\tikzset{main node/.style={circle,fill=blue!20,draw,minimum size=0.8cm,inner sep=0pt},}
\newtheorem{thm}{Theorem}[section]
\newtheorem{cor}[thm]{Corollary}
\newtheorem{lem}[thm]{Lemma}
\newtheorem{prop}[thm]{Proposition}
\theoremstyle{definition}
\newtheorem{defn}[thm]{Definition}
\theoremstyle{remark}
\newtheorem{ex}[thm]{Example}
\newcommand{\opn}[1]{\operatorname{#1}}
\newcommand{\mbb}[1]{\mathbb{#1}}
\newcommand{\mc}[1]{\mathcal{#1}}
\newcommand{\bs}[1]{\boldsymbol{#1}}
\def\>{\rangle}
\def\<{\langle}
\def\RR{\rrangle}
\def\im{\opn{Im}}
\def\tr{\opn{Tr}}
\def\ker{\opn{ker}}
\def\spn{\opn{span}}
\def\0{\bs{0}}
\def\1{\mathbbm{1}}
\def\N{\mbb{N}}
\def\Z{\mbb{Z}}
\def\R{\mbb{R}}
\def\C{\mbb{C}}
\def\T{\mbb{T}}
\def\D{\mbb{D}}
\def\HH{\mathscr{H}}
\def\DD{\mathscr{D}}
\def\RR{\mathscr{R}}
\def\SS{\mathscr{S}}
\def\BB{\mathscr{B}}
  \def\XXint#1#2#3{{\setbox0=\hbox{$#1{#2#3}{\int}$}
      \vcenter{\hbox{$#2#3$}}\kern-.47\wd0}}
\begin{document}


\title[A generalization of Schur functions]
{A generalization of Schur functions: \\ applications to Nevanlinna functions, \\ orthogonal polynomials, random walks \\ and unitary and open quantum walks}
\author{\small F.A. Gr\"unbaum$^1$, L. Vel\'azquez$^2$}
\address{\scriptsize 
$^1$Department of Mathematics, University of California, Berkeley, CA 94720, USA}
\address{\scriptsize 
$^2$Departamento de Matem\'{a}tica Aplicada $\&$ IUMA, Universidad de Zaragoza, Mar\'{\i}a de Luna 3, 50018 Zaragoza, Spain.  
\footnotesize
\it E-mail address: \rm \texttt{velazque@unizar.es}
\it (corresponding author)}

\subjclass[2010]{47A56, 42C05, 47B36, 60J10, 81Q99}

\keywords{Operators, Schur functions, Schur algorithm, Nevanlinna functions, orthogonal polynomials, Khrushchev formula, Jacobi matrices, Markov chains, recurrence, stochastic matrices, random walks, quantum walks, open quantum walks, CPTP maps} 

\date{}					

\begin{abstract}

Recent work on return properties of quantum walks (the quantum analogue of random walks) has identified their generating functions of first returns as Schur functions. This is connected with a representation of Schur functions in terms of the operators governing the evolution of quantum walks, i.e. the unitary operators on Hilbert spaces. 

In this paper we propose a generalization of Schur functions by extending the above operator representation to arbitrary closed operators on Banach spaces. Such generalized `Schur functions' meet the formal structure of first return generating functions, thus we call them FR-functions. We derive some general properties of FR-functions, among them a simple relation with an operator version of Stieltjes functions which generalizes the renewal equation already known for random and quantum walks. We also prove that FR-functions satisfy splitting properties which extend useful factorizations of Schur functions. 

When specialized to self-adjoint operators on Hilbert spaces, we show that FR-functions become Nevanlinna functions. This allows us to obtain properties of Nevanlinna functions which, as far as we know, seem to be new. The FR-function structure leads to a new operator representation of Nevanlinna functions in terms of self-adjoint operators, whose spectral measures provide also new integral representations of such functions. This allows us to characterize each Nevanlinna function by a measure on the real line, which we refers to as `the measure of the Nevanlina function'. In contrast to standard operator and integral representations of Nevanlinna functions, these new ones are exact analogues of those already known for Schur functions. The above results are also the source of a very simple `Schur algorithm' for Nevanlinna functions based on interpolations at points on the real line, which we refer to as the `Schur algorithm on the real line'.  
 
The paper is completed with several applications of FR-functions to orthogonal polynomials and random and quantum walks which illustrate their wide interest: an analogue for orthogonal polynomials on the real line of the Khrushchev formula for orthogonal polynomials on the unit circle, and the use of FR-functions to study recurrence in random walks, quantum walks and open quantum walks. These applications provide numerous explicit examples of FR-functions, clarifying the meaning of these functions --as first return generating functions-- and their splittings --which become recurrence splitting rules.  They also show that these new tools, despite being extensions of very classical ones, play an important role in the study of physical problems of a highly topical nature. 


\end{abstract}

\maketitle

\tableofcontents

\section{Introduction}
\label{sec:INTRO}

The analytic functions mapping the open unit disk into its closure are known as Schur functions. These functions, as well as their matrix and operator valued versions, are central objects in harmonic analysis, but their interest goes beyond this area, covering a wide variety of applications such as linear system theory, electrical engineering, signal processing, geophysics, stochastic processes, operator theory, interpolation problems, orthogonal polynomials on the unit circle (OPUC) or quantum walks (QW).

Among the main and most fruitful features of these functions is their characterization by a sequence of complex parameters --the Schur parameters-- arising from the so called Schur algorithm \cite{Schur} (see \cite{DGK-S} for the matrix valued case), which generates a sequence of Schur functions --the Schur iterates-- starting from the original one. The standard Schur algorithm, based on the evaluation of the iterates at the origin, can be generalized to deal with evaluation points arbitrarily chosen in the open unit disk, and then it is known under the name of the Nevanlinna-Pick algorithm \cite{Pick,Nevanlinna} (see \cite{DGK-N} for the matrix valued case).  

Schur functions are also characterized by an integral representation which follows from the Riesz-Herglotz representation of Carath\'eodory functions \cite{Herglotz,Riesz} (see \cite[Sect. I.4]{Brodskii} for the operator valued case), i.e. the analytic functions mapping the open unit disk into the closed right half-plane. This yields a one-to-one correspondence between Schur functions and normalized measures on the unit circle, linking such functions to unitary operators via spectral measures. This leads to a representation of Schur functions in terms of unitary operators and orthogonal projections \cite{GVWW,BGVW,CGVWW}, which is connected to a known realization as transfer/characteristic functions related to unitary colligations \cite{NF,Brodskii2}. 

A further feature which adds value to Schur functions is the existence of factorization properties related to certain compositions of linear systems or unitary operators. The importance of these factorizations can be hardly summarized in a single paragraph. As an example let us mention their key role in the invariant subspace problem of operator theory \cite{NF,Brodskii}, in Wiener-Hopf methods and realizability in linear systems \cite{BGK,Kaashoek}, in a recent OPUC revolution known as Khrushchev theory \cite{Khrushchev,Khrushchev2,Simon-OPUC} (see \cite{CGVWW} for the matrix-valued case) or in the study of splitting rules for QW recurrence, see Section~\ref{sec:QW}. 

Some of these results can be generalized to the so called Nevanlinna functions, the analogue of the Schur functions when the open unit disk is replaced by the open upper half-plane. For instance, the linear fractional transformations between these two domains have suggested Nevanlinna analogues of the Schur algorithm which use interpolation points in the open upper half-plane, see \cite[Chapter 3]{Akhiezer} and \cite{ADL,ADLV,De}. Nevertheless, these algorithms are not as simple as that one for Schur functions. 

This is in contrast with the simplicity of the standard Schur algorithm for Nevanlinna functions of Stieltjes type, which uses a point in the boundary of the upper half-plane, the point at infinity, as interpolation point, see \cite[Chapter 3]{Akhiezer} and \cite{ADL2}. This suggests to explore the possibility of taking the rest of such a boundary, i.e. the real line, as a natural place for the interpolation points of a simple Schur algorithm for Nevanlinna functions. Among other things, this would have the benefit of leading to a simple Nevanlinna version of the Nevanlinna-Pick algorithm when chosing a different interpolation point in the real line at each step.

Nevanlinna functions also have integral and operator representations, but in terms of measures on the real line and self-adjoint operators instead of measures on the unit circle and unitary operators, see \cite[Chapter III]{ST}, \cite[Sect. I.4]{Brodskii}, \cite{Shmulyan,NK,GT,BHST,BS,GKMT,KL} and references therein. However, in contrast to the case of Schur functions, the integral representations either are restricted to certain type of Nevanlinna functions or require additional parameters, thus they do not yield a one-to-one correspondence between Nevanlinna functions and measures. Besides, these integral and operator representations are qualitatively different from those of Schur functions. 

Mappings between the open unit disk and the open upper half-plane may be useful for surmising results for Nevanlinna functions inspired by the case of Schur functions, but this is not the end of the story. For instance, these transformations do not suggest the simplicity of taking the point at infinity as interpolation point in a Schur algorithm for Nevanlinna functions (concerning the differences beween the interpolation problems for Schur and Nevanlinna functions, see \cite{De2}). Besides, such mappings have not been used to understand the Nevanlinna version of factorizations already known for Schur functions. This also holds for the Nevanlinna version of applications already developed for Schur functions. By analogy with that case one should expect, for instance, applications of Nevanlinna functions to orthogonal polynomials on the real line (OPRL), as well as to symmetrizable random walks (RW) --i.e., RW whose stochastic matrix can be made self-adjoint--, such as those which are both irreducible and reversible \cite[Chapter 6]{Stroock}. 

\smallskip 

The previous discussion leads to the following natural questions:

\begin{itemize}

\item Is there any simple Schur algorithm for Nevanlinna functions based on interpolations at points on the real line?  

\item Are there integral and operator representations of Nevanlinna functions which resemble those of Schur functions? Do they establish a one-to-one correspondence between the set of all Nevanlinna functions and some set of measures? 

\item Do Nevanlinna functions satisfy any splitting properties that could be viewed as an analogue of known factorizations of Schur functions?

\item Do the previous ideas shed light on applications of Nevanlinna functions which parallel those of Schur functions? In particular, do they have any interesting consequence for the study of OPRL or RW? 

\end{itemize}

Among the main results of this paper is the discovery of new properties of Nevanlinna functions which answer affirmatively the above questions. We do not intend to state that these new findings on Nevanlinna functions are, for any purpose, better than similar ones already existing in the literature. However these new results are specially useful for the applications to OPRL and RW described in Sections~\ref{sec:OP} and \ref{sec:RW}. 

Answering the first question, we will see that there exists a very natural and surprisingly simple Schur algorithm for Nevanlinna functions which uses interpolation points lying in the real line instead of the interior of the upper half-plane. This shows once more that the strategy of translating ideas from Schur to Nevanlinna functions by using M\"obius transformations does not exhaust all the possibilities, and does not even yield necessarily the simplest answers. A single step of the alluded Schur algorithm looks like
$$
 f(z) \to \frac{1}{z} \frac{f(z)-f(0)-f'(0)z}{f(z)-f(0)}
$$
for a Nevanlinna function $f$ analytic at the origin. A real translation gives the version of this algorithm for Nevanlinna functions analytic at other points on the real line, while the matrix valued case only requires a slight modification which is similar to the one required for matrix valued Schur functions, see \eqref{eq:Nalg-op} and Definition~\ref{def:SAR}. Also, the analyticity condition can be weakened to the existence of derivatives along normal directions to the real line.     

We will also obtain new integral and operator representations of Nevanlinna functions which are in perfect analogy with those already known for Schur functions, see Theorem~\ref{thm:N=FR}. In particular, we will find an integral representation without additional parameters which characterizes each Nevanlinna function by a measure on the real line --much in the same way as in the case of Schur functions and measures on the unit circle-- which we call `the measure of the Nevanlinna function', see Theorem~\ref{thm:N=FR}.{\it(ii)} and Definition~\ref{def:N-mu}. Explicitly, the measure $\mu$ of a Nevanlinna function $f$ is defined by the following identity for the Stieltjes function of $\mu$,
\begin{equation} \label{eq:int-rep}
 \int\frac{d\mu(t)}{1-zt} = (1-zf(z))^{-1}.
\end{equation} 
This is the exact analogue of the known integral representation of Schur functions via Carath\'eodory functions, except for the fact that now the measure is not necessarily normalized by $\mu(\R)=1$, but $\mu(\R)\le1$. Actually, we will show that the integral representation \eqref{eq:int-rep} establishes a one-to-one correspondence between Nevanlinna functions and measures on the real line such that $\mu(\R)\le1$. All these results remain true for matrix valued Nevanlinna functions and measures, where 1 stands for the identity matrix of appropriate size wherever necessary.

As for the new operator representations of Nevanlinna functions, they are given in terms of self-adjoint operators and orthogonal projections in a way almost identical to the representation of Schur functions by unitary operators given in \cite{GVWW,BGVW,CGVWW}, see Theorem~\ref{thm:N=FR}.{\it(iv)}. Namely, we will prove that every Nevanlinna function can be expressed as
$$
 f(z) = -dz^{-1} + PT(1-zQT)^{-1}P,
 \qquad
 \begin{aligned}
 & T \text{ self-adjoint operator}, \quad d\ge0,
 \\
 & P \text{ rank 1 orthogonal projection}, \quad Q=1-P,
\end{aligned}
$$
and viceversa. In the matrix valued case $d$ becomes a non-negative definite matrix and $P$ is simply finite rank. The only quirk of the Nevanlinna case is the eventual presence of additional terms proportional to $-z^{-1}$, the only ones that cannot be reproduced by the operator representation. This difference with respect to Schur functions is related to that one in the normalization condition of the measure $\mu$ previously noticed. Actually, $d=0$ exactly when $\mu(\R)=1$ because in this case $\mu$ becomes a spectral measure for the self-adjoint operator $T$, see Corollary~\ref{cor:N=FR}. This is one of the interesting features of the integral representation \eqref{eq:int-rep} since such spectral measures codify dynamical properties of random systems described by the self-adjoint operator $T$ \cite{Last,RS,KMcG,DS,MW} (see \cite{GVWW,BGVW,CGMV,CGMV1,ASW} for the unitary case).     

Regarding the splitting properties, it can be seen that certain factorizations of a unitary operator yield similar factorizations of Schur functions generated by such an operator \cite{NF,Brodskii2,BGK,CGVWW}. Analogously, we will show that a decomposition of a Nevanlinna function into a sum of other ones follows from similar decompositions of the self-adjoint operator giving the corresponding operator representation, see Theorem~\ref{thm:split}.{\it(i)} particularized to self-adjoint operators. This can be considered as a Nevanlinna analogue of the alluded factorizations of Schur functions.   

The parallelism between these new results for Nevanlinna functions and those already known for Schur functions opens the possibility of exporting to the Nevanlinna case the applications of Schur functions previously described. For instance, as it is shown in \cite{CGVWW}, Khrushchev formula \cite{Khrushchev,Khrushchev2} (see also \cite[Chapters 4 and 9]{Simon-OPUC}) --the cornerstone of OPUC Khrushchev theory-- can be understood as a factorization of Schur functions generated by certain factorizations of the so called CMV matrices \cite{CMV,Watkins} (see also \cite[Chapter 4]{Simon-OPUC}), unitary analogue of Jacobi matrices. Analogously, we will show that the decomposition properties of Nevanlinna functions uncovered in the present work lead to a Khrushchev type formula for OPRL --see Theorem~\ref{thm:K-R}-- which should be the starting point of a Khrushchev theory for OPRL. More precisely, this OPRL Khrushchev formula will be a consequence of certain decompositions of the Jacobi matrix encoding the OPRL three term recurrence relation.  

Besides, we will prove that the success of Schur functions in encrypting the recurrence --i.e., the return properties-- of QW \cite{GVWW,BGVW,CGMV2} has a Nevanlinna counterpart: the recurrence of a symmetrizable RW is best codified by Nevanlinna functions associated with the related self-adjoint operator, see Section~\ref{sec:RW} when applied to such a RW. Also, it will be shown that the role of the factorization properties of Schur functions as splitting rules for QW recurrence is paralleled by the splitting rules for RW recurrence generated by the decomposition properties of Nevanlinna functions, see Theorem~\ref{thm:split-RW}.{\it(i)} when applied to a symmetrizable RW.

Actually, we will do more than this since we will prove that both, Schur and Nevanlinna functions, are just a particular case of a much more general class of functions linked to arbitrary closed operators and bounded projections on Banach spaces, see Definition~\ref{def:FR}. The starting point for this generalization is the extension to arbitrary operators of the representation of Schur functions in terms of unitaries uncovered in \cite{GVWW,BGVW,CGVWW}, giving rise to the definition of what we will call first return functions --the name refers to the fact that these functions have the formal structure of a generating function of first returns--, in short FR-functions. More precisely, we will define an FR-function $f$ by the operator representation
\begin{equation} \label{eq:fr}
 f(z) := PT(1-zQT)^{-1}P,
 \qquad
 \begin{aligned}
 & T \text{ closed operator},
 \\
 & P \text{ bounded projection}, \quad Q=1-P.
 \end{aligned}
\end{equation}

On the one hand, FR-functions arise as an abstract generalization of the notion of generating function of first returns, whose origin goes back to G.~P\'olya's celebrated paper on RW recurrence \cite{Polya} (see also \cite{Feller,Stroock}). Thus their study has a direct impact in the analysis of recurrence in different dynamical systems, see Sections~\ref{sec:RW}, \ref{sec:QW} and \ref{sec:OQW}. On the other hand, FR-functions provide generalizations of transfer/characteristic functions \cite{NF,Brodskii,Brodskii2,BGK,Kaashoek} to unbounded situations --see Proposition~\ref{prop:falt} and subsequent comments--,  a fact which is behind new representations of Nevanlinna functions in terms of self-adjoint operators and measures on the real line, see Theorem~\ref{thm:N=FR} and Corollary~\ref{cor:N=FR}.
 
The interesting fact is that some properties already known in much more restricted settings are satisfied by FR-functions in this general abstract context of operators on Banach spaces, even in the absence of an inner product. For instance, we will see in Theorem~\ref{thm:dom2} that \eqref{eq:int-rep} generalizes to every FR-function \eqref{eq:fr} as 
$$
 s(z) := P(1-zT)^{-1}P = (1-zf(z))^{-1},
$$
where $s$ is an operator version of the Stieltjes function of a measure, which formally plays the role of a generating function of returns. This relation between generating functions of returns and first returns also generalizes the well known renewal equation for RW, first derived by G.~P\'olya in \cite{Polya} (see also \cite[Chapter XIII]{Feller} and \cite[Chapter 4]{Stroock}), as well as its QW version which has been recently obtained \cite{GVWW,BGVW,CGMV2}.

Another example of the properties which hold for general FR-functions of operators on Banach spaces are the factorizations and decompositions mentioned above in the unitary and self-adjoint case respectively. We will prove that these splitting properties have a mere algebraic nature and are simply a consequence of similar splittings of the underlying operator, regardless of any symmetry that the operator may have, see Theorem~\ref{thm:split}. 

The extension of these results to the general setting of operators on Banach spaces is not a mere exercise of abstraction, but it is motivated by the requirements of important applications. For instance, the stochastic matrix of a RW is connected to a self-adjoint operator only in some situations --for instance, when the RW is irreducible and reversible \cite[Chapter 6]{Stroock}--, however every stochastic matrix defines an operator in a Banach space. Therefore, the general results on FR-functions apply to every RW and not only to symmetrizable ones. In particular, this allows us to prove that the splitting rules --including not only decomposition, but also factorization rules-- for RW recurrence constitute a completeley general feature of RW, see Theorem~\ref{thm:split-RW}.   

Even more interesting is the possibility of applying the results on FR-functions to more burning issues. The emergence of quantum information as one of the most promising scientific developments for the practical implementation of quantum technologies has promted the research on quantum systems in interaction with the macroscopic world to better reproduce realistic situations. Among these kind of systems are the so called quantum channels, which have been recently used to build a version of QW in interaction with the enviroment under the name of open QW (OQW) \cite{APSS} (see also \cite{APS,SP,SP2,SP3,SaPa,LS,LS2,CGL,CP,CP2,SKA,SKKA}). The evolution of such systems is governed by an operator on a Banach space rather than on a Hilbert space. Hence, every result on general FR-functions may be applied to the study of such open quantum systems. We will show that, as in the case of standard QW, the recurrence properties of OQW --or more generally, iterated quantum channels-- are codified by FR-functions of the corresponding evolution operator. This not only makes possible to perform calculations which are hard to tackle with other methods, but also allows us to generalize to OQW results obtained for RW or QW recurrence, such as establishing a renewal equation or developing splitting techniques for OQW recurrence, see Section~\ref{sec:OQW}. 

Summarizing, this work is an attempt to extend tools and techniques of harmonic analysis and operator theory already used with great success for the study of unitary operators, OPUC and QW, to other contexts such as self-adjoint operators, OPRL, RW and OQW. This requires the generalization of the notion of Schur function and its properties to arbitrary operators on Banach spaces. As a byproduct, the specialization of these ideas to self-adjoint operators on Hilbert spaces leads to results on Nevanlinna functions which, to the best of our knowledge, seem not to have been noticed before.

The content of the paper is organized as follows: the rest of the introduction is devoted to a summary on Schur functions and unitary operators --highlighting the results that we wish to generalize-- and also to some results on Schur complements for operators on Banach spaces which will be of interest for the analysis of FR-functions. The whole paper revolves around the concept of FR-function as a generalization of the notion of Schur function. FR-functions for arbitrary operators on Banach spaces are introduced in Section~\ref{sec:GF}, which also includes some results on FR-functions, the main one being the generalization of the renewal equation to this abstract setting. The special features of the self-adjoint case are considered in Section~\ref{sec:s-a}, which links FR-functions of self-adjoint operators to Nevanlinna functions. This relation is examined in more detail in Section~\ref{sec:rep-N}, which provides different equivalent characterizations of Nevanlinna functions, among them integral and operator representations which are in perfect analogy with those of Schur functions. The FR-function approach leads to a Schur algorithm for Nevanlinna functions based on interpolations at points on the real line, which we call the Schur algorithn on the real line. This algorithm and its different generalizations are discussed in Section~\ref{sec:SA}. The factorization and decomposition formulas for FR-functions are derived in Section~\ref{sec:KF} from similar splittings of the underlying operators. The application of such decomposition formulas to Nevanlinna functions associated with Jacobi matrices leads to a Khrushchev formula for OPRL, which is the aim of Section~\ref{sec:OP}. Sections~\ref{sec:RW}, \ref{sec:QW} and \ref{sec:OQW} are devoted to the applications of FR-functions to the study of recurrence in RW, QW and OQW respectively. The examples presented in these sections also provide many explicit examples of FR-functions and illustrate some of their features, such as the splitting properties mentioned above. Finally, Appendices~A and B prove some technical results on Nevanlinna functions which are needed for the development of the paper.

\subsection{The unitary case - Schur functions}
\label{ssec:u}
\quad

Let us summarize some known connections between Schur functions and unitary operators whose origin lies in the study of QW, to better understand the kind of results we wish to generalize. Quick references to scalar and matrix valued Schur functions which stress the aspects that we need can be found in \cite[Chapter 1]{Simon-OPUC} and \cite[Sect. 1 and 3]{DPS} respectively. The connection between Schur functions and unitary operators via QW recurrence appears in \cite{GVWW,BGVW,CGVWW}. For good references to QW, a notion first introduced in \cite{ADZ}, see \cite{Ambainis,Childs,Kempe,Kendon}. Also, Section~\ref{sec:QW} provides a summary of QW recurrence which may be useful to follow some of the ideas presented in this section. 

Given any unitary operator $U$ on a Hilbert space $\HH$, we can consider the QW driven by $U$. If $P$ is the orthogonal projection onto a closed subspace $\HH_0$ and $Q=1-P$ is the projection onto $\HH_0^\bot$, the generating function of the {\bf first time return} amplitudes to $\HH_0$ is (up to multiplication by $z$) the function $f$ with values in operators on $\HH_0$ given by 
\begin{equation} \label{eq:f-U}
 f(z) = \sum_{n\ge0} z^n PU(QU)^nP = PU(1-zQU)^{-1}P = P(U^\dag-zQ)^{-1}P. 
\end{equation}
This function is obviously analytic on the unit disk 
$$
 \D:=\{z\in\C : |z|<1\}.
$$
Note that this region coincides with that one bounded by the unit circle 
$$
 \T:=\{z\in\C : |z|=1\}
$$ 
where the spectrum of a unitary lives. 

We will refer to the {\bf first return generating function} $f$ in short as the {\bf FR-function} of the subspace $\HH_0$ with respect to the unitary $U$.

We point out that other generating functions appear in connection with return properties of QW. The generating function of the return amplitudes to $\HH_0$ is given by
\begin{equation} \label{eq:s-U}
 s(z) = \sum_{n\ge0} z^nPU^nP = P(1-zU)^{-1}P,
\end{equation}
which is again analytic on $\D$. The spectral decomposition $U=\int\!t\,dE(t)$ gives to this generating function a special meaning since one has
\begin{equation} \label{eq:s-mu}
 s(z) = \int \frac{d\mu(t)}{1-zt},
 \qquad \mu = PEP.
\end{equation}
That is, $s$ is the Stieltjes function of a measure $\mu$ suported on $\T$, namely, the spectral measure of $\HH_0$ with respect to $U$. As in the case of $f$, the spectral measure $\mu$ and the Stieltjes function $s$ take values in operators on $\HH_0$. In particular, 
$$
 s(0) = \mu(\T) = 1_0 := \text{identity on } \HH_0.
$$
Both generating functions, $f$ and $s$, are related by the {\bf quantum renewal equation} \cite{GVWW,BGVW,CGMV2},  
\begin{equation} \label{eq:st-sc}
 s(z)^{-1}=1_0-zf(z), 
 \qquad z\in\D,
\end{equation}
a quantum version of the renewal equation for classical random walks \cite[Chapter XIII]{Feller} (see also \cite[Chapter 4]{Stroock}). This allows us to connect $f$ to the Carath\'eodory function $F$ of $\mu$, defined by
$$
 F(z) := \int \frac{t+z}{t-z}\,d\mu(t) = 2s^\dag(z)-1_0,
 \qquad
 s^\dag(z) := s(\overline{z})^\dag,
$$
leading to the identification of 
\begin{equation} \label{eq:S-C}
 f^\dag(z) = z^{-1}(F(z)-1_0)(F(z)+1_0)^{-1}
\end{equation} 
as the {\bf Schur function} of $\mu$ \cite[Chapter 1]{Simon-OPUC} (see \cite[Sect. 3]{DPS} for the matrix-valued case). As a consequence, besides its analyticity, the FR-function $f$ has a remarkable contractivity property in $\D$, 
\begin{equation} \label{eq:Schur}
 \|f(z)\|\le1 \; \text{ for } \; z\in\D.
\end{equation}
This means that every FR-function related to a unitary operator is a Schur function.

The integral representation \eqref{eq:s-mu} of Stieltjes functions combined with \eqref{eq:st-sc} provides an integral representation of Schur functions, thus of FR-functions for unitary operators, 
\begin{equation} \label{eq:f-mu}
 f(z) = z^{-1} \left(1_0-\left(\int \frac{d\mu(t)}{1-zt}\right)^{-1}\right)
 = \left(\int\frac{t\,d\mu(t)}{1-zt}\right) 
 \left(\int\frac{d\mu(t)}{1-zt}\right)^{-1}.
\end{equation}

In the case of a one-dimensional subspace $\HH_0$, the FR-function is an analytic map $f\colon\D\to\overline\D$. According to the maximum modulus principle, there are two possibilities: 
\begin{itemize}
 \item $f$ is {\it degenerate}: 
 $|f(z_0)|=1$ for some $z_0\in\D$, then $f$ is a unimodular constant on $\D$. 
 \item $f$ is {\it non-degenerate}:
 $|f|<1$ on $\D$, i.e. $f(\D)\subset\D$. 
\end{itemize}
In this scalar valued case $f$ is characterized by a finite or infinite number of Schur parameters $\alpha_n$ via the Schur algorithm \cite{Schur} (see also \cite[Chapter 1]{Simon-OPUC}), 
\begin{equation} \label{eq:Salg}
\begin{aligned}
 & f_0(z)=f(z),
 \\
 & f_{n+1}(z) = z^{-1} M_{\alpha_n}(f_n(z)),
 \qquad \alpha_n=f_n(0),
 \qquad n\ge0,
\end{aligned}
\end{equation}
where $M_\alpha$ is the M\"obius transformation 
\begin{equation} \label{eq:Mobius}
 M_\alpha(z) = \frac{z-\alpha}{1-\overline\alpha z}, 
 \qquad |\alpha|<1. 
\end{equation}
This algorithm generates new Schur functions $f_n$, the iterates of $f$, and it terminates if some iterate $f_N$ is degenerate, i.e. if $|\alpha_N|=1$. Thus, the Schur parameters lie on $\D$, except for the last one, if any, which lies on $\T$.

A canonical unitary operator can be associated with any scalar valued Schur function $f$, namely, the operator in $\ell^2$ defined by the CMV representation \cite{CMV,Watkins} (see also \cite[Chapter 4]{Simon-OPUC}) of the multiplication operator $h(z)\mapsto zh(z)$ in $L^2_\mu$, where $\mu$ is the measure on $\T$ associated with $f$. Geronimus theorem \cite{Geronimus} (see also \cite[Chapter 3]{Simon-OPUC}) implies that the Schur parameters of $f$ coincide with the Verblunsky coefficients which parametrize the related CMV matrix. Thus, the Schur algorithm can be viewed as a procedure to extract from a Schur function the parameters giving the corresponding canonical unitary operator.

The Schur algorithm also works for higher dimensional subspaces $\HH_0$, which leads to matrix valued Schur functions. Then, the Schur parameters $\alpha_n$ are square matrices and the scalar M\"obius transformation must be changed by its matrix version \cite{DGK-S} (see also \cite[Sect. 1 and 3]{DPS})
\begin{equation} \label{eq:Mobius-op}  
 M_\alpha(T) 
 = \rho_{\alpha^\dag}^{-1} (T-\alpha)(1-\alpha^\dag T)^{-1} \rho_\alpha
 = \rho_{\alpha^\dag} (1-T\alpha^\dag)^{-1}(T-\alpha) \rho_\alpha^{-1},
 \quad
 \begin{aligned}
 	& \|\alpha\|<1,
	\\[-3pt]
 	& \rho_\alpha = (1-\alpha^\dag\alpha)^{1/2}.
 \end{aligned}
\end{equation}
The equivalence between the above two expressions follows from the identity $\rho_{\alpha^\dag}^2\alpha=\alpha\rho_\alpha^2$, which implies that $\alpha\rho_\alpha^{-2}=\rho_{\alpha^\dag}^{-2}\alpha$.
As in the scalar case, there are two possibilities:
\begin{itemize}
 \item $f$ is {\it degenerate}: 
 $\|f(z_0)\|=1$ for some $z_0\in\D$, then $\|f\|=1$ on $\D$ (but now $f$ is not necessarily constant). 
 \item $f$ is {\it non-degenerate}:
 $\|f\|<1$ on $\D$. 
\end{itemize}
Thus, $\|\alpha_n\|<1$ unless one runs into a degenerate iterate $f_N$, a situation which terminates the algorithm because $\|\alpha_N\|=1$.      

Another feature of FR-functions is a remarkable factorization property which lies behind Khrushchev formula for OPUC: suppose that a unitary $U$ on a Hilbert space with an orthogonal decomposition $\HH=\HH_-\oplus\HH_0\oplus\HH_+$ has a factorization into unitaries which only overlap on $\HH_0$, i.e.
$$
 U = (U_L \oplus 1_+) (1_- \oplus U_R),
 \qquad 
 \begin{aligned}
 	& U_L = \text{unitary on } \HH_L:=\HH_-\oplus\HH_0,
	\\
	& U_R = \text{unitary on } \HH_R:=\HH_0\oplus\HH_+,
	\\
	& 1_\pm = \text{identity on } \HH_\pm.
 \end{aligned}
$$
Then, the FR-function $f$ of $\HH_0$ with respect to $U$ also factorizes as  \cite{CGVWW}
$$
 f = f_L f_R,
$$
where $f_{L,R}$ are the FR-functions of $\HH_0$ with respect to the `left/right' unitaries $U_{L,R}$. 

The above factorization determines the FR-function of $\HH_0$ in the QW driven by $U$ in terms of those in the left/right QW driven by $U_{L,R}$. That is, any overlapping factorization of a QW into smaller ones allows us to recover the return properties of the overlapping subspace for the larger QW from those in the smaller QWs. 

Finally, FR-functions related to unitary operators are not just examples of Schur functions, rather both concepts coincide exactly. In other words, every Schur function with values in operators on a Hilbert space is the FR-function of a closed subspace with respect to a unitary operator. This follows from the integral representation of Schur functions in terms of measures on the unit circle --via Carath\'eodory or Stieltjes functions-- and Naimark's dilation theorem \cite{Naimark}. Let $f$ be a Schur function with values in operators on a Hilbert space $\HH_0$. The relation \eqref{eq:S-C} between Schur and Carath\'eodory functions --or equivalently the relation \eqref{eq:f-mu}-- associates to $f$ a measure $\mu$ on $\T$ with values in operators on $\HH_0$. Naimark's dilation theorem implies that $\mu$ comes from the projection of an spectral measure $E$, defined in a larger Hilbert space $\HH\supset\HH_0$, i.e. $\mu=PEP$ with $P$ the orthogonal projection of $\HH$ onto $\HH_0$. Then, the unitary operator $U=\int\!t\,dE(t)$ associates the spectral measure $\mu$ to the subspace $\HH_0$ and, hence, $f^\dag(z)$ becomes the FR-function of $\HH_0$ with respect to $U$. In other words, $f$ is the FR-function of $\HH_0$ with respect to the unitary $U^\dag$.    

\medskip

We intend to show that an analogue of these operator valued FR-functions also appears in other areas where, to the best of our knowledge, they have not been considered so far. Indeed, we will see that FR-functions and their properties generalize to arbitrary operators on Banach spaces. We will exploit our obsevation of the role and properties of these generalized FR-functions for OPRL, RW, QW and OQW. Their analogy with the FR-functions of the unitary case --i.e., the Schur functions-- rests on the following similarities:  

\begin{itemize}
\item[(A)] {\bf Definition:} formally identical to that of the unitary case and similarly related to Stieltjes functions by a renewal equation. Recognizable as a generating function of first time returns. 
\item[(B)] {\bf Properties:} domain of analyticity similarly related to the spectrum of the underlying operator. Analogous transformation properties in the self-adjoint and unitary cases.
\item[(C)] {\bf ``Schur algorithm'' (self-adjoint and unitary cases):} existence of an algorithm which, starting from a FR-function, yields iteratively a set of  FR-functions of the same kind and characterizes the original one by means of certain parameters which also provide a connection with a related canonical operator. 
\item[(D)] {\bf Splittings:} existence of splitting rules for FR-functions, consequence of similar ``overlapping'' splittings for the underlying operator. Interpretation as rules to split return properties when splitting a system into overlapping smaller subsystems.   
\end{itemize}

\subsection{Block operators and Schur complements}
\label{ssec:SC}
\quad

The generalization of FR-functions mentioned above requires dealing with operators related to resolvents, as in the unitary case. Here we summarize the results that we will use along the paper regarding operator inverses.

As required by the applications to RW and open QW, we will develop most of the results in the abstract context of operators on Banach spaces, a generalization which requires no more effort than the case of Hilbert spaces.  

The results previously summarized for the unitary case show the prominent role that projections will play in this generalization. A projection on a Banach space $\BB$ is a linear operator $P\colon\BB\to\BB$ such that $P^2=1$ is the identity on $\BB$. Any such a projection is specified by a decomposition $\BB=\BB_0\oplus\BB_1$ into the direct sum of two subspaces, $\BB_0=\RR(P)$ and $\BB_1=\ker P$. We say that $P$ is the projection onto $\BB_0=\RR(P)$ along $\BB_1=\ker P$, which is a bounded operator exactly when these subspaces are closed. Bounded projections --specified by pairs of complementary closed subspaces-- are the only ones that we will consider along the paper.  

Let $T \colon \DD(T) \to \BB$ be a linear operator on a Banach space $\BB$ with domain $\mathscr{D}(T)$ and range $\RR(T)$. Since we will get away from unitarity, we will assume that the domain is an arbitrary subspace, not necessarily the whole space or even a closed one, to include the case of unbounded operators. A bounded projection $P$ onto $\BB_0$ along $\BB_1$, together with the complementary projection $Q=1-P$, yield a block representation of $T$ whenever 
\begin{equation} \label{eq:bc}
 \DD(T)=(\BB_0\cap\DD(T))\oplus(\BB_1\cap\DD(T)), 
\end{equation}
i.e. as long as $P\DD(T)=\BB_0\cap\DD(T)$ and $Q\DD(T)=\BB_1\cap\DD(T)$. Then, 
\begin{equation} \label{eq:T-block}
 T = \begin{pmatrix} A & B \\ C & D \end{pmatrix},
 \qquad
 \begin{aligned}
 	& A=PTP \colon \BB_0\cap\DD(T) \to \BB_0, 
 	& \quad & B=PTQ \colon \BB_1\cap\DD(T) \to \BB_0,
 	\\
 	& C=QTP \colon \BB_0\cap\DD(T) \to \BB_1,
 	& & D=QTQ \colon \BB_1\cap\DD(T) \to \BB_1,
\end{aligned}
\end{equation}
makes sense acting on arbitrary vectors $v\in\DD(T)$ in block form
$$
 v = \begin{pmatrix} Pv \\ Qv \end{pmatrix}.  
$$
We will refer to \eqref{eq:T-block} as the block representation of the operator $T$ generated by the bounded projection $P$.

Condition \eqref{eq:bc} implies that 
\begin{equation} \label{eq:bc2}
 P\DD(T)\subset\DD(T).
\end{equation}
Conversely, \eqref{eq:bc2} gives $Q\DD(T)=(1-P)\DD(T)\subset\DD(T)$, thus $\DD(T)=P\DD(T)\oplus Q\DD(T)$ which becomes \eqref{eq:bc}. This means that \eqref{eq:bc} is equivalent to \eqref{eq:bc2}, a condition which therefore characterizes the projections $P$ which generate block representations for $T$. Condition \eqref{eq:bc2} holds for instance if $\BB_0\subset\DD(T)$, a particularly simple situation generating block representations which will be of interest later on.

Since we will use the block representation of $T \colon \DD(T)\to\BB$ to obtain a similar one for $T^{-1} \colon \RR(T)\to\BB$, besides \eqref{eq:bc2} we also need to suppose that $P\RR(T)\subset\RR(T)$. To avoid unnecessary complications we will assume that $\RR(T)=\BB$, a condition that will cover all our needs. 

A key tool for the analysis of inverses is the notion of Schur complement for block operators on Banach spaces. The Schur complement of the block $D$ with respect to the block operator $T$ given in \eqref{eq:T-block} is the operator
\begin{equation} \label{eq:SC1}
 T/D := A-BD^{-1}C = PTP-PTQ(QTQ)^{-1}QTP,
\end{equation}
which defines an operator on $\BB_0$ with domain $\DD(T/D)=\BB_0\cap\DD(T)$ whenever $D$ is invertible with $\RR(C)\subset\RR(D)$. Again, we will make the stronger assumption that $\RR(D)=\BB_1$, which will be enough to deal with all the situations appearing in the paper.

The following two results, usually given for finite dimension, hold for any Banach space under the previous assumptions. We present their proof in such a general case to take care of the subtleties of infinite dimension.

\begin{lem} \label{lem:SC}
Let $T \colon \DD(T)\to\BB$ be a linear operator on a Banach space $\BB$ with block representation \eqref{eq:T-block} generated by a projection $P$ onto $\BB_0$ along $\BB_1$ satisfying \eqref{eq:bc2}. If $T$ and $D$ have inverses everywhere defined on $\BB$ and $\BB_1$ respectively, then $T^{-1} \colon \HH\to\HH$ has a block representation generated by $P$ given as follows 
$$
 T^{-1} = \begin{pmatrix} (T/D)^{-1} & * \\ * & * \end{pmatrix}, 
$$
i.e., 
\begin{equation} \label{eq:SC}
 PT^{-1}P = (A-BD^{-1}C)^{-1} = (PTP-PTQ(QTQ)^{-1}QTP)^{-1}.
\end{equation}
\end{lem}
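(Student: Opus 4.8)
The plan is to use the classical block $LU$-type factorization of $T$ attached to the projection $P$, and then simply invert it and read off the $(1,1)$-block; the only non-formal part is the domain and range bookkeeping, which is exactly why the authors flag the infinite-dimensional subtleties. With the notation of \eqref{eq:T-block} and $S:=T/D=A-BD^{-1}C=PTP-PTQ(QTQ)^{-1}QTP$ (the second form being just the definition of the blocks), I would first verify the identity
\[
 T=\begin{pmatrix} 1 & BD^{-1}\\ 0 & 1\end{pmatrix}
    \begin{pmatrix} S & 0\\ 0 & D\end{pmatrix}
    \begin{pmatrix} 1 & 0\\ D^{-1}C & 1\end{pmatrix}=:L\,\Delta\,R ,
\]
which is a one-line block multiplication. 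The care needed here is: $L$ is a bounded bijection of $\BB$ onto $\BB$, with inverse obtained by flipping the sign of the off-diagonal block; $R$ restricts to a bijection of $\DD(T)$ onto itself — here one uses that $C$ maps $\BB_0\cap\DD(T)$ into $\BB_1$, that $D^{-1}$ maps $\BB_1$ into $\BB_1\cap\DD(T)$ because $\RR(D)=\BB_1$ and $D$ is injective, and that $\DD(T)=(\BB_0\cap\DD(T))\oplus(\BB_1\cap\DD(T))$ by \eqref{eq:bc}; and $\Delta$ is defined precisely on $\DD(T)$, mapping it onto $\RR(S)\oplus\BB_1$.

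\textbf{Key steps.} Once the factorization is in place: since $L$ and $R$ are bijections (of $\BB$ and of $\DD(T)$ respectively) and $D$ is a bijection of $\BB_1\cap\DD(T)$ onto $\BB_1$, the composition $T=L\Delta R$ is a bijection of $\DD(T)$ onto $\BB$ if and only if $S$ is a bijection of $\BB_0\cap\DD(T)$ onto $\BB_0$. Thus the hypothesis that $T$ has an inverse everywhere defined on $\BB$ forces $S^{-1}\colon\BB_0\to\BB_0\cap\DD(T)$ to exist, which is what makes $(T/D)^{-1}$ in \eqref{eq:SC} meaningful — so the existence of the Schur-complement inverse is part of the conclusion, not an extra assumption. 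Inverting, $T^{-1}=R^{-1}\Delta^{-1}L^{-1}$ with
\[
 \Delta^{-1}=\begin{pmatrix} S^{-1} & 0\\ 0 & D^{-1}\end{pmatrix},\qquad
 R^{-1}=\begin{pmatrix} 1 & 0\\ -D^{-1}C & 1\end{pmatrix},\qquad
 L^{-1}=\begin{pmatrix} 1 & -BD^{-1}\\ 0 & 1\end{pmatrix},
\]
and multiplying these three operators out gives that the $(1,1)$-block of $T^{-1}$ is $S^{-1}$, i.e. $PT^{-1}P=(T/D)^{-1}$. Finally, because $\RR(T)=\BB$, the operator $T^{-1}$ is everywhere defined, so $P$ trivially satisfies the analogue of \eqref{eq:bc2} for $T^{-1}$ and the asserted block representation of $T^{-1}$ generated by $P$ is automatic.

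\textbf{Alternative route.} As a check, the same content can be obtained by the elimination argument: for $w\in\BB_0$ write $v=T^{-1}w=v_0+v_1$ with $v_0=Pv,\ v_1=Qv$; the block form of $Tv=w$ is $Av_0+Bv_1=w$ and $Cv_0+Dv_1=0$, the second equation yields $v_1=-D^{-1}Cv_0$, and substitution gives $Sv_0=w$. This simultaneously shows that $S$ is onto $\BB_0$ and that $v\mapsto Pv$ is a right inverse of $S$ on $\BB_0$; injectivity of $S$ follows because $Sv_0=0$ together with $v_1:=-D^{-1}Cv_0\in\BB_1\cap\DD(T)$ gives $T(v_0+v_1)=0$, hence $v_0=0$ since $T$ is injective. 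Either way one arrives at $PT^{-1}P=(T/D)^{-1}$.

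\textbf{Main obstacle.} The only genuine difficulty, and the reason for spelling the argument out, is the infinite-dimensional domain/range bookkeeping: one must be sure the triangular factors $L,R,L^{-1},R^{-1}$ really preserve $\DD(T)$ (so that the factorization is a valid identity of possibly unbounded operators) and that $\RR(T)=\BB$ propagates correctly through $L$, so that invertibility of $T$ genuinely transfers to invertibility of the Schur complement $T/D$. Everything past that is formal $2\times 2$ block algebra.
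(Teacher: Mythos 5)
Your proof is correct, but it is organized differently from the paper's. The paper does not build any factorization of $T$: it rewrites the Schur complement in the two forms $T/D = PT-PTQ(QTQ)^{-1}QT = TP-TQ(QTQ)^{-1}QTP$ and then checks directly, using only $TT^{-1}=1_\BB$ and $T^{-1}T=1_{\DD(T)}$, that $(T/D)\,PT^{-1}P$ is the identity on $\BB_0$ and $PT^{-1}P\,(T/D)$ is the identity on $\BB_0\cap\DD(T)$, so the two operators are mutually inverse; no intermediate operators are introduced and no claim about the full block form of $T^{-1}$ is needed. Your main route via the triangular factorization $T=L\Delta R$ buys more: it shows in one stroke that invertibility of $T$ (given $D$ invertible) \emph{forces} $T/D$ to be a bijection of $\BB_0\cap\DD(T)$ onto $\BB_0$, and it produces all four blocks of $T^{-1}$, at the price of the domain bookkeeping for $L$, $\Delta$, $R$ that you rightly spell out; your elimination argument is closer in content to the paper's computation, just phrased as solving the $2\times2$ system rather than as operator identities. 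One small correction: under the lemma's hypotheses $BD^{-1}=PTQ\,D^{-1}$ is everywhere defined on $\BB_1$ but need not be bounded ($T$ is merely linear and $D^{-1}$ is only assumed everywhere defined), so $L$ should be called an algebraic bijection of $\BB$ rather than a bounded one — harmless here, since your argument uses only bijectivity of $L$ and $R$, not continuity.
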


\begin{proof}
Rewriting the Schur complement as 
$$ 
\begin{aligned}
 T/D & = PT-PTQ-PTQ(QTQ)^{-1}QTP = PT-PTQ(QTQ)^{-1}(QTQ+QTP)
 \\ 
 & = PT-PTQ(QTQ)^{-1}QT,
 \\
 T/D & = TP-QTP-PTQ(QTQ)^{-1}QTP = TP-(QTQ+PTQ)(QTQ)^{-1}QTP
 \\ 
 & = TP-TQ(QTQ)^{-1}QTP,
\end{aligned}
$$
gives 
$$
\begin{aligned}
 & (T/D)PT^{-1}P = (T/D)T^{-1}P = PTT^{-1}P 
 = \text{ identity on } \BB_0,
 \\
 & PT^{-1}P (T/D) = PT^{-1}(T/D) = PT^{-1}TP 
 = \text{ identity on } \BB_0\cap\DD(T),
\end{aligned}
$$
since $TT^{-1}$ is the identity on $\BB$ and $T^{-1}T$ is the identity on $\DD(T)$. This proves that $PT^{-1}P \colon \BB_0\to\BB_0$ and $T/D \colon \BB_0\cap\DD(T)\to\BB_0$ are inverses of each other.
\end{proof} 

The particular case of \eqref{eq:T-block} in which $PTQ=0$ corresponds to a triangular block operator
\begin{equation} \label{eq:triang}
 T = \begin{pmatrix} A & 0 \\ C & D \end{pmatrix},
\end{equation}
which will be of especial interest for us. 

\begin{lem} \label{lem:triang}
Let $T \colon \DD(T)\to\BB$ be a linear operator on a Banach space $\BB$ with block representation \eqref{eq:triang} generated by a projection $P$ onto $\BB_0$ along $\BB_1$ satisfying \eqref{eq:bc2}. Then,
\begin{center}
 \parbox{131pt}{\begin{center} 
 				$T$ has an inverse \break
 				everywhere defined on $\BB$ 
				\end{center}}
 $\quad\Longrightarrow\quad$  
 \parbox{239pt}{\begin{center} 
 				$A$ has an inverse everywhere defined on $\BB_0$,\break 
				$D$ has an inverse everywhere defined on $\BB_1$,						\end{center}}
\end{center}
whenever any of the following conditions is satisfied:
\begin{center}
{\it (i)} $C=0$, \qquad
{\it (ii)} $A$ is invertible, \qquad
{\it (iii)} $\RR(D)=\BB_1$, \qquad
{\it (iv)} $\dim\HH_0<\infty$. 
\end{center}
Conversely, if $A$ and $D$ have inverses everywhere defined on $\BB_0$ and $\BB_1$ respectively, then $T$ has an inverse everywhere defined on $\BB$ with block representation generated by $P$ given by
\begin{equation} \label{eq:triang-inv}
 T^{-1} = \begin{pmatrix} A^{-1} & 0 \\ -D^{-1}CA^{-1} & D^{-1} \end{pmatrix}.
\end{equation}
\end{lem}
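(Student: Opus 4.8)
The plan is to handle the two directions separately. The converse direction (from invertibility of $A$ and $D$ to invertibility of $T$) is the easy one: I would simply write down the candidate inverse \eqref{eq:triang-inv} and verify by direct block multiplication that it is a two-sided inverse of $T$. Here one must be slightly careful about domains: the product $T^{-1}T$ should be checked on $\DD(T)=(\BB_0\cap\DD(T))\oplus(\BB_1\cap\DD(T))$, using $A^{-1}A=\idty$ on $\BB_0\cap\DD(T)$, $D^{-1}D=\idty$ on $\BB_1\cap\DD(T)$, and the cross term $-D^{-1}CA^{-1}A+D^{-1}C$ which cancels; the product $TT^{-1}$ should be checked on all of $\BB_0\oplus\BB_1=\BB$ using $AA^{-1}=\idty$ on $\BB_0$ and $DD^{-1}=\idty$ on $\BB_1$. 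One also notes that \eqref{eq:triang-inv} genuinely is a block representation generated by $P$, i.e. that $P\RR(T)=P\BB=\BB_0\subset\BB=\RR(T)$, so Lemma~\ref{lem:SC} applies and $PT^{-1}P=(T/D)^{-1}=A^{-1}$ is consistent (since $B=PTQ=0$ makes $T/D=A$).

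For the forward direction, I would observe first that $D$ is precisely the Schur complement $T/A$ of the block $A$ (because $C\cdot A^{-1}\cdot B$ vanishes when either $B=0$ or, after the symmetric version of Lemma~\ref{lem:SC}, $A^{-1}$ exists and $B=0$), so under hypothesis {\it (ii)} the invertibility of $D$ is immediate from the transpose of Lemma~\ref{lem:SC} (Schur complement with respect to the upper-left block), while $A^{-1}$ exists by assumption. Under hypothesis {\it (i)}, $C=0$ as well, so $T$ is block-diagonal, $T=A\oplus D$, and then $A$ and $D$ are each invertible as restrictions of $T^{-1}$ to the respective invariant subspaces. The substance of the lemma is therefore in hypotheses {\it (iii)} and {\it (iv)}, where only the surjectivity of $T$ and one dimension/range condition are given.

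The core argument for {\it (iii)} and {\it (iv)}: from $T=\begin{pmatrix}A&0\\ C&D\end{pmatrix}$ surjective onto $\BB$, for every $w_0\in\BB_0$ there is $v=Pv+Qv\in\DD(T)$ with $APv=w_0$ and $CPv+DQv=0$; projecting onto $\BB_0$ shows $A$ is surjective onto $\BB_0$. For injectivity of $A$: if $APv=0$ with $Pv\in\BB_0\cap\DD(T)$, set $v_0:=Pv$; then $Tv_0=\begin{pmatrix}0\\ Cv_0\end{pmatrix}\in\BB_1$. Now I need to produce $v_1\in\BB_1\cap\DD(T)$ with $Dv_1=-Cv_0$: under {\it (iii)} this is exactly surjectivity of $D$ onto $\BB_1$; under {\it (iv)}, $\dim\BB_0<\infty$ forces — after establishing surjectivity of $A$ — that $A$ is already injective (a surjective endomorphism of a finite-dimensional space), so there is nothing left to prove for $A$, and for $D$ one argues dually that $\RR(D)=\BB_1$: given $w_1\in\BB_1$, surjectivity of $T$ gives $v$ with $APv=0$ and $CPv+DQv=w_1$, and since $A$ is injective $Pv=0$, hence $DQv=w_1$. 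Returning to the {\it (iii)} thread: with $v_1$ in hand, $T(v_0+v_1)=\begin{pmatrix}0\\ Cv_0+Dv_1\end{pmatrix}=0$, so $v_0+v_1=0$ by injectivity of $T$, forcing $v_0=Pv_0+Qv_0$ to have $Pv_0=0$, i.e. $v_0=0$ — but $v_0=Pv$, so $Pv=0$ and $A$ is injective. Finally, $A^{-1}$ is everywhere defined on $\BB_0$ because $A$ is a bijection $\BB_0\cap\DD(T)\to\BB_0$; and $D^{-1}$ everywhere defined on $\BB_1$ follows symmetrically (or directly, once $A$ is known invertible, by applying the converse direction and Lemma~\ref{lem:SC} to recover $D$ as a Schur complement). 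The main obstacle I expect is bookkeeping the domain constraints — everything lives on $\BB_i\cap\DD(T)$ rather than on $\BB_i$, and the unbounded case means $\RR$ and $\DD$ need careful tracking — together with making sure the two cases {\it (iii)} and {\it (iv)} are each closed off without circular appeal to the conclusion.
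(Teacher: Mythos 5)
Your proposal is correct, but the forward direction takes a genuinely different route from the paper's. The paper argues entirely with compressions: from $PTQ=0$ it reads off that $(PTP)(PT^{-1}P)=PTT^{-1}P$ is the identity on $\BB_0$ (so $\RR(A)=\BB_0$) and that $(QT^{-1}Q)(QTQ)=QT^{-1}TQ$ is the identity on $\BB_1\cap\DD(T)$ (so $D$ is injective); then either of {\it (ii)} or {\it (iii)} forces $PT^{-1}Q=0$, i.e.\ the inverse is again lower triangular, which makes $(PT^{-1}P)(PTP)$ and $(QTQ)(QT^{-1}Q)$ identities and closes both blocks at once, case {\it (iv)} being reduced to {\it (ii)} by finite dimension. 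You instead work at the vector level: surjectivity of $A$ by solving $Tv=w_0$, injectivity of $A$ under {\it (iii)} by correcting a kernel vector of $A$ with a vector of $\BB_1\cap\DD(T)$ supplied by $\RR(D)=\BB_1$ so as to manufacture a kernel vector of $T$, the dual solvability argument for $\RR(D)=\BB_1$ under {\it (iv)}, and injectivity of $D$ directly from $\ker D\subset\ker T$ (your ``symmetric'' step). Each approach buys something: the paper's computation is uniform across the cases and yields the lower triangularity of $T^{-1}$ as a byproduct (exactly what the shift counterexample after the lemma is about), while yours makes transparent where surjectivity of $T$, injectivity of $T$ and each extra hypothesis enter, and keeps the domain issue ($A$ acts on $\BB_0\cap\DD(T)$, not on $\BB_0$) in view. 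Two points to tighten when writing it up: in case {\it (ii)} the paper reads ``$A$ invertible'' as mere injectivity, so your appeal to a mirrored Lemma~\ref{lem:SC} (Schur complement of the upper-left block) either needs the stronger everywhere-defined invertibility of $A$ or should be preceded by the observation that surjectivity of $A$ already follows from surjectivity of $T$ exactly as in your core argument---after which the mirrored lemma, or more simply your dual solvability argument plus $\ker D\subset\ker T$, finishes {\it (ii)} with no Schur complement at all; and in case {\it (iv)} the phrase ``surjective endomorphism'' should note that the domain is a priori only $\BB_0\cap\DD(T)$, surjectivity onto $\BB_0$ forcing it to equal $\BB_0$ by dimension count before rank--nullity gives injectivity. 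With these remarks your argument is complete and fully rigorous.
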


\begin{proof}
The result under condition {\it (i)} is obvious since $T=A\oplus D$ in such a case. 
 
Assume only the existence of $T^{-1}$ everywhere defined on $\BB$. 

Using $PTQ=0$, we find that $(PTP)(PT^{-1}P) = PTT^{-1}P$ is the identity on $\BB_0$, hence $A=PTP$ has range $\BB_0$ but it is not necessarily invertible. Also, $(QT^{-1}Q)(QTQ) = QT^{-1}TQ$ is the identity on $\BB_1\cap\DD(T)$, so $D=QTQ$ has an inverse but not necessarily everywhere defined on $\BB_1$. Hence, {\it (ii)} implies that $A$ has an inverse everywhere defined on $\BB_0$, while {\it (iii)} implies that $D$  has an inverse everywhere defined on $\BB_1$.

Furthermore, $PTQ=0$ also gives
$$
 (PTP)(PT^{-1}Q) = PTT^{-1}Q = 0, \qquad (PT^{-1}Q) (QTQ) = PT^{-1}TQ = 0,
$$
thus any of the conditions {\it (ii)} or {\it (iii)} implies that $PT^{-1}Q=0$. If this is the case, then $(PT^{-1}P)(PTP) = PT^{-1}TP$ is the identity on $\BB_0\cap\DD(T)$, while $(QTQ)(QT^{-1}Q) = QTT^{-1}Q$ is the identity on $\BB_1$, thus $A=PTP$ is invertible and $D=QTQ$ has range $\BB_1$. 

Combining the previous results we conclude that any of the conditions {\it (ii)} or {\it (iii)} imply that $A$ and $D$ have inverses everywhere defined on $\BB_0$ and $\BB_1$ respectively. This also holds for condition {\it (iv)} because $\RR(A)=\BB_0$ is equivalent to $A$ being invertible if $\dim\BB_0<\infty$.

Suppose now that that $A$ and $D$ have inverses everywhere defined on $\BB_0$ and $\BB_1$. Then, the block operator given by \eqref{eq:triang-inv} has domain $\BB$ and it is straightforward to check that it is the inverse of $T$.     
\end{proof}

The right implication of the previous lemma and the lower triangular block representation of $T^{-1}$ may not hold unless an additional condition is added as to the existence of such an inverse with $\DD(T^{-1})=\BB$. As a counter example, take $T$ as the forward shift in $\ell^2(\Z)$ and $\BB_0=\{(x_n)\in\ell^2(\Z):x_n=0,n\ge0\}$. Then, $\BB_0$ generates a lower triangular block representation for $T$, but an upper triangular one for $T^{-1}$, the backward shift,
$$
 T = 
 \text{\tiny $\left( 
 \begin{array}{cccc|cccc}
 	\cdots & \cdots & \cdots & \cdots & \cdots & \cdots & \cdots & \cdots  
	\\
    \cdots & 0 & 0 & 0 & 0 & 0 & 0 & \cdots
	\\
 	\cdots & 1 & 0 & 0 & 0 & 0 & 0 & \cdots
	\\  
 	\cdots & 0 & 1 & 0 & 0 & 0 & 0 & \cdots
	\\ \hline
	\cdots & 0 & 0 & 1 & 0 & 0 & 0 & \cdots
	\\ 
	\cdots & 0 & 0 & 0 & 1 & 0 & 0 & \cdots
	\\ 
	\cdots & 0 & 0 & 0 & 0 & 1 & 0 & \cdots
	\\
	\cdots & \cdots & \cdots & \cdots & \cdots & \cdots & \cdots & \cdots
 \end{array}
 \right)$},
 \qquad\quad
 T^{-1} = 
 \text{\tiny $\left( 
 \begin{array}{cccc|cccc}
 	\cdots & \cdots & \cdots & \cdots & \cdots & \cdots & \cdots & \cdots  
	\\
    \cdots & 0 & 1 & 0 & 0 & 0 & 0 & \cdots
	\\
 	\cdots & 0 & 0 & 1 & 0 & 0 & 0 & \cdots
	\\  
 	\cdots & 0 & 0 & 0 & 1 & 0 & 0 & \cdots
	\\ \hline
	\cdots & 0 & 0 & 0 & 0 & 1 & 0 & \cdots
	\\ 
	\cdots & 0 & 0 & 0 & 0 & 0 & 1 & \cdots
	\\ 
	\cdots & 0 & 0 & 0 & 0 & 0 & 0 & \cdots
	\\
	\cdots & \cdots & \cdots & \cdots & \cdots & \cdots & \cdots & \cdots
 \end{array}
 \right)$}.
$$
Besides, $\RR(A)=\BB_0$ but $A$ is not invertible, while $D$ has an inverse but $\DD(D^{-1})=\RR(D)=\{(x_n)\in\ell^2(\Z):x_n=0, n\le0\}$ is not the whole subspace $\BB_1=\{(x_n)\in\ell^2(\Z):x_n=0, n\le-1\}$.

\section{FR-functions for arbitrary operators}
\label{sec:GF}

In this section we will extend the notion of FR-functions to arbitrary operators on Banach spaces and study some of their general properties. Concerning the technical details of operator theory arising in the discussion below, we refer to \cite{Kato}.

Although we will work in general with operators which are not necessarily bounded, some of the operators involved in the discussions will be bounded and everywhere defined. Thus, for convenience, given a Banach space $\BB$ we introduce the notation 
$$
 \frak{B}(\BB) := \{T\colon\BB\to\BB \text{ bounded linear}\}
 = \{T\colon\BB\to\BB \text{ closed linear}\},
$$
where $T\colon\BB\to\BB$ indicates that $T$ has domain $\DD(T)=\BB$ and the second identity is due to the closed graph theorem.
   
Let $T \colon \DD(T) \to \BB$ be a linear operator on a Banach space $\BB$ with domain $\DD(T)$ and range $\RR(T)$. We will denote
$$
\begin{aligned}
 & \varpi(T)  
 := \{z\in\C : \exists (z-T)^{-1}\colon\BB\to\BB\} 
 = \{z\in\C : \ker(z-T)=\{0\}, \; \RR(z-T)=\BB\},
 \\
 & \varrho(T)  
 := \{z\in\C : \exists (z-T)^{-1}\in\frak{B}(\BB)\}.
\end{aligned}
$$
The set $\varrho(T)$, obviously included in $\varpi(T)$, is known as the resolvent set of $T$ and it is non-empty only for a closed operator $T$. When $T$ is closed, the resolvent operator $(z-T)^{-1}$ is not only bounded and everywhere defined on $\BB$ for $z\in\varrho(T)$, but it is also analytic in $z$ in such a domain. Besides, the closed graph theorem implies that
\begin{equation} \label{eq:closed}
 T \text{ closed} \quad\Rightarrow\quad \varrho(T)=\varpi(T).
\end{equation}

Closely related to the resolvent operator are the notions of Stieltjes function and FR-function. The expressions \eqref{eq:f-U} and \eqref{eq:s-U} of FR-functions and Stieltjes functions for the unitary case suggest a generalization to arbitrary operators which also involves projections. The validity of the results of this section for arbitrary Banach spaces --even in the absence of an inner product-- reflects the fact that they are indeed true for arbitrary bounded --not necessarily orthogonal-- projections, i.e. for arbitrary pairs of complementary closed subspaces. 

Following these ideas, we will introduce the Stieltjes function and FR-function of a {\bf bounded projection}, which we will define for an arbitrary {\bf closed operator}. Since we assume no symmetry for the operator, no spectral measure is available in this case, so we should define such functions resorting to operator identities similar to \eqref{eq:f-U} and \eqref{eq:s-U}.

\begin{defn} 
Let $T\colon\DD(T)\to\HH$ be a closed operator on a Banach space $\BB$ and $P$ a bounded projection of $\BB$ onto a closed subspace $\BB_0$. The {\bf Stieltjes function} of the projection $P$ with respect to the operator $T$ is the function $s$ with values in operators on $\BB_0$ given by
$$
 s(z) := P(1-zT)^{-1}P,
 \qquad z^{-1}\in\varrho(T).
$$
When $\BB$ is a Hilbert space and $P$ is the orthogonal projection onto $\BB_0$ we will also refer to $s$ as the Stieltjes function of the subspace $\BB_0$ with respect to the operator $T$.
\end{defn}        

The existence of $s(z)\colon\BB_0\to\BB_0$ for every bounded projection only requires the existence of $(1-zT)^{-1}\colon\BB\to\BB$, i.e. $z^{-1}\in\varpi(T)$. However, \eqref{eq:closed} implies that $\varpi(T)=\varrho(T)$, thus the domain of definition $z^{-1}\in\varrho(T)$ is the largest one which guarantees the existence of the Stieltjes function for every bounded projection $P$ as an operator everywhere defined on $\BB_0=\RR(P)$. Actually, these arguments and the properties of the resolvent operator show that $s(z)$ is an analytic function with values in $\frak{B}(\BB_0)$ for $z^{-1}\in\varrho(T)$. 

The origin may also lie in the domain of analyticity of $s$ --for instance, if $T\in\frak{B}(\BB)$-- but is in general a special point which requires particular care. For example, a naive evaluation suggests that $s(0)$ is always the identity on $\BB_0$. However, by definition, $1-zT$ has domain $\DD(T)$ for any value of $z$, hence we should identify this operator with the identity on $\DD(T)$ for $z=0$. Therefore, the origin lies outside of the set of values of $z$ for which $(1-zT)^{-1}$ is everywhere defined on $\BB$ unless $\DD(T)=\BB$, i.e. unless $T\in\frak{B}(\BB)$. Hence, $s(0)$ is the identity on $\BB_0\cap\DD(T)$, which belongs to $\frak{B}(\BB_0)$ only when $\BB_0\subset\DD(T)$.   

The Stieltjes function given by the above definition becomes the usual Stieltjes function for the spectral measure $\mu$ of the subspace $\BB_0$ when $T$ has a spectral decomposition, i.e. if $\BB$ is a Hilbert space and $T$ is a normal operator on $\BB$,
\begin{equation} \label{eq:normal}
 T = \int\!t\,dE(t) 
 \quad\Rightarrow\quad \mu=PEP 
 \quad\Rightarrow\quad s(z) = \int\frac{d\mu(t)}{1-zt}.
\end{equation}
Note that $\mu$ is normalized so that $\mu(\R)=1_0$ is the identity on $\BB_0$ because $E(\R)=1$ is the identity on $\BB$. 

The key point of this section is the generalization to arbitrary closed operators of the Schur functions appearing in the unitary case. As in the case of Stieltjes functions, such a generalization calls for a purely operator definition, suggested in this case by the identification between Schur functions and FR-functions related to unitary operators. This leads to the introduction of the FR-function of a bounded projection with respect to a closed operator, which constitutes the generalization of Schur functions beyond the unitary case.

\begin{defn} \label{def:FR}
Let $T\colon\DD(T)\to\BB$ be a closed operator on a Banach space $\BB$ and $P$ a bounded projection of $\BB$ onto a closed subspace $\BB_0$. The {\bf first return function (FR-function)} of the projection $P$ with respect to the operator $T$ is the function $f$ with values in operators on $\BB_0$ given by   
\begin{equation} \label{eq:f}
 f(z) := PT(1-zQT)^{-1}P, 
 \qquad Q=1-P,
 \qquad z^{-1}\in\varpi(QT).
\end{equation}
When $\BB$ is a Hilbert space and $P$ is the orthogonal projection onto $\BB_0$ we will also refer to $f$ as the FR-function of the subspace $\BB_0$ with respect to the operator $T$.
\end{defn}

The condition $z^{-1}\in\varpi(QT)$ means that $(1-zQT)^{-1}$ exists as an operator with domain $\BB$ and range $\DD(T)$, so that $T(1-zQT)^{-1}$ is also everywhere defined on $\BB$. This ensures that, for every projection $P$, its FR-function gives at $z$ an operator $f(z)$ everywhere defined on $\BB_0=\RR(P)$.  

The origin requires once again special care. The operator $QT$ has domain $\DD(T)$, so for $z=0$ we should consider $1-zQT$ as the identity on $\DD(T)$. Thus, $f(0)=PTP$, which fails to be everywhere defined on $\BB_0$ unless $\BB_0\subset\DD(T)$. An FR-function $f$ may also satisfy $f(0)\in\frak{B}(\BB_0)$, as it is the case of $T\in\frak{B}(\HH)$ where $f$ becomes analytic at the origin. 

In contrast to the Stieltjes case, the condition $z^{-1}\in\varpi(QT)$ is not necessarily equivalent to $z^{-1}\in\varrho(QT)$ because the class of closed operators on $\HH$ is preserved by the product by an operator of $\frak{B}(\BB)$ on the right but not on the left, i.e.
$$
 ST \text{ closed} \quad\nLeftarrow\quad
 T \text{ closed}, \quad S\in\frak{B}(\BB) 
 \quad\Rightarrow\quad TS \text{ closed}.
$$ 
This means that $QT$ need not be closed even if $T$ is closed (this is for instance the case if a sequence $v_n\in\DD(T)$ converging to $v\in\HH\setminus\DD(T)$ satisfies $Tv_n\in\BB_0$ because then $(v_n,QTv_n)=(v_n,0)$ converges to $(v,0)$ which does not belong to the graph of $QT$). Hence, $z^{-1}\in\varpi(QT)$ implies the existence of $f(z)\colon\BB_0\to\BB_0$ for every projection $P$, but a priori does not guarantee that $f(z)\in\frak{B}(\BB_0)$. 

The following proposition shows that the stronger condition $z^{-1}\in\varrho(QT)$ selects an open subset of $z^{-1}\in\varpi(QT)$ in which $f(z)\in\frak{B}(\BB_0)$. 

\begin{prop} \label{prop:rhoQT}
Let $T\colon\DD(T)\to\BB$ be a closed operator on a Banach space $\BB$, $P$ a bounded projection of $\BB$ onto a closed subspace $\BB_0$ and $Q=1-P$. Then, the FR-function $f(z)$ of $P$ with respect to $T$ is an analytic function with values in $\frak{B}(\BB_0)$ for $z^{-1}\in\varrho(QT)$. 
\end{prop}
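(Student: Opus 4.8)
The plan is to reduce the statement to two facts about the resolvent $R(z):=(1-zQT)^{-1}$: that $R(z)\in\frak{B}(\BB)$ and is analytic on the open set $\Omega:=\{z\in\C:z^{-1}\in\varrho(QT)\}$, and that $TR(z)$ is again a bounded operator everywhere defined on $\BB$, analytic on $\Omega$. Granting this, the statement follows at once: for $w\in\BB_0$ one has $Pw=w$, so $f(z)w=PTR(z)Pw=P\bigl(TR(z)w\bigr)$, i.e.\ $f$ is obtained from $z\mapsto TR(z)$ by composing with the bounded inclusion $\BB_0\hookrightarrow\BB$ and with the bounded projection $P$ (whose range is $\BB_0$), and these operations preserve both analyticity and membership in $\frak{B}(\BB_0)$.

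For the first fact, note that $\varrho(QT)$ is non-empty only if $QT$ is closed, so the hypothesis $z^{-1}\in\varrho(QT)$ places us in that situation; the properties of the resolvent recalled before the definition of the Stieltjes function then give $R(z)=z^{-1}(z^{-1}-QT)^{-1}\in\frak{B}(\BB)$ (observe $0\notin\Omega$), that $\Omega$ is open, and that $z\mapsto R(z)$ is analytic on $\Omega$. I will also use the first resolvent identity, which here reads
\[
 R(z)-R(z_0)=(z-z_0)\,R(z)\,QT\,R(z_0),\qquad z,z_0\in\Omega,
\]
obtained from $R(z)^{-1}-R(z_0)^{-1}=(z-z_0)QT$ on $\DD(T)$ by composing with $R(z)$ on the left and $R(z_0)$ on the right — which is legitimate because $R(z_0)$ maps $\BB$ onto $\RR(R(z_0))=\DD(1-z_0QT)=\DD(QT)=\DD(T)$.

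For the second fact, since $R(z)$ maps $\BB$ onto $\DD(T)$ the operator $TR(z)$ has domain $\BB$; and as $T$ is closed and $R(z)\in\frak{B}(\BB)$, the product $TR(z)$ is closed, so $TR(z)\in\frak{B}(\BB)$ by the closed graph theorem. Composing the resolvent identity above with $T$ on the left (allowed since $R(z),R(z_0)$ take values in $\DD(T)$) gives, as an identity in $\frak{B}(\BB)$,
\[
 TR(z)-TR(z_0)=(z-z_0)\,\bigl(TR(z)\bigr)\bigl(QTR(z_0)\bigr),\qquad QTR(z_0)=Q\bigl(TR(z_0)\bigr)\in\frak{B}(\BB),
\]
equivalently $TR(z)\bigl(1-(z-z_0)QTR(z_0)\bigr)=TR(z_0)$. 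For $|z-z_0|$ small enough the bracket is invertible in $\frak{B}(\BB)$ by a Neumann series, whence $TR(z)=TR(z_0)\sum_{n\ge0}(z-z_0)^n\bigl(QTR(z_0)\bigr)^n$ converges in operator norm near $z_0$; thus $z\mapsto TR(z)$ is analytic on $\Omega$, completing the argument.

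The only delicate point is the bookkeeping of domains forced by the unboundedness of $T$: one must check that $TR(z)$ is everywhere defined and closed, so that the closed graph theorem upgrades it to $\frak{B}(\BB)$, and that the first resolvent identity survives left multiplication by $T$. Both hinge on the facts — made explicit in the excerpt — that $R(z)$ always takes values in $\DD(T)$ and that closedness is preserved by right (but not left) multiplication by bounded operators; this is precisely why the stronger hypothesis $z^{-1}\in\varrho(QT)$, rather than just $z^{-1}\in\varpi(QT)$, is what makes the argument run.
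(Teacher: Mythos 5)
Your proof is correct and takes essentially the same route as the paper's: the decisive step in both is that $T(1-zQT)^{-1}$ is closed (being the product of a closed operator by an operator of $\frak{B}(\BB)$ on the right) and everywhere defined, hence belongs to $\frak{B}(\BB)$ by the closed graph theorem, after which composing with $P$ gives $f(z)\in\frak{B}(\BB_0)$. The only difference is that you make the analyticity of $z\mapsto T(1-zQT)^{-1}$ explicit via the first resolvent identity and a Neumann series — the paper simply invokes the properties of the resolvent — and your bookkeeping of domains (in particular that $R(z_0)$ maps $\BB$ onto $\DD(T)$, so left composition with $T$ is legitimate) is accurate.
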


\begin{proof}
The condition $z^{-1}\in\varrho(QT)$ means that $(1-zQT)^{-1}\in\frak{B}(\BB)$ so, as a product of a closed operator by an operator of $\frak{B}(\BB)$ on the right, $T(1-zQT)^{-1}\colon\BB\to\BB$ is also closed, hence it is bounded by the closed graph theorem. We conclude that $T(1-zQT)^{-1}\in\frak{B}(\BB)$ and $f(z)\in\frak{B}(\BB_0)$ for $z^{-1}\in\varrho(QT)$, where both operator valued functions are also analytic in $z$ due to the properties of the resolvent operator. 
\end{proof}

The domain of definition of $f$, determined by the properties of $QT\colon\DD(T)\to\BB$, can be rewritten sometimes in terms of the properties of the block $QTQ \colon \BB_1\cap\DD(T)\to\BB_1$. The proposition below provides one of such situations which will be of interest later on. 

\begin{prop} \label{prop:QTQ}
Let $T\colon\DD(T)\to\BB$ be a closed operator on a Banach space $\BB$, $P$ a bounded projection of $\BB$ onto a closed subspace $\BB_0$ and $Q=1-P$. Then,
$$
 \BB_0\subset\DD(T) 
 \quad\Rightarrow\quad 
 \varpi(QT)\setminus\{0\} = \varpi(QTQ)\setminus\{0\},
 \quad
 \varrho(QT)\setminus\{0\} = \varrho(QTQ)\setminus\{0\}.
$$
\end{prop}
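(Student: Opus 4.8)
The plan is to reduce the statement to Lemma~\ref{lem:triang} by exhibiting $1-zQT$ as a lower triangular block operator generated by $P$. Since $\BB_0\subset\DD(T)$ forces $P\DD(T)=\BB_0\subset\DD(T)$, condition \eqref{eq:bc2} holds, so $T$ and $1-zQT$ (which has the same domain $\DD(T)$) both admit block representations generated by $P$. Using that $QTP$ and $QTQ$ have ranges inside $\BB_1=\RR(Q)$, a direct computation gives, for every $z\in\C$,
$$
 1-zQT = \begin{pmatrix} 1_0 & 0 \\ -zQTP & 1-zQTQ \end{pmatrix},
$$
with $1_0$ the identity on $\BB_0$ and the $(2,2)$-block equal to $1-zQTQ$ on the domain $\BB_1\cap\DD(T)=\DD(QTQ)$. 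Fixing $z\neq0$, the identities $z^{-1}-QT=z^{-1}(1-zQT)$ and $z^{-1}-QTQ=z^{-1}(1-zQTQ)$ translate ``$z^{-1}\in\varpi(QT)$'' and ``$z^{-1}\in\varrho(QT)$'' into ``$(1-zQT)^{-1}$ exists everywhere on $\BB$'' and ``$(1-zQT)^{-1}\in\frak{B}(\BB)$'', and likewise for $QTQ$ on $\BB_1$.

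For the $\varpi$ equality I would apply Lemma~\ref{lem:triang} to the triangular operator $1-zQT$, whose $(1,1)$-block $1_0$ is invertible. The forward implication (via condition {\it (ii)} of that lemma) shows that if $1-zQT$ has an everywhere defined inverse on $\BB$ then $1-zQTQ$ has an everywhere defined inverse on $\BB_1$; the converse implication (with $A=1_0$ and $D=1-zQTQ$, both having everywhere defined inverses) gives the reverse. Hence $\varpi(QT)\setminus\{0\}=\varpi(QTQ)\setminus\{0\}$.

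For the $\varrho$ equality one extra ingredient is needed: $QTP$ is a bounded operator from $\BB_0$ to $\BB_1$. Indeed $\BB_0\subset\DD(T)$ makes $TP$ everywhere defined on $\BB$, and $TP$ is closed as the product of the closed operator $T$ by the bounded operator $P$ on the right, hence bounded by the closed graph theorem; thus $QTP=Q(TP)$ is bounded. Now if $z^{-1}\in\varrho(QT)$, then by \eqref{eq:triang-inv} the $(2,2)$-block of $(1-zQT)^{-1}$ equals $(1-zQTQ)^{-1}$, which is bounded as a compression of a bounded operator, so $z^{-1}\in\varrho(QTQ)$. Conversely, if $z^{-1}\in\varrho(QTQ)$, Lemma~\ref{lem:triang} gives $(1-zQT)^{-1}$ everywhere defined on $\BB$ with the triangular form \eqref{eq:triang-inv}, whose blocks $1_0$, $z(1-zQTQ)^{-1}QTP$ and $(1-zQTQ)^{-1}$ are all bounded; hence $(1-zQT)^{-1}\in\frak{B}(\BB)$ and $z^{-1}\in\varrho(QT)$. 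Combining, $\varrho(QT)\setminus\{0\}=\varrho(QTQ)\setminus\{0\}$.

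\textbf{Main obstacle.} I expect the delicate point to be exactly the one stressed just before the proposition: $QT$ need not be closed, so $\varpi(QT)=\varrho(QT)$ cannot be taken for granted, and the argument must carefully separate mere existence of inverses from boundedness at every step --- which is precisely what the two halves of Lemma~\ref{lem:triang} are designed to do. The boundedness of the off-diagonal block $QTP$, itself a consequence of $\BB_0\subset\DD(T)$ via the closed graph theorem, is what carries the $\varrho$ half of the equality through.
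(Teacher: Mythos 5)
Your proof is correct and follows essentially the same route as the paper: the lower triangular block representation of $1-zQT$ generated by $P$, Lemma~\ref{lem:triang} (via its condition {\it (ii)}, since the $(1,1)$-block is the identity on $\BB_0$) for the equality of the $\varpi$ sets, and the boundedness of $TP$ from the closed graph theorem to transfer boundedness of the inverses in both directions for the $\varrho$ sets. The only differences are notational ($1_0$ and $1-zQTQ$ in place of the paper's $P$ and $Q-zQTQ$), and you are in fact slightly more explicit than the paper about which hypothesis of Lemma~\ref{lem:triang} is being invoked.
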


\begin{proof}
The proposition can be restated by saying that the condition $\BB_0\subset\DD(T)$ ensures that $1-zQT$ and $Q-zQTQ$ have simultaneously (bounded) inverses everywhere defined on $\BB$ and $\BB_1$ respectively. According to the comments of Section~\ref{ssec:SC}, a block representation of $T$ is available whenever $\BB_0\subset\DD(T)$, which also leads to a block representation of $1-zQT$ generated by $P$, 
\begin{equation} \label{eq:(1-zQT)-block}
 1-zQT = \begin{pmatrix} P & 0 \\ -zQTP & Q-zQTQ \end{pmatrix}.
\end{equation}
From Lemma~\ref{lem:triang}, we know that $1-zQT$ has an inverse everywhere defined iff the block $Q-zQTQ$ does likewise, and the inverse of $1-zQT$ has the block representation
$$
 (1-zQT)^{-1} = 
 \begin{pmatrix}
 	P & 0
 	\\
 	z(Q-zQTQ)^{-1}QTP & (Q-zQTQ)^{-1}
 \end{pmatrix}.
$$
Therefore, $(1-zQTQ)^{-1}$ bounded implies that $(Q-zQTQ)^{-1}=Q(1-zQT)^{-1}Q$ is also bounded. On the other hand, the operator $TP$ is closed --as a product of a closed operator by an operator of $\frak{B}(\BB)$ on the right-- and has domain $\BB$ --because $\BB_0\subset\DD(T)$-- so it is bounded by the closed graph theorem. Hence, $(Q-zQTQ)^{-1}$ bounded implies that $(Q-zQTQ)^{-1}QTP$, and thus $(1-zQT)^{-1}$, are also bounded.
\end{proof}

As a consequence of this result, the conditions $z^{-1}\in\varpi(QT)$ in \eqref{eq:f} and $z^{-1}\in\varrho(QT)$ in Proposition~\ref{prop:rhoQT} can be substituted respectively by $z^{-1}\in\varpi(QTQ)$ and $z^{-1}\in\varrho(QTQ)$ if $\BB_0\subset\DD(T)$. This has some advantages when dealing with a self-adjoint operator $T$ since such a symmetry is usually inherited by $QTQ$, as it is the case for instance when $T\in\frak{B}(\BB)$. 

As in the unitary case, Stieltjes functions and FR-functions are intimately related also for general operators. This relation is established in the following theorem, which constitutes the main result of this section. The theorem also uses the close link between Stieltjes functions and FR-functions to provide information about the domain of the latter ones. More precisely, the theorem below shows on the one hand that the renewal equation \eqref{eq:st-sc} holds beyond the unitary case, and on the other hand that the common part of the domain of definition for Stieltjes and FR-functions is characterized in terms of Stieltjes functions theirselves.

\begin{thm}[\bf renewal equation for FR-functions] \label{thm:dom2}
Let $T\colon\DD(T)\to\BB$ be a closed operator on a Banach space $\BB$, $P$ a bounded projection of $\BB$ onto a closed subspace $\BB_0$ and $Q=1-P$. Denote by $s$ and $f$, respectively, the Stieltjes function and FR-function of $P$ with respect to $T$. Then, if $z^{-1}\in\varrho(T)$,
\begin{align}
 z^{-1}\in\varpi(QT) \;\Leftrightarrow\; \exists s(z)^{-1}\colon\HH_0\to\HH_0,
 \label{eq:piQT}
 \\
 z^{-1}\in\varrho(QT) \;\Leftrightarrow\; \exists s(z)^{-1}\in\frak{B}(\HH_0).
 \label{eq:rhoQT}
\end{align}
Besides, $f$ and $s$ are related by
\begin{equation} \label{eq:sf}
 s(z)^{-1} = 1_0-zf(z), 
 \qquad 1_0 = \text{identity on } \BB_0,
 \qquad z^{-1}\in\varrho(T)\cap\varpi(QT),
\end{equation}
which we will call the {\bf generalized renewal equation} for arbitrary closed operators. 
\end{thm}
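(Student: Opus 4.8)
The plan is to reduce the whole statement to two elementary operator identities on $\BB$ and one explicit formula for $(1-zQT)^{-1}$. Note $z\neq 0$ throughout, since this is implicit in $z^{-1}\in\varrho(T)$. Write $R:=(1-zT)^{-1}\in\frak{B}(\BB)$; then $R$ maps $\BB$ onto $\DD(T)$, $(1-zT)R=1$ on $\BB$ and $R(1-zT)=1$ on $\DD(T)$, $s(z)=PRP\in\frak{B}(\BB_0)$, and from $zTR=R-1$ one gets that $PTR\in\frak{B}(\BB)$ has range in $\BB_0$ and that $g(z):=PTRP=z^{-1}(s(z)-1_0)\in\frak{B}(\BB_0)$, so $1_0+zg(z)=s(z)$; one also has $(1-zQT)=(1-zT)+zPT$ on $\DD(T)$. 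Now assume $z^{-1}\in\varrho(T)\cap\varpi(QT)$. From $(1-zQT)^{-1}=1+zQT(1-zQT)^{-1}$ and $PQ=0$ one sees that $P(1-zQT)^{-1}=P$. Next I would verify the identity $R=(1-zQT)^{-1}+zR\,PT(1-zQT)^{-1}$ on $\BB$ by left-multiplying by $1-zT$ and using $(1-zQT)=(1-zT)+zPT$. Sandwiching this identity between factors of $P$, and inserting $P=P^2$ in the middle term, yields $s(z)=1_0+z\,s(z)f(z)$, that is $s(z)(1_0-zf(z))=1_0$, with $1_0-zf(z)$ everywhere defined on $\BB_0$. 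To upgrade this to a two-sided inverse I would note that $z^{-1}\in\varpi(QT)$ makes $s(z)$ injective: if $s(z)u=0$ with $u\in\BB_0$ then $h:=Ru\in\ker P\cap\DD(T)$ satisfies $(1-zT)h=u$, hence $zPTh=Ph-Pu=-u$, hence $(1-zQT)h=(1-zT)h+zPTh=0$, forcing $h=0$ and $u=0$. A linear map possessing a right inverse and being injective is bijective, so $s(z)^{-1}=1_0-zf(z)$; this is \eqref{eq:sf} and, at the same time, the forward implication in \eqref{eq:piQT}, and — using Proposition~\ref{prop:rhoQT} to get $f(z)\in\frak{B}(\BB_0)$ when $z^{-1}\in\varrho(QT)$ — the forward implication in \eqref{eq:rhoQT}.

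For the converse implications, assume only that $s(z)^{-1}\colon\BB_0\to\BB_0$ exists; it is then automatically bounded, as $s(z)\in\frak{B}(\BB_0)$ and $\BB_0$ is a Banach space. I would introduce the candidate resolvent $W:=R-zR\,s(z)^{-1}PTR\in\frak{B}(\BB)$ and check directly that $(1-zQT)W=1$ on $\BB$ and $W(1-zQT)=1$ on $\DD(T)$, using $(1-zQT)=(1-zT)+zPT$, the relations $(1-zT)R=1$ and $R(1-zT)=1$, the identity $PTRP=g(z)$ and $1_0+zg(z)=s(z)$. This shows $1-zQT\colon\DD(T)\to\BB$ is a bijection with bounded inverse $W$, i.e.\ $z^{-1}\in\varrho(QT)\subset\varpi(QT)$, which gives the converse implications in both \eqref{eq:piQT} and \eqref{eq:rhoQT} — and incidentally shows that, within $\{z:z^{-1}\in\varrho(T)\}$, the two right-hand conditions are equivalent and $\varpi(QT)$ coincides with $\varrho(QT)$.

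The two identity verifications in the forward part are mechanical. The main obstacle is the converse: because we deliberately do \emph{not} assume $\BB_0\subset\DD(T)$, neither $1-zT$ nor $1-zQT$ need admit a block decomposition with respect to $P$, so Lemma~\ref{lem:triang} cannot be applied directly and $(1-zQT)^{-1}$ is not simply a triangular inverse. The formula for $W$ is the ``inverse Schur complement'' one would write in the block case, but validating it requires a bare-hands computation that respects the domain $\DD(T)$ at each step; and since $QT$ need not be closed even though $T$ is, the passage from mere existence of $(1-zQT)^{-1}$ to its boundedness (from $\varpi$ to $\varrho$) has to be obtained from the explicit boundedness of $W$ rather than from the closed graph theorem. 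This domain- and closedness-bookkeeping is where essentially all of the care is concentrated.
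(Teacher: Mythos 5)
Your proof is correct and follows essentially the same route as the paper's: your identity $R=(1-zQT)^{-1}+zR\,PT(1-zQT)^{-1}$ is precisely the paper's \eqref{eq:res-eq}, and your candidate inverse $W$ is exactly the operator $(1-zT)^{-1}(1-zs(z)^{-1}PT(1-zT)^{-1})$ that the paper obtains by solving $(1-zQT)u=v$. The only (harmless) deviations are that you establish two-sidedness of $1_0-zf(z)$ via injectivity of $s(z)$ plus the right-inverse relation, rather than also deriving $s(z)-1_0=zf(z)s(z)$ from the full chain of resolvent identities as the paper does, and that you invoke the bounded inverse theorem to collapse the $\varpi$/$\varrho$ distinction on the right-hand sides of \eqref{eq:piQT}--\eqref{eq:rhoQT}, whereas the paper instead reads off the boundedness of $(1-zQT)^{-1}$ from the explicit formula.
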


\begin{proof}
In what follows, $z\in\C$ is chosen so that $z^{-1}\in\varrho(T)$. To prove \eqref{eq:piQT} we must show that the existence of $(1-zQT)^{-1}$ everywhere defined on $\BB$ is equivalent to the existence of $s(z)^{-1}$ everywhere defined on $\BB_0$. Then, \eqref{eq:rhoQT} means that $(1-zQT)^{-1}$ and $s(z)^{-1}$ are simultaneously bounded.

\noindent \fbox{$\Leftarrow$} \,
Assume the existence of $s(z)^{-1}\colon\BB_0\to\BB_0$. To prove that $z^{-1}\in\varpi(QT)$ we need to show that the equation
\begin{equation} \label{eq:uv1}
 (1-zQT)u = v, \qquad v\in\BB,
\end{equation}
has a unique solution $u\in\DD(T)$ for each $v\in\BB$. Bearing in mind that $z^{-1}\in\varrho(T)$, the above equation may be rewritten as
\begin{equation} \label{eq:uv2}
 (1-zT)u + zPTu = v 
 \;\Leftrightarrow\;
 u + z(1-zT)^{-1}PTu = (1-zT)^{-1}v.
\end{equation}
Applying the operator $PT$ to both sides of the last equation we get
$$
 (P+zPT(1-zT)^{-1}P)PTu = PT(1-zT)^{-1}v,
$$
which, using that 
\begin{equation} \label{eq:1-zT}
 1+zT(1-zT)^{-1}=(1-zT)^{-1}, 
\end{equation}
reads as
$$
 s(z)PTu = PT(1-zT)^{-1}v.
$$
Since $s(z)$ has an inverse everywhere defined on $\BB_0$, the above equation implies that
\begin{equation} \label{eq:PTu}
 PTu = s(z)^{-1}PT(1-zT)^{-1}v.
\end{equation}
Inserting this into \eqref{eq:uv2} yields
$$
 u = (1-zT)^{-1} (1-zs(z)^{-1}PT(1-zT)^{-1}) v,
$$
a vector in $\DD(T)$ which is the only possible solution of \eqref{eq:uv1}. Actually, using \eqref{eq:1-zT} again, it is straightforward to check that this vector satisfies \eqref{eq:PTu} and \eqref{eq:uv1}, so it is really the solution that we were looking for. This proves the left implication of \eqref{eq:piQT}.

The above arguments give an expression for $(1-zQT)^{-1}$ whenever $s(z)^{-1}$ is everywhere defined on $\BB_0$, namely   
$$
\begin{aligned}
 (1-zQT)^{-1} & = (1-zT)^{-1} (1-zs(z)^{-1}PT(1-zT)^{-1})
 \\
 & = (1-zT)^{-1}(1+s(z)^{-1}P-s(z)^{-1}P(1-zT)^{-1}),
\end{aligned}
$$
where in the second equality we have used once again \eqref{eq:1-zT}. Bearing in mind that $z^{-1}\in\varrho(T)$, this expression shows that $(1-zQT)^{-1}$ is bounded whenever $s(z)^{-1}$ is bounded. This gives the left implication of \eqref{eq:rhoQT}.

\noindent \fbox{$\Rightarrow$} \,
The right implication of \eqref{eq:rhoQT} follows from Proposition~\ref{prop:rhoQT}, assuming the right implication of \eqref{eq:piQT} and the renewal equation \eqref{eq:sf}, so we only need to prove the two latter ones. 

Suppose now that $z^{-1}\in\varpi(QT)$, so that $f(z)$ and $s(z)$ exist because $z^{-1}\in\varrho(T)$ by hypothesis, and let us prove that $s(z)$ has an inverse everywhere defined on $\BB_0$. Indeed, we will show that 
\begin{equation} \label{eq:s-f}
 s(z)^{-1}=1_0-zf(z), 
\end{equation}
which will finally finish the proof of the theorem. We start with the identities
\begin{equation} \label{eq:res-eq}
\begin{aligned}
 (1-zT)^{-1}-(1-zQT)^{-1} 
 & = z(1-zT)^{-1}PT(1-zQT)^{-1} 
 = z(1-zQT)^{-1}PT(1-zT)^{-1} 
 \\
 & = (1-zQT)^{-1}P(1-zT)^{-1} - (1-zQT)^{-1}P,
\end{aligned}
\end{equation}
where in the last step we have used \eqref{eq:1-zT}. Multiplying \eqref{eq:res-eq} by the projection $P$ on the right, we obtain from the equality between the first and the last term that
\begin{equation} \label{eq:(1-zT)^{-1}P}
 (1-zT)^{-1}P = (1-zQT)^{-1}s(z).
\end{equation}
On the other hand, multiplying \eqref{eq:res-eq} by $P$ both on the left and the right, the first two equalities yield
\begin{equation} \label{eq:s-P}
 s(z)-P = zs(z)f(z) = zP(1-zQT)^{-1}PT(1-zT)^{-1}P.
\end{equation}
Here we have used that $P(1-zQT)^{-1}=P$, which follows from the identity obtained by substituting $T$ by $QT$ in \eqref{eq:1-zT}. Finally, inserting \eqref{eq:(1-zT)^{-1}P} into the last term of \eqref{eq:s-P} gives
$$
 s(z)-P = zs(z)f(z) = zf(z)s(z),
$$
which is equivalent to \eqref{eq:s-f} since $s(z)$ and $f(z)$ must be understood as operators on $\BB_0$.
\end{proof}

The generalized renewal equation can be also expressed as
\begin{equation} \label{eq:f-s}
 f(z) = z^{-1}(1_0-s(z)^{-1}), \qquad z^{-1}\in\varrho(T)\cap\varpi(QT),
\end{equation}
which gives the FR-function in terms of the Stieltjes function. Inserting the definition of the Stieltjes function into \eqref{eq:f-s} yields an alternative explicit expression of the FR-function in terms of the operator $T$,
$$
 f(z) = z^{-1} (s(z)-1_0) s(z)^{-1} = (PT(1-zT)^{-1}P)(P(1-zT)^{-1}P)^{-1},
 \kern15pt z^{-1}\in\varrho(T)\cap\varpi(QT).
$$ 

As a direct consequence of Proposition~\ref{prop:rhoQT} and Theorem~\ref{thm:dom2}, we get the following result concerning the domain of analyticity of FR-functions. 

\begin{cor} \label{cor:dom3}
Let $T\colon\DD(T)\to\BB$ be a closed operator on a Banach space $\BB$ and $P$ a bounded projection of $\BB$ onto a closed subspace $\BB_0$. Denote by $s$ and $f$, respectively, the Stieltjes function and FR-function of $P$ with respect to $T$. Then, $f(z)$ is an analytic function with values in $\frak{B}(\BB_0)$ for $z$ in  
$$
 \{z\in\C : z^{-1}\in\varrho(T), \; \exists s(z)^{-1}\in\frak{B}(\BB_0)\}.
$$ 
\end{cor}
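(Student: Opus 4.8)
The plan is to deduce the statement directly from Theorem~\ref{thm:dom2} and Proposition~\ref{prop:rhoQT}, since all the substantive work has already been carried out there. Write $S := \{z\in\C : z^{-1}\in\varrho(T),\ \exists\, s(z)^{-1}\in\frak{B}(\BB_0)\}$ for the set appearing in the statement and fix an arbitrary $z\in S$. The first step is to note that, because $z^{-1}\in\varrho(T)$, the hypotheses of Theorem~\ref{thm:dom2} are met at this point, so the equivalence \eqref{eq:rhoQT} is available.

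Reading \eqref{eq:rhoQT} from right to left, and using that $s(z)^{-1}\in\frak{B}(\BB_0)$ by the very definition of $S$, I obtain $z^{-1}\in\varrho(QT)$. As $z\in S$ was arbitrary, this proves the inclusion $S\subseteq\{z\in\C : z^{-1}\in\varrho(QT)\}$. Now Proposition~\ref{prop:rhoQT} asserts precisely that on this larger set the FR-function $f$ takes values in $\frak{B}(\BB_0)$ and is analytic there; restricting to the subset $S$ preserves both properties, which is exactly the claim.

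The only point worth a remark is the legitimacy of speaking of analyticity, i.e. that $S$ is contained in an open set; but this is immediate, since $\{z : z^{-1}\in\varrho(QT)\}$ is already open (the set of invertible elements of a Banach algebra is open and $z\mapsto(1-zQT)^{-1}$ is continuous where defined, with $0$ excluded as $0^{-1}\notin\C$), so the conclusion follows a fortiori. One can also check directly that $S$ itself is open: $\varrho(T)$ is open and $z\mapsto z^{-1}$ is continuous away from the origin, so $\{z : z^{-1}\in\varrho(T)\}$ is open and avoids $0$; on it $s(z)=P(1-zT)^{-1}P$ is an analytic $\frak{B}(\BB_0)$-valued function, and invertibility in $\frak{B}(\BB_0)$ is an open condition. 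There is no genuine obstacle here: the real content — the passage between $z^{-1}\in\varrho(QT)$ and invertibility of $s(z)$, together with the renewal equation \eqref{eq:sf} — is exactly Theorem~\ref{thm:dom2}, and this corollary is merely the bookkeeping that repackages it as a statement about the domain of analyticity of $f$.
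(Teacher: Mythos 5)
Your argument is correct and is precisely the one the paper intends: the corollary is stated there as a direct consequence of Theorem~\ref{thm:dom2} (reading the equivalence \eqref{eq:rhoQT} from right to left to get $z^{-1}\in\varrho(QT)$) and Proposition~\ref{prop:rhoQT} (which gives analyticity of $f$ with values in $\frak{B}(\BB_0)$ on that set). Your extra remark on openness is a harmless and reasonable piece of bookkeeping; nothing further is needed.
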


If the range of a projection is included in the domain of an operator, the corresponding FR-function admits alternative expressions, each one of them with its own interest. 

\begin{prop} \label{prop:falt}
Let $T\colon\DD(T)\to\BB$ be a closed operator on a Banach space $\BB$, $P$ a bounded projection of $\BB$ onto a closed subspace $\BB_0$ and $Q=1-P$. If $\BB_0\subset\DD(T)$, the FR-function $f$ of $P$ with respect to $T$ can be expressed as
\begin{itemize}
\item[{\it (i)}] 
$f(z) = P(1-zTQ)^{-1}TP$, \; $z^{-1}\in\varpi(QTQ)$,
\medskip
\item[{\it (ii)}] 
$f(z) = PTP + zPTQ(Q-zQTQ)^{-1}QTP$, \; $z^{-1}\in\varpi(QTQ)$,
\medskip
\item[{\it (iii)}] 
$f(z) = PTP + zPTQ(1-zQTQ)^{-1}QTP$, \; $z^{-1}\in\varpi(QTQ)$.
\end{itemize}
\end{prop}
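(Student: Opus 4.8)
The plan is to derive all three identities from the defining formula $f(z) = PT(1-zQT)^{-1}P$ together with the block structure available because $\BB_0\subset\DD(T)$, using elementary resolvent manipulations. The first step is to justify that everything makes sense: by Proposition~\ref{prop:QTQ}, when $\BB_0\subset\DD(T)$ one has $\varpi(QT)\setminus\{0\}=\varpi(QTQ)\setminus\{0\}$, so the stated domain $z^{-1}\in\varpi(QTQ)$ is (up to the origin, where $\BB_0\subset\DD(T)$ makes $PTP$ bounded and all three right-hand sides reduce to $PTP$) the same as $z^{-1}\in\varpi(QT)$, the domain of $f$ in Definition~\ref{def:FR}. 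Also note $TP\in\frak{B}(\BB)$ by the closed graph theorem, exactly as in the proof of Proposition~\ref{prop:QTQ}, so the operator products below are legitimate.

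For \emph{(i)}, I would use the intertwining identity $T(1-zQT)^{-1} = (1-zTQ)^{-1}T$ on $\DD(T)$, which follows from $(1-zTQ)T = T(1-zQT)$ applied after inverting both sides (this is the standard ``$g(TS)T = Tg(ST)$'' trick for $g(w)=(1-zw)^{-1}$, valid here since $z^{-1}\in\varpi(QT)=\varpi(TQ)$ on the relevant range). Composing on the left with $P$ and on the right with $P$ gives $f(z) = P(1-zTQ)^{-1}TP$. For \emph{(ii)}, I would expand $PT(1-zQT)^{-1}P$ by writing $1 = P + Q$ and peeling off the leading term: using the block representation \eqref{eq:(1-zQT)-block} of $1-zQT$ and the block form of its inverse computed in the proof of Proposition~\ref{prop:QTQ},
$$
 (1-zQT)^{-1} = \begin{pmatrix} P & 0 \\ z(Q-zQTQ)^{-1}QTP & (Q-zQTQ)^{-1} \end{pmatrix},
$$
so that $(1-zQT)^{-1}P$ has $P$ in the $\BB_0$ component and $z(Q-zQTQ)^{-1}QTP$ in the $\BB_1$ component; applying $PT = PTP + PTQ$ and collecting terms yields exactly $f(z) = PTP + zPTQ(Q-zQTQ)^{-1}QTP$. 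Finally \emph{(iii)} is just a cosmetic rewriting of \emph{(ii)}: on the subspace $\BB_1$ the operator $Q$ acts as the identity, so $(Q-zQTQ)^{-1}$ and $(1-zQTQ)^{-1}$ denote the same operator in $\frak{B}(\BB_1)$, and the substitution is immediate.

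I do not expect any serious obstacle; the only points requiring a little care are (a) keeping track of which identity operator is meant ($1$ on $\BB$ versus $Q$ on $\BB_1$ versus $1_0$ on $\BB_0$), particularly at $z=0$, and (b) justifying the intertwining used in \emph{(i)} at the level of domains rather than just formally — here one checks that $(1-zTQ)^{-1}$ exists on $\BB$ precisely when $(1-zQT)^{-1}$ does (again Proposition~\ref{prop:QTQ}, since $\varpi(QTQ)=\varpi(TQ\,\text{restricted})$ by the same block argument applied to the other triangular decomposition of $1-zTQ$), and that $T$ maps $\RR((1-zQT)^{-1}) = \DD(T)$ into the domain where $(1-zTQ)^{-1}$ was inverted. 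All of this is routine given the Schur-complement machinery of Section~\ref{ssec:SC} and Proposition~\ref{prop:QTQ}.
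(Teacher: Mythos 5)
Your proposal is correct and follows essentially the same route as the paper: the intertwining identity $T(1-zQT)^{-1}=(1-zTQ)^{-1}T$ for \emph{(i)} (with the same domain caveat, resolved by checking that $w=(1-zQT)^{-1}v$ satisfies $QTw=z^{-1}(w-v)\in\DD(T)$), the triangular block inverse of $1-zQT$ from Lemma~\ref{lem:triang} for \emph{(ii)}, and the block-diagonal identification $Q(1-zQTQ)^{-1}Q=Q(Q-zQTQ)^{-1}Q$ for \emph{(iii)}. No substantive differences.
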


\begin{proof}
\quad

\noindent{\it (i)}
As in the proof of Proposition~\ref{prop:QTQ}, we can see that $z^{-1}\in\varpi(TQ)$ is equivalent to $z^{-1}\in\varpi(QTQ)$ whenever $\BB_0\subset\DD(T)$. If $v\in\DD(T)$ and $z^{-1}\in\varpi(QTQ)$, we can define $w=(1-zQT)^{-1}v\in\DD(T)$, which satisfies $QTw=z^{-1}(w-v)\in\DD(T)$ so that $TQTw$ makes sense. Therefore,
$$
 (1-zTQ)(T(1-zQT)^{-1}-(1-zTQ)^{-1}T)v = (1-zTQ)Tw-T(1-zQT)w = 0.
$$
Since $\ker(1-zQT)=\{0\}$ for $z^{-1}\in\varpi(QT)$, i.e. for $z^{-1}\in\varpi(QTQ)$, we conclude that
$$
 T(1-zQT)^{-1}v=(1-zTQ)^{-1}Tv, \qquad \forall v\in\DD(T).
$$
Hence $f(z)=PT(1-zQT)^{-1}P=P(1-zTQ)^{-1}TP$ because $\BB_0\subset\DD(T)$.

\noindent{\it (ii)}
The condition $\BB_0\subset\DD(T)$ guarantees that $\BB_0$ generates the block representations \eqref{eq:T-block} and \eqref{eq:(1-zQT)-block} for $T$ and $1-zQT$ respectively. Applying Lemma~\ref{lem:triang} to \eqref{eq:(1-zQT)-block} and combining the result with \eqref{eq:T-block} gives
$$
 T(1-zQT)^{-1} = 
 \begin{pmatrix}
 	PTP-zPTQ(Q-zQTQ)^{-1}QTP & * \\ * & *
 \end{pmatrix},
 \qquad z^{-1}\in\varpi(QTQ),
$$
which proves the result.

\noindent{\it (iii)}
It follows from the previous result and the block diagonal representation
$$
 1-zQTQ =
 \begin{pmatrix}
 	P & 0 \\ 0 & Q-zQTQ
 \end{pmatrix},
$$
which yields the identity $Q(1-zQTQ)^{-1}Q=Q(Q-zQTQ)^{-1}Q$.   
\end{proof}
 
Proposition~\ref{prop:falt}.{\it(ii)} relates the FR-function $f$ to the so called transfer/characteristic function \cite{NF,Brodskii,Brodskii2,BGK,Kaashoek} associated with the colligation operator \eqref{eq:T-block} when $T\in\frak{B}(\BB)$ --so that $A,B,C,D$ are bounded and everywhere defined--, as is the case for instance when $T$ is unitary. However, since unbounded operators cannot be everywhere defined, the `transfer/characteristic function representation' of general FR-functions --with respect to not necessarily bounded operators-- is more limited than the original notion (Definition~\ref{def:FR}) or the alternative expression provided by the renewal equation (Theorem~\ref{thm:dom2}), for which $\DD(T)$ imposes no constraint on $\BB_0$ (they hold even if $\BB_0\cap\DD(T)=\{0\}$!). In other words, FR-functions constitute a generalization of transfer/characteristic functions to the unbounded case. This will be crucial, for instance, when identifying the whole class of functions which are expressible as FR-functions associated with --bounded or unbounded-- self-adjoint operators on Hilbert spaces. This is at the root of the wide validity of the integral and operator representations of Nevanlinna functions given in Theorem~\ref{thm:N=FR} and Corollary~\ref{cor:N=FR}. 

Sometimes the following modified version of Stieltjes functions will be useful.

\begin{defn}
 
Let $T\colon\DD(T)\to\BB$ be a closed operator on a Banach space $\BB$ and $P$ a bounded projection of $\BB$ onto a closed subspace $\BB_0$. The {\bf modified Stieltjes function (m-function)} of the projection $P$ with respect to the operator $T$ is the function $m$ with values in operators on $\BB_0$ given by
\begin{equation} \label{eq:ms}
 m(z) := P(T-z)^{-1}P,
 \qquad z\in\varrho(T).
\end{equation}
When $\BB$ is a Hilbert space and $P$ is the orthogonal projection onto $\BB_0$ we will also refer to $m$ as the m-function of the subspace $\BB_0$ with respect to the operator $T$.
\end{defn}
The function $m$ is analytic on $\varrho(T)$ with values in $\frak{B}(\BB_0)$, connected to the corresponding Stieltjes function $s$ by
$$
  m(z) = -z^{-1}s(z^{-1}), \qquad z\in\varrho(T).
$$
This relation allows us to rewrite the previous results using m-functions instead of Stieltjes ones. For instance, the renewal equation \eqref{eq:f-s} reads in terms of m-functions as
\begin{equation} \label{eq:f-ms}
 f(z) = z^{-1}1_0 +  m(z^{-1})^{-1}, 
 \qquad z^{-1}\in\varrho(T)\cap\varpi(QT).
\end{equation} 

\subsection{The bounded case}
\label{ssec:GB}
\quad

It is worth to specialize the above discussion to the case of bounded everywhere defined operators, where many of the subtleties in the previous relations and results disappear. For instance, if $T\in\frak{B}(\BB)$ every subspace lies on $\DD(T)=\BB$. Also $QT\in\frak{B}(\BB)$ and $QTQ\in\frak{B}(\BB_0)$, thus both operators are closed. Hence, $\varrho(QT)=\varpi(QT)$ and $\varrho(QTQ)=\varpi(QTQ)$ by the closed graph theorem, while $\varrho(QT)\setminus\{0\}=\varrho(QTQ)\setminus\{0\}$ by Proposition~\ref{prop:QTQ}. Since the spectrum of $T\in\frak{B}(\BB)$ is bounded, it is natural to consider the extended resolvent set $\overline\varrho(T):=\varrho(T)\cup\{\infty\}$, so that the condition $z^{-1}\in\overline\varrho(T)$ defines an open set which contains the origin. 

Bearing in mind the results of the previous sections in the light of the above comments, we find that FR-functions have the following list of equivalent expressions which are valid for any subspace. 

\begin{prop} \label{prop:B-falt}
Let $T\in\frak{B}(\BB)$, $P$ a bounded projection of $\BB$ onto a closed subspace $\BB_0$ and $Q=1-P$. Denote by $s$ and $f$, respectively, the Stieltjes function and FR-function of $P$ with respect to $T$. Then, $s(z)$ and $f(z)$ are analytic functions with values in $\frak{B}(\BB_0)$ for $z^{-1}\in\overline\varrho(T)$ and $z^{-1}\in\overline\varrho(QTQ)$ respectively. Besides one has,
\begin{itemize}
\item[{\it (i)}] 
$f(z)=PT(1-zQT)^{-1}P$, \; $z^{-1}\in\overline\varrho(QTQ)$,
\medskip
\item[{\it (ii)}] 
$f(z)=P(1-zTQ)^{-1}TP$, \; $z^{-1}\in\overline\varrho(QTQ)$,
\medskip
\item[{\it (iii)}] 
$f(z)=PTP+zPTQ(Q-zQTQ)^{-1}QTP$, \; $z^{-1}\in\overline\varrho(QTQ)$,
\medskip
\item[{\it (iv)}] 
$f(z)=PTP+zPTQ(1-zQTQ)^{-1}QTP$, \; $z^{-1}\in\overline\varrho(QTQ)$,
\medskip
\item[{\it (v)}] 
$zf(z)=1_0-s(z)^{-1}$, \; 
$z^{-1}\in\overline\varrho(T)\cap\overline\varrho(QTQ)$,
\medskip
\item[{\it (vi)}]
$f(z)=(PT(1-zT)^{-1}P)(P(1-zT)^{-1}P)^{-1}$, \; 
$z^{-1}\in\overline\varrho(T)\cap\overline\varrho(QTQ)$,
\medskip
\item[{\it (vii)}] 
$\displaystyle f(z)=\sum_{n\ge0}z^nPT(QT)^nP$, \; $|z|<\|QTQ\|^{-1}$.
\end{itemize}
\end{prop}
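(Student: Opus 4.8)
The plan is to cash in the general results of Section~\ref{sec:GF} under the two simplifications that $T\in\frak{B}(\BB)$ provides: since $\DD(T)=\BB$, the inclusion $\BB_0\subset\DD(T)$ holds automatically, so Propositions~\ref{prop:QTQ} and \ref{prop:falt} are fully available; and since $QT$ and $QTQ$ are bounded and everywhere defined, hence closed, the closed graph theorem gives $\varrho(QT)=\varpi(QT)$ and $\varrho(QTQ)=\varpi(QTQ)$, while Proposition~\ref{prop:QTQ} yields $\varrho(QT)\setminus\{0\}=\varrho(QTQ)\setminus\{0\}$. I would treat the origin separately throughout; there every candidate formula collapses to $PTP=f(0)$, and the extended resolvent sets $\overline\varrho(\cdot)$ are precisely the device that puts $0$ in the interior of the domains.

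For the analyticity claims: for $z^{-1}\in\varrho(T)$ the resolvent $(1-zT)^{-1}$ is bounded, everywhere defined and analytic, so $s(z)=P(1-zT)^{-1}P$ is analytic with values in $\frak{B}(\BB_0)$; near $z=0$ one has the Neumann expansion $s(z)=\sum_{n\ge0}z^nPT^nP$ (valid for $|z|<\|T\|^{-1}$), which is analytic at the origin with $s(0)=1_0$, i.e. $z^{-1}=\infty\in\overline\varrho(T)$. For $f$, Proposition~\ref{prop:rhoQT} already gives analyticity with values in $\frak{B}(\BB_0)$ for $z^{-1}\in\varrho(QT)$, which away from $0$ coincides with $z^{-1}\in\varrho(QTQ)$; near $z=0$ I would read off analyticity from Proposition~\ref{prop:falt}.{\it(iii)}, $f(z)=PTP+zPTQ(1-zQTQ)^{-1}QTP$, whose second term is analytic around the origin because $QTQ$ is bounded. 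This pins down the domains $z^{-1}\in\overline\varrho(T)$ for $s$ and $z^{-1}\in\overline\varrho(QTQ)$ for $f$.

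The equivalent expressions are then bookkeeping. Items {\it(i)}--{\it(iv)} are the restrictions to $z^{-1}\in\overline\varrho(QTQ)$ of, respectively, Definition~\ref{def:FR}, Proposition~\ref{prop:falt}.{\it(i)}, Proposition~\ref{prop:falt}.{\it(ii)} and Proposition~\ref{prop:falt}.{\it(iii)}, all legitimate because $\varrho(QTQ)\setminus\{0\}=\varpi(QTQ)\setminus\{0\}=\varpi(QT)\setminus\{0\}$ and the case $z=0$ is trivial. Item {\it(v)} is the renewal equation \eqref{eq:sf} of Theorem~\ref{thm:dom2}, whose hypothesis $z^{-1}\in\varrho(T)\cap\varpi(QT)$ is implied, for $z\neq0$, by $z^{-1}\in\overline\varrho(T)\cap\overline\varrho(QTQ)$ and reads $1_0=1_0$ at the origin. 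Item {\it(vi)} follows from {\it(v)}: rewrite it as $f(z)=z^{-1}(s(z)-1_0)s(z)^{-1}$ and simplify $z^{-1}(s(z)-1_0)=PT(1-zT)^{-1}P$ using \eqref{eq:1-zT}.

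Finally, item {\it(vii)}. I would first record the elementary identity $PT(QT)^nP=PTQ(QTQ)^{n-1}QTP$ for $n\ge1$, a collapsing consequence of $Q^2=Q$, which gives the bound $\|PT(QT)^nP\|\le\|PTQ\|\,\|QTP\|\,\|QTQ\|^{n-1}$; hence the series $\sum_{n\ge0}z^nPT(QT)^nP$ converges in operator norm precisely for $|z|\,\|QTQ\|<1$, and there
\[
 \sum_{n\ge0}z^nPT(QT)^nP=PTP+zPTQ\Big(\sum_{k\ge0}(zQTQ)^k\Big)QTP
 =PTP+zPTQ(1-zQTQ)^{-1}QTP,
\]
which is expression {\it(iv)}, hence $f(z)$. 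The only genuinely delicate points in all of this are ensuring that the origin sits in the interior of every domain and that the seven formulas agree there, and noticing that in {\it(vii)} the radius of convergence is controlled by $\|QTQ\|$ rather than by the coarser $\|QT\|$ — which is exactly what the identity $PT(QT)^nP=PTQ(QTQ)^{n-1}QTP$ buys us; none of it is deep.
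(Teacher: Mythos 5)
Your proof is correct and follows essentially the same route as the paper: reduce everything to Theorem~\ref{thm:dom2}, Proposition~\ref{prop:falt} and the identifications $\varrho=\varpi$ and $\varrho(QT)\setminus\{0\}=\varrho(QTQ)\setminus\{0\}$, then handle the origin and item \textit{(vii)} via the Neumann expansions of $(1-zT)^{-1}$ and $(1-zQTQ)^{-1}$ plugged into expression \textit{(iv)}. The only difference is cosmetic: you make explicit the collapsing identity $PT(QT)^nP=PTQ(QTQ)^{n-1}QTP$ that the paper leaves implicit when matching the series coefficients, which is a nice touch since it is exactly what gives the radius $\|QTQ\|^{-1}$ rather than $\|QT\|^{-1}$.
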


\begin{proof}
Taking into account Theorem~\ref{thm:dom2}, Proposition~\ref{prop:falt}, the comments just before this theorem and the identities $s(0)=1_0$, $f(0)=PTP$, it only remains to prove the analyticity of $s$ and $f$ at the origin as well as the expression {\it (vii)}. The power expansion
$$
 (1-zT)^{-1} = \sum_{n\ge0} z^nT^n, \qquad \|zT\|<1,
$$
shows that $s(z)$ is analytic for $|z|<\|T\|^{-1}$. Similar arguments for $QTQ$ and the power expansion of $(1-zQTQ)^{-1}$ around the origin, combined with the expression {\it (iv)}, prove that $f(z)$ is analytic for $|z|<\|QTQ\|^{-1}$ with an expansion given by {\it (vii)}.
\end{proof}

When $\frak{B}$ is a Hilbert space and $P$ is an orthogonal projection we have that $\|P\|=\|Q\|=1$. Hence, if $f$ is the FR-function of a closed subspace with respect to a bounded operator on a Hilbert space, the power expansion of Proposition~\ref{prop:B-falt}.{\it(vii)} holds for $|z|<\|T\|^{-1}$ regardless of the subspace.

The power expansion of FR-functions given in Proposition~\ref{prop:B-falt}.{\it (vii)} identifies them as `true generating functions' in the bounded case. Actually, this expansion gives to $f$ the meaning of a generating function of `first returns': suppose that each time step the state $v\in\BB$ of a system evolves according to $v \to Tv$. If the evolution starts at a state in $\BB_0$, the projection $P$ conditions on the event `return to $\BB_0$', while the projection $Q$ conditions on the event `no return to $\BB_0$'. Thus, the operator coefficient $PT(QT)^nP$ reflects the fact that, starting at $\BB_0$, the system has not returned to $\BB_0$ during the first $n$ steps, but has returned to $\BB_0$ in the last step, i.e. the system has returned to $\BB_0$ for the first time in the last step. The power expansion of the corresponding Stieltjes function,
$$
 s(z) = \sum_{n\ge0} z^n PT^nP, \qquad |z|<\|T\|^{-1},
$$
also uncovers its interpretation as generating function of `returns': the operator coefficient $PT^nP$ is associated with the return to $\BB_0$ in the $n$-th step, but without excluding any return at previous steps.
 
Proposition~\ref{prop:B-falt}.{\it (vii)} implies that $f(0)=PTP$ and $f^{(n)}(0)=n!PTQ(QTQ)^{n-1}QTP$ for $T\in\frak{B}(\BB)$. Properly interpreted, this result makes sense also for certain unbounded situations, see Lemma~\ref{lem:der} in Appendix~A.

\section{The self-adjoint case - Nevanlinna functions}
\label{sec:s-a}

FR-functions related to self-adjoint operators have a especial interest because, together with the unitaries, they are the operators that appear more frequently in applications. In this section we will discuss the FR-function of a self-adjoint bounded projection $P$ with respect to a self-adjoint operator $T\colon\DD(T)\to\HH$ on a Hilbert space $\HH$. The self-adjointness of the projection means that it is orthogonal, i.e. $\ker P=\RR(P)^\bot$. In this case we talk about the FR-function of the closed subspace $\HH_0:=\RR(P)$ --since it determines the subspace $\HH_1:=\ker P=\HH_0^\bot$ along which the projection takes place-- and likewise for Stieltjes functions and m-functions. This is the kind of functions that we will consider in this section, as well as in Sections~\ref{sec:rep-N} and \ref{sec:SA}. 

Continuing in the spirit of the previous section, we will deal with general self-adjoint operators, bounded or not. Although including unbounded situations will make the discussions more complicated, this is a price that we will pay to arrive later on at operator and integral representations valid for every Nevanlinna function, see Section~\ref{sec:rep-N}. For the technical results on unbounded self-adjoint operators that we will use below we refer to \cite{Kato,Schmudgen}. 

As in the unitary case, self-adjoint operators generate FR-functions with special properties. Remember that the FR-functions for unitary operators are Schur functions. This section provides a similar identification in the self-adjoint case: the FR-functions for self-adjoint operators are operator valued {\bf Nevanlinna functions}, i.e. analytic functions $f\colon\C\setminus\R\to\frak{B}(\HH_0)$ satisfying
\begin{equation} \label{eq:N}
 f(z)^\text{\rm \dag}=f(\overline{z}), 
 \qquad \frac{\im f(z)}{\im z}\ge0, 
 \qquad z\in\C\setminus\R.
\end{equation}
In other words, they are analytic functions in the upper and lower half-planes 
$$
 \C_+:=\{z\in\C : \im z>0\}, \qquad\qquad \C_-:=\{z\in\C : \im z<0\},
$$
such that 
\begin{equation} \label{eq:N1}
 \im f(z)\ge0 \text{ for } z\in\C_+, 
 \qquad\qquad
 \im f(z)\le0 \text{ for } z\in\C_-. 
\end{equation}
These inequalities, which hold simultaneously under the symmetry $f(z)^\dag=f(\overline{z})$, are the self-adjoint version of the contractivity property \eqref{eq:Schur} of Schur functions in the unitary case. 

The subtleties arising in the unbounded case --absent for unitaries-- make it difficult to know if the above connection between FR-functions for self-adjoint operators and Nevanlinna functions holds in its full extent, so we will prove it in two general cases:
\begin{itemize}
\item For arbitrary {\bf finite-dimensional subspaces}, not necessarily included in the domain of the operator. 
\item For closed {\bf subspaces included in the domain of the operator}, but with no restriction on their dimension.
\end{itemize} 

Since we will deal with adjoints of operators which are not necessarily bounded, we must be very careful with the manipulation of adjoints. First, the adjoint of $T\colon\DD(T)\to\HH$ exists iff $\DD(T)$ is dense in $\HH$, a requirement which is implicit when stating that $T$ is self-adjoint. Besides, if we only know that $T$ and $S$ are operators densely defined on $\HH$, all that we can say about $(S+T)^\dag$ and $(ST)^\dag$ is that, whenever $S+T$ and $ST$ are also densely defined on $\HH$,
$$ 
 S^\dag+T^\dag \subset (S+T)^\dag,
 \qquad \qquad
 T^\dag S^\dag \subset (ST)^\dag. 
$$ 
In spite of the above, to guarantee the equalities in the above relations it is not necessary for both operators to be bounded and everywhere defined, it is enough to impose this requirement for only one of the operators,
$$
 S\in\frak{B}(\HH) \quad\Rightarrow\quad  
 S^\dag+T^\dag = (S+T)^\dag,
 \quad
 T^\dag S^\dag = (ST)^\dag. 
$$ 
These remarks will be important in what follows.   

\begin{thm}[\bf FR-functions of self-adjoint operators] \label{thm:FR=N}
Let $f$ be the FR-function of a closed subspace $\HH_0\subset\HH$ with respect to a self-adjoint operator $T\colon\DD(T)\to \HH$ on a Hilbert space $\HH$. Then, $f$ is a Nevanlinna function in any of the following cases:
\begin{itemize}
\item[{\it (i)}] $\dim\HH_0<\infty$.
\item[{\it (ii)}] $\HH_0\subset\DD(T)$.
\end{itemize} 
\end{thm}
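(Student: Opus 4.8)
The plan is to show that $f$ satisfies the two defining properties of a Nevanlinna function: the symmetry $f(z)^\dag = f(\overline z)$ and the sign condition $\im f(z)/\im z \ge 0$. The symmetry is the easy part and works uniformly in both cases. Using the renewal equation from Theorem~\ref{thm:dom2}, or directly the alternative expression $f(z) = (PT(1-zT)^{-1}P)(P(1-zT)^{-1}P)^{-1}$ where it applies, it suffices to establish the corresponding symmetry for the Stieltjes function $s(z) = P(1-zT)^{-1}P$. Since $T$ is self-adjoint and $P$ is an orthogonal projection, $(1-zT)^{-1}$ is defined precisely when $z^{-1} \in \varrho(T) = \mbb C \setminus \R$ intersected with the image of the spectrum, and $((1-zT)^{-1})^\dag = (1 - \overline z T)^{-1}$; sandwiching by $P = P^\dag$ gives $s(z)^\dag = s(\overline z)$, hence $f(z)^\dag = f(\overline z)$ via \eqref{eq:f-s} (noting $z^{-1} \in \varrho(T) \cap \varpi(QT)$ is symmetric under conjugation). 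The domain $\mbb C \setminus \R \subseteq \{z : z^{-1} \in \varrho(T)\}$ follows because the spectrum of a self-adjoint operator is real.

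For the sign condition I would work with the Stieltjes function first. Fix $z$ with $\im z > 0$ (say) and $z^{-1} \in \varrho(T)$. For a vector $v \in \HH_0$, write $u = (1-zT)^{-1}v \in \DD(T)$, so $v = u - zTu$ and $\langle v, s(z) v\rangle = \langle v, Pu \rangle = \langle v, u \rangle = \langle u - zTu, u \rangle = \|u\|^2 - z\langle Tu, u\rangle$; since $T$ is self-adjoint, $\langle Tu, u\rangle$ is real, so $\im \langle v, s(z)v\rangle = -(\im z)\langle Tu, u\rangle$... this is not obviously signed, so the cleaner route is to compute $\im$ of the quadratic form of $(1-zT)^{-1}$ directly: for any $w$, with $u = (1-zT)^{-1}w$, one has $\langle w, (1-zT)^{-1} w\rangle = \langle (1-zT)u, u\rangle = \|u\|^2 - z\langle Tu,u\rangle$, whose imaginary part is $-(\im z)\langle Tu, u\rangle$ — still the wrong shape. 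The right object is instead $\im \langle w, (\,(1-zT)^{-1} - \tfrac12\,)w\rangle$ or, more transparently, the m-function: $m(\zeta) = P(T-\zeta)^{-1}P$ satisfies, with $u = (T-\zeta)^{-1}w$, $\langle w, m(\zeta)w\rangle = \langle (T-\zeta)u, u\rangle = \langle Tu,u\rangle - \zeta\|u\|^2$, so $\im\langle w, m(\zeta) w\rangle = (\im\zeta)\,\|u\|^2 \ge 0$ for $\im\zeta > 0$. Thus $m$ is manifestly Nevanlinna. Then I translate back: by \eqref{eq:f-ms}, $f(z) = z^{-1}1_0 + m(z^{-1})^{-1}$, and the map $\zeta \mapsto \zeta^{-1}$ sends $\mbb C_+$ to $\mbb C_-$ while inversion $m \mapsto -m^{-1}$ and the additive term $z^{-1}$ are the standard operations under which the Nevanlinna class is closed — one checks $\im(-m^{-1}) \ge 0$ when $\im m \ge 0$ from $\im(m^{-1}) = -|m|^{-?}$... in the operator case via $\im(M^{-1}) = -M^{-\dag}(\im M)M^{-1}$ for invertible $M$, and $\im(z^{-1}1_0) = -(\im z)|z|^{-2}1_0$ has the same sign as $-\im z$, consistent with $\im f$ having the sign of $\im z$. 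Assembling these pieces gives the sign condition.

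The remaining work — and the place where cases {\it(i)} and {\it(ii)} genuinely differ — is verifying that the domain and invertibility statements used above actually hold, i.e. that $m(z^{-1})$ is invertible on $\HH_0$ for $z \in \mbb C\setminus\R$ so that $f(z)$ is defined and analytic there, and more basically that $\varpi(QT) \supseteq \{z : z^{-1}\in\mbb C\setminus\R\}$. In case {\it(ii)}, $\HH_0 \subseteq \DD(T)$, so by Proposition~\ref{prop:QTQ} one may replace $QT$ by $QTQ$, which inherits self-adjointness (as a symmetric restriction/compression; here $QTQ$ is self-adjoint on $\HH_1$ when $T$ is self-adjoint and $\HH_0\subseteq\DD(T)$, a standard fact I would cite from \cite{Kato,Schmudgen}), so its spectrum is real and $z^{-1}\in\varrho(QTQ)$ for all non-real $z$; then Theorem~\ref{thm:dom2} gives invertibility of $s(z)$, hence of $m$. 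In case {\it(i)}, $\dim\HH_0 < \infty$, so $f(z)$ is a matrix-valued analytic function; here I would argue that the non-real $z$ for which $s(z)$ fails to be invertible are isolated (zeros of $\det s(z)$, which is not identically zero since $s(z) \to 1_0$ appropriately), and that on the complement $f$ is Nevanlinna by the computation above, then remove the apparent singularities using that a matrix Nevanlinna function cannot have poles in $\mbb C\setminus\R$ — or, more carefully, show directly via the finite-dimensional compression that $\im s(z)$ is strictly definite so $s(z)$ is automatically invertible. The main obstacle I anticipate is precisely this invertibility/analyticity bookkeeping in the unbounded finite-dimensional case {\it(i)}: one must make sure $f$ extends analytically across all of $\mbb C\setminus\R$ without assuming $\HH_0 \subseteq \DD(T)$, and the strict definiteness of $\im s(z)$ (coming from $\|u\|^2 > 0$ whenever $v \neq 0$, since $(1-zT)^{-1}$ is injective) is the tool that makes this go through cleanly.
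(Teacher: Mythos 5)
There is a genuine gap at the heart of your argument, namely in the sign condition. You correctly reduce to $f(z)=z^{-1}1_0+m(z^{-1})^{-1}$ and correctly observe that $m$ is Nevanlinna, hence (by $\zeta\mapsto\zeta^{-1}$ followed by operator inversion) $m(z^{-1})^{-1}$ is a Nevanlinna function of $z$. But the remaining summand $z^{-1}1_0$ satisfies $\im(z^{-1}1_0)=-(\im z)|z|^{-2}1_0$, i.e.\ it is \emph{anti}-Nevanlinna: for $z\in\C_+$ it contributes a strictly negative term to $\im f(z)$. So $f$ is the sum of a Nevanlinna function and a non-Nevanlinna function, and ``assembling the pieces'' proves nothing; your parenthetical ``consistent with $\im f$ having the sign of $\im z$'' is in fact the opposite of what that term says. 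The theorem is true only because of an exact cancellation that your proposal never exhibits. The paper's proof supplies it: writing $g(z)=f(z^{-1})=z1_0+m(z)^{-1}$ and $\im m(z)=(\im z)\,P(T-\overline z)^{-1}(T-z)^{-1}P$, one inserts $1=P+Q$ between the two resolvents to get $P(T-\overline z)^{-1}(T-z)^{-1}P=m(z)^\dag m(z)+P(T-\overline z)^{-1}Q(T-z)^{-1}P$, whence
$$
\frac{\im g(z)}{\im z}=1_0-(m(z)^\dag)^{-1}P(T-\overline z)^{-1}(T-z)^{-1}P\,m(z)^{-1}=-R(z)^\dag R(z)\le 0,\qquad R(z)=Q(T-z)^{-1}m(z)^{-1},
$$
which is exactly the statement that the positive part $m(z^{-1})^{-1}$ dominates the negative part $z^{-1}1_0$. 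Without this identity (or an equivalent, e.g.\ the quotient formula $f(z)=\bigl(\int\frac{t\,d\mu}{1-zt}\bigr)\bigl(\int\frac{d\mu}{1-zt}\bigr)^{-1}$ verified directly) the proof is incomplete.

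A secondary problem: in case {\it(i)} you propose to get invertibility of $s(z)$ from ``strict definiteness of $\im s(z)$,'' but $\im s(z)=(\im z)\,P(1-zT)^{-1}T(1-\overline zT)^{-1}P$ is \emph{not} definite in general (take $T=\operatorname{diag}(1,-1)$). It is $\im m(\zeta)=(\im\zeta)\,[(T-\zeta)^{-1}P]^\dag[(T-\zeta)^{-1}P]$ that is strictly positive definite for $\zeta\in\C_+$, and since $s(z)=-z^{-1}m(z^{-1})$ this does give injectivity, hence invertibility when $\dim\HH_0<\infty$; alternatively the paper argues directly that $\langle s(z)v|v\rangle=\|w\|^2-z\langle w|Tw\rangle$ cannot vanish for $v\ne0$ and $z$ non-real. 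Your determinant/removable-singularity fallback is circular as stated, since it invokes the Nevanlinna property you are trying to prove. The self-adjointness of $QTQ$ in case {\it(ii)}, which you cite as standard, is actually proved in the paper via $(TQ)^\dag=T-(TP)^\dag=QT$ using that $TP\in\frak{B}(\HH)$ when $\HH_0\subset\DD(T)$; this is a minor point but worth making explicit in the unbounded setting.
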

 
\begin{proof}
\quad

\noindent{\it (i)}
Let $s$ be the Stieltjes function of $\HH_0$ with respect to $T$. The spectrum of $T$ is real, thus $\C\setminus\R\subset\varrho(T)$ and $s(z)$ exists as an operator everywhere defined on $\HH_0$ for every non-real $z$. Also, $s(z)v=0$ for some $v\in\HH_0$ and $z\in\C\setminus\R$ implies that
$$
 0 = \<s(z)v|v\> = \<(1-zT)^{-1}v|v\> = \<w|(1-zT)w\> = \|w\|^2-z\<w|Tw\>,
 \qquad w=(1-zT)^{-1}v.
$$
Since $\<w|Tw\>$ is real and $z$ is not, we find that $w=0$, hence $v=0$. Therefore, if $z\in\C\setminus\R$, we conclude that $\ker s(z)=\{0\}$, which is equivalent to stating that $s(z)^{-1}\in\frak{B}(\HH_0)$ because $\dim\HH_0<\infty$. Then, Corollary~\ref{cor:dom3} implies that $f(z)\in\frak{B}(\HH_0)$ and is analytic for $z\in\C\setminus\R$. 

Let $z\in\C\setminus\R$. Taking adjoints we get $s(z)^\dag=s(\overline{z})$. Applying Theorem~\ref{thm:dom2} we find that 
\begin{equation} \label{eq:fs-sa}
 f(z)=z^{-1}(1_0-s(z)^{-1}), \qquad z\in\C\setminus\R,
\end{equation} 
hence $f(z)^\dag=f(\overline{z})$. 

To finish the proof, it is convenient to rewrite \eqref{eq:fs-sa} using the m-function \eqref{eq:ms},
$$
 g(z) := f(z^{-1}) = z1_0 +  m(z)^{-1}, \qquad z\in\C\setminus\R.
$$
Then, $\im f(z)/\im z\ge0$ is equivalent to $\im g(z)/\im z\le0$. As in the case of $s$, the adjoint of $ m$ is given by $ m(z)^\dag= m(\overline{z})$. Therefore, if $z\in\C\setminus\R$, 
$$
\begin{aligned}
 \im  m(z) & = \frac{1}{2i} P((T-z)^{-1}-(T-\overline{z})^{-1})P 
 =  (\im z) P(T-\overline{z})^{-1}(T-z)^{-1}P,
 \\
 \im g(z) & = \im z + \frac{1}{2i}( m(z)^{-1}-( m(z)^\dag)^{-1})
 = \im z - ( m(z)^\dag)^{-1} (\im m(z)) \, m(z)^{-1}
 \\ 
 & = (\im z) \, 
 (1_0 -( m(z)^\dag)^{-1} P(T-\overline{z})^{-1}(T-z)^{-1}P \, m(z)^{-1})
 \\
 & = -(\im z) \,
 ( m(z)^\dag)^{-1} P(T-\overline{z})^{-1}Q(T-z)^{-1}P \, m(z)^{-1},
 \qquad Q=1-P,
\end{aligned}
$$ 
which gives
$$
 \frac{\im g(z)}{\im z} = -R(z)^\dag R(z), 
 \qquad R(z):=Q(T-z)^{-1} m(z)^{-1}. 
$$
This proves that $\im g(z)/\im z\le0$ for $z\in\C\setminus\R$.

\noindent{\it (ii)} 
Denote by $P$ and $Q$ the orthogonal projections onto $\HH_0$ and $\HH_0^\bot$ respectively. The block operator $QTQ$ --which controls the domain of definition of $f$-- has domain $\HH_0^\bot\cap\DD(T)$, which is dense in $\HH_0^\bot$ because $\HH_0\subset\DD(T)$ implies that $\DD(T)=\HH_0\oplus(\HH_0\cap\DD(T))$, while $\DD(T)$ is dense in $\HH$ because $T$ is self-adjoint. Thus the adjoint of $QTQ$ makes sense. Indeed $(QTQ)^\dag$ is an extension of $QTQ$, i.e. $QTQ$ is symmetric. 

Let us see that the symmetric block operator $QTQ$ is indeed self-adjoint. For this purpose we can consider $QTQ$ equivalently as an operator on $\HH$. Then $(QTQ)^\dag=(TQ)^\dag Q$ because $Q\in\frak{B}(\HH)$. As for $TQ=T-TP$, remember from the proof of Proposition~\ref{prop:QTQ} that $TP\in\frak{B}(\HH)$ whenever $\HH_0\subset\DD(T)$ because then $TP$ is closed and everywhere defined on $\HH$. Hence $(TQ)^\dag = T-(TP)^\dag$. Note that $(TP)^\dag\in\frak{B}(\HH)$ is an extension of $PT$ to the whole space $\HH$. Despite this, we can substitute $(TP)^\dag$ by $PT$ in $T-(TP)^\dag$ because this operator has the same domain $\DD(T)$ as $PT$. We conclude that $(TQ)^\dag=T-PT=QT$, so that $(QTQ)^\dag=(TQ)^\dag Q=QTQ$. 
 
As a self-ajoint operator, $QTQ$ has real spectrum, thus $\C\setminus\R\subset\varrho(QTQ)$. Then, Proposition~\ref{prop:QTQ} implies that $\C\setminus\R\subset\varrho(QT)$, so that from Proposition~\ref{prop:rhoQT} we know that $f(z)\in\frak{B}(\HH_0)$ and is analytic for $z\in\C\setminus\R$. Since Theorem~\ref{thm:dom2} gives \eqref{eq:fs-sa}, we can follow the same steps as in the case {\it (i)} to prove \eqref{eq:N}. 
\end{proof}

As a consequence of the previous theorem, if a subspace has finite dimension or lies in the domain of the operator, the versions \eqref{eq:f-s} and \eqref{eq:f-ms} of the corresponding renewal equation read in the self-adjoint case as
\begin{align} \label{eq:f-s-sa} 
 & \kern70pt f(z) = z^{-1}(1_0-s(z)^{-1}) = z^{-1}1_0 +  m(z^{-1})^{-1},
 \qquad z\in\C\setminus\R,
 \\ \nonumber
 & s(z)=P(1-zT)^{-1}P=\int\frac{d\mu(t)}{1-zt},
 \qquad
  m(z)=P(T-z)^{-1}P=\int\frac{d\mu(t)}{t-z},
 \qquad
 \mu(\R)=1_0.
\end{align}
Here $\mu$ is the spectral measure assigned to the subspace $\HH_0$ by the self-adjoint operator $T$.

As in the case of Schur functions, a dichotomy concerning Nevanlinna functions classifies them into two types. If $f$ is a scalar Nevanlinna function, i.e. an analytic map $f\colon\C\setminus\R\to\C$ such that $\overline{f(z)}=f(\overline{z})$ and $f(\C_\pm)\subset\overline\C_\pm$, general principles of complex analysis lead to the following options: 

\begin{itemize}
\item $f$ is {\it degenerate}: $\im f(z_0)=0$ for some $z_0\in\C\setminus\R$, then $f$ is a real constant on $\C\setminus\R$. 
\item $f$ is {\it non-degenerate}: $\im f\ne0$ on $\C\setminus\R$, then $f(\C_\pm)\subset\C_\pm$.
\end{itemize}

\noindent On the other hand, the operator valued Nevanlinna functions $f\colon\C\setminus\R\to\frak{B}(\HH_0)$ are obviously characterized by the fact that $\<v|f(z)v\>$ is a scalar Nevanlinna function for every $v\in\HH_0$. This allows us to study many properties of the operator valued case by reducing them to the scalar situation. 

This is the case for instance for the analysis of the strict positivity of $\im f$. To state that $\im f(z_0)/\im z_0$ is not positive definite at some $z_0\in\C\setminus\R$ is equivalent to the existence of $v\in\HH_0$ such that $\im f(z_0)v=0$. Then, $\im\<v|f(z_0)v\>=\<v|\im f(z_0)v\>=0$, so that $\<v|f(z)v\>$ is a degenerate scalar Nevanlinna function, i.e. a real constant. As a consequence $\<v|\im f(z)v\>=\im\<v|f(z)v\>=0$ for every $z\in\C\setminus\R$, which implies that $\im f(z)v=0$ at every non-real $z$ because $\im f(z)/\im z\ge0$. Therefore, $\ker\im f(z)$ is independent of $z\in\C\setminus\R$, leaving only two possibilities:  

\begin{itemize}
\item $f$ is {\it degenerate}: $\ker\im f$ is non-trivial on $\C\setminus\R$, then $\im f/\im z\ge0$ but nowhere positive definite on $\C\setminus\R$. 
\item $f$ is {\it non-degenerate}: $\ker\im f$ is trivial on $\C\setminus\R$, then $\im f/\im z>0$ on $\C\setminus\R$.
\end{itemize} 

Degenerate Nevanlinna functions have several equivalent characterizations. Some of them, of interest for the Nevanlinna version of the Schur algorithm discussed below, have to do with the limit value and the derivative of Nevanlinna functions at a point in the real line. When this point is the origin, the existence of such limits and derivatives is guaranteed for FR-functions of bounded operators, but may hold even for an unbounded self-adjoint operator $T$ if understood as limits along the imaginary axis, i.e. normal to the real line: if $\HH_0\subset\DD(T)$, the corresponding FR-function $f$ has a well defined normal limit at the origin 
$$
 f(0):=\lim_{y\to0}f(iy)=PTP\in\frak{B}(\HH_0), 
$$
while the more restrictive condition $\HH_0\subset\DD(T^2)$ ensures that $f$ has also a normal derivative at the origin 
$$
 f'(0):=\lim_{y\to0}\frac{f(iy)-f(0)}{iy}=PTQTP\in\frak{B}(\HH_0). 
$$
These limits must be understood in the weak sense if $\dim\HH_0=\infty$, see Lemma~\ref{lem:der} in Appendix~A. 

In the case of a Nevanlinna FR-function $f$, the above expressions show that $f(0)$ is self-adjoint, while $\displaystyle f'(0)$ is not only self-adjoint, but also non-negative definite. Actually, these properties of normal limits and normal derivatives hold for every Nevanlinna function at every point in the real line where such limit and derivative exist.

\begin{prop} \label{prop:der-p}
Given an arbitrary Nevanlinna function $f \colon \C\setminus\R \to \frak{B}(\HH_0)$, consider the following weak limits for $x\in\R$,
$$
\begin{aligned}
 & f(x):=\lim_{y\to0}f(x+iy), 
 & & \text{(normal limit of $f$ at $x$)},
 \\
 & f'(x):=\lim_{y\to0}\frac{f(x+iy)-f(x)}{iy} 
 & \qquad & \text{(normal derivative of $f$ at $x$)}.
\end{aligned}
$$
Then, whenever these limits exist as everywhere defined operators on $\HH_0$, they are self-adjoint operators of $\frak{B}(\HH_0)$ and $\displaystyle f'(x)\ge0$.
\end{prop}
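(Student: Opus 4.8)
The plan is to work directly with the operator-valued function $f$, using only its two defining properties: the symmetry $f(z)^\dag=f(\overline z)$ and the sign condition \eqref{eq:N1} on $\im f$. I will repeatedly use three elementary facts: weak limits of operators in $\frak{B}(\HH_0)$ are preserved under taking adjoints (because $\<u|A_n^\dag v\>=\overline{\<v|A_nu\>}$) and under finite linear combinations; a weak limit of self-adjoint (resp. non-negative) operators is again self-adjoint (resp. non-negative), since $\<v|Av\>=\lim\<v|A_nv\>$; and the normal limit and normal derivative are, by hypothesis, genuine two-sided limits in $y$, so that $f(x\pm iy)\to f(x)$ and $\tfrac{f(x\pm iy)-f(x)}{\pm iy}\to f'(x)$ as $y\to0^+$. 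If one wants boundedness as well, the uniform boundedness principle applied twice shows that an everywhere defined weak limit of operators in $\frak{B}(\HH_0)$ is again bounded, so $f(x),f'(x)\in\frak{B}(\HH_0)$ once they exist as everywhere defined operators.

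First I would establish that $f(x)$ is self-adjoint. From $f(z)^\dag=f(\overline z)$ one has $f(x+iy)^\dag=f(x-iy)$ for $y\ne0$; letting $y\to0^+$, the left-hand side converges weakly to $f(x)^\dag$ by continuity of the adjoint, while the right-hand side converges weakly to the same $f(x)$ (using that the normal limit is two-sided). Hence $f(x)^\dag=f(x)$.

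For the statement about $f'(x)$ I would symmetrize the difference quotient. Since $f(x-iy)=f(x+iy)^\dag$, the operator $\tfrac{\im f(x+iy)}{y}=\tfrac1{2iy}\bigl(f(x+iy)-f(x-iy)\bigr)$ is self-adjoint, and it is non-negative for $y>0$ by \eqref{eq:N1}. On the other hand, pure algebra (adding and subtracting $f(x)$) gives
$$
 \frac{\im f(x+iy)}{y}
 =\frac1{2}\left(\frac{f(x+iy)-f(x)}{iy}+\frac{f(x-iy)-f(x)}{-iy}\right),
$$
where as $y\to0^+$ the first term tends weakly to $f'(x)$ by definition and the second does so by the two-sidedness of the normal derivative; hence $\tfrac{\im f(x+iy)}{y}\to f'(x)$ weakly. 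Being a weak limit of self-adjoint, non-negative operators, $f'(x)$ is self-adjoint and $f'(x)\ge0$. Equivalently, one may run the whole argument scalar by scalar, replacing $f$ by $\<v|f(\cdot)v\>$ for each $v\in\HH_0$ and invoking polarization at the end, as in the remarks preceding the statement.

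The only delicate point — more bookkeeping than genuine obstacle — is the systematic reliance on the normal limit and normal derivative being two-sided in $y$: every step above pairs the value at $x+iy$ with the value at $x-iy$, and it is precisely the hypothesis that these limits exist which guarantees that $f(x-iy)\to f(x)$ and $\tfrac{f(x-iy)-f(x)}{-iy}\to f'(x)$ as $y\to0^+$. No analyticity of $f$ across $\R$ and no information about $f$ away from the single point $x$ is needed.
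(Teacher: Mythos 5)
Your proof is correct and follows essentially the same route as the paper's: both establish self-adjointness of $f(x)$ from $f(x+iy)^\dag=f(x-iy)$ by passing to the weak limit, and both obtain $f'(x)\ge0$ from the same symmetrized difference quotient identifying $\tfrac{1}{2}\bigl(\tfrac{f(x+iy)-f(x)}{iy}+\tfrac{f(x-iy)-f(x)}{-iy}\bigr)$ with $\im f(x+iy)/y$. The only cosmetic difference is that you argue at the operator level and get boundedness via uniform boundedness, whereas the paper works with quadratic forms $\<v|f(\cdot)v\>$ and deduces boundedness from self-adjointness plus the closed graph theorem; both are sound.
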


\begin{proof}
Suppose that $f(x)$ is everywhere defined on $\HH_0$ for some $x\in\R$. Since $f(x+iy)^\dag=f(x-iy)$ for $y\in\R\setminus\{0\}$, we get 
$$
 \<u|f(x)v\>=\lim_{y\to0}\<u|f(x+iy)v\>=\lim_{y\to0}\<f(x-iy)u|v\>=\<f(x)u|v\>,
 \qquad u,v\in\HH_0.
$$
This means that $f(0)$ is self-adjoint, hence closed, thus it is also bounded because $f(0)$ has closed domain $\HH_0$ by hypothesis. 

If, besides, $\displaystyle f'(x)$ is everywhere defined on $\HH_0$, then 
$$
\begin{aligned}
 \<v|f'(x)v\>
 & =\lim_{y\to0}\frac{\<v|(f(x+iy)-f(x))v\>-\<v|(f(x-iy)-f(x))v\>}{2iy}
 \\
 & =\lim_{y\to0}\frac{\<v|\im f(x+iy)v\>}{y} \ge 0,
 \qquad v\in\HH_0.
\end{aligned}
$$
This implies that $\displaystyle f'(x)$ is self-adjoint and non-negative definite, hence also bounded since we assume that it has closed domain $\HH_0$. 
\end{proof}

The following proposition summarizes different properties that distinguish between degenerate and non-degenerate Nevanlinna functions, among them those given in terms of their limits and derivatives at a point in the real line. The proof is given in Appendix~B. 

\begin{prop} \label{prop:deg-NFR}
If $f\colon\C\setminus\R\to\frak{B}(\HH_0)$ is a Nevanlinna function and $x\in\R$, then, for every $z\in\C\setminus\R$,
$$
\begin{aligned}
 \ker\im f(z) 
 & = \{v\in\HH_0:fv\text{ is constant on }\C\setminus\R\}
 \\[2pt]
 & = \ker(f(z)-f(x)), 
 & & \kern-70pt \text{if } \exists 
 f(x):=\lim_{y\to0}f(x+iy),
 \\[-2pt]
 & = \ker f'(x), 
 & & \kern-70pt \text{if } \exists 
 f'(x):=\lim_{y\to0}(f(x+iy)-f(x))/iy,
\end{aligned}  
$$
where $f(x)$ and $\displaystyle f'(x)$ are assumed to exist as weak limits everywhere defined on $\HH_0$.
As a consequence, any of the following conditions characterizes the non-degenerate Nevanlinna functions:
\begin{itemize}
 \item[{\it (i)}] $\im f(z_0)$ is invertible for some $z_0\in\C\setminus\R$.
 \item[{\it (ii)}] $\im f(z)$ is invertible for every $z\in\C\setminus\R$.
 \item[{\it (iii)}] $fv$ is not constant on $\C\setminus\R$ 
 for any $v\in\HH_0\setminus\{0\}$.
 \item[{\it (iv)}] $f(z_0)-f(x)$ is invertible for some $z_0\in\C\setminus\R$
 \; (if $f(x)$ exists).  
 \item[{\it (v)}] $f(z)-f(x)$ is invertible for every $z\in\C\setminus\R$
 \; (if $f(x)$ exists).
 \item[{\it (vi)}] $\displaystyle f'(x)$ is invertible
 \; (if $\displaystyle f'(x)$ exists).
\end{itemize}
\end{prop}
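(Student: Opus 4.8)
The plan is to reduce everything to the scalar case: for each $v\in\HH_0$ the function $z\mapsto\langle v|f(z)v\rangle$ is a scalar Nevanlinna function, and on each half-plane $\im f(z)$ has a definite sign, so that $\langle v|\im f(z)v\rangle=0$ forces $\im f(z)v=0$ (a positive semidefinite bounded operator annihilates every vector on which its quadratic form vanishes). The backbone is the first displayed equality $\ker\im f(z)=\{v\in\HH_0:fv\text{ constant on }\C\setminus\R\}$ together with the $z$-independence of both sides (the right-hand side being visibly $z$-independent). The inclusion $\supseteq$ is immediate from $\im f(z)=\tfrac1{2i}(f(z)-f(\overline z))$ (recall $f(z)^\dag=f(\overline z)$): if $f(z)v$ is the same vector for all $z$ then $\im f(z)v=0$. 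For $\subseteq$ I would start exactly as in the paragraph preceding the proposition: if $\im f(z_0)v=0$ for some $z_0$, then $\langle v|f(z)v\rangle$ is a degenerate scalar Nevanlinna function, hence a real constant, so $\langle v|\im f(z)v\rangle=0$ and therefore $\im f(z)v=0$ for every non-real $z$.

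The delicate point in that equality is upgrading ``$\langle v|f(z)v\rangle$ constant'' to ``$f(z)v$ constant''. Fix $w\in\HH_0$. Since $\im f(z)v=0$ and $\im f(z)$ is self-adjoint, $\langle v|\im f(z)w\rangle=\overline{\langle w|\im f(z)v\rangle}=0$, so $\langle v+w|f(z)(v+w)\rangle$ and $\langle w|f(z)w\rangle$ are scalar Nevanlinna functions with the same imaginary part, hence differ by a real constant; this gives that $\langle v|f(z)w\rangle+\langle w|f(z)v\rangle$ is constant in $z$. Repeating with $w$ replaced by $iw$ gives that $\langle v|f(z)w\rangle-\langle w|f(z)v\rangle$ is constant as well, and combining the two shows $\langle w|f(z)v\rangle$ is constant for every $w$, i.e. $f(z)v$ is a constant vector. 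This finishes the first equality and in particular re-proves the $z$-independence of $\ker\im f(z)$.

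For the equality with $\ker(f(z)-f(x))$ I would assume the normal limit $f(x)$ exists as an everywhere defined operator, so that by Proposition~\ref{prop:der-p} $f(x)\in\frak{B}(\HH_0)$ is self-adjoint. If $fv$ is constant, letting $z\to x$ normally gives $f(x)v=f(z)v$, hence $v\in\ker(f(z)-f(x))$ for every $z$. Conversely, $(f(z_0)-f(x))v=0$ gives $\langle v|f(z_0)v\rangle=\langle v|f(x)v\rangle\in\R$, so the scalar Nevanlinna function $\langle v|f(\cdot)v\rangle$ takes a real value at an interior point and is therefore a real constant; then $\im f(z)v=0$ everywhere and the previous step yields that $fv$ is constant. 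For the equality with $\ker f'(x)$: if $fv$ is constant then $f(x+iy)v-f(x)v\equiv0$, so $f'(x)v=0$; conversely $f'(x)v=0$ means the scalar Nevanlinna function $g:=\langle v|f(\cdot)v\rangle$ has vanishing normal derivative at $x$, and using the Herglotz representation $g(z)=\alpha+\beta z+\int\bigl(\tfrac1{t-z}-\tfrac{t}{1+t^2}\bigr)\,d\nu(t)$ with $\beta\ge0$ one checks that $\re\tfrac{g(x+iy)-g(x)}{iy}=\beta+\int\tfrac{d\nu(t)}{(t-x)^2+y^2}$ increases monotonically to $\beta+\int\tfrac{d\nu(t)}{(t-x)^2}$ as $y\downarrow0$, so its vanishing forces $\beta=0$ and $\nu=0$, i.e. $g$ is constant; again $\im f(z)v=0$ and $fv$ is constant. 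Finally, the characterizations (i)--(vi) of non-degeneracy drop out at once: by the three kernel identities and their $z$-independence, injectivity of any one of $\im f(z)$, $f(z)-f(x)$, $f'(x)$ at a single point is equivalent to injectivity at every point, which is precisely non-degeneracy (and in the matrix-valued case injectivity is invertibility).

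I expect the main obstacle to be the scalar-to-vector passage in the second paragraph, namely turning constancy of the quadratic form $\langle v|f(z)v\rangle$ into constancy of the vector $f(z)v$; this needs the polarization trick together with the elementary fact that two scalar Nevanlinna functions with the same imaginary part differ only by a real constant. The remaining delicate ingredients are the two boundary facts about scalar Nevanlinna functions (degeneracy forced by a real interior value; constancy forced by a vanishing normal derivative), for which the Herglotz integral representation is the natural tool, and the routine but necessary bookkeeping needed to keep all limits weak and all the operators involved everywhere defined when $\dim\HH_0=\infty$.
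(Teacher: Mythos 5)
Your proposal is correct, and its skeleton coincides with the paper's Appendix~B proof: both reduce to scalar Nevanlinna functions $\<v|f(z)v\>$, both use the degeneracy dichotomy (a real interior value, equivalently a vanishing imaginary part at one point, forces a real constant), and both settle the $\ker f'(x)$ direction by a Herglotz-representation computation showing that a vanishing normal derivative kills the linear term and the measure (the paper isolates this as Lemma~\ref{lem:deg} at $x=0$ and shifts; you compute directly at general $x$ --- equivalent). The one step where you genuinely diverge is the upgrade from ``$\<v|f(z)v\>$ constant'' to ``$f(z)v$ constant''. The paper decomposes $f(z)$ into blocks with respect to $\ker\im f\oplus(\ker\im f)^\bot$ and observes that the blocks $pf(z)p$ and $qf(z)p$ are simultaneously analytic and anti-analytic (since their adjoints are again blocks of $f$), hence constant on each half-plane; you instead polarize the quadratic form, using $\im f(z)v=0$ and self-adjointness of $\im f(z)$ to show $\<w|f(z)v\>$ has constant value for every $w$. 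Both arguments are sound and of comparable length; yours avoids introducing the block decomposition, while the paper's makes one subtlety explicit that you leave implicit: since $\C\setminus\R$ is disconnected, ``zero imaginary part implies constant'' a priori yields two constants, one per half-plane, and these must be identified using the symmetry $f(\overline z)=f(z)^\dag$ (in your setting, most directly via $f(\overline z)v=f(z)^\dag v=f(z)v$ for $v\in\ker\im f(z)$). That is a one-line addition, not a gap. Your handling of the $\ker(f(z)-f(x))$ converse also differs cosmetically --- you route through the scalar dichotomy, the paper computes $\<v|\im f(z)v\>=0$ directly from the self-adjointness of $f(x)$ --- and your closing remark that ``invertible'' in items {\it(i)}--{\it(vi)} should be read as injectivity of the kernel (coinciding with invertibility in the matrix case) matches the paper's intent.
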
 

Taking into account the previous remark, Propositions \ref{prop:der-p} and \ref{prop:deg-NFR} imply that for every non-degenerate Nevanlinna function $f$ and $x\in\R$, $\displaystyle f'(x)$ is positive definite when it exists as an everywhere defined operator. 

The above properties of non-degenerate Nevanlinna functions will be key to guarantee the proper running of the Schur algorithm on the real line discussed in Section~\ref{sec:SA}: analogously to the case of Schur functions, such a Schur algorithm for Nevanlinna functions will not stop unless a degenerate Nevanlinna iterate arises.

\section{Representations of Nevanlinna functions}
\label{sec:rep-N}
 
As in the unitary case, the FR-functions for self-adjoint operators turn out to be more than mere examples of Nevanlinna functions. We will see in this section that the FR-functions of finite-dimensional subspaces with respect to self-adjoint operators provide all the matrix Nevanlinna functions, up to additive terms which are non-negative multiples of $-z^{-1}$. This gives a new operator representation for matrix Nevanlinna functions, analogue of the operator representation for matrix Schur functions as FR-functions related to unitaries. This result also yields a new integral representation of matrix Nevanlinna functions, providing a self-adjoint version of the integral representation \eqref{eq:f-mu} of matrix Schur functions. This establishes a connection between matrix Nevanlinna functions and matrix measures on the real line which, in contrast to the standard one, is one-to-one, and is in perfect analogy with the usual correspondence between matrix Schur functions and matrix measures on the unit circle via Carath\'eodory functions.

Before proving the above results, let us recall the details of the standard integral representations of matrix Nevanlinna functions see \cite[Chapter III]{ST}, \cite[Sect. I.4]{Brodskii} and \cite{Shmulyan,GT,GKMT}. A general representation formula states that every such a function $f$ can be expressed as
\begin{equation} \label{eq:N-mu}
 f(z) = a + bz + \int \frac{1+zt}{t-z} \, d\nu(t),
\end{equation}
with $a$ a self-adjoint matrix, $b$ a non-negative definite matrix and $\nu$ a finite positive matrix measure on $\R$. This integral representation establishes a one-to-one correspondence between matrix Nevanlinna functions and matrix measures on $\R$ only up to the matrix coefficients $a$, $b$.

A particular situation will be of especial interest. It can be proved that the condition 
\begin{equation} \label{eq:St-char}
 \sup_{y>0} y\|f(iy)\| < \infty
\end{equation}
is equivalent to stating that the matrix Nevanlinna function $f$ is the m-function of a finite (not necessarily normalized) matrix measure $\mu$ on $\R$, i.e.
\begin{equation} \label{eq:St-int}
 f(z) = \int \frac{d\mu(t)}{t-z}.
\end{equation}
Actually, by dominated convergence, such an m-function satisfies 
\begin{equation} \label{eq:St-mu}
 \mu(\R)=\lim_{y\to\infty}-iyf(iy)=\lim_{y\to\infty}y\im f(iy). 
\end{equation}
In the case of an m-function $f(z)=P(T-z)^{-1}P$ associated with a self-adjoint operator $T=\int\!t\,dE(t)$, the related measure $\mu$ is the spectral measure $PEP$ of the subspace where $P$ projects, thus $\mu$ is normalized by $\mu(\R)=1_0$ and 
\begin{equation} \label{eq:St-norm}
 \lim_{y\to0}-iyf(iy)=1_0.
\end{equation} 

The above results will be the key to proving the theorem below. 

To connect with the language in the paper, note that any Nevanlinna function with values in operators on a finite-dimensional Hilbert space $\HH_0$ can be identified with a matrix valued Nevanlinna function by choosing an orthonormal basis in $\HH_0$. Hence, the above results about integral representations can be obviously translated to the operator valued setting if $\dim\HH_0<\infty$.  

\begin{thm}[\bf characterizations of matrix Nevanlinna functions] \label{thm:N=FR}
If $\HH_0$ is a finite-dimensional Hilbert space, the following statements are equivalent:
\begin{itemize}
\item[{\it (i)}] $f$ is a Nevanlinna function with values in $\frak{B}(\HH_0)$.
\item[{\it (ii)}] {\bf Integral representation I:} There exists a positive measure $\mu$ on $\R$ with values in operators on $\HH_0$ such that $\mu(\R)\le1_0$ and, for $z\in\C\setminus\R$,
$$
 \kern40pt
 f(z) = z^{-1} \left(1_0-\left(\int \frac{d\mu(t)}{1-zt}\right)^{-1}\right)
 = z^{-1}1_0 +  m(z^{-1})^{-1},
 \quad  m(z) = \int \frac{d\mu(t)}{t-z}. 
$$
\item[{\it (iii)}] {\bf Integral representation II:} There exist a positive measure $\hat\mu$ on $\R$ with values in operators on $\HH_0$ and a self-adjoint operator $c$ on $\HH_0$ such that $\hat\mu(\R)=1_0$, $c\le1_0$ and, for $z\in\C\setminus\R$,
$$
 \kern40pt
 f(z) = z^{-1} \left(c-\left(\int \frac{d\hat\mu(t)}{1-zt}\right)^{-1}\right)
 = cz^{-1} +  \hat{m}(z^{-1})^{-1},
 \quad  \hat{m}(z) = \int \frac{d\hat\mu(t)}{t-z}. 
$$
\item[{\it (iv)}] {\bf Operator representation:} $f$ is the FR-function of a closed subspace with respect to a self-adjoint operator, up to the addition of a non-negative multiple of $-z^{-1}$, i.e. there exist a self-adjoint operator $T$ on a Hilbert space $\HH\supset\HH_0$ and a self-adjoint operator $d$ on $\HH_0$ such that $d\ge0$ and, for $z\in\C\setminus\R$,
$$
 \kern40pt f(z) = -dz^{-1} + PT(1-zQT)^{-1}P,
 \quad
 \begin{aligned}
 	& P = \text{orthogonal projection of } \HH \text{ onto } \HH_0,
	\\[-1pt]
 	& Q = 1-P.
 \end{aligned}
$$
\end{itemize}
The correspondence between Nevanlinna functions and measures on $\R$ established by the integral representation {\it (ii)} is one-to-one. The operators $c$ and $d$ in {\it (iii)} and {\it (iv)} are given by $d=\lim_{y\to0}-iyf(iy)=\lim_{y\to0}y\im f(iy)$ and $c=1_0-d$. The correspondence between Nevanlinna functions and pairs $(\hat\mu,c)$ established by the integral representation {\it (iii)} is one-to-one.
\end{thm}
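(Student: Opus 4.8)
The plan is to prove the cycle {\it (iv)}$\Rightarrow${\it (i)}, {\it (i)}$\Rightarrow${\it (ii)}, {\it (ii)}$\Rightarrow${\it (iii)}, {\it (iii)}$\Rightarrow${\it (iv)}, reading off the formulas for $c,d$ and the uniqueness statements along the way. The backbone is the self-adjoint renewal equation \eqref{eq:f-s-sa}, available for finite-dimensional $\HH_0$ by Theorem~\ref{thm:FR=N}{\it (i)}: the FR-function of $\HH_0$ with respect to a self-adjoint $T=\int t\,dE(t)$ equals $z^{-1}1_0+m(z^{-1})^{-1}$, where $m(w)=P(T-w)^{-1}P=\int(t-w)^{-1}\,d\mu(t)$ is the $m$-function of the spectral measure $\mu=PEP$, normalized by $\mu(\R)=1_0$; and conversely, Naimark's dilation theorem \cite{Naimark} realizes every positive $\frak{B}(\HH_0)$-valued measure of total mass $1_0$ as such a $PEP$. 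Hence {\it (iii)} and {\it (iv)} are the same statement after the substitution $f(z)=-dz^{-1}+\bigl(z^{-1}1_0+m(z^{-1})^{-1}\bigr)=(1_0-d)z^{-1}+m(z^{-1})^{-1}$, i.e. with $c=1_0-d$ and $\hat\mu=\mu$; the direction {\it (iv)}$\Rightarrow${\it (iii)} uses \eqref{eq:f-s-sa}, the direction {\it (iii)}$\Rightarrow${\it (iv)} uses Naimark's theorem (and $\dim\HH_0<\infty$ keeps Theorem~\ref{thm:FR=N}{\it (i)} applicable even if the dilation $T$ is unbounded). Moreover {\it (iv)}$\Rightarrow${\it (i)} is immediate: $-dz^{-1}$ is Nevanlinna whenever $d\ge0$, $PT(1-zQT)^{-1}P$ is Nevanlinna by Theorem~\ref{thm:FR=N}{\it (i)}, and sums of Nevanlinna functions are Nevanlinna. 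Evaluating the {\it (iv)}-form at $z=iy$ and letting $y\to0$, with $m(-i/y)\sim-iy\,1_0$ for the FR-part, gives $d=\lim_{y\to0}-iyf(iy)=\lim_{y\to0}y\,\im f(iy)$, pinning down $d$ (hence $c$) in terms of $f$.

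For {\it (i)}$\Rightarrow${\it (ii)} the key device is the transformation $g(w):=f(1/w)-w\,1_0$. If $w\in\C_+$ then $1/w\in\C_-$, so $\im f(1/w)\le0$ and $\im g(w)\le-(\im w)1_0<0$; hence $g(w)$ is invertible with $\|g(w)^{-1}\|\le(\im w)^{-1}$, the symmetry $g(w)^\dag=g(\overline w)$ carries over to $m:=g^{-1}$, and $\im m(w)=-g(w)^{-1}\bigl(\im g(w)\bigr)g(w)^{-\dag}\ge0$, so $m$ is a Nevanlinna function with $\sup_{y>0}y\|m(iy)\|\le1$. By the characterization \eqref{eq:St-char}--\eqref{eq:St-int} of $m$-functions, $m(w)=\int(t-w)^{-1}\,d\mu(t)$ for a finite positive $\frak{B}(\HH_0)$-valued measure $\mu$, and by \eqref{eq:St-mu} one has $\mu(\R)=\lim_{y\to\infty}y\,\im m(iy)$, a positive operator of norm $\le1$, i.e. $\mu(\R)\le1_0$ (in fact $\mu(\R)>0$, since $m$ is everywhere invertible). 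Unravelling $m(z^{-1})^{-1}=g(z^{-1})=f(z)-z^{-1}1_0$ gives the first identity in {\it (ii)}, and $\int(1-zt)^{-1}\,d\mu(t)=-z^{-1}m(1/z)=(1_0-zf(z))^{-1}$ gives \eqref{eq:int-rep}.

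The main work is {\it (ii)}$\Rightarrow${\it (iii)}. Given $\mu$ with $0<\mu(\R)\le1_0$, set $d:=\mu(\R)^{-1}-1_0\ge0$, $c:=1_0-d\le1_0$, and $\hat m(w):=(m(w)^{-1}+dw)^{-1}$. Algebra already gives $cz^{-1}+\hat m(z^{-1})^{-1}=cz^{-1}+dz^{-1}+m(z^{-1})^{-1}=z^{-1}1_0+m(z^{-1})^{-1}=f(z)$, so everything reduces to showing that $\hat m$ is the $m$-function of a positive measure $\hat\mu$ with $\hat\mu(\R)=1_0$. This is where the hypothesis $\mu(\R)\le1_0$ is genuinely used: from the positive semidefiniteness of the Gram-type operator matrix with diagonal blocks $\mu(\R)$ and $\int|t-w|^{-2}\,d\mu(t)$ and off-diagonal blocks $m(w),\,m(w)^\dag$, its Schur complement relative to $\mu(\R)$ yields the Cauchy--Schwarz inequality $\int|t-w|^{-2}\,d\mu(t)\ge m(w)\,\mu(\R)^{-1}\,m(w)^\dag$; combined with $d\le\mu(\R)^{-1}$ and $\im m(w)=(\im w)\int|t-w|^{-2}\,d\mu(t)$ this gives $\im\bigl(m(w)^{-1}+dw\bigr)\le-(\im w)1_0$ on $\C_+$, so $\hat m$ is Nevanlinna with $\sup_{y>0}y\|\hat m(iy)\|\le1$. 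By \eqref{eq:St-char}--\eqref{eq:St-int} it is an $m$-function, and a first-order expansion of $m$ at infinity (using $d=\mu(\R)^{-1}-1_0$, so that $m(iy)^{-1}+d(iy)\sim-iy\,1_0$) together with \eqref{eq:St-mu} gives $\hat\mu(\R)=\lim_{y\to\infty}y\,\im\hat m(iy)=1_0$.

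Finally, for the two bijections: in both {\it (ii)} and {\it (iii)} the data are recovered from $f$ --- $d$, hence $c=1_0-d$, from $d=\lim_{y\to0}-iyf(iy)$; then $\hat m(w)^{-1}=f(1/w)-cw$ (resp. $m(w)^{-1}=f(1/w)-w\,1_0$) determines $\hat m$ (resp. $m$), and Stieltjes inversion determines $\hat\mu$ (resp. $\mu$) --- so the maps $f\mapsto\mu$ and $f\mapsto(\hat\mu,c)$ are injective, while surjectivity onto the prescribed classes is exactly the content of the implications {\it (ii)}$\Rightarrow${\it (i)} and {\it (iii)}$\Rightarrow${\it (i)} that the cycle already yields. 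The step I expect to fight hardest with is {\it (ii)}$\Rightarrow${\it (iii)}: converting the non-normalized measure of Integral representation~I into the normalized one of Integral representation~II is precisely where the bound $\mu(\R)\le1_0$ does its work, with the operator Cauchy--Schwarz/Schur-complement estimate as the linchpin; a secondary nuisance is keeping careful track of the normal limits at $0$ and at $\infty$ when $T$ (or the support of $\mu$) is unbounded.
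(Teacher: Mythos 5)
Your argument is correct, but the second half of your cycle is genuinely different from the paper's. Both proofs share the step {\it (i)}$\Rightarrow${\it (ii)} (inverting $g(w)=f(1/w)-w1_0$, bounding $y\|m(iy)\|\le1$, and invoking the m-function characterization \eqref{eq:St-char}--\eqref{eq:St-mu}), and both ultimately rest on Naimark's dilation plus the renewal equation \eqref{eq:f-s-sa}. But the paper closes the loop as {\it (ii)}$\Rightarrow${\it (iv)}$\Rightarrow${\it (iii)}$\Rightarrow${\it (i)}: it dilates the renormalized measure $\mu_0=\mu(\R)^{-1/2}\mu\,\mu(\R)^{-1/2}$, then absorbs the factors $\mu(\R)^{\mp1/2}$ into the operator by passing to $\tilde{T}=\hat{T}_0T\hat{T}_0$ via Theorem~\ref{thm:split2} (a forward reference to Section~\ref{sec:KF}), and finally proves {\it (iii)}$\Rightarrow${\it (i)} by an explicit computation of $\im f$ from an operator representation of $\hat{m}$. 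You instead go {\it (ii)}$\Rightarrow${\it (iii)}$\Rightarrow${\it (iv)}: the passage from the unnormalized to the normalized measure is carried out entirely on the function-theoretic side, by showing that $\hat{m}=(m^{-1}+dw)^{-1}$ with $d=\mu(\R)^{-1}-1_0$ satisfies $\im(\hat{m}(w)^{-1})\le-(\im w)1_0$ on $\C_+$ via the Gram/Cauchy--Schwarz inequality and then reading $\hat\mu(\R)=1_0$ off the asymptotics at infinity; Naimark is then applied only to the already normalized $\hat\mu$, which makes {\it (iii)}$\Rightarrow${\it (iv)} immediate and gives {\it (iii)}$\Rightarrow${\it (i)} for free from Theorem~\ref{thm:FR=N}.{\it(i)}. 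Your route avoids both the rescaling trick and the dependence on Theorem~\ref{thm:split2}, and it isolates exactly where the hypothesis $\mu(\R)\le1_0$ enters; the paper's route keeps the operator dilation explicit throughout. One bookkeeping point: the Schur complement you actually need is $\int|t-w|^{-2}\,d\mu(t)\ge m(w)^\dag\,\mu(\R)^{-1}\,m(w)$, since it is this version that yields $(m(w)^\dag)^{-1}\bigl(\int|t-w|^{-2}\,d\mu\bigr)m(w)^{-1}\ge\mu(\R)^{-1}=d+1_0$ and hence $\im\bigl(m(w)^{-1}+dw\bigr)\le-(\im w)1_0$; the inequality you wrote, with $m$ and $m^\dag$ interchanged, is also true (take $(t-\overline{w})^{-1}$ rather than $(t-w)^{-1}$ as the second test function in the Gram matrix) but does not plug directly into that estimate. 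This is a harmless slip, not a gap.
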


\begin{proof}
Bearing in mind that $-z^{-1}$ is a Nevanlinna function, Theorem~\ref{thm:FR=N}.{\it(i)} gives the implication {\it (iv) $\Rightarrow$ (i)}. Also, using the one-to-one correspondence $\mu\mapsto m(z)$ between finite operator valued measures on $\R$ and their m-functions, {\it (ii)} leads to a one-to-one correspondence $\mu\mapsto f(z)=z^{-1}1_0+ m(z^{-1})^{-1}$ between Nevanlinna functions $f$ and measures $\mu$ such that $\mu(\R)\le1_0$. Due to a similar reason, asuming that the self-adjoint operator $c$ in {\it (iii)} is fixed by the Nevanlinna function $f$, {\it (iii)} yields a one-to-one correspondence between such functions and pairs $(\mu,c)$ with $\mu(\R)=1_0$ and $c\le1_0$. To prove the theorem we will show that {\it (i) $\Rightarrow$ (ii) $\Rightarrow$ (iv) $\Rightarrow$ (iii) $\Rightarrow$ (i)}, obtaining also the values of the operators $c$ and $d$ in terms of the Nevanlinna function $f$. 

\medskip

\noindent \fbox{\it (i) $\Rightarrow$ (ii)} 

\smallskip

All that we must prove is that, if $f$ is a Nevanlinna function, then $ m(z)=(f(z^{-1})-z1_0)^{-1}$ is the m-function of a matrix measure $\mu$ on $\R$ such that $\mu(\R)\le1_0$. This is proved if we show that $ m$ is a Nevanlinna function satisfying $y\| m(iy)\|\le1$ for $y>0$ because in this case $\sup_{y>0} y\| m(iy)\|<\infty$ and
$$
 \<v|y\im m(iy)v\> = \im\<v|y m(iy)v\> 
 \le |\<v|y m(iy)v\>| \le y\|s(iy)\| \|v\|^2 \le \|v\|^2, 
 \quad v\in\HH_0, \quad y>0, 
$$
so that $y\im s(iy)\le1_0$ for $y>0$ and the measure $\mu$ related to $ m$ satisfies
$$
 \mu(\R) = \lim_{y\to\infty}y\im m(iy) \le 1_0.
$$ 

Note that $g(z)=f(z)-z^{-1}1_0$ is a non-degenerate Nevanlinna function because $f(z)$ and $-z^{-1}$ are both Nevanlinna functions, $-z^{-1}$ being non-degenerate. Therefore, $\ker g$ is trivial on $\C\setminus\R$, which means that $g$ is invertible there. Bearing in mind that $\dim\HH_0<\infty$, we conclude that $g(z)^{-1}\in\frak{B}(\HH_0)$ is analytic for $z\in\C\setminus\R$. From the identity $\im g^{-1}=-(g^{-1})^\dag (\im g) g^{-1}$ we find that $\im g(z)^{-1}/\im z<0$. Thus, $\im g(z^{-1})^{-1}/\im z>0$, proving that $ m(z)=g(z^{-1})^{-1}$ is a Nevanlinna function again. 

Concerning the bound for $y\| m(iy)\|$, let us write for convenience $y m(iy)=-h(y)^{-1}$ with $h(y)=i1_0-y^{-1}f(-iy^{-1})$. Assume that $y>0$. Then $\im f(-iy^{-1})\le0$ so that $\im h(y)\ge1_0$. Hence,
$$
 |\<v|h(y)v\>| \ge |\im\<v|h(y)v\>| = \<v|\im h(y)v\> \ge \|v\|^2,
 \qquad v\in\HH_0.
$$
Rewriting this inequality in terms of $u=h(y)v$ --a vector of $\HH_0$ which is as arbitrary as $v$-- yields
$$
 \|y m(iy)u\| \|u\| \ge |\<y m(iy)u|u\>| \ge \|y m(iy)u\|^2,
 \qquad u\in\HH_0.
$$
Therefore, $\|y m(iy)u\|\le\|u\|$ for $u\in\HH_0$, which means that $y\| m(iy)\|\le1$. 

\medskip

\noindent \fbox{\it (ii) $\Rightarrow$ (iv)} 

\smallskip

Suppose that $f(z)=z^{-1}1_0+ m(z^{-1})^{-1}$ for every non-real $z$, with $ m$ the m-function of a finite measure $\mu$ on $\R$ such that $\mu(\R)\le1$. Let us see first that $\mu(\R)$ is invertible. This follows from the fact that $ m$ is an invertible Nevanlinna function, which implies that
$$
 0 < \frac{\im m(z)}{\im z} = \int \frac{d\mu(t)}{|t-z|^2} 
 \le \int \frac{d\mu(t)}{(\im z)^2} = \frac{\mu(\R)}{(\im z)^2},
 \qquad z\in\C\setminus\R.
$$

Therefore, we can define the measure $\mu_0=\mu(\R)^{-1/2}\,\mu\;\mu(\R)^{-1/2}$, which is normalized by $\mu_0(\R)=1_0$. Naimark's dilation theorem guarantees that $\mu_0=PEP$, where $E$ is a spectral measure on $\R$ with values in operators on a Hilbert space $\HH\supset\HH_0$ and $P$ is the orthogonal projection of $\HH$ onto $\HH_0$. Then, the self-adjoint operator $T=\int\!t\,dE(t)$ generates the m-function $ m_0(z)=P(T-z)^{-1}P=\int\!d\mu_0(t)/(t-z)$ for $\HH_0$, related to the Nevanlinna FR-function $f_0(z)=PT(1-zQT)^{-1}P$ by $f_0(z)=z^{-1}+ m_0(z^{-1})^{-1}$, according to \eqref{eq:f-s-sa}. Since $ m(z)=\mu(\R)^{1/2}\, m_0(z)\,\mu(\R)^{1/2}$, we find that
$$
 f(z) = z^{-1}1_0 + \mu(\R)^{-1/2}\, m_0(z^{-1})^{-1}\mu(\R)^{-1/2}
 = z^{-1}(1_0-\mu(\R)^{-1}) + \mu(\R)^{-1/2}f_0(z)\,\mu(\R)^{-1/2}.
$$

Consider the self-adjoint operator $\tilde{T}=\hat{T}_0T\hat{T}_0$, where $\hat{T}_0=T_0 \oplus 1_0^\bot$ is the sum of $T_0=\mu(\R)^{-1/2}$ on $\HH_0$ and the identity $1_0^\bot$ on $\HH_0^\bot$. Applying Theorems~\ref{thm:split2}.{\it(ii)} and \ref{thm:split2}.{\it(iii)} we find that the FR-function of $\HH_0$ with respect to $\tilde{T}$ is $\tilde{f}(z)=T_0f_0(z)T_0=\mu(\R)^{-1/2}f_0(z)\mu(\R)^{-1/2}$. Therefore,
$$
 f(z) = -dz^{-1} + P\tilde{T}(1-zQ\tilde{T})^{-1}P,
 \qquad d=\mu(\R)^{-1}-1_0,
$$ 
where $d=\mu(\R)^{-1/2}(1_0-\mu(\R))\mu(\R)^{-1/2}\ge0$ because $\mu(\R)\le1_0$.

\medskip

\noindent \fbox{\it (iv) $\Rightarrow$ (iii)} 

\smallskip

Let $f(z) = -dz^{-1} + PT(1-zQT)^{-1}P$ for $z\in\C\setminus\R$, with $T$ a self-adjoint operator on $\HH\supset\HH_0$, $P$ the orthogonal projection onto $\HH_0$, $Q=1-P$ and $d\ge0$. From \eqref{eq:f-s-sa}, the FR-function $PT(1-zQT)^{-1}P$ can be expressed as $PT(1-zQT)^{-1}P=z^{-1}+ \hat{m}(z^{-1})^{-1}$, where $\hat{m}$ is the m-function of the spectral measure $\hat\mu$ of $\HH_0$ with respect to $T$, which satisfies $\hat\mu(\R)=1_0$. Therefore,
$$
 f(z) = cz^{-1} +  \hat{m}(z^{-1})^{-1},
 \qquad c=(1_0-d)z^{-1}, 
$$
with $c\le1_0$ because $d\ge0$.

\medskip

\noindent \fbox{\it (iii) $\Rightarrow$ (i)} 

\smallskip

Assume that $f(z)=cz^{-1}+ \hat{m}(z^{-1})^{-1}$ for every non-real $z$, with $c\le1_0$ and $\hat{m}$ the m-function of a measure $\hat\mu$ on $\R$ such that $\hat\mu(\R)=1_0$. Since $\hat{m}$ is a Nevanlinna function and $c$ is self-adjoint, $f$ is analytic on $\C\setminus\R$ and $f(z)^\dag=f(\overline{z})$. To show that $\im f(z)/\im z\ge0$ we will use again Naimark's dilation theorem to get a spectral measure $E$ on $\R$ with values in operators on a Hilbert space $\HH\supset\HH_0$ such that $\hat\mu=PEP$, where $P$ is the orthogonal projection of $\HH$ onto $\HH_0$. Then, the self-adjoint operator $T=\int\!t\,dE(t)$ yields the operator representation $\hat{m}(z)=P(T-z)^{-1}P$. Using this representation we find that
$$
 \im \hat{m}(z) = \frac{1}{2i} P((T-z)^{-1}-(T-\overline{z})^{-1})P 
 =  (\im z) P(T-\overline{z})^{-1}(T-z)^{-1}P.
$$
Hence,
$$
\begin{aligned}
 \im f(z^{-1}) 
 & = c\im z - (\hat{m}(z)^\dag)^{-1} (\im \hat{m}(z)) \, \hat{m}(z)^{-1}
 \\ 
 & = (\im z) \, 
 (c - 
 (\hat{m}(z)^\dag)^{-1} P(T-\overline{z})^{-1}(T-z)^{-1}P \, 
 \hat{m}(z)^{-1})
 \\
 & = -(\im z) \,
 (\hat{m}(z)^\dag)^{-1} 
 P(T-\overline{z})^{-1}(1-PcP)(T-z)^{-1}P 
 \, \hat{m}(z)^{-1}
 \\
 & = -(\im z) \,
 (\hat{m}(z)^\dag)^{-1} 
 P(T-\overline{z})^{-1}(P(1_0-c)P+Q)(T-z)^{-1}P 
 \, \hat{m}(z)^{-1}.
\end{aligned}
$$ 
This expression shows that $\im f(z^{-1})/\im z\le0$, which is equivalent to $\im f(z)/\im z\ge0$, proving that $f$ is a Nevanlinna function. 

Concerning the values of the operators $c$ and $d$, note that the proof of {\it (iv) $\Rightarrow$ (iii)} has shown that, if $d$ satisfies {\it(iv)}, then $c=1_0-d$ satisfies {\it(iii)}. Besides, if $c$ satisfies {\it(iii)}, using \eqref{eq:St-mu} for the m-function $\hat{m}$, we find for the function $f$ given by {\it(iii)} that we have
$$
 \lim_{y\to0}-iyf(iy) 
 = -c + \left(\lim_{y\to0}iy^{-1} \hat{m}(-iy^{-1})\right)^{-1}
 = -c+\hat\mu(\R) = 1_0-c.
$$
This finishes the proof of the theorem.     
\end{proof}

The degree of freedom of the integral representation {\it(ii)} in the normalization of the measure is translated into the degree of freedom in the coefficient of $z^{-1}$ for the integral representation {\it(iii)}. The impossibility to guarantee the simultaneous normalization of both terms has to do with the fact that FR-functions of self-adjoint operators give all the Nevanlinna functions but only up to $-z^{-1}$ terms, as {\it(iv)} states.

The above theorem uncovers a surprising result about the set of Nevanlinna functions: all its elements can be recovered from those of the proper subset of m-functions by performing the simple transformation $m(z) \mapsto z^{-1}1_0+ m(z^{-1})^{-1}$. This result is the reason behind the following definition.  

\begin{defn} \label{def:N-mu}
To each Nevanlinna function $f$ with values in operators on a finite-dimensional Hilbert space $\HH_0$ we associate a measure $\mu$ on $\R$ such that $\mu(\R)\le1_0$ through the integral representation I in Theorem~\ref{thm:N=FR}.{\it(ii)}, i.e. $\mu$ is the only of such measures satisfying
$$
 \int \frac{d\mu(t)}{1-zt} = (1_0-zf(z))^{-1},
 \qquad 
 z\in\C\setminus\R.
$$
We say that $\mu$ is the {\bf measure of the Nevanlinna function} $f$, or that $f$ is the {\bf Nevanlinna function of the measure} $\mu$. 
\end{defn}

The measure $\mu$ of a Nevanlinna function, as defined previously, is different from the measure given by the integral representation II in Theorem~\ref{thm:N=FR}.{\it(iii)} unless $\mu(\R)=1_0$. It is also different from the measure appearing in the usual integral representation \eqref{eq:N-mu}, and from the measure of an m-function given by \eqref{eq:St-int}. In contrast to these other measures, according to Theorem~\ref{thm:N=FR}, that one given in Definition~\ref{def:N-mu} establishes a one-to-one correspondence between the set of all matrix Nevanlinna functions and a subset of matrix measures on $\R$, namely, the subset of measures $\mu$ such that $\mu(\R)\le1_0$. Definition~\ref{def:N-mu} is the exact analogue of the relation \eqref{eq:st-sc} characterizing the measure of a Schur function.  

It is worth to remark that we would not have reached the almost equivalence --only up to $-z^{-1}$ terms-- between Nevanlinna functions and FR-functions for self-adjoint operators if the latter ones were defined by the transfer/characteristic function representation given by Proposition~\ref{prop:falt}.{\it (ii)}. The reason for this is that the expressions of FR-functions given by this proposition are limited to subspaces included in the domain of the operator. As Lemma~\ref{lem:der} in Appendix~A shows, this requires for the Nevanlinna functions to have a well defined normal limit at the origin, something violated by some Nevanlinna functions different from $-z^{-1}$, such as for instance $\ln(z)$. 

As a direct consequence of Theorem~\ref{thm:N=FR}, we get the following characterization of the Nevanlinna FR-functions, that is, the Nevanlinna functions for which there is an operator representation such as in {\it(iv)} of the previous theorem with $d=0$. This is equivalent to stating that $c=1_0$, which means that $\hat\mu=\mu$, i.e. the integral representations I and II of such theorem coincide. In other words, Nevanlinna FR-functions are those Nevanlinna functions whose measure $\mu$ --as given by Definition~\ref{def:N-mu}-- satisfies $\mu(\R)=1_0$.

\begin{cor}[\bf characterizations of matrix Nevanlinna FR-functions] \label{cor:N=FR}
If $\HH_0$ is a finite-dimensional Hilbert space, the following statements are equivalent:
\begin{itemize}
\item[{\it (i)}] $f$ is a Nevanlinna function with values in $\frak{B}(\HH_0)$ such that $\lim_{y\to0}y\im f(iy)=0$.
\item[{\it (ii)}] $f$ has an integral representation \eqref{eq:N-mu} whose measure $\nu$ has no mass point at the origin.
\item[{\it (iii)}] {\bf Integral representation:} There exists a positive measure $\mu$ on $\R$ with values in operators on $\HH_0$ such that $\mu(\R)=1_0$ and, for $z\in\C\setminus\R$,
$$
\begin{aligned}
 \kern40pt
 f(z) & = z^{-1} \left(1_0-\left(\int \frac{d\mu(t)}{1-zt}\right)^{-1}\right)
 = \left(\int\frac{t\,d\mu(t)}{1-zt}\right) 
 \left(\int\frac{d\mu(t)}{1-zt}\right)^{-1}
 \\
 & = z^{-1}1_0 +  m(z^{-1})^{-1},
 \qquad  m(z) = \int \frac{d\mu(t)}{t-z}. 
\end{aligned}
$$
\item[{\it (iv)}] {\bf Operator representation:} $f$ is strictly the FR-function of a closed subspace with respect to a self-adjoint operator, i.e. there exists a self-adjoint operator $T$ on a Hilbert space $\HH\supset\HH_0$ such that, for $z\in\C\setminus\R$,
$$
 \kern40pt f(z) = PT(1-zQT)^{-1}P,
 \quad
 \begin{aligned}
 	& P = \text{orthogonal projection of } \HH \text{ onto } \HH_0,
	\\[-1pt]
 	& Q = 1-P.
 \end{aligned}
$$
\end{itemize}
The correspondence between Nevanlinna FR-functions and measures on $\R$ established by the representation {\it (iii)} is one-to-one.
\end{cor}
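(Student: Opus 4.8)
The plan is to read the corollary off Theorem~\ref{thm:N=FR}, supplemented by one short computation with the classical representation \eqref{eq:N-mu}. Throughout put $d:=\lim_{y\to0}y\,\im f(iy)$. Theorem~\ref{thm:N=FR} tells us that, for a Nevanlinna function $f$ with values in operators on $\HH_0$, this limit exists, equals $-\lim_{y\to0}iyf(iy)$, is self-adjoint and non-negative, and that the operator representation $f(z)=-dz^{-1}+PT(1-zQT)^{-1}P$ holds; moreover the measure $\mu$ of $f$ in the sense of Definition~\ref{def:N-mu} satisfies $\mu(\R)\le1_0$ and, by the proof of the implication {\it(ii)}$\Rightarrow${\it(iv)} of that theorem, $d=\mu(\R)^{-1}-1_0$. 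In particular $d=0$ if and only if $\mu(\R)=1_0$.

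\emph{Equivalence of {\it(i)}, {\it(iii)} and {\it(iv)}.} By definition condition {\it(i)} of the corollary says that $f$ is Nevanlinna and $d=0$. By Theorem~\ref{thm:N=FR} the first clause is equivalent to the operator representation {\it(iv)} of that theorem; adding $d=0$ turns it into condition {\it(iv)} of the corollary, so {\it(i)}$\Leftrightarrow${\it(iv)}. For {\it(iv)}$\Leftrightarrow${\it(iii)}, recall $d=0\Leftrightarrow\mu(\R)=1_0$; when $\mu(\R)=1_0$ one has $c=1_0-d=1_0$ and $\hat\mu=\mu$, so integral representations I and II of Theorem~\ref{thm:N=FR} coincide and reduce to the displayed formula of {\it(iii)}. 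The extra expression $\left(\int t\,d\mu(t)/(1-zt)\right)\left(\int d\mu(t)/(1-zt)\right)^{-1}$ is just $z^{-1}(1_0-s(z)^{-1})=z^{-1}(s(z)-1_0)s(z)^{-1}$ rewritten via the identity $s(z)-1_0=z\int t\,d\mu(t)/(1-zt)$, which holds because $\mu(\R)=1_0$ forces $1_0=\int d\mu(t)$. Here $\mu$ is automatically the measure of $f$ since the formula in {\it(iii)} rearranges to $\int d\mu(t)/(1-zt)=(1_0-zf(z))^{-1}$; hence restricting the one-to-one correspondence of Theorem~\ref{thm:N=FR}.{\it(ii)} to the measures satisfying $\mu(\R)=1_0$ yields the claimed bijection.

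\emph{Equivalence with {\it(ii)}.} It remains to match {\it(i)} with the condition on the measure $\nu$ in the classical representation $f(z)=a+bz+\int\frac{1+zt}{t-z}\,d\nu(t)$, where $a=a^\dag$, $b\ge0$ and $\nu$ is finite and positive. Since $\im a=0$ and $\im(bz)=b\,\im z$, so $\im(biy)=yb$, and since $\im\frac{1+iyt}{t-iy}=\frac{y(1+t^2)}{t^2+y^2}$ for real $t$, one gets $y\,\im f(iy)=y^2b+\int\frac{y^2(1+t^2)}{t^2+y^2}\,d\nu(t)$ for $y>0$. Splitting the range of integration at $|t|=1$ shows the scalar integrand is bounded by $2$ uniformly for $y\in(0,1]$, while as $y\to0$ it converges pointwise to the indicator of $\{0\}$; dominated convergence (legitimate since $\nu$ is finite) then gives $d=\lim_{y\to0}y\,\im f(iy)=\nu(\{0\})$. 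Therefore $d=0$ --that is, condition {\it(i)}-- holds if and only if $\nu(\{0\})=0$, which is condition {\it(ii)}.

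The only ingredient beyond Theorem~\ref{thm:N=FR} is this last computation, and the single point requiring care there is the uniform bound that justifies passing to the limit under the integral sign. I expect that step, which is rather mild, to be the main obstacle.
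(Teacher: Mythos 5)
Your proposal is correct and follows essentially the same route as the paper: the equivalences {\it(i)}$\Leftrightarrow${\it(iii)}$\Leftrightarrow${\it(iv)} are read off Theorem~\ref{thm:N=FR} via $d=0\Leftrightarrow\mu(\R)=1_0\Leftrightarrow c=1_0$, and {\it(i)}$\Leftrightarrow${\it(ii)} is the dominated convergence computation showing $\lim_{y\to0}y\,\im f(iy)=\nu(\{0\})$. The paper performs that last limit on $-iyf(iy)$ with the bound $|(1+iyt)/(1+iy^{-1}t)|^2\le1+y^4$, whereas you take imaginary parts first and bound the nonnegative integrand $y^2(1+t^2)/(t^2+y^2)$ by $2$; the two computations are interchangeable.
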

    
\begin{proof}
In view of Theorem~\ref{thm:N=FR}, it only remains to see the equivalence {\it (i) $\Leftrightarrow$ (ii)}. This follows from the fact that a matrix valued function $f$ given by the integral representation \eqref{eq:N-mu} satisfies
$$
 \lim_{y\to0} -iyf(iy) = \lim_{y\to0} \int \frac{1+iyt}{1+iy^{-1}t} \, d\nu(t) 
 = \nu(\{0\}).
$$
The last equality is a consequence of the dominated convergence theorem, bearing in mind that 
$$
 \lim_{y\to0} \frac{1+iyt}{1+iy^{-1}t} = 
 \begin{cases}
 	0, & t\in\R\setminus\{0\}, \\ 1, & t=0,
 \end{cases}
 \qquad 
 \left|\frac{1+iyt}{1+iy^{-1}t}\right|^2 = \frac{1+y^2t^2}{1+y^{-2}t^2} 
 \le 1+y^4,
 \qquad
 y\in\R\setminus\{0\}. 
$$  
\end{proof} 

In the case of a Nevanlinna FR-function $f$, the measure $\mu$ given by Definition~\ref{def:N-mu} coincides with the spectral measure of the subspace related to an operator representation of $f$ as FR-function.

Some of the examples in Section~\ref{ssec:RW-ex} provide operator representations of Nevanlinna functions because there we calculate FR-functions related to stochastic matrices, which in some instances are symmetrizable. We complete this discussion with some examples of the integral representations I and II of Nevanlinna functions.

\subsection{Examples of measures of Nevanlinna functions}
\label{ssec:measure-ex}

To illustrate the previous results, let us uncover the measures of some simple scalar Nevanlinna functions. For this purpose we must identify the measure $\mu$ of the m-function $m$ related to the Nevanlinna function $f$ in question by the integral representation I, i.e.
$$
 \int \frac{d\mu(t)}{t-z} = m(z) = \frac{1}{f(z^{-1})-z}.
$$

\begin{ex} \label{ex:1p} 

Constant Nevanlinna functions.

Every constant Nevanlinna function is real. The m-function associated with a function $f(z)=c\in\R$ is given by
$$
 m(z) = \frac{1}{c-z} = \int \frac{\delta(t-c)\,dt}{t-z}.
$$ 
Therefore the measure of a constant Nevanlinna function $f(z)=c$ is a single Dirac delta $d\mu(t)=\delta(t-c)\,dt$ at the constant $c$. Since $\mu(\R)=1$, constant Nevanlinna functions are FR-functions. 

\end{ex}

\begin{ex} \label{ex:2p}

Nevanlinna polynomials of degree one.

These Nevanlinna functions are those of the form $f(z)=\lambda z+c$ with $\lambda>0$ and $c\in\R$. The corresponding m-function is given by
$$
 m(z) = \frac{z}{\lambda+cz-z^2}.
$$
The polynomial $z^2-cz-\lambda=(z-a)(z-b)$ has roots $a$, $b$ of different sign because $\lambda=-ab>0$. Hence,
$$
 m(z) = \frac{m_a}{a-z} + \frac{m_b}{b-z} 
 = \int \frac{(m_a\delta(t-a)+m_b\delta(t-b))\,dt}{t-z},
 \qquad
 \begin{aligned}
 	& m_a+m_b=1,
	\\
	& m_ab+m_ba=0,
 \end{aligned}
$$
which shows that the measures $d\mu(t)=(m_a\delta(t-a)+m_b\delta(t-b))\,dt$, constituted by two Dirac deltas at the roots of $z^2-cz-\lambda$, are the measures of these Nevanlinna polynomials. As in the previous example, these polynomials are Nevanlinna FR-functions becasue $\mu(\R)=m_a+m_b=1$.

As a particular case, $d\mu(t)=\frac{1}{2}(\delta(t-1)+\delta(t+1))\,dt$ is the measure of $f(z)=z$.

\end{ex}

\begin{ex} \label{ex:1/z}

The Nevanlinna function 
$$
 f(z) = -\lambda z^{-1}, \qquad \lambda>0,
$$
yields the m-function
$$
 m(z) = -\frac{1}{(\lambda+1)z} 
 = \frac{1}{(\lambda+1)} \int \frac{\delta(t)\,dt}{t-z},
$$
thus $d\mu(t)=(\lambda+1)^{-1}\delta(t)\,dt$ is the corresponding measure. In contrast to the previous examples, this Nevanlinna function is not an FR-function for any $\lambda>0$ because $\mu(\R)=(\lambda+1)^{-1}<1$. To obtain the normalized measure $\hat\mu$ of the alternative integral representation II note that $c=1-\lim_{y\to0}-iyf(iy)=1-\lambda$, hence
$$
 \hat{m}(z) = \frac{1}{f(z^{-1})-cz} = -\frac{1}{z} 
 = \int \frac{\delta(t)\,dt}{t-z},
$$
so that $d\hat\mu(t)=\delta(t)\,dt$. This corresponds to the integral representation II given by
$$
 f(z) = (1-\lambda)z^{-1} + \frac{1}{\hat{m}(z^{-1})}.
$$

\end{ex}

\begin{ex} \label{ex:3p}

The Nevanlinna function 
$$
 f(z)=\frac{z}{\lambda-z^2}, \quad \lambda>0,
$$
has an associated m-function with the form
$$
 m(z) = \frac{\lambda z^2-1}{2z-\lambda z^3} = \frac{z^2-a^2/2}{z(a^2-z^2)},
 \qquad
 a=\sqrt{\frac{2}{\lambda}},
$$
which can be expressed as
$$
 m(z) = \frac{1}{4} \left(\frac{1}{a-z}-\frac{1}{a+z}\right) - \frac{1}{2z}
 = \int \frac{
 (\frac{1}{4}\delta(t-a)+\frac{1}{4}\delta(t+a)+\frac{1}{2}\delta(t))\,dt}
 {t-z}. 
$$
This identifies 
$d\mu(t)=(\frac{1}{4}\delta(t-a)+\frac{1}{4}\delta(t+a)+\frac{1}{2}\delta(t))\,dt$ as the measure of $f$, which is an FR-function because $\mu(\R)=1$.

As for the Nevanlinna function $-1/f(z)=z-\lambda z^{-1}$, its m-function  $m(z) = 1/(z^{-1}-(\lambda+1)z)$, rewritten as
$$
 m(z) = \frac{1}{2(\lambda+1)} \left(\frac{1}{a-z}-\frac{1}{a+z}\right)
 = \frac{1}{2(\lambda+1)} \int 
 \frac{(\delta(t-a)+\delta(t-a))\,dt}{t-z},
 \qquad
 a=\frac{1}{\sqrt{\lambda+1}}, 
$$
shows that the corresponding measure is
$d\mu(t) = \frac{1}{2(\lambda+1)} (\delta(t-a)+\delta(t-a))\,dt$. In constrast to $f$, the Nevanlinna function $-1/f$ is not an FR-function because $\mu(\R)=(\lambda+1)^{-1}<1$. The integral representation II follows from the value of $c=1-\lim_{y\to0}-iy(-1/f(iy))=1-\lambda$ and the expression of the m-function
$$
 \hat{m}(z) = \frac{1}{-1/f(z^{-1})-cz} = \frac{1}{z^{-1}-z}
 = \frac{1}{2} \left(\frac{1}{1-z}-\frac{1}{1+z}\right)
 = \frac{1}{2} \int \frac{(\delta(t-1)+\delta(t+1))\,dt}{t-z}.
$$
This identifies $d\hat\mu(t)=\frac{1}{2}(\delta(t-1)+\delta(t+1))\,dt$ as the measure of the alternative integral representation II for $-1/f$,
$$
 -\frac{1}{f(z)} = (1-\lambda)z^{-1} + \frac{1}{\hat{m}(z^{-1})}.
$$

\end{ex}

\begin{ex} \label{ex:sqrt}

As a final example, let us consider a non-rational Nevanlinna function to show how to proceed in such a general case. Let $f(z)=\sqrt{z}$ be given by the branch which is analytical on $\C\setminus(-\infty,0]$, i.e. $f(z)=\sqrt{|z|}\,e^{i\theta/2}$ with $z=|z|\,e^{i\theta}$, $\theta\in(-\pi,\pi)$. The Stieltjes inversion formula \cite{ST,GT} for the corresponding m-function
$$
 m(z) = \frac{1}{\sqrt{z^{-1}}-z} 
 = \frac{|z|^{1/2}\,e^{i\theta/2}}{1-|z|^{3/2}\,e^{i3\theta/2}}
$$
allows us to recover the measure $\mu$ of $f$. The absolutely continuous part $w(t)\,dt$ is given by the weight 
$$
 w(t) = \frac{1}{\pi} \lim_{y\downarrow0} \im m(t+iy)
 = \frac{1}{\pi} \frac{\sqrt{|t|}}{1+|t|^3},
 \qquad t\in(-\infty,0], 
$$
while the singular part is concentrated on the points $t\in\R$ such that $\lim_{y\downarrow0}m(t+iy)=\infty$, with the mass points being those such that $\mu(\{t\})=\lim_{y\downarrow}-iy(m(\{t+iy\})\ne0$. Therefore, the sigular part is in this case a Dirac delta at 1 with mass 2/3, so that the measure of $\sqrt{z}$ is given finally by
$$
 d\mu(t) = w(t) \, dt + \frac{2}{3} \delta(t-1)\,dt.
$$
The Nevanlinna function $\sqrt{z}$ is an FR-function because $\lim_{y\to0}y\sqrt{iy}=0$, which means that $\mu(\R)=1$ as can be easily verified.
 
\end{ex}

It is worth remarking that none of the first two examples --Nevanlinna polynomials of degree not bigger than one-- is characterized by a measure neither by the general representation \eqref{eq:N-mu} nor by the m-function representation \eqref{eq:St-int}. Besides, the correspondence between constant Nevanlinna functions and single Dirac deltas supported on the real line is in perfect analogy with the case of constant Schur functions, whose associated measures are the single Dirac deltas supported on the unit circle.

\section{Schur algorithm on the real line}
\label{sec:SA}

We will see that in the self-adjoint case there exists a natural analogue of the Schur algorithm for the Nevanlinna FR-function of a subspace. Schur algorithms for Nevanlinna functions using interpolation points outside of the real line, including the point at infinity, have been already proposed in the literature, see \cite[Chapter 3]{Akhiezer} and \cite{ADL,ADL2,ADLV,De}. Based on our FR-function approach, we will present here a very simple and natural version of the Schur algorithm for Nevanlinna functions which interpolates them at points on the real line. We will refer to it as the `Schur algorithm on the real line'. As we will see in Sections~\ref{sec:OP} and \ref{sec:RW}, this Schur algorithm turns out to be especially useful for applications to OPRL and RW. 

The Schur algorithm for Schur functions can be seen as a way to extract iteratively the parameters of a related canonical unitary, namely, the Verblunsky coefficients (Schur parameters) of the CMV matrix associated with the related measure on $\T$. CMV matrices are the unitary analogue of Jacobi matrices \cite{CMV,Watkins} (see also \cite[Chapter 4]{Simon-OPUC}), so a Schur algorithm for FR-functions in the self-adjoint case should extract from them the Jacobi parameters of the related measure on $\R$. 

Let $T$ be a self-adjoint operator on a Hilbert space $\HH$. For simplicity, consider first the related Nevanlinna FR-function $f$ of a one-dimensional subspace $\HH_0=\spn\{v\}$. Its spectral measure $\mu$ on $\R$ is given in terms of the spectral decomposition $T=\int\!t\,dE(t)$ by
$$
 \mu = \<v|Ev\>,
$$
while the integral representation of Theorem~\ref{thm:N=FR}.{\it (ii)} reads as
\begin{equation} \label{eq:f-s-sc}
 f(z) = \frac{1}{z} 
 \left( 1-\frac{1}{\displaystyle \int \frac{d\mu(t)}{1-zt}} \right)
 = \frac{\displaystyle\int\frac{t\,d\mu(t)}{1-zt}}
 {\displaystyle\int\frac{d\mu(t)}{1-zt}},
 \qquad z\in\C\setminus\R.
\end{equation}
Since $f$ is completely determined by $\mu$, we obtain the same FR-function for any other operator with a subspace having $\mu$ as a spectral measure. An example of this is the self-adjoint multiplication operator in $L^2_\mu$ given by 
\begin{equation} \label{eq:m-op}
 T_\mu \colon \kern-7pt \mathop{L^2_\mu \xrightarrow{\kern12pt} L^2_\mu}
 \limits_{\textstyle \kern7pt h(t) \mapsto th(t)}
\end{equation} 
which assigns to the function $h(t)=1$ the spectral measure $\mu$. Therefore, $f$ becomes the FR-function of $\spn\{1\}$ with respect to $T_\mu$.

Since $T_\mu$ has simple spectrum, it can be represented as a Jacobi matrix, the canonical object we need to build up a Schur algorithm. Nevertheless, in the case of $\mu$ with unbounded support the Jacobi representation may require a basis excluding the function $h(t)=1$, which can make it difficult to recover the Stieltjes function $s$ of $\mu$ --hence the FR-function $f$-- from the Jacobi matrix. 

To introduce a useful Jacobi matrix in this set up we need an assumption, non-trivial only in the unbounded case: we will suppose that $v$ lies in the domain of every positive power of $T$. Then, the equality $\int t^n d\mu(t)=\<v|T^nv\>$ shows that $\mu$ has finite moments, so one can consider the corresponding OPRL $p_n$. The related three term recurrence relation is encoded by a Jacobi matrix 
\begin{equation} \label{eq:J}
 \mc{J} = 
 \begin{pmatrix}
 b_0 & a_0
 \\
 a_0 & b_1 & a_1
 \\
 & a_1 & b_2 & a_2
 \\
 & & \kern-15pt \ddots & \kern-15pt \ddots & \kern-6pt \ddots
 \end{pmatrix},
 \qquad a_n>0, \qquad b_n\in\R.
\end{equation}
If $\mu$ is finitely supported, there will be a finite number of orthonormal polynomials $p_n$, so the Jacobi matrix $\mc{J}$ will be finite. Otherwise, it will be an infinite Jacobi matrix. 

We will identify the Jacobi matrix $\mc{J}$ with the operator $v \mapsto \mc{J}v$ on $\ell^2$ with maximal domain $\DD(\mc{J})=\{v\in\ell^2:\mc{J}v\in\ell^2\}$. To avoid getting away from self-adjointness, let us suppose that $\mc{J}$ is self-adjoint, again a non-trivial assumption only in the unbounded case. In this situation $\mc{J}$ is unitarily equivalent to $T_\mu$ because then the polynomials are dense in $L^2_\mu$ \cite{Akhiezer}. This unitary equivalence amounts to the identification of $p_n\in L^2_\mu$ with the canonical vector $e_n=(\delta_{k,n})_{k\ge0}\in\ell^2$. As a consequence, the Nevanlinna FR-function $f$ of $\spn\{p_0(t)\}=\spn\{1\}$ with respect to $T_\mu$ coincides with that of $\spn\{e_0\}$ with respect to $\mc{J}$. Therefore we get $f$ by substituting
$$
 T \to \mc{J}, 
 \kern7pt 
 P \to \begin{pmatrix} 1 \\ & 0 \\ & & 0 \\ & & & \ddots \end{pmatrix},
 \kern7pt
 \begin{aligned}
 	& PTP \to b_0,
 	\\
	& PTQ \to 
	\begin{pmatrix} 
		a_0 & \kern-3pt 0 & \kern-3pt 0 & \kern-3pt \cdots
	\end{pmatrix},
	\\
	& QTP \to 
	\begin{pmatrix} 
		a_0 & \kern-3pt 0 & \kern-3pt 0 & \kern-3pt \cdots 
	\end{pmatrix}^T,
 \end{aligned}
 \kern7pt
 QTQ \to \mc{J}^{(1)} = 
 \begin{pmatrix}
 b_1 & a_1
 \\
 a_1 & b_2 & a_2
 \\
 & a_2 & b_3 & a_3
 \\
 & & \kern-15pt \ddots & \kern-15pt \ddots & \kern-6pt \ddots
 \end{pmatrix}, 
$$
in the expression given by Proposition~\ref{prop:falt}.{\it (ii)}. This yields 
\begin{equation} \label{eq:f-S1}
 f(z) = b_0 + a_0^2 zs_1(z),
\end{equation}
where $s_1$ is the Stieltjes function for the orthogonality measure of the Jacobi matrix $\mc{J}^{(1)}$ obtained from $\mc{J}$ by coefficient stripping.   

The renewal equation \eqref{eq:f-s-sc} implies that the Stieltjes function 
$s_1$ is related to the corresponding FR-function $f_1$ (of the subspace spanned by the first canonical vector with respect to $\mc{J}^{(1)}$) by  
$s_1(z)=(1-zf_1(z))^{-1}$. Inserting this into \eqref{eq:f-S1} yields
\begin{equation} \label{eq:f-f1}
 f(z) = b_0  + \frac{a_0^2z}{1-zf_1(z)}.
\end{equation}
Equivalently,
\begin{equation} \label{eq:f1-f}
 f_1(z) = \frac{1}{z} \left(1+\frac{a_0^2z}{b_0-f(z)}\right)
 = \frac{1}{z} \frac{f(z)-b_0-a_0^2z}{f(z)-b_0},
\end{equation}
which is the version of the Schur transformation for FR-functions related to self-adjoint operators. Since we are dealing with a subspace included in the domain of $\mc{J}$, the stripped Jacobi matrix $\mc{J}^{(1)}=Q\mc{J}Q$ is also self-adjoint according to the proof of Theorem~\ref{thm:FR=N}.{\it(ii)}. Thus $f_1$ is also a Nevanlinna function and the above Schur transformation can be iterated.

It is important to notice that the Schur algorithm for Schur functions does not resort to any external object such as a CMV matrix, rather the Schur parameters come from the evaluation at the origin of the Schur functions generated by the algorithm itself. As for the self-adjoint case, being considered now, how do the Jacobi parameters $b_0$, $a_0$ come from the original FR-function $f$?
   
Suppose for the moment that $\mc{J}$ is bounded so that $f$ is analytic around the origin, and the same holds for $f_1$ because $\mc{J}^{(1)}$ is bounded too. Then, the parameters $b_0$ and $a_0$ have a remarkable meaning which follows from \eqref{eq:f-f1},
$$
 f(z) = b_0 + a_0^2z + O(z^2) 
 \kern7pt\Rightarrow\kern7pt
 b_0=f(0), \quad a_0^2=f'(0).
$$
This result remains almost unchanged in the unbounded case, despite the lack of analyticity at the origin: the evaluations at the origin must be substituted by normal limits along the imaginary axis, $z=iy \to 0$. More precisely, as a consequence of Lemma~\ref{lem:der} in Appendix~A, the assumption $v\in\DD(T^2)$ implies that 
\begin{equation} \label{eq:der}
 f(0):=\lim_{y\to0}f(iy)=P\mc{J}P=b_0,
 \qquad
 f'(0):=\lim_{y\to0}\frac{f(iy)-f(0)}{iy}=(Q\mc{J}P)^\dag(Q\mc{J}P)=a_0^2.
\end{equation}
Actually, since we are assuming that $v\in\DD(T^n)$ for every $n\in\N$, Lemma~\ref{lem:der} implies that $f$ has normal derivatives of all orders at the origin. 

Iterating the process, we find from \eqref{eq:f1-f} a `Schur algorithm' for Nevanlinna FR-functions related to self-adjoint operators, which is given by
\begin{equation} \label{eq:Nalg}
\begin{aligned}
 & f_0(z)=f(z),
 \\
 & f_{n+1}(z) = \frac{1}{z} \frac{f_n(z)-b_n-a_n^2z}{f_n(z)-b_n},
 \qquad b_n=f_n(0), \qquad a_n^2=f_n'(0),
 \qquad n\ge0.
\end{aligned}
\end{equation}
As follows from Proposition~\ref{prop:deg-NFR}, the algorithm stops whenever a degenerate Nevanlinna iterate $f_n$ arises since then $f_n(z)=f_n(0)$ for every $z\in\C\setminus\R$, otherwise $f_n(z)-f_n(0)$ does not vanish anywhere on $\C\setminus\R$ and the algorithm continues. The terminating situation corresponds to a finite Jacobi matrix related to a finitely supported measure. 

The above result identifies the `Schur parameters' of this Schur algorithm as the first two Taylor coefficients of the iterates, which also coincide with the coefficients of the corresponding Jacobi matrix. This constitutes the self-adjoint version of Geronimus theorem \cite{Geronimus} (see also \cite[Chapter 3]{Simon-OPUC}) relating Schur parameters and Verblunsky coefficients in the unitary case.

If $b_0,a_0,b_1,a_1,\dots$ are the Schur parameters of $f$, then its $n$-th iterate $f_n$ is again a Nevanlinna FR-function with Schur parameters $b_n,a_n,b_{n+1},a_{n+1},\dots$. This is in perfect analogy with the unitary case. While the unitary Schur algorithm \eqref{eq:Salg} removes from a Schur function the first term of its power expansion around the origin, the self-adjoint version \eqref{eq:Nalg} needs to remove the first and second terms of a similar expansion for Nevanlinna functions. Notice that in both cases we substract from the power expansion of the FR-function information corresponding to two real parameters.  

The Schur algorithm \eqref{eq:Nalg} is easily generalized to the case of a higher finite-dimensional subspace $\HH_0$. A choice of an orthonormal basis in $\HH_0$ allows us to identify its spectral measure as a matrix valued one. This leads to matrix valued OPRL \cite{DPS} when $\HH_0\subset\DD(T^n)$ for every $n\ge1$. The corresponding recurrence relation leads to a block Jacobi matrix which takes the place of \eqref{eq:J}. It has the same form as \eqref{eq:J}, but now the coefficients are square matrices $b_n=b_n^\dag$ and $a_n>0$ of size $\dim\HH_0$. Assuming $\mc{J}$ self-adjoint --as an operator on $\ell^2$ with maximal domain--, similar arguments to those of the one-dimensional case show that \eqref{eq:f-S1} and \eqref{eq:f-f1} read now as follows
$$
 f(z) = b_0 + za_0s_1(z)a_0 = b_0 + za_0(1_0-zf_1(z))^{-1}a_0,
 \quad 1_0=\text{identity matrix of size } \dim\HH_0.
$$
Here $s_1$ is the Stieltjes function of the matrix measure associated with the block Jacobi matrix $\mc{J}^{(1)}$ with blocks $b_n$, $a_n$, $n\ge1$. Also, $f_1$ is the FR-function with respect to $\mc{J}^{(1)}$ for the subspace of $\ell^2$ on which the first block column of $\mc{J}^{(1)}$ acts. We finally find the following form for the Schur algorithm in the case of matrix valued Nevanlinna FR-functions associated with self-adjoint operators,
\begin{equation} \label{eq:Nalg-op}
\begin{aligned}
 & f_0(z)=f(z),
 \\
 & 
 \begin{aligned}
 	f_{n+1}(z) 
 	& = z^{-1} 
	a_n^{-1} (f_n(z)-b_n-a_n^2z) (f_n(z)-b_n)^{-1} a_n,
	& \kern10pt & b_n=f_n(0),
	\\
	& = z^{-1} 
	a_n (f_n(z)-b_n)^{-1} (f_n(z)-b_n-a_n^2z) a_n^{-1},
	& & a_n^2=f_n'(0),
 \end{aligned}
 \kern20pt n\ge0.
\end{aligned}
\end{equation}
In the unbounded case $f_n(0)$ and $f_n'(0)$ must be understood as normal limits along the imaginary axis analogous to \eqref{eq:der}, as follows from Lemma~\ref{lem:der} in Appendix~A. 

The above algorithm does not stop unless an iterate $f_n$ is degenerate because Proposition~\ref{prop:deg-NFR} shows that non-degeneracy is equivalent to the invertibility of $f_n(z)-f_n(0)$, a property which holds simultaneously for every $z\in\C\setminus\R$.  

As in the one-dimensional case, the $n$-th iterate $f_n$ is again a Nevanlinna FR-function whose matrix Schur parameters $b_n,a_n,b_{n+1},a_{n+1},\dots$ are obtained by stripping those of $f$.

When compared with the Schur algorithm \eqref{eq:Nalg} for scalar Nevanlinna functions, the only additional ingredients of the matrix valued version \eqref{eq:Nalg-op} are the left and right factors $a_n^{\pm1}$ which obviously cancel in the scalar case. This in in perfect analogy with the Schur algorithm for Schur functions, whose matrix valued version involves the matrix M\"obius transformations \eqref{eq:Mobius-op}. The factors $a_n^{\pm1}$ are the Nevanlinna analogues of the left and right factors of the type $\rho_\alpha^{\pm1}$ appearing in such matrix M\"obius transformations, which cancel in the the scalar case leading to the Schur algorithm defined by the scalar M\"obius transformations \eqref{eq:Mobius}.   

\medskip   

The FR-functions introduced in this paper are not the only Nevanlinna functions that can be considered in the self-adjoint case. For instance, the m-function $ m$ is also a Nevanlinna function. Indeed, m-functions for measures on $\R$ are sometimes considered as an analogue of Schur functions for measures on $\T$. That is, m-functions fit with the requirement (B) in the introduction concerning the similarity with the properties of Schur functions. Regarding the requirement (C) about the existence of a Schur algorithm, using the relation \eqref{eq:f-s-sa} between Stieltjes functions and FR-functions it can be easily seen that the Schur algorithm \eqref{eq:Nalg} is equivalent to 
\begin{equation} \label{eq:Stalg}
\begin{aligned}
 &  m_0(z)= m(z),
 \\
 &  m_{n+1}(z) = 
 \frac{1}{a_n^2} \left(b_n-z-\frac{1}{ m_n(z)}\right),
 \qquad
 \begin{aligned}
 	& b_n=\lim_{y\to\infty}(iy+1/m_n(iy)), 
	\\
 	& a_n^2=\lim_{y\to\infty}iy(iy+1/m_n(iy)-b_n),
 \end{aligned}
 \qquad n\ge0.
\end{aligned}
\end{equation}
This algorithm requires the existence of the limits giving $b_n$ and $a_n^2$ at every step, a condition which is not guaranteed for the m-function $m$ of an arbitray normalized measure on $\R$, as the examples in the next subsection show. When such conditions hold, the algorithm yields a terminating sequence of m-functions if $a_n=0$ for some $n$, and an infinite sequence otherwise.

The algorithm \eqref{eq:Stalg} is considered sometimes as the Schur algorithm for Nevanlinna functions of Stieltjes form, see \cite[Chapter 3]{Akhiezer} and \cite{De2,ADL2}. However, \eqref{eq:Nalg} seems a more natural version of the standard Schur algorithm \eqref{eq:Salg}. Besides, \eqref{eq:Nalg} provides a transparent meaning for the `Schur parameters' of Nevanlinna functions --quite close to that of the Schur parameters for Schur functions-- which is obscured in \eqref{eq:Stalg}. Rewriting \eqref{eq:Nalg} as \eqref{eq:Stalg} is as unnatural as rewriting the Schur algorithm for Schur functions in terms of Stieltjes or Carath\'eodory functions.

The Schur algorithms \eqref{eq:Nalg} and \eqref{eq:Stalg} are equivalent once the relation between FR-functions and m-functions is taken into account. Nevertheless, if we think of them simply as Schur algorithms for Nevanlinna functions, they have a different range of applicability. The usual Schur algorithm \eqref{eq:Stalg} applies to Nevanlinna functions with an asymptotic behaviour at infinity controlled by \eqref{eq:St-norm} and such that the limits involved in \eqref{eq:Stalg} exist. Indeed, this convergence requirement is needed also for each iterate, which amounts to asking for the analyticity at infinity of the Nevanlinna function restricted to the imaginary axis \cite[Chapter 3]{Akhiezer}. 

On the other hand, the Schur algorithm \eqref{eq:Nalg} requires of the Nevanlinna function and its iterates certain regularity at the origin, where they must have a well defined normal limit and derivative. However, as we will see below, the applicability of \eqref{eq:Nalg} at every step can be easily determined from the beginning, because this new algorithm works as long as the original Nevanlinna function has normal derivatives of all orders at the origin. Therefore, \eqref{eq:Nalg} applies for instance to every Nevanlinna function which is analytic at the origin. All these comments remain true even in the matrix valued case, as the next theorem shows.

For convenience, we introduce the following definition.
\begin{defn}
We denote by $\frak{N}_r(x)$ the set of {\bf matrix valued Nevanlinna functions $f$ with $r$-th normal derivative at $x\in\R$}, which means that the limits below do exist
$$
\begin{aligned}
 & f^{(0)}(x)=f(x) := \lim_{y\to0} f(x+iy),
 \\
 & f^{(k)}(x) := \lim_{y\to0} \frac{f^{(k)}(x+iy)-f^{(k)}(x)}{iy}, 
 \qquad k=1,2,\dots,r.
\end{aligned}
$$
\end{defn}

With this notation, we can state the alluded result about the Schur algorithm \eqref{eq:Nalg} and its matrix version \eqref{eq:Nalg-op}. 

\begin{thm}
The transformation 
$$
\begin{aligned}
 f(z) \; \mapsto \;
 g(z) & = z^{-1} f'(0)^{-1/2} (f(z)-f(0)-f'(0)z) (f(z)-f(0))^{-1} f'(0)^{1/2},
 \\
 & = z^{-1} f'(0)^{1/2} (f(z)-f(0))^{-1} (f(z)-f(0)-f'(0)z) f'(0)^{-1/2},
 \\
 & f'(0)^{1/2} = \text{non-negative definite square root of } f'(0),
\end{aligned}
$$
satisfies the following properties:
\begin{itemize}
\item[{\it (i)}] $f\in\frak{N}_1(0) \text{ non-degenerate} 
\;\Rightarrow\; g$ is a Nevanlinna function.
\item[{\it (ii)}] $f\in\frak{N}_r(0) \text{ non-degenerate} 
\;\Rightarrow\; g\in\frak{N}_{r-2}(0), \quad r\ge2.$
\end{itemize}
As a consequence, the scalar and matrix Schur algorithms \eqref{eq:Nalg} and \eqref{eq:Nalg-op} are valid for every $f\in\frak{N}_\infty(0)$, and in this case generate finitely or infinitely many Nevanlinna iterates $f_n$ depending whether a degenerate iterate arises or not. 
\end{thm}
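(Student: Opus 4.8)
Write $b:=f(0)$ and $a:=f'(0)^{1/2}$, the non-negative definite square root. Since $f$ is non-degenerate and lies in $\frak{N}_1(0)$, Proposition~\ref{prop:der-p} gives that $b$ is self-adjoint and $f'(0)\ge0$, while Proposition~\ref{prop:deg-NFR}.{\it(vi)} gives that $f'(0)$ is invertible; hence $a$ and $a^{-1}$ are positive definite. Moreover Proposition~\ref{prop:deg-NFR}.{\it(v)} shows that $f(z)-b$ is invertible for every $z\in\C\setminus\R$, so the transformation is well defined there. Using $a^{-1}a^2=a$ and that $z$ is a scalar, a one-line manipulation shows that both displayed expressions for $g$ equal $z^{-1}(1_0-za(f(z)-b)^{-1}a)$, that is,
$$
 g(z) = z^{-1}1_0 - f'(0)^{1/2}(f(z)-b)^{-1}f'(0)^{1/2}, \qquad z\in\C\setminus\R .
$$

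For {\it(i)} the plan is to introduce $\tilde m(w):=a^{-1}(b-f(w^{-1}))a^{-1}$, $w\in\C\setminus\R$, and to recognise it as an m-function. Since $f$ is Nevanlinna and $w\mapsto w^{-1}$ reverses the sign of the imaginary part, $w\mapsto b-f(w^{-1})$ is again Nevanlinna (subtracting the self-adjoint constant $b$ does not affect monotonicity), and conjugation by the positive operator $a^{-1}$ preserves this, so $\tilde m$ is Nevanlinna. Next one checks $\sup_{y>0}y\|\tilde m(iy)\|<\infty$: for large $y$ the hypothesis $f\in\frak{N}_1(0)$, together with $f(-it)=f(it)^\dag$ and self-adjointness of $b,f'(0)$, gives $f(-iy^{-1})=b-iy^{-1}f'(0)+o(y^{-1})$, whence $y\tilde m(iy)=i1_0+o(1)$; for small $y$ the at-most-linear growth of Nevanlinna functions along the imaginary axis bounds $\|f(-iy^{-1})\|=O(y^{-1})$, so $y\|\tilde m(iy)\|$ stays bounded; and $\tilde m(iy)$ is continuous on compact $y$-intervals. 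By \eqref{eq:St-char}--\eqref{eq:St-mu}, $\tilde m$ is then the m-function of a finite positive matrix measure $\mu'$ on $\R$ with $\mu'(\R)=\lim_{y\to\infty}(-iy)\tilde m(iy)=1_0$. Since $\tilde m(z^{-1})=a^{-1}(b-f(z))a^{-1}$ gives $\tilde m(z^{-1})^{-1}=-a(f(z)-b)^{-1}a$, we obtain $g(z)=z^{-1}1_0+\tilde m(z^{-1})^{-1}$, so $g$ has the integral representation I of Theorem~\ref{thm:N=FR}.{\it(ii)} with $\mu'(\R)\le1_0$ and is therefore a Nevanlinna function; as $\mu'(\R)=1_0$, Corollary~\ref{cor:N=FR} moreover shows $g$ is a Nevanlinna FR-function whose measure in the sense of Definition~\ref{def:N-mu} is $\mu'$.

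For {\it(ii)} I would argue through asymptotic expansions along the imaginary axis at the origin. The hypothesis $f\in\frak{N}_r(0)$ means the analytic derivatives $f,f',\dots,f^{(r)}$ all admit normal limits at $0$, which yields a genuine order-$r$ expansion $f(z)=\sum_{k=0}^{r}\frac{1}{k!}f^{(k)}(0)z^k+o(z^r)$ for $z\to0$ along the imaginary axis. Feeding this into $\tilde m(z^{-1})=a^{-1}(b-f(z))a^{-1}$ and using $b-f(z)=-zf'(0)-\sum_{k=2}^{r}\frac{1}{k!}f^{(k)}(0)z^k+o(z^r)$ writes $\tilde m(z^{-1})=-z\,E(z)$ with $E(0)=a^{-1}f'(0)a^{-1}=1_0$ and $E$ having an order-$(r-1)$ expansion; inverting this expansion term by term (legitimate since $E(0)=1_0$ and $E(z)$ is invertible for $z$ near $0$ off $\R$) gives that $g(z)=z^{-1}1_0+\tilde m(z^{-1})^{-1}=z^{-1}(1_0-E(z)^{-1})$ has an order-$(r-2)$ expansion at $0$ along the imaginary axis, and reading off its coefficients produces the iterated normal limits defining $g^{(j)}(0)$ for $j\le r-2$, i.e. $g\in\frak{N}_{r-2}(0)$. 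I expect this normal-derivative bookkeeping — justifying that $\frak{N}_r(0)$ produces a bona fide Taylor expansion and that series inversion preserves its order — to be the main obstacle, and the natural place to lean on Lemma~\ref{lem:der} of Appendix~A; the estimate $\sup_{y>0}y\|\tilde m(iy)\|<\infty$ in {\it(i)}, which must knit together the behaviour of $f$ near $0$ with its growth near $\infty$, is the other delicate point.

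Finally, the consequence follows by induction on the step $n$: for $f\in\frak{N}_\infty(0)$, at the $n$-th stage either the current iterate $f_n$ is degenerate — so $f_n(z)\equiv f_n(0)$ by Proposition~\ref{prop:deg-NFR} and the algorithm terminates — or $f_n$ is non-degenerate, in which case $f_n(0)$ and $f_n'(0)$ exist with $f_n'(0)$ invertible (Propositions~\ref{prop:der-p} and~\ref{prop:deg-NFR}.{\it(vi)}), the transformation producing $f_{n+1}$ applies, and by {\it(i)} and by {\it(ii)} applied to every $r$ the iterate $f_{n+1}$ is again a Nevanlinna function lying in $\frak{N}_\infty(0)$. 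Since \eqref{eq:Nalg} and \eqref{eq:Nalg-op} are exactly this transformation (with the factors $a_n^{\pm1}$ cancelling in the scalar case), both algorithms run at every step, producing an infinite sequence of Nevanlinna iterates unless some iterate is degenerate — equivalently, unless the underlying measure is finitely supported — in which case they terminate.
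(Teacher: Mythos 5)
Your part (i) is correct and is essentially the paper's own argument: rewrite $g(z)=z^{-1}1_0-f'(0)^{1/2}(f(z)-f(0))^{-1}f'(0)^{1/2}$, recognise $\tilde m(w)=f'(0)^{-1/2}(f(0)-f(w^{-1}))f'(0)^{-1/2}$ as a Nevanlinna function with $\lim_{y\to\infty}-iy\,\tilde m(iy)=1_0$ (this limit is exactly where $f\in\frak{N}_1(0)$ enters), identify it as an m-function of a measure of total mass $1_0$ via \eqref{eq:St-char}, and conclude by Theorem~\ref{thm:N=FR}. Your extra check of $y\|\tilde m(iy)\|$ for small $y$ is more careful than the paper, and the remark that $g$ is in fact an FR-function is a correct bonus.

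Part (ii) is where you depart from the paper, and there is a genuine gap. Membership in $\frak{N}_{r-2}(0)$ is not defined by the existence of a Peano-type expansion of $g$ along the imaginary axis; it is defined by the existence of iterated normal limits of the \emph{complex derivatives} $g,g',g'',\dots$ at the origin. The forward direction you use for $f$ (normal derivatives $\Rightarrow$ order-$r$ ray expansion, by repeated integration along the ray) is fine, but the converse direction you need for $g$ — from an order-$(r-2)$ expansion of $g$ along one ray back to normal limits of $g',\dots,g^{(r-2)}$ — is false for general analytic functions on a half-plane and is not addressed by your series inversion; so even after the ``bookkeeping'' you flag, you would have proved the wrong statement. (Lemma~\ref{lem:der} does not rescue this: it concerns FR-functions of self-adjoint operators with $\HH_0\subset\DD(T^{n+1})$, not arbitrary elements of $\frak{N}_r(0)$.) The paper instead writes $g(z)=f'(0)^{-1/2}h_1(z)h_0(z)^{-1}f'(0)^{1/2}$ with
$$
 h_0(z)=\frac{f(z)-f(0)}{z},\qquad h_1(z)=\frac{f(z)-f(0)-f'(0)z}{z^2},
$$
notes $h_0(0)=f'(0)$ is invertible and $h_0^{(k)}(0)=\frac{1}{k+1}f^{(k+1)}(0)$, $h_1^{(k)}(0)=\frac{1}{(k+1)(k+2)}f^{(k+2)}(0)$ for $k\le r-2$, and then uses closure of the normal-derivative property under products and under inversion at an invertible normal limit. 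Salvaging your route would require proving that the ray expansion of $g$ can be differentiated, which amounts to redoing this computation. Your closing induction for the validity of \eqref{eq:Nalg} and \eqref{eq:Nalg-op} on $\frak{N}_\infty(0)$ is correct once (i) and (ii) are in place.
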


\begin{proof}
\quad  

\noindent{\it (i)} 
The transformation is well defined for every non-degenerate $f\in\frak{N}_1(0)$ because, by Proposition~\ref{prop:deg-NFR}, $f(z)-f(0)$ is invertible for every $z\in\C\setminus\R$, while $\displaystyle f'(0)$ is not only invertible but also positive definite due to Proposition~\ref{prop:der-p}. In this case, the equality between the two alternative expressions of $g$ follows from the fact that both of them can be rewritten as
$$
 g(z) = z^{-1}1_0 - f'(0)^{1/2} (f(z)-f(0))^{-1} f'(0)^{1/2}.
$$
Therefore, 
$$
 g(z)= z^{-1}1_0 +  m(z^{-1})^{-1},
 \qquad
  m(z) = f'(0)^{-1/2} (f(0)-f(z^{-1})) f'(0)^{-1/2}.
$$
Since $-f(z^{-1})$ is a Nevanlinna function and, from Proposition~\ref{prop:der-p}, $f(0)$ and $\displaystyle f'(0)$ are self-adjoint, we conclude that $ m$ is also a Nevanlinna function.
Besides, the limit
$$
 \lim_{y\to\infty} -iy m(iy) 
 = \lim_{y\to\infty} 
 f'(0)^{-1/2} \frac{f(-iy^{-1})-f(0)}{-iy^{-1}} f'(0)^{-1/2} 
 = 1_0,
$$
proves that $ m$ satisfies the condition \eqref{eq:St-char} which identifies it as an m-function, being the related measure $\mu$ such that $\mu(\R)=\lim_{y\to\infty}-iy m(iy)=1_0$. Therefore, Theorem~\ref{thm:N=FR} implies that $g$ is a Nevanlinna function.

\noindent{\it (ii)}
Let us write $\displaystyle g(z) = f'(0)^{-1/2} h_1(z) h_0(z)^{-1} f'(0)^{1/2}$ with
$$
 h_0(z)=\frac{f(z)-f(0)}{z},
 \qquad h_1(z)=\frac{\displaystyle f(z)-f(0)-f'(0)z}{z^2}, 
$$
which are analytic on $\C\setminus\R$. Actually, for every non-degenerate $f\in\frak{N}_2(0)$, the functions $h_0$ and $h_1$ have normal limits at the origin given by
$$
 h_0(0)=f'(0), \qquad h_1(0)=\frac{1}{2}f''(0).
$$
Since $h_0(0)$ is invertible, $g\in\frak{N}_{r-2}(0)$ whenever the normal derivatives $h_0^{(k)}(0)$ and $h_1^{(k)}(0)$ exist for $k=1,2,\dots,r-2$. This follows from the condition $f\in\frak{N}_r(0)$ because
$$
 h_0^{(k)}(0) = \frac{1}{k+1} f^{(k+1)}(0),
 \qquad
 h_1^{(k)}(0) = \frac{1}{(k+1)(k+2)} f^{(k+2)}(0).
$$
\end{proof}

There exists an additional advantage of the Schur algorithm \eqref{eq:Nalg}. Using a real shift $f(z) \to f(z+x)$, $x\in\R$, we see that \eqref{eq:Nalg} can be generalized to a Schur algorithm with a wider applicability, which requires the existence of normal derivatives at an arbitrarily chosen point $x\in\R$. Such an algorithm amounts to substituting the origin by $x\in\R$ in \eqref{eq:Nalg} and \eqref{eq:Nalg-op} for the scalar and matrix valued case respectively.

\begin{defn} \label{def:SAR}
The {\bf Schur algorithm on the real line} is defined for a given $x\in\R$ and every Nevanlinna function $f\in\frak{N}_\infty(x)$ by  
\begin{equation} \label{eq:Nalg-x}
\begin{aligned}
 & f_0(z)=f(z),
 \\
 & f_{n+1}(z) = \frac{1}{z-x} \frac{f_n(z)-f_n(x)-f_n'(x)(z-x)}{f_n(z)-f_n(x)},
 \qquad n\ge0,
\end{aligned}
\end{equation}
in the scalar case, while its matrix valued extension reads as follows
$$
\begin{aligned}
 & f_0(z)=f(z),
 \\
 & 
 \begin{aligned}
 	f_{n+1}(z) 
 	& = \frac{1}{z-x} \, f_n'(x)^{-1/2} (f_n(z)-f_n(x)-f_n'(x)(z-x)) 
	(f_n(z)-f_n(x))^{-1} f_n'(x)^{1/2},
	\\
	& = \frac{1}{z-x} \, f_n'(x)^{1/2} (f_n(z)-f_n(x))^{-1} 
	(f_n(z)-f_n(x)-f_n'(x)(z-x)) f_n'(x)^{-1/2},
 \end{aligned}
 \kern15pt n\ge0.
\end{aligned}
$$
\end{defn}

Since this Schur algorithm works for every $f\in\frak{N}_\infty(x)$, it is valid in particular for all the Nevanlinna functions with $x\in\R$ as an analyticity point. This is for instance the case of every FR-function with respect to a bounded self-adjoint operator $T$ when $|x|<\|T\|^{-1}$.

This Schur algorithm for Nevanlinna functions opens up many other questions in parallel with the theory of Schur functions, such as continued fraction expansions and Wall's polynomials for Nevanlinna functions, determinacy issues for the parametrization of Nevanlinna functions by Schur parameters or the relation of this algorithm with interpolation and moment problems on the real line. For instance, in contrast to the Schur algorithm interpolating at the point of infinity, the Schur algorithm on the real line offers the possibility of using a different interpolation point at each step --very much in line with the Nevanlinna-Pick algorithm--, a fact which links to interpolation problems on the real line.

Although these topics will be discussed elsewhere, for convenience we will briefly comment here on the uniqueness of the Nevanlinna function related to given Schur parameters. First, suppose for simplicity that the sequences of Schur parameters $b_n$, $a_n$ derived from the Schur algorithm \eqref{eq:Nalg} are bounded. There is a one-to-one correspondence between such bounded sequences, the bounded Jacobi matrices and the normalized measures with a bounded support on the real line \cite{Akhiezer}. Bearing in mind the correspondence between measures on the real line and Nevanlinna functions established in Theorem~\ref{thm:N=FR} and Corollary~\ref{cor:N=FR} we conclude that any sequence of bounded Schur parameters determines a unique Nevanlinna function --which is indeed an FR-function-- via the Schur algorithm \eqref{eq:Nalg}. This result generalizes obviously to the matrix Schur algorithm \eqref{eq:Nalg-op}, while the use of a real shift shows that it also holds for the corresponding shifted versions of the algorithm. In general, the uniqueness of the Nevanlinna function for not necessarily bounded sequences of Schur parameters $b_n$, $a_n$ holds as long as the corresponding Jacobi matrix is associated with a unique normalized measure on the real line, i.e. when the corresponding moment problem is determinate \cite{Akhiezer}. This situation is characterized by the fact that the Jacobi matrix, considered as an operator on $\ell^2$ with maximal domain, is self-adjoint.

\subsection{Examples of the Schur algorithm on the real line}
\label{ssec:SA-ex}

Let us briefly compare the above Schur algorithm on the real line with the standard one for m-functions using a few simple examples.

\begin{ex} \label{ex:SA1}

The Nevanlinna function $z$ goes well with the Schur algorithm \eqref{eq:Nalg} on the real line but not the standard one \eqref{eq:Stalg} because it is not an m-function according to the characterization \eqref{eq:St-char}. The opposite holds for the Nevanlinna function $-z^{-1}$, which has no normal limit at the origin. This leads to the following terminating sequences of iterates, 
$$
 f(z)=z \;\to\; f_1(z)=0,
 \qquad\quad
  m(z)=-z^{-1} \;\to\;  m_1(z)=0.
$$
However, $-z^{-1}$ is analytic outside of the origin, thus it goes well with the shifted version \eqref{eq:Nalg-x} of the new algorithm for any $x\ne0$, giving also a terminating sequence of iterates,
$$
 f(z)=-z^{-1} \;\to\; f_1(z)=1-x^{-2}.
$$

\end{ex}

\begin{ex} \label{ex:SA2}

The Nevanlinna function
$$
 \frac{z}{\lambda-z^2}, \qquad \lambda>0,
$$
goes well with both, the standard Schur algorithm \eqref{eq:Stalg} for m-functions and the Schur algorithm \eqref{eq:Nalg} on the real line. The corresponding sequences of iterates are given by
$$
\begin{aligned}
 & m(z)=\frac{z}{\lambda-z^2} \;\to\; m_1(z)=-z^{-1} 
 \;\to\; m_2(z)=0,
 \\[2pt]
 & f(z)=\frac{z}{\lambda-z^2} \;\to\; f_1(z)=\frac{z}{\lambda} 
 \;\to\; f_2(z)=0.
\end{aligned}
$$

However, none of these algorithms work for the Nevanlinna function 
$$
 \frac{z^2-\lambda}{z}, \qquad \lambda>0,
$$
which instead goes well with the shifted Schur algorithm \eqref{eq:Nalg-x} for any $x\ne0$, leading to the sequence of iterates
$$
 f(z)=\frac{z^2-\lambda}{z} 
 \;\to\; f_1(z)=-\frac{\lambda}{x^2z+x\lambda} 
 \;\to\; f_2(z)=-\frac{x}{x^2+\lambda}.
$$

\end{ex}

\begin{ex} \label{ex:SA3}

Consider for $n\in\N$ the Nevanlinna m-functions of the normalized measures $nt^{-(n+1)}dt$ on $[1,\infty)$, given explicitly by
$$
\begin{aligned}
 f(n;z) = \int_1^\infty \frac{n}{t^{n+1}} \frac{dt}{t-z}
 & = \frac{n}{z^{n+1}} \int_1^\infty
 \left(\frac{1}{t-z}-\frac{1}{t}-\frac{z}{t^2}-\frac{z^2}{t^3}-\frac{z^3}{t^4}-
 \cdots-\frac{z^n}{t^{n+1}}\right) dt
 \\
 & = - \frac{n}{z^{n+1}} 
 \left(\ln(1-z)+z+\frac{z^2}{2}+\frac{z^3}{3}+\cdots+\frac{z^n}{n}\right).
\end{aligned}
$$
Applying the standard Schur algorithm \eqref{eq:Stalg} to $f(n;z)$ we obtain the following results for the first values of $n$:
\medskip
\begin{center}
\renewcommand{\arraystretch}{1.5}
\begin{tabular}{|c|l|}
\hline
	$n=1$ & $b_0=\infty$
\\ 	\hline 
	$n=2$ & $b_0=2, \quad a_0=\infty$
\\ 	\hline 
	$n=3$ & $b_0=3/2, \quad a_0=\sqrt{3}/2, \quad b_1=\infty$
\\ 	\hline
	$n=4$ & $b_0=4/3, \quad a_0=\sqrt{2}/3, \quad b_1=14/3, \quad a_1=\infty$
\\ 	\hline
\end{tabular}
\end{center}
\medskip
In general, $f(2k+1;z)$ has well defined values of $b_j$ and $a_j$ for $j=0,1,\dots,k-1$ but $b_k=\infty$, while $f(2k+2;z)$ has also a well defined value of $b_k$ but $a_k=\infty$. Therefore, although $f(n;z)$ is a Nevanlinna m-function, the standard Schur algorithm \eqref{eq:Stalg} does not apply to it for any value of $n$. The reason for this is the lack of analiticity of $f(n;iy)$ at infinity, which is related to the fact that only the first $n$ moments of the related measure exist.  

On the contrary, since $f(n;z)$ is analytic for $z\in\C\setminus[1,\infty)$, the shifted Schur algorithm \eqref{eq:Nalg-x} on the real line is applicable for any $x<1$. For instance, using the power expansion $f(n;z)=n\sum_{k\ge0}z^k/(n+k+1)$ it is possible to see that the choice $x=0$ yields the following Schur parameters for $f(n;z)$,
$$
\begin{aligned}
 & b_0 = \frac{n}{n+1}, 
 & \quad &  a_0 = \frac{n}{n+2},
 \\[5pt] 
 & b_k = \frac{n^2+(2k+1)n+2k^2}{(n+2k)^2-1},
 & & a_k = \frac{k^2(n+k+1)^2}{(n+2k+1)^2((n+2k+1)^2-1)},
 \qquad k\ge1.
\end{aligned}
$$
Since $f(n;z)$ is analytic at the origin, it is an FR-function with an operator representation given by the Jacobi matrix built out of the above Schur parameters. The boundedness of these parameters with respect to $k$ implies the boundedness of the support of the corresponding orthogonality measure, which is not the original one supported on $[1,\infty)$ and defining $f(n;z)$ as an m-function, but that one attached to the Nevanlinna function $f(n;z)$ by Definition~\ref{def:N-mu}.

\end{ex}

\begin{ex} \label{ex:SA4}

In this example we will proceed in reverse order, obtaining the Nevanlinna function with a prescribed sequence of Schur parameters for the Schur algorithm \eqref{eq:Nalg}. In particular, we will determine the Nevanlinna function with constant Schur parameters $b_n=b$, $a_n=a$ for any $a,b\in\R$. Such a constant sequence is obviouly bounded, thus it determines a unique Nevanlinna function $f$. Since the first iterate $f_1$ has the same Schur parameters as $f$ we conclude that $f_1=f$, so that
$$
 f(z) = \frac{1}{z} \frac{f(z)-b-a^2z}{f(z)-b}.
$$
Solving this equation yields
\begin{equation} \label{eq:f-const}
 f(z) = \frac{1+bz-\sqrt{(1-bz)^2-4a^2z^2}}{2z},
\end{equation}
where the branch for the square root is that one which takes the value 1 at $z=0$ because $f$ must have a well defined normal limit $b$ at the origin. 

\end{ex} 

Coming back to the role of Nevanlinna m-functions as the self-adjoint version of Schur functions, as we pointed out, m-functions of measures on $\R$ have been already considered as an analogue of Schur functions of measures on $\T$. At this point it is worth mentioning the discussion in B.~Simon's monograph \cite[Appendix B.2]{Simon-OPUC} about the variety of OPUC analogs for the m-function of OPRL, which includes Schur functions among them. We go in the opposite direction, trying find the right OPRL analogue of Schur functions for OPUC. The results in the present paper intend to show that this OPRL analogue, although close to m-functions --the renewal equation links both of them-- is given instead by what we call FR-functions. 

Regarding the requirements stated in the Introduction for the generalization of Schur functions, even if one is pleased with the fact that m-functions satisfy the requirements (B) and (C), they do not meet requirement (A), unlike FR-functions. As we pointed out, in the bounded case the Stieltjes function represents a return generating function, while the corresponding first return generating function is an FR-function which becomes a Schur/Nevanlinna function in the unitary/self-adjoint case. This interpretation is key in understanding the interest of FR-functions for the study of RW and QW, as the applications discussed in Sections~\ref{sec:RW}, \ref{sec:QW} and \ref{sec:OQW} will show. The FR-functions for both, QW and RW, turn out to be Schur functions, but with values in operators on Hilbert and Banach spaces respectively. Besides, for a large class of RW --those which are irreducible and reversible-- the corresponding evolution operator is self-adjoint with respect to a suitable inner product \cite[Chapter 6]{Stroock} and hence the first return probabilities generate an FR-function which is simultaneously a Schur and a Nevanlinna function. None of these results hold for m-functions.

Above and beyond the previous remarks, the best argument in support for the FR-functions as the right generalization of Schur functions beyond unitarity will be given in the next section. There is a clear asymmetry in the fact that Schur functions are known to satisfy very useful factorization properties, but no Nevanlinna analogue exists so far. This asymmetry reappears in other contexts, for instance, when noticing that Khrushchev factorization formula for Schur functions related to OPUC has no OPRL version. This is not merely an aesthetic question since Schur factorizations have been key in the invariant subspace problem of operator theory \cite{NF,Brodskii} and realizabilty in linear system theory \cite{BGK,Kaashoek}, while Khrushchev formula is the origin of a recent revolution in OPUC theory with deep impact and far-reaching consequences \cite{Khrushchev,Khrushchev2,Simon-OPUC}, even beyond OPUC theory --for instance with applications in QW \cite{GVWW,BGVW}. 

We will see that splitting formulas similar to the alluded factorizations of Schur functions appear also for Nevanlinna FR-functions associated with self-adjoint operators. Actually, we will find that such kind of splitting formulas are a general feature of FR-functions for arbitrary operators. This opens the possibility of applying these splittings, not only to OPRL theory, but also to other areas like for instance the study of RW and open QW. The suitability of FR-functions --under the more restricted notion of transfer/characteristic functions-- for factorizations formulas was already recognized since the 50's in works on system theory, dilation theory and related areas, where they have been profusely exploited in the context of unitary colligations \cite{NF,Brodskii2}. A much more recent result very much influenced by the connection with QW --and indeed at the root of this paper-- has shown that Khrushchev factorization formula for OPUC can be generalized to arbitary unitary operators \cite{CGVWW}. In the next section we will see that FR-functions are not only suitable for factorizations, but also for other kinds of splittings which mimic similar splittings of the underlying operator. Moreover, these splittings rules require no symmetry at all for the related operator, which makes them a powerful tool for developing splitting techniques in different applications of FR-functions, see Sections~\ref{sec:OP}, \ref{sec:RW}, \ref{sec:QW} and \ref{sec:OQW}.

\section{Splitting rules for FR-functions}
\label{sec:KF}

The objective of this section is to prove that the FR-functions related to arbitrary bounded projections and arbitrary closed operators on Banach spaces satisfy a couple of splitting rules with respect to the sum and product of operators. Natural extensions of Khrushchev factorization formulas for unitaries found in \cite{CGVWW}, these splitting rules --given in Theorem~\ref{thm:split}-- are central for the applications discussed in Sections~\ref{sec:OP}, \ref{sec:RW}, \ref{sec:QW} and \ref{sec:OQW}. 

An essential ingredient of these rules is the assumption of an overlapping splitting of the underlying operator $T \colon \DD(T) \to \BB$ on a Banach space $\BB$, i.e. a splitting into operators $T_L \colon \DD(T_L) \to \BB_L$ and $T_R \colon \DD(T_R) \to \BB_R$ on subspaces related to a decomposition $\BB=\BB_-\oplus\BB_0\oplus\BB_+$ with $\BB_L=\BB_-\oplus\BB_0$ and $\BB_R=\BB_0\oplus\BB_+$. The splitting rules in question state that such an overlapping splitting of an operator induces a similar splitting on the FR-function of the projection of $\BB$ onto the overlapping subspace $\BB_0=\BB_L\cap\BB_R$ along the complementary one $\BB_-\oplus\BB_+$. We will deal with two situations corresponding to the following operator overlapping splittings
$$
 T = 
 \left(
 \begin{array}{c|c} 
 	\\[-5pt]
	\kern7pt T_L \kern9pt 
	\\[7pt] \hline 
	& 0_+ \kern-2pt
 \end{array}
 \right)
 +
 \left(
 \begin{array}{c|c} 
 	0_- \kern-2pt 
 	\\ \hline 
	\\[-5pt]
	& \kern9pt T_R \kern7pt
	\\ [5pt]
 \end{array}
 \right),
 \qquad
 T = 
 \left(
 \begin{array}{c|c} 
 	\\[-5pt]
	\kern7pt T_L \kern9pt 
	\\[7pt] \hline 
	& 1_+ \kern-2pt
 \end{array}
 \right)
 \left(
 \begin{array}{c|c} 
 	1_- \kern-2pt 
 	\\ \hline 
	\\[-5pt]
	& \kern9pt T_R \kern7pt
	\\ [5pt]
 \end{array}
 \right),
$$
where $0_\pm$ and $1_\pm$ stand for the null and identity operators on $\HH_\pm$ respectively. We will refer to $\HH_{L,R}$ and $T_{L,R}$ as the `left/right' subspaces and operators respectively. 

\begin{thm}[\bf splitting rules for FR-functions] \label{thm:split}
Let $\BB=\BB_-\oplus\BB_0\oplus\BB_+$ be a decomposition of a Banach space into a direct sum of closed subspaces and $T_L$, $T_R$ operators on $\BB_L=\BB_-\oplus\BB_0$, $\BB_R=\BB_0\oplus\BB_+$ respectively. Denote by $0_k$ and $1_k$ the null and identity operators on $\BB_k$ for any subindex $k$, while
$$
\begin{aligned}
 & P = \text{projection of $\BB$ onto $\BB_0$ along $\BB_-\oplus\BB_+$},
 & \quad & Q=1-P, 
 \\
 & P_L = \text{projection of $\BB_L$ onto $\BB_0$ along $\BB_-$},
 & & Q_L=1_L-P_L
 \\
 & P_R = \text{projection of $\BB_R$ onto $\BB_0$ along $\BB_+$},
 \qquad
 & & Q_R=1_R-P_R.
\end{aligned} 
$$
Then, if $\BB_0\subset\DD(T_{L,R})$ and $f_{L,R}$ is the FR-function of $P_{L,R}$ with respect to $T_{L,R}$, we have the following splitting rules:
\begin{itemize}
\vskip7pt
\item[{\it (i)}] {\bf Decomposition:}
$T = (T_L \oplus 0_+) + (0_- \oplus T_R)$ has domain $\DD(T)=\DD(T_L)+\DD(T_R)$ and the FR-function of $P$ with respect to $T$ is given by
$$
 f(z)=f_L(z)+f_R(z), \qquad z^{-1}\in\varpi(Q_LT_LQ_L)\cap\varpi(Q_RT_RQ_R).
$$
\vskip7pt
\item[{\it (ii)}] {\bf Factorization:}
$T = (T_L \oplus 1_+)(1_- \oplus T_R)$ has domain $\DD(T)=\DD(T_L)+\DD(T_R)$ and the FR-function of $P$ with respect to $T$ is given by
$$
 f(z)=f_L(z)f_R(z), \qquad z^{-1}\in\varpi(Q_LT_LQ_L)\cap\varpi(Q_RT_RQ_R).
$$
\end{itemize} 
\end{thm}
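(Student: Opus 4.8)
The plan is to reduce everything to the renewal equation (Theorem~\ref{thm:dom2}) and the Schur-complement machinery of Lemmas~\ref{lem:SC} and~\ref{lem:triang}. The key observation is that for all three operators $T$, $T_L$, $T_R$ the relevant subspace lies in the domain (we have $\BB_0\subset\DD(T_{L,R})$, and the hypotheses give $\BB_0\subset\DD(T)$ as well), so all FR-functions admit the block expression of Proposition~\ref{prop:falt}.{\it(ii)}, namely
$$
 f(z)=PTP+zPTQ(Q-zQTQ)^{-1}QTP,
$$
and similarly for $f_{L,R}$ with $P_{L,R}$, $Q_{L,R}$, $T_{L,R}$. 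So the whole theorem becomes an identity about block decompositions of $T$ relative to the finer grading $\BB=\BB_-\oplus\BB_0\oplus\BB_+$. I would first record the $3\times3$ block form of $T$ in each of the two cases. In the decomposition case $T=(T_L\oplus0_+)+(0_-\oplus T_R)$, so writing $T_L$ and $T_R$ each as $2\times2$ blocks against $\BB_-\oplus\BB_0$ resp. $\BB_0\oplus\BB_+$, the $(0,0)$-block of $T$ is $P_LT_LP_L+P_RT_RP_R$ (abusing notation to identify operators on $\BB_0$), the off-diagonal blocks to $\BB_-$ come only from $T_L$, those to $\BB_+$ only from $T_R$, and crucially the $\BB_-$--$\BB_+$ corner blocks vanish. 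In the factorization case one multiplies the two block matrices out; again the $\BB_-$--$\BB_+$ corners vanish and the $(0,0)$-block becomes $P_LT_LP_L\cdot P_RT_RP_R$ — wait, more care is needed here, since $(T_L\oplus1_+)$ acts on $\BB_0$ as $P_LT_L$ projected but on $\BB_+$ as identity, and the product threads $\BB_0$ through — I would carry the block multiplication out carefully once.

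The heart of the matter is then a Schur-complement computation. For the decomposition case, set $R=\BB_-\oplus\BB_+$ and let $\Pi$ be the projection onto $R$ along $\BB_0$, so $\Pi=1-P$. The key point is that $Q-zQTQ$, viewed as an operator on $R$, is \emph{block-diagonal} with respect to $R=\BB_-\oplus\BB_+$: its $\BB_-$--$\BB_+$ off-diagonal blocks are $-z$ times the corresponding corner blocks of $T$, which we noted vanish (the $\BB_-$ block of $T$ only touches $\BB_-\oplus\BB_0$ and the $\BB_+$ block only $\BB_0\oplus\BB_+$). Hence $(Q-zQTQ)^{-1}$ is also block-diagonal, $=(Q_L-zQ_LT_LQ_L)^{-1}\oplus(Q_R-zQ_RT_RQ_R)^{-1}$, which exists exactly when $z^{-1}\in\varpi(Q_LT_LQ_L)\cap\varpi(Q_RT_RQ_R)$ (using Proposition~\ref{prop:QTQ} to pass between $\varpi(QTQ)$ and the full-space condition). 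Now plug this block-diagonal resolvent into $PTP+zPTQ(Q-zQTQ)^{-1}QTP$: the cross terms (e.g. $PT$ hitting $\BB_-$, resolvent staying in $\BB_-$, then $QTP$ coming back — but $QTP$ restricted to $\BB_-$ is governed by $T_L$) split cleanly into a piece depending only on $T_L$-blocks and a piece depending only on $T_R$-blocks, because $P_LT_LQ_L$ maps $\BB_0$ into $\BB_-$ and $Q_LT_LP_L$ maps $\BB_-$ back to $\BB_0$ with no contribution from $\BB_+$, and symmetrically. Summing, $f(z)=f_L(z)+f_R(z)$. For the domain assertion $\DD(T)=\DD(T_L)+\DD(T_R)$, one argues directly from the block form: $v\in\DD(T)$ iff each block-projection lies in the appropriate domain, and since $\BB_0\subset\DD(T_{L,R})$ the $\BB_0$-component is harmless, giving the stated sum.

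For the factorization case the same strategy applies but the algebra is heavier. The cleanest route, I expect, is \emph{not} to expand $Q-zQTQ$ directly but to use the alternative form $f(z)=P(1-zTQ)^{-1}TP$ from Proposition~\ref{prop:falt}.{\it(i)} (valid since $\BB_0\subset\DD(T)$), or better, to exploit the product structure $T=UV$ with $U=T_L\oplus1_+$, $V=1_-\oplus T_R$ and observe $PU=PUP_*$-type relations: $P$ factors through the splitting so that $PTP=PUP\cdot PVP$ modulo the overlap bookkeeping, and then show $(1-zQT)^{-1}$ restricted appropriately threads through the two resolvents $(1-zQ_RT_R)^{-1}$ and $(1-zQ_LT_L)^{-1}$ multiplicatively. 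Concretely I would verify the operator identity $T(1-zQT)^{-1}P = \big(T_L(1-zQ_LT_L)^{-1}P_L\big)\cdot\big(T_R(1-zQ_RT_R)^{-1}P_R\big)$ after suitable identification of $\BB_0$ inside $\BB_L$ and $\BB_R$ — this is the analogue of the unitary Khrushchev factorization $f=f_Lf_R$ proved in \cite{CGVWW}, and the proof there (pushing a vector through the two overlapping pieces and tracking when it is in $\BB_0$) generalizes verbatim since it never used unitarity, only the block/triangular structure.

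\textbf{The main obstacle} I anticipate is the infinite-dimensional/unbounded bookkeeping in the factorization case: one must be sure that the resolvents $(1-zQ_LT_L)^{-1}$, $(1-zQ_RT_R)^{-1}$, $(1-zQT)^{-1}$ all exist as everywhere-defined operators on the respective spaces precisely under the stated condition $z^{-1}\in\varpi(Q_LT_LQ_L)\cap\varpi(Q_RT_RQ_R)$, and that products like $T_L(1-zQ_LT_L)^{-1}$ compose legally with the $1_+$ (resp. $1_-$) padding — here the non-closedness of $QT$ noted after Definition~\ref{def:FR} means one cannot be cavalier, and Proposition~\ref{prop:QTQ} (applied to each of $T$, $T_L$, $T_R$, all satisfying the $\BB_0\subset\DD$ hypothesis) is exactly what licenses replacing the $\varpi(QT)$-type conditions by the stated $\varpi(Q_{L,R}T_{L,R}Q_{L,R})$ ones. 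Once the domain/invertibility statements are pinned down, the algebraic identity is, as in \cite{CGVWW}, a matter of careful but routine block manipulation.
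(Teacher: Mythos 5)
Your treatment of part \emph{(i)} is essentially the paper's own proof: reduce to the block expression of Proposition~\ref{prop:falt}.{\it(ii)}, observe that both $\BB_-$--$\BB_+$ corner blocks of $T$ vanish so that $Q-zQTQ$ is block-diagonal on $\BB_-\oplus\BB_+$, invert blockwise, and watch the cross terms cancel. That part is correct, as is your handling of the domains there.

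Part \emph{(ii)} contains a genuine error followed by an unexecuted sketch. You assert that in the factorization case ``again the $\BB_-$--$\BB_+$ corner blocks vanish.'' They do not: writing $\hat T_L=T_L\oplus1_+$, $\hat T_R=1_-\oplus T_R$, one has $P_+\hat T_L=\hat T_LP_+=P_+$ and $P_-\hat T_R=\hat T_RP_-=P_-$, whence $P_+TP_-=P_+\hat T_RP_-=0$, but $P_-TP_+=P_-\hat T_L\,P\,\hat T_RP_+$ is generically nonzero. Consequently $Q-zQTQ$ is block \emph{upper-triangular} on $\BB_-\oplus\BB_+$, not diagonal; its inverse (via the upper-triangular analogue of Lemma~\ref{lem:triang}) carries an off-diagonal entry $zS_-\,P_-\hat T_L\hat T_RP_+\,S_+$ with $S_\mp=(P_\mp-zP_\mp\hat T_{L,R}P_\mp)^{-1}$, and it is precisely this entry that supplies the cross term needed to reassemble the product $f_Lf_R$. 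If you proceed as if the resolvent were block-diagonal you obtain an additive-looking expression that is simply wrong. Your fallback --- ``verify $T(1-zQT)^{-1}P=\bigl(T_L(1-zQ_LT_L)^{-1}P_L\bigr)\bigl(T_R(1-zQ_RT_R)^{-1}P_R\bigr)$, the CGVWW proof generalizes verbatim'' --- is not a proof (and as stated the composition does not even typecheck, since the right factor lands in $\BB_R$, not $\BB_0$). The missing ingredients are exactly the identities that follow from $P_+\hat T_L=\hat T_LP_+=P_+$ and $P_-\hat T_R=\hat T_RP_-=P_-$, namely $\hat T_L\hat T_R=\hat T_LP_-+\hat T_LP\hat T_R+P_+\hat T_R$ and hence $P\hat T_L\hat T_RP=(P\hat T_LP)(P\hat T_RP)$, $P_-\hat T_L\hat T_RP_+=(P_-\hat T_LP)(P\hat T_RP_+)$, etc.; substituting these into the triangular inverse is what produces $f=f_Lf_R$. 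Also, in the factorization case the domain identity is not a purely componentwise statement: one needs $\DD(T)=\{v:\hat T_Rv\in\DD(\hat T_L)\}$ together with the fact that $\hat T_R$ acts as the identity on $\BB_-$ to conclude $\DD(T)=(\DD(T_L)\cap\BB_-)\oplus\DD(T_R)$.
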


\begin{proof}
\quad

\noindent{\it (i)}
For convenience, let us write the overlapping splitting of $T$ as
$$
 T=\hat{T}_L+\hat{T}_R,
 \qquad 
 \hat{T}_L = T_L \oplus 0_+, 
 \qquad 
 \hat{T}_R = 0_- \oplus T_R,
$$
using the enlarged left/right operators $\hat{T}_{L,R}$. The condition $\BB_0\subset\DD(T_{L,R})$ implies that 
$$
 \DD(T_L)=(\DD(T_L)\cap\BB_-)\oplus\BB_0,
 \qquad
 \DD(T_R)=\BB_0\oplus(\DD(T_R)\cap\BB_+),
$$ 
hence
\begin{equation} \label{eq:DTLR}
 \DD(T)=\DD(\hat{T}_L)\cap\DD(\hat{T}_R)
 =(\DD(T_L)\cap\BB_-)\oplus\BB_0\oplus(\DD(T_R)\cap\BB_+)
 =\DD(T_L)+\DD(T_R).
\end{equation}

The condition $\BB_0\subset\DD(T_{L,R})$ ensures that $f_{L,R}$ admit expressions like the one in Proposition~\ref{prop:falt}.{\it(ii)}, i.e.
\begin{equation} \label{eq:fLfR0}
\begin{aligned}
 & f_L(z) = P_LT_LP_L + zP_LT_LQ_L(Q_L-zQ_LT_LQ_L)^{-1}Q_LT_LP_L, 
 \qquad z^{-1}\in\varpi(Q_LT_LQ_L),
 \\
 & f_R(z) = P_RT_RP_R + zP_RT_RQ_R(Q_R-zQ_RT_RQ_R)^{-1}Q_RT_RP_R, 
 \qquad z^{-1}\in\varpi(Q_RT_RQ_R).
\end{aligned}
\end{equation}
The FR-functions $f_{L,R}$ can be rewritten in terms of the enlarged operators as
\begin{equation} \label{eq:fLfR}
\begin{aligned}
 & f_L(z) 
 = P\hat{T}_LP + zP\hat{T}_LP_-(P_--zP_-\hat{T}_LP_-)^{-1}P_-\hat{T}_LP, 
 \qquad z^{-1}\in\varpi(P_-\hat{T}_LP_-),
 \\
 & f_R(z) 
 = P\hat{T}_RP + zP\hat{T}_RP_+(P_+-zP_+\hat{T}_RP_+)^{-1}P_+\hat{T}_RP, 
 \qquad z^{-1}\in\varpi(P_+\hat{T}_RP_+),
\end{aligned}
\end{equation} 
with $P_\pm$ the projection of $\BB$ onto $\BB_\pm$ along $\BB_{L,R}$. 

The above results imply that $\BB_0\subset\DD(T)$, which guarantees that the expression in Proposition~\ref{prop:falt}.{\it(ii)} is also valid for the FR-function $f$, i.e.
\begin{equation} \label{eq:fQ}
 f(z) = PTP + zPTQ(Q-zQTQ)^{-1}QTP, 
 \qquad
 Q=P_-+P_+.
\end{equation}
Besides, from \eqref{eq:DTLR} we know that $\DD(T)\cap(\BB_-\oplus\BB_+)=(\DD(T)\cap\BB_-)\oplus(\DD(T)\cap\BB_+)$, which implies that $\DD(Q-zQTQ)=(\DD(Q-zQTQ)\cap\BB_-)\oplus(\DD(Q-zQTQ)\cap\BB_+)$. This means that $Q-zQTQ$ has a block representation with respect to the direct sum $\BB_-\oplus\BB_+$, which is given by
$$
 Q-zQTQ =
 \begin{pmatrix} 
 	P_--zP_-\hat{T}_LP_- & 0
	\\
	0 & P_+-zP_+\hat{T}_RP_+ 
 \end{pmatrix},
$$
where we have taken into account that 
\begin{equation} \label{eq:rel-enl}
 P_+\hat{T}_L=\hat{T}_LP_+=P_-\hat{T}_R=\hat{T}_RP_-=0.
\end{equation}
Therefore, the domain of definition of $f(z)$ is 
$$
 z^{-1}\in\varpi(QTQ)
 =\varpi(P_-\hat{T}_LP_-)\cap\varpi(P_+\hat{T}_RP_+)
 =\varpi(Q_LT_LQ_L)\cap\varpi(Q_RT_RQ_R),
$$ 
and in this domain we can rewrite $f$ in terms of block representations as follows
$$
 f(z) = P\hat{T}_LP + P\hat{T}_RP
 + z\begin{pmatrix} P\hat{T}_LP_- & \kern-5pt P\hat{T_R}P_+ \end{pmatrix}
 \begin{pmatrix} 
 	(P_--zP_-\hat{T}_LP_-)^{-1} & \kern-12pt 0
	\\
	0 & \kern-12pt (P_+-zP_+\hat{T}_RP_+)^{-1} 
 \end{pmatrix}
 \begin{pmatrix} P_-\hat{T}_LP \\ P_+\hat{T}_RP \end{pmatrix},
$$
where we have used \eqref{eq:rel-enl} again. The decomposition $f(z)=f_L(z)+f_R(z)$ follows from the above block representation of $f$ and the expression of $f_{L,R}$ given in \eqref{eq:fLfR}.

\noindent{\it (ii)}
The splitting of $T$ is now an overlapping factorization 
$$
 T=\hat{T}_L\hat{T}_R,
 \qquad
 \hat{T}_L = T_L \oplus 1_+, \qquad \hat{T}_R = 1_- \oplus T_R,
$$
where the enlarged left/right operators $\hat{T}_{L,R}$ have been redefined. Concerning the domain of $T$, note that $\RR(\hat{T}_R)=\BB_-\oplus \RR(T_R)$ and $\DD(\hat{T}_L)=(\DD(T_L)\cap\BB_-)\oplus\BB_R$. Hence, bearing in mind that $\hat{T}_R$ acts as the identity on $\BB_-$ we get
$$
 \DD(T) = (\DD(T_L)\cap\BB_-) \oplus \DD(T_R) = \DD(T_L)+\DD(T_R).
$$

The assumption $\BB_0\subset\DD(T_{L,R})$ implies that the expressions \eqref{eq:fLfR} for $f_{L,R}$ also hold in this case with the redefined operators $\hat{T}_{L,R}$.

As a consequence of the previous results, $\BB_0\subset\DD(T)$, thus $f$ is given again by \eqref{eq:fQ} but with the new meaning of $T$. Analogously to the case {\it (i)}, $Q-zQTQ$ has a block representation with respect to the direct sum $\BB_-\oplus\BB_+$, which in this case looks like
$$
 Q-zQTQ = 
 \begin{pmatrix} 
 	P_--zP_-\hat{T}_LP_- & -zP_-\hat{T}_L\hat{T}_RP_+
	\\
	0 & P_+-zP_+\hat{T}_RP_+ 
 \end{pmatrix}.
$$
The analogue of Lemma~\ref{lem:triang} for upper triangular block operators provides the inverse of $Q-zQTQ$ for 
$$
 z^{-1}\in\varpi(Q_LT_LQ_L)\cap\varpi(Q_RT_RQ_R) 
 =\varpi(P_-\hat{T}_LP_-)\cap\varpi(P_+\hat{T}_RP_+)
 \subset\varpi(QTQ), 
$$
leading to the following expression of $f$ in such a domain,  
\begin{equation} \label{eq:f-fact}
\begin{gathered}
 f(z) = P\hat{T}_L\hat{T}_RP
 + z\begin{pmatrix} P\hat{T}_LP_- & P\hat{T}_L\hat{T}_RP_+ \end{pmatrix}
 \begin{pmatrix} 
 	S_- & zS_-P_-\hat{T}_L\hat{T}_RP_+S_+
	\\
	0 & S_+ 
 \end{pmatrix}
 \begin{pmatrix} P_-\hat{T}_L\hat{T}_RP \\ P_+\hat{T}_RP \end{pmatrix},
 \\
 S_-=(P_--zP_-\hat{T}_LP_-)^{-1}, 
 \qquad 
 S_+=(P_+-zP_+\hat{T}_RP_+)^{-1}.
\end{gathered}
\end{equation}
Here we have used the new relations 
$$
 P_+\hat{T}_L=\hat{T}_LP_+=P_+, \qquad P_-\hat{T}_R=\hat{T}_RP_-=P_-,
$$ 
satisfied by the redefined enlarged operators. These relations also imply that
$$
 \hat{T}_L\hat{T}_R = \hat{T}_L(P_-+P+P_+)\hat{T}_R 
 = \hat{T}_LP_- + \hat{T}_LP\hat{T}_R + P_+\hat{T}_R,
$$
so that
\begin{equation} \label{eq:rel-enl-2}
\begin{aligned}
 & P\hat{T}_L\hat{T}_RP = (P\hat{T}_LP)(P\hat{T}_RP),
 & \quad & P\hat{T}_L\hat{T}_RP_+ = (P\hat{T}_LP)(P\hat{T}_RP_+),
 \\
 & P_-\hat{T}_L\hat{T}_RP = (P_-\hat{T}_LP)(P\hat{T}_RP),
 & & P_-\hat{T}_L\hat{T}_RP_+ = (P_-\hat{T}_LP)(P\hat{T}_RP_+).  
\end{aligned}
\end{equation}
Inserting \eqref{eq:rel-enl-2} into the block representation \eqref{eq:f-fact} of $f$ and comparing with the expressions \eqref{eq:fLfR} of $f_{L,R}$ yields the factorization $f(z)=f_L(z)f_R(z)$.
\end{proof} 

Other useful but less surprising splitting rules for FR-functions appearing in the next theorem deal with the following situations,
$$
 \tilde{T} = T +
 \left(
 \begin{array}{c|c|c} 
 	0_- & & 
 	\\ \hline
	& T_0 &
	\\ \hline
	& & 0_+	 
 \end{array}
 \right),
 \qquad
 \tilde{T} =
 \left(
 \begin{array}{c|c|c} 
 	1_- & & 
 	\\ \hline
	& T_0 &
	\\ \hline
	& & 1_+	 
 \end{array}
 \right)
 T,
 \qquad
 \tilde{T} = T
 \left(
 \begin{array}{c|c|c} 
 	1_- & & 
 	\\ \hline
	& T_0 &
	\\ \hline
	& & 1_+	 
 \end{array}
 \right),
$$
where $T_0$ is an operator on the subspace $\BB_0$.

\begin{thm} \label{thm:split2}
Let $T$ be an operator on a Banach space $\BB$, $P$ a bounded projection of $\BB$ onto $\BB_0$ along $\BB_1$ and $Q=1-P$. Denote by $0_1$ and $1_1$ the null and identity operators on $\BB_1$. Then, if $T_0$ is an operator on $\BB_0$ and $f$ is the FR-function of $P$ with respect to $T$, we have the following rules: 
\begin{itemize}
\vskip7pt
\item[{\it (i)}] 
The FR-function of $P$ with respect to $\tilde{T} = T + (T_0 \oplus 0_1)$ is given by
$$
 \tilde{f}(z)=f(z)+T_0, \qquad z^{-1}\in\varpi(QT).
$$
\vskip7pt
\item[{\it (ii)}] 
The FR-function of $P$ with respect to $\tilde{T} = (T_0 \oplus 1_1)T$ is given by
$$
 \tilde{f}(z)=T_0f(z), \qquad z^{-1}\in\varpi(QT).
$$
\item[{\it (iii)}] 
The FR-function of $P$ with respect to $\tilde{T} = T(T_0 \oplus 1_1)$ is given by
$$
 \tilde{f}(z)=f(z)T_0, \qquad z^{-1}\in\varpi(QT).
$$
\end{itemize} 
\end{thm}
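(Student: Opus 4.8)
The plan is to exploit the block representation of $\tilde T$ generated by the projection $P$ in each of the three cases, together with the alternative expression for FR-functions given in Proposition~\ref{prop:falt}.{\it(ii)} (equivalently Proposition~\ref{prop:B-falt}.{\it(iii)}), which expresses $f$ entirely in terms of the four blocks $PTP$, $PTQ$, $QTP$, $QTQ$. The first observation is that in all three constructions the modification is supported by $\BB_0$ in a way that leaves the domain essentially untouched: since $T_0\oplus0_1$ and $T_0\oplus1_1$ are perturbations/multipliers acting as something harmless on $\BB_1$, one checks $\DD(\tilde T)=\DD(T)$ (for {\it(i)} because $T_0\oplus0_1$ is defined on all of $\BB_0\oplus\DD(QTQ)$-ish considerations — more precisely, adding an operator that is zero on $\BB_1$ and everywhere defined on $\BB_0$ does not shrink the domain; for {\it(ii)} and {\it(iii)} because composing with $T_0\oplus1_1$, which is the identity on $\BB_1$, does not change which vectors land in the domain). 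Consequently the condition $z^{-1}\in\varpi(QT)$ continues to control the domain of $\tilde f$, since in each case $Q\tilde T = QT$ (for {\it(i)} and {\it(ii)}) or $Q\tilde TQ$ differs from $QTQ$ only by the identity action, giving the same $\varpi$.

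For part {\it(i)}: the block representation of $\tilde T = T+(T_0\oplus0_1)$ is obtained from that of $T$ by replacing the $(0,0)$-block $A=PTP$ with $A+T_0$, while $B=PTQ$, $C=QTP$, $D=QTQ$ are unchanged. Plugging into Proposition~\ref{prop:falt}.{\it(ii)}, namely $\tilde f(z)=P\tilde TP + zP\tilde TQ(Q-zQ\tilde TQ)^{-1}Q\tilde TP$, the only term that changes is $P\tilde TP = PTP+T_0$, and the remaining term is identical to that of $f$; hence $\tilde f(z)=f(z)+T_0$. (One must be a little careful about whether $\BB_0\subset\DD(T)$ so that Proposition~\ref{prop:falt} literally applies; if not, one falls back on Definition~\ref{def:FR} directly: $Q\tilde T=QT$ and $P\tilde T(1-zQ\tilde T)^{-1}P = P(T+T_0\oplus0_1)(1-zQT)^{-1}P = PT(1-zQT)^{-1}P + PT_0P(1-zQT)^{-1}P$, and since $P(1-zQT)^{-1}=P$ — the identity used in the proof of Theorem~\ref{thm:dom2}, obtained by substituting $T\to QT$ in \eqref{eq:1-zT} — the second summand is just $T_0$.)

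For parts {\it(ii)} and {\it(iii)}, I would prefer to argue straight from Definition~\ref{def:FR} rather than the block formula, because there the multiplicative structure is transparent. Write $S=T_0\oplus1_1$, so $\tilde T = ST$ in {\it(ii)} and $\tilde T=TS$ in {\it(iii)}, with $QS=SQ$ on the relevant domains and $PS=SP=PT_0P$ acting as $T_0$ on $\BB_0$. In case {\it(iii)}, $\tilde T=TS$ gives $Q\tilde T = QTS$, and one computes $(1-zQTS)^{-1}$; the key identity is $PS=T_0$ on $\BB_0$ together with $S(1-zQTS)^{-1}\cdots$ manipulations, or more simply: $\tilde f(z)=PTS(1-zQTS)^{-1}P$ and, substituting $T\to TS$ — careful, $S$ need not be the identity on $\BB_0$ — one uses $S P = PT_0P$... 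Here I expect the main obstacle: matching domains and the noncommutativity between $S$ and $Q$ on the overlap. The clean route is: for {\it(iii)}, note $QTS Q = QTQ$ (since $SQ=Q$), so $\varpi(Q\tilde T)=\varpi(QT)$, and $\tilde f(z)=PTS(1-zQT S)^{-1}P$; using that $(1-zQTS)^{-1}P = (1-zQT)^{-1}P$ — because $SP=PT_0P$ acts inside $\BB_0$ and $Q(1-zQT)^{-1}P$ reproduces the resolvent correctly — would need justification, so instead substitute via the identity $T S(1-zQTS)^{-1}P$ directly against the $f$ of $T$. For {\it(ii)}, $\tilde T=ST$ gives $Q\tilde T = QST = QT$ (since $QS=Q$), hence $\varpi(Q\tilde T)=\varpi(QT)$ trivially, and $\tilde f(z)=PST(1-zQT)^{-1}P = PT_0P\,T(1-zQT)^{-1}P = T_0 f(z)$, using $PS=PT_0P$ and $PT_0P\cdot PT(\cdots)P = T_0\cdot PT(\cdots)P$ since everything is read as operators on $\BB_0$. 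Case {\it(ii)} is therefore the easy one and {\it(iii)} is the subtle one; the resolution for {\it(iii)} is that $\tilde T = TS$ with $S$ invertible on $\BB$ (as $T_0\oplus1_1$ — wait, $T_0$ may not be invertible), so one cannot simply conjugate; the honest computation is $\tilde f(z)=PTS(1-zQTS)^{-1}P$, and one checks by direct verification that $TS(1-zQTS)^{-1}P = T(1-zQT)^{-1}SP = T(1-zQT)^{-1}PT_0P$ whenever the right-hand side makes sense, giving $\tilde f(z)=PT(1-zQT)^{-1}P\,T_0 = f(z)T_0$. I would present {\it(ii)} and {\it(iii)} as mirror-image arguments, flagging that {\it(iii)} requires the small extra identity $TS(1-zQTS)^{-1}P=T(1-zQT)^{-1}PT_0P$, which follows by multiplying both sides on the left by $(1-zQTS)=(1-zQT S)$ and simplifying using $QSP=QP=0$ — and that is the one routine-but-fiddly verification I would actually write out.
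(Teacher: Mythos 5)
Your proposal is correct and follows essentially the same route as the paper: parts {\it(i)} and {\it(ii)} rest on the identities $Q(T_0\oplus 0_1)=0$, $Q(T_0\oplus 1_1)=Q$ and $P(1-zQT)^{-1}=P$, and part {\it(iii)} on intertwining the resolvents of $QTS$ and $QT$. Two small repairs: the key identity in {\it(iii)}, $S(1-zQTS)^{-1}=(1-zQT)^{-1}S$, is obtained by left-multiplying by $1-zQT$ and using $(1-zQT)S=S(1-zQTS)$ (a consequence of $SQ=Q$), not by multiplying by $1-zQTS$ and invoking $QSP=0$; and your primary route for {\it(i)} via Proposition~\ref{prop:falt}.{\it(ii)} requires $\BB_0\subset\DD(T)$, which the theorem does not assume, so the fallback computation you give directly from Definition~\ref{def:FR} is the one to write out.
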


\begin{proof}
\quad

\noindent{\it (i)}
By definition,
\begin{equation} \label{eq:tf}
 \tilde{f}(z) = P\tilde{T}(1-zQ\tilde{T})^{-1}P,
 \qquad z^{-1}\in\varpi(Q\tilde{T}),
\end{equation}
where $\tilde{T}=T+\hat{T}_0$ in terms of the enlarged operator $\hat{T}_0 = T_0 \oplus 0_1$. Since $P\hat{T}_0=\hat{T}_0=\hat{T}_0P$ and $Q\hat{T}_0=0$ we find that $Q\tilde{T}=QT$ and
$$
 \tilde{f}(z) = PT(1-zQT)^{-1}P + P\hat{T}_0P(1-zQT)^{-1}P = f(z) + T_0,
 \qquad z^{-1}\in\varpi(QT),
$$
where we have used that $(1-zQT)^{-1}=1+zQT(1-zQT)^{-1}$ and $P\hat{T}_0P=T_0$ as an operator on $\BB_0$.

\noindent{\it (ii)}
The FR-function $\tilde{f}$ is given again by \eqref{eq:tf}, but with $\tilde{T}=\hat{T}_0T$ in terms of the redefined operator $\hat{T}_0=T_0\oplus1_1$. This redefined operator satisfies $P\hat{T}_0=T_0\oplus0_1=\hat{T}_0P$ and $Q\hat{T}_0=Q$, which lead to $Q\tilde{T}=QT$ and
$$
 \tilde{f}(z) = P\hat{T}_0PT(1-zQT)^{-1}P = T_0f(z),
 \qquad z^{-1}\in\varpi(QT),
$$
where we have used that $(1-zQT)^{-1}=1+zQT(1-zQT)^{-1}$ and $P\hat{T}_0P=T_0$ as an operator on $\BB_0$.

\noindent{\it (iii)}
Now the FR-function $\tilde{f}$ is given by \eqref{eq:tf} with $\tilde{T}=T\hat{T}_0$ and the same redefined operator $\hat{T}_0=T_0\oplus1_1$ as in {\it(ii)}. Using $\hat{T}_0Q=Q$ we get the identity
$$
 (1-zQT) \hat{T}_0 = \hat{T}_0 (1-zQT\hat{T}_0), \qquad z\in\C.
$$
Since $(1-zQT\hat{T}_0)(1-zQT\hat{T}_0)^{-1}=1$ for $z\in\varpi(QT)$, we obtain
$$
 (1-zQT) \hat{T}_0 (1-zQT\hat{T}_0)^{-1} = \hat{T_0}.
$$
When $z\in\varpi(QT)$, the operator $(1-zQT\hat{T}_0)^{-1}$ has domain $\BB$ and range equal to $\DD(T\hat{T}_0)=\{v\in\BB:\hat{T}_0v\in\DD(T)\}$. Hence, $\hat{T}_0 (1-zQT\hat{T}_0)^{-1}$ has domain $\BB$ and range $\DD(T)$, so that we can use the fact that $(1-zQT)^{-1}(1-zQT)$ is the identity in $\DD(T)$ to conclude that
$$
 \hat{T}_0 (1-zQT\hat{T}_0)^{-1} = (1-zQT)^{-1} \hat{T_0}, 
 \qquad z^{-1}\in\varpi(QT).
$$
Multiplying on the left by $PT$ and on the right by $P$, this leads to 
$$
 \tilde{f}(z) = PT(1-zQT)^{-1}P\hat{T}_0P = f(z)T_0,
 \qquad z^{-1}\in\varpi(QT),
$$ 
once we take into account that $P\hat{T}_0=\hat{T}_0P$ and $P\hat{T}_0P$ is $T_0$ as an operator on $\BB_0$.
\end{proof}

The previous splitting rules constitute a characteristic feature of FR-functions which is not shared by other relevant functions such as Stieltjes of m-functions. This is an indication of the fact that FR-functions do a better job at codifying important properties of those ``systems'' to which they are linked. This is the case for instance for the return properties of RW and QW, as Sections~\ref{sec:RW}, \ref{sec:QW} and \ref{sec:OQW} will show. The above splitting properties of FR-functions become recurrence splitting rules for such walks. The next section will show the importance of the above splitting rules also for the theory of orthogonal polynomials: the choice of a right OPRL analogue of Schur functions for OPUC allows us to obtain for the first time an OPRL version of Khrushchev factorization formula for OPUC.

\section{Applications to orthogonal polynomials on the real line: 
\break OPRL Khrushchev formula}
\label{sec:OP}

The aim of this section is to obtain an OPRL analogue of Khrushchev formula for OPUC, which should be the starting point of a Khrushchev theory for OPRL. The bulk of OPUC Khrushchev theory appeared in S.~Khrushchev's papers \cite{Khrushchev,Khrushchev2}, but the reader may also consult the monograph \cite[Chapters 4 and 9]{Simon-OPUC}.

Let us describe first the OPUC version of Khrushchev formula. Consider a probability measure $\mu$ on $\T$ --i.e. a positive measure normalized by $\mu(\T)=1$-- with Verblunsky coefficients $\alpha_0,\alpha_1,\alpha_2,\dots$. These coefficients not only determine the corresponding CMV matrix, but also the Schur function $f$ related to $\mu$ because its Schur parameters coincide with the Verblunsky coefficients due to Geronimus theorem \cite{Geronimus} (see also \cite[Chapter 3]{Simon-OPUC}). This establishes a one-to-one correspondence among probability measures on $\T$, Schur functions and sequences in $\D$ with the last term on $\T$ in the case of a terminating sequence.

Khrushchev formula answers the following question: if $\varphi_n$ is the orthonormal polynomial of degree $n$ in $L^2_\mu$, what is the Schur function of the probability measure $|\varphi_n|^2\,d\mu$? Khrushchev formula states that this Schur function factorizes as the product
\begin{equation} \label{eq:gnfn}
 g_nf_n,
 \qquad
 \begin{cases}
 	\; f_n = \text{Schur function with Schur parameters } 
 	\alpha_n,\alpha_{n+1},\alpha_{n+2},\dots,
 	\\
 	\; g_n = \text{Schur function with Schur parameters } 
 	-\overline\alpha_{n-1},-\overline\alpha_{n-2},\dots,-\overline\alpha_0,1.
 \end{cases}
\end{equation}
While $f_n$ are the Schur iterates of $f$, the rational Schur functions $g_n$ are called the inverse Schur iterates of $f$. It can be proved that the inverse Schur iterates are given by
$$
 g_n=\frac{\varphi_n}{\varphi_n^*}, 
 \qquad
 \varphi_n^*(z)=z^n\overline{\varphi_n(1/\overline{z})},
$$
so that the Schur function of $d\mu_n:=|\varphi_n|^2\,d\mu$ becomes 
$$
\frac{\varphi_n}{\varphi_n^*}f_n,
$$
where $\varphi_n^*$ is known as the reversed polynomial of $\varphi_n$. Writing OPUC Khrushchev formula as a formula by itself, it takes the form 
$$
 \frac{\displaystyle\int\frac{t\,d\mu_n(t)}{1-zt}}
 {\displaystyle\int\frac{d\mu_n(t)}{1-zt}} 
 = \frac{\varphi_n(z)}{\varphi_n^*(z)}f_n(z),
 \qquad z\in\D.
$$

In the FR-function approach, Khrushchev factorization formula for OPUC is a special instance of the factorization of FR-functions given in Theorem~\ref{thm:split}.{\it (ii)}, and comes from a natural overlapping factorization of CMV matrices into smaller ones \cite{CGVWW}. It is natural to expect an OPRL version of Khrushchev formula linked to a splitting of Jacobi matrices. Since self-adjointness is in general not preserved by products but only by sums, we expect a Khrushchev formula for OPRL based on the decomposition of FR-functions given in Theorem~\ref{thm:split}.{\it(i)}.
 
Let $\mc{J}$ be a Jacobi matrix with Jacobi coefficients $b_0,a_0,b_1,a_1,b_2,a_2,\dots$, and consider the related sequence of orthonormal polynomials $p_n$ starting at $p_0(t)=1$. Suppose that $\mc{J}$ is self-adjoint as an operator on $\ell^2$ with maximal domain. Then, the moment problem is determinate \cite{Akhiezer}, which means that $p_n$ are orthonormal with respect to a unique probability measure $\mu$ on $\R$. Also, the polynomials are dense in $L^2_\mu$ \cite{Akhiezer}, thus $\mc{J}$ represents the self-adjoint multiplication operator $T_\mu$ given in \eqref{eq:m-op} with respect to the basis $p_n$. The Jacobi matrix $\mc{J}$ determines the basis $p_n$, thus the measure $\mu$, which becomes the spectral measure of $p_0(t)=1$ with respect to $T_\mu$, equivalently the spectral measure of $e_0=(\delta_{k,0})_{k\ge0}\in\ell^2$ with respect to $\mc{J}$. Therefore, the FR-function $f$ of the subspace $\spn\{p_0\}$ with respect to $T_\mu$ is the Nevanlinna function of the measure $\mu$, in the sense of Definition~\ref{def:N-mu}. 

Since $p_n\in\DD(T_\mu)$, it makes sense to ask about the Nevanlinna function of the probability measure on $\R$ given by $p_n^2\,d\mu$. This is precisely the FR-function of the subspace $\spn\{p_n\}$ with respect to the multiplication operator $T_\mu$, i.e. the FR-function of the subspace $\spn\{e_n\}$, $e_n=(\delta_{k,n})_{k\ge0}\in\ell^2$, with respect to $\mc{J}$. 

The answer to the above question should give the OPRL version of OPUC Khrushchev formula. To obtain such a formula let us consider the following overlapping decomposition of the Jacobi matrix
$$
 \mc{J} \kern-2pt = \kern-1pt {\small
 \left(
 \begin{array}{ccccc|ccc}
 	b_0 & a_0 & & &
 	\\
 	a_0 & b_1 & a_1 & & 
 	\\
 	& \kern-15pt \ddots & \kern-10pt \ddots & \kern-10pt \ddots & 
 	\\
	& & \kern-10pt a_{n-2} & \kern-5pt b_{n-1} & \kern-7pt a_{n-1}
	\\
	& & & \kern-5pt a_{n-1} & \kern-7pt b_n
	\\ \hline 
	& & & & & & &
	\\ 
	& & & & & & &
	\\ 
	& & & & & & &
 \end{array}
 \right)
 \kern-1pt + \kern-1pt
 \left(
 \begin{array}{cccccc|ccccc}
 	& & & & & 
 	\\
 	& & & & & 
 	\\
    & & & & &
 	\\
	& & & & &
	\\ \hline
	& & & & & & b_n & \kern-3pt a_n
	\\ 
	& & & & & & a_n & \kern-3pt b_{n+1} & \kern-3pt a_{n+1}
	\\ 
	& & & & & & & \kern-3pt a_{n+1} & \kern-3pt b_{n+2} & \kern-3pt a_{n+2}
	\\ 
	& & & & & & & & \kern-30pt \ddots & \kern-30pt \ddots & \kern-10pt \ddots
 \end{array}
 \right)
 \kern-1pt - \kern-1pt
 \left(
 \begin{array}{cc|c|cc}
 	& & & &  
 	\\
 	& & & & 
 	\\
    & & & &
 	\\
	& & & &
	\\ \hline 
	& & \kern-1pt b_n \kern-1pt & & 
	\\ \hline
	& & & &  
	\\ 
	& & & &   
	\\ 
	& & & &  
 \end{array}
 \right)}.
$$
In a more compact notation
\begin{equation} \label{eq:Jdec}
 \mc{J} = (\mc{J}_{n+1} \oplus O) + (O_n \oplus \mc{J}^{(n)}) 
 - (O_n \oplus b_n \oplus O),  
\end{equation}
where $O$ and $O_n$ stand for the null $\infty\times\infty$ and $n\times n$ matrices respectively, $\mc{J}_n$ is the principal $n\times n$ submatrix of $\mc{J}$ and $\mc{J}^{(n)}$ is the Jacobi matrix obtained from $\mc{J}$ by deleting the first $n$ rows and columns. 

We can apply Theorem~\ref{thm:split2}.{\it (i)} to the operators on the Hilbert space $\HH=\ell^2$ involved in \eqref{eq:Jdec}, taking $\tilde{T}=\mc{J}$, $T=(\mc{J}_{n+1} \oplus O) + (O_n \oplus \mc{J}^{(n)})$ and $T_0$ being the operator on the overlapping subspace $\HH_0=\spn\{e_n\}$ given by $T_0e_n=-b_ne_n$. According to Theorem~\ref{thm:split2}.{\it (i)}, the Nevanlinna FR-function $\tilde{f}$ of $\spn\{e_n\}$ with respect to $\mc{J}$ is given by 
$$
 \tilde{f}(z) = h(z) - b_n, \qquad z\in\C\setminus\R,
$$ 
where $h$ is the FR-function of $\spn\{e_n\}$ with respect to $(\mc{J}_{n+1} \oplus O) + (O_n \oplus \mc{J}^{(n)})$. 

On the other hand, the FR-function $h$ comes about from Theorem~\ref{thm:split}.{\it(i)} applied to $T_L=\mc{J}_{n+1}$ acting on $\HH_L=\spn\{e_0,e_1,\dots,e_n\}$ and $T_R=\mc{J}^{(n)}$ acting on $\HH_R=\spn\{e_n,e_{n+1},\dots\}$. We find that $h$ is the sum of the FR-functions of $\HH_0=\spn\{e_n\}$ with respect to $\mc{J}_{n+1}$ and $\mc{J}^{(n)}$. 

Remember that the Nevanlinna function $f$ of the measure $\mu$ coincides with the FR-function of $\spn\{e_0\}$ with respect to $\mc{J}$. Obviously, the FR-function of $\spn\{e_n\}$ with respect to $\mc{J}^{(n)}$ follows from applying to $f$ the Schur algorithm \eqref{eq:Nalg} for a total of $n$ steps. Therefore, such an FR-function is the corresponding $n$-th iterate $f_n$ of $f$, a Nevanlinna FR-function whose Schur parameters are $b_n,a_n,b_{n+1},a_{n+1},\dots$ with respect to the Schur algorithm \eqref{eq:Nalg} at the origin. 

As for the FR-function of $\spn\{e_n\}$ with respect to $\mc{J}_{n+1}$, the renewal equation \eqref{eq:f-s-sa} allows us to express it as 
\begin{equation} \label{eq:FR-Jn}
 z^{-1} + \frac{1}{\<e_n|(\mc{J}_{n+1}-z^{-1}I_{n+1})^{-1}e_n\>}
 = z^{-1} + \frac{\det(\mc{J}_{n+1}-z^{-1}I_{n+1})}{\det(\mc{J}_n-z^{-1}I_n)},
 \qquad z\in\C\setminus\R,
\end{equation}
where $I_n$ stands for the $n\times n$ identity matrix. Using the expression of the orthonormal polynomials in terms of truncated Jacobi matrices
\begin{equation} \label{eq:p-J}
 p_n(z) = \kappa_n\det(zI_n-\mc{J}_n), 
 \qquad \kappa_n=\frac{1}{a_0a_1\cdots\,a_{n-1}},
\end{equation}
the FR-function \eqref{eq:FR-Jn} becomes
$$
 z^{-1} - a_n\frac{p_{n+1}(z^{-1})}{p_n(z^{-1})},
 \qquad z\in\C\setminus\R.
$$

Combining all the previous results we find that the FR-function of $\spn\{e_n\}$ with respect to $\mc{J}$ is given by
$$
\begin{aligned}
 \tilde{f}(z) 
 & = z^{-1} - a_n\frac{p_{n+1}(z^{-1})}{p_n(z^{-1})} + f_n(z) - b_n 
 = \frac{(z^{-1}-b_n)p_n(z^{-1})-a_np_{n+1}(z^{-1})}{p_n(z^{-1})} + f_n(z),
 \\
 & = a_{n-1}\frac{p_{n-1}(z^{-1})}{p_n(z^{-1})} + f_n(z),
 \qquad\quad z\in\C\setminus\R,
\end{aligned}
$$
that is,
\begin{equation} \label{eq:gn+fn}
 \tilde{f}(z) = g_n(z) + f_n(z),
 \qquad\quad 
 g_n(z) := \frac{\kappa_{n-1}}{\kappa_n} \frac{p_{n-1}(z^{-1})}{p_n(z^{-1})},
 \qquad\quad
 z\in\C\setminus\R.
\end{equation}
This can be considered as the OPRL version of Khrushchev formula, because $\tilde{f}$ is precisely the Nevanlinna function of the measure $p_n^2\,d\mu$, a statement which we enunciate separately below. Before doing that we will make a couple of remarks.

First, remember that behind the previous arguments there was the self-adjointness assumption for the Jacobi matrix. This is equivalent to the determinacy of the corresponding moment problem --i.e. the uniqueness of the  orthogonality measure for the polynomials generated by the three term recurrence relation associated with the Jacobi matrix-- and guarantees the uniqueness of the Nevanlina function whose Schur parameters --with respect to the Schur algorithm \eqref{eq:Nalg}-- are the Jacobi coefficients. This uniqueness holds simultaneously for a Nevanlinna function and its iterates, as follows by the connection among themselves due to the Schur algorithm on the real line.  

Furthermore, we could have arrived at OPRL Khrushchev formula alternatively by applying Theorem~\ref{thm:split}.{\it(i)} directly to the decomposition 
$$
\begin{aligned}
 \mc{J} & = {\small
 \left(
 \begin{array}{ccccc|ccc}
 	b_0 & a_0 & & &
 	\\
 	a_0 & b_1 & a_1 & & 
 	\\
 	& \kern-15pt \ddots & \kern-10pt \ddots & \kern-10pt \ddots & 
 	\\
	& & \kern-10pt a_{n-2} & \kern-5pt b_{n-1} & \kern-7pt a_{n-1}
	\\
	& & & \kern-5pt a_{n-1} & \kern-7pt 0
	\\ \hline 
	& & & & & & &
	\\ 
	& & & & & & &
	\\ 
	& & & & & & &
 \end{array}
 \right)
 +
 \left(
 \begin{array}{cccccc|ccccc}
 	& & & & & 
 	\\
 	& & & & & 
 	\\
    & & & & &
 	\\
	& & & & &
	\\ \hline
	& & & & & & b_n & \kern-3pt a_n
	\\ 
	& & & & & & a_n & \kern-3pt b_{n+1} & \kern-3pt a_{n+1}
	\\ 
	& & & & & & & \kern-3pt a_{n+1} & \kern-3pt b_{n+2} & \kern-3pt a_{n+2}
	\\ 
	& & & & & & & & \kern-30pt \ddots & \kern-30pt \ddots & \kern-10pt \ddots
 \end{array}
 \right)}
 \\
 & = (\hat{\mc{J}}_{n+1} \oplus O) + (O_n \oplus \mc{J}^{(n)}).
\end{aligned}
$$
Therefore, $g_n$ must be the Nevanlinna FR-function of $\spn\{e_n\}$ with respect to $\hat{\mc{J}}_{n+1}$. Reordering the basis of $\spn\{e_0,e_1,\dots,e_n\}$ as $\{e_n,\dots,e_1,e_0\}$ we conclude that $g_n$ is the Nevanlinna function characterized by the terminating sequence of Schur parameters given by $0,a_{n-1},b_{n-1},a_{n-2},b_{n-2},\dots,a_0,b_0$. Comparing with \eqref{eq:gnfn} we see that $g_n$ should be considered as the inverse Schur iterates of the Nevanlinna function $f$.

Although the validity of OPRL Khrushchev formula \eqref{eq:gn+fn} requires the self-adjointness of the underlying Jacobi matrix, the fact that $g_n$ is a Nevanlinna function is indeed true even if $\mc{J}$ is not self-adjoint since it refers to a property of the modified finite submatrix $\hat{\mc{J}}_{n+1}$, which is always self-adjoint. Therefore, the quotient $p_{n-1}(z^{-1})/p_n(z^{-1})$ is a Nevanlinna function for every sequence $p_n$ of orthonormal polynomials. This means that $p_{n-1}(z)/p_n(z)$ maps $\C_\pm$ into $\overline\C_\mp$, so that $-p_{n-1}(z)/p_n(z)$ is again a Nevanlinna function. 

The conclusion of the previous discussion is summarized in the following theorem, which is stated in general for matrix valued OPRL. Since we are now in the matrix valued case, attention should be paid to commutativity issues.   

\begin{thm}[\bf Khrushchev formula for matrix valued OPRL] \label{thm:K-R}
Let $f$ be the matrix Nevanlinna function of a positive matrix measure $\mu$ on $\R$ such that $\mu(\R)=1_0$ is the identity matrix. Suppose that there exists a sequence of matrix orthonormal polynomials $p_n(z)=\kappa_nz^n+\cdots$ with respect to $\<p|q\>=\int\!p^\text{\rm$\dag$}d\mu\,q$ and that the related block Jacobi matrix is self-adjoint as an operator on $\ell^2$ with maximal domain. Then, the Nevanlinna function of the matrix measure $d\mu_n:=p_n^\text{\rm$\dag$}\,d\mu\,p_n$ is given by the following sum of Nevanlinna functions
$$
 g_n+f_n, 
 \qquad
 \begin{cases}
 	\; g_n(z) = 
	p_n(z^{-1})^{-1} p_{n-1}(z^{-1}) \, \kappa_n^{-1} \kappa_{n-1},
 	\\[5pt]
 	\; f_n = 
	n\text{-th iterate of $f$ for the Schur algorithm \eqref{eq:Nalg-op}}.
 \end{cases}
$$ 
Equivalently,
$$
\begin{gathered}
 \left(\int\frac{t\,d\mu_n(t)}{1-zt}\right)
 \left(\int\frac{d\mu_n(t)}{1-zt}\right)^{-1} =
 g_n(z) + f_n(z),
 \qquad\quad z\in\C\setminus\R,
 \\[3pt]
 \begin{cases}
 	\; f_n = \text{Nevanlinna function with Schur parameters } 
 	b_n,a_n,b_{n+1},a_{n+1},b_{n+2},a_{n+2},\dots,
 	\\
 	\; g_n = \text{Nevanlinna function with Schur parameters } 
 	0,a_{n-1},b_{n-1},a_{n-2},b_{n-2},\dots,a_0,b_0,
 \end{cases}
\end{gathered}
$$
where $b_0,a_0,b_1,a_1,b_2,a_2,\dots$ are the corresponding Jacobi coefficients, i.e. the Schur parameters of $f$ with respect to \eqref{eq:Nalg-op}.    
\end{thm}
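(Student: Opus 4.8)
The plan is to realize $\tilde f$, the Nevanlinna function of the measure $d\mu_n=p_n^\dag\,d\mu\,p_n$, as the FR-function of a single canonical vector with respect to the Jacobi matrix $\mc{J}$, and then to read off the claimed identity from the splitting rules of Section~\ref{sec:KF} applied to the overlapping decomposition \eqref{eq:Jdec}. First I would set up the operator picture. Since $\mc{J}$ is self-adjoint on $\ell^2$ with maximal domain, the moment problem is determinate and the polynomials $p_n$ are dense in $L^2_\mu$, so $\mc{J}$ represents the self-adjoint multiplication operator $T_\mu$ of \eqref{eq:m-op} in the basis $\{p_n\}$, with $p_n$ identified with the canonical vector $e_n$. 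Under this identification the spectral measure attached to $e_n$ by $\mc{J}$ is $p_n^\dag\,d\mu\,p_n=d\mu_n$, normalized because $\mu_n(\R)=\langle p_n|p_n\rangle=1_0$; hence by \eqref{eq:f-s-sa} and Definition~\ref{def:N-mu} the Nevanlinna function of $d\mu_n$ is exactly the FR-function $\tilde f$ of $\spn\{e_n\}$ with respect to $\mc{J}$. Note that every $e_n$ lies in the domain of every power of $\mc{J}$ and of $\mc{J}^{(n)}$, since these are band matrices and $\mc{J}^k e_n$ is finitely supported, so all the normal limits and derivatives used below — including those defining the iterates $f_n$ through \eqref{eq:Nalg-op} — make sense.

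Next I would split. Writing $\mc{J}$ as in \eqref{eq:Jdec}, i.e. $\mc{J}=T+(T_0\oplus O)$ with $T=(\mc{J}_{n+1}\oplus O)+(O_n\oplus\mc{J}^{(n)})$ and $T_0e_n=-b_ne_n$ on the overlap $\HH_0=\spn\{e_n\}$, and then splitting $T$ over $\HH_L=\spn\{e_0,\dots,e_n\}$ and $\HH_R=\spn\{e_n,e_{n+1},\dots\}$, I would apply Theorem~\ref{thm:split2}.{\it(i)} followed by Theorem~\ref{thm:split}.{\it(i)} to get $\tilde f=h_L+h_R-b_n$, where $h_L$, $h_R$ are the FR-functions of $\spn\{e_n\}$ with respect to $\mc{J}_{n+1}$ and $\mc{J}^{(n)}$. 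The hypotheses hold: $\HH_0$ is finite-dimensional and contained in $\DD(\mc{J})$, $\mc{J}_{n+1}$ is bounded, and $e_n\in\DD(\mc{J}^{(n)})$; moreover $\mc{J}^{(n)}$ is self-adjoint, since deleting the finitely many coordinates $e_0,\dots,e_{n-1}$ from a self-adjoint Jacobi matrix preserves self-adjointness — this is the argument in the proof of Theorem~\ref{thm:FR=N}.{\it(ii)} run with the finite-dimensional subspace $\spn\{e_0,\dots,e_{n-1}\}\subset\DD(\mc{J})$.

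It then remains to identify the pieces. The function $h_R$ is the FR-function of the first canonical vector with respect to the stripped Jacobi matrix $\mc{J}^{(n)}$, whose coefficients are $b_n,a_n,b_{n+1},a_{n+1},\dots$; since one step of the Schur algorithm \eqref{eq:Nalg-op} is precisely one coefficient-stripping step — the content of \eqref{eq:f-S1}--\eqref{eq:f1-f} in Section~\ref{sec:SA} — iterating $n$ times gives $h_R=f_n$. For $h_L$ the renewal equation \eqref{eq:f-s-sa} gives $h_L(z)=z^{-1}1_0+\big(\,(n,n)\text{-block of }(\mc{J}_{n+1}-z^{-1}I_{n+1})^{-1}\big)^{-1}$; evaluating that block by the Schur-complement identity of Lemma~\ref{lem:SC} and rewriting the resulting determinants through the representation \eqref{eq:p-J} of the orthonormal polynomials turns it into $h_L(z)=z^{-1}1_0-a_n\,p_{n+1}(z^{-1})\,p_n(z^{-1})^{-1}$ (with the matrix factors on the appropriate side). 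Inserting this into $\tilde f=h_L+f_n-b_n$ and collapsing $z^{-1}1_0-b_n-a_np_{n+1}(z^{-1})p_n(z^{-1})^{-1}$ by the three-term recurrence encoded in $\mc{J}$ produces $\tilde f(z)=g_n(z)+f_n(z)$ with $g_n(z)=p_n(z^{-1})^{-1}p_{n-1}(z^{-1})\,\kappa_n^{-1}\kappa_{n-1}$. Finally, that $g_n$ is itself a Nevanlinna function follows by rerunning the splitting with $\mc{J}_{n+1}$ replaced by $\hat{\mc{J}}_{n+1}$ (the same submatrix with the entry $b_n$ deleted): then $g_n$ is the FR-function of $\spn\{e_n\}$ with respect to the finite \emph{self-adjoint} matrix $\hat{\mc{J}}_{n+1}$, so Theorem~\ref{thm:FR=N}.{\it(i)} applies, and reordering its basis as $\{e_n,\dots,e_0\}$ reads off its Schur parameters as the terminating sequence $0,a_{n-1},b_{n-1},\dots,a_0,b_0$.

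The conceptual skeleton — identifying $\tilde f$ as an FR-function and splitting it along \eqref{eq:Jdec} — is immediate from the machinery already in place. The hard part will be the matrix-valued bookkeeping in the last step: pinning down the conventions for the block Jacobi recurrence (which of $a_n$, $a_n^\dag$ appears and on which side), carrying out the block Schur-complement evaluation of the $(n,n)$-block of $(\mc{J}_{n+1}-z^{-1}I_{n+1})^{-1}$, and placing the factors $p_n(z^{-1})^{-1}$, $p_{n-1}(z^{-1})$ and $\kappa_n^{-1}\kappa_{n-1}$ on the correct sides, so that the telescoping through the recurrence yields exactly the stated $g_n$; one must also keep track of the matrix pairing $\langle p|q\rangle=\int p^\dag\,d\mu\,q$ throughout, so that $d\mu_n$ really is the measure produced by the $p_n$. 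In the scalar case every one of these points is routine and the computation collapses as in the discussion preceding the theorem.
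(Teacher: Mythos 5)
Your proposal is correct and follows essentially the same route as the paper: realize the Nevanlinna function of $d\mu_n$ as the FR-function of the $n$-th (block of) canonical vector(s) with respect to $\mc{J}$, split along \eqref{eq:Jdec} via Theorems~\ref{thm:split2}.{\it(i)} and \ref{thm:split}.{\it(i)}, identify the right piece as $f_n$ by coefficient stripping, and identify the left piece through the resolvent of the truncation $\mc{J}_{n+1}$. The only caveat is in the step you rightly flag as the hard part: the determinant representation \eqref{eq:p-J} is scalar-only, and the paper replaces it in the matrix case by the Schur-complement recurrence $m_n(z)^{-1}=b_n-zI_d-a_{n-1}m_{n-1}(z)a_{n-1}$ from Lemma~\ref{lem:SC}, matched against the three-term recurrence \eqref{eq:OP} to get $m_n(z)=-a_n^{-1}p_{n+1}(z)^{-1}p_n(z)$ --- which is exactly the ``telescoping through the recurrence'' you describe.
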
  

\begin{proof}
The steps of the proof are similar to those of the scalar case discussed previously, but now the Jacobi matrix $\mc{J}$ is made up of $d\times d$-matrix blocks $b_n=b_n^\dag$, $a_n>0$ where $d$ is the size of the matrix measure.
These blocks provide the three term recurrence relation for the orthonormal polynomials \cite{DPS},
\begin{equation} \label{eq:OP}
 zp_n(z) = p_{n+1}(z)a_n + p_n(z)b_n + p_{n-1}(z)a_{n-1},
 \qquad n\ge0 \qquad (a_{-1}=0, \kern5pt p_0(z)=1_0),
\end{equation}
hence the leading matrix coefficients satisfy $\kappa_{n+1}a_n=\kappa_n$. 

When $\mc{J}$ is self-adjoint --as an operator on $\ell^2$ with maximal domain-- it represents, in the basis given by the columns of the orthonormal polynomials, the self-adjoint multiplication operator $T_\mu$ defined in \eqref{eq:m-op}, where $L^2_\mu$ is now the Hilbert space of square-summable $d$-vector valued functions with inner product $\<f|g\>=\int\!f^\dag\,d\mu\,g$. Hence, $\mc{J}$ is unitarily equivalent to $T_\mu$. This unitary equivalence identifies the columns $\{p_n^{(0)},p_n^{(1)},\dots,p_n^{(d-1)}\}\subset L^2_\mu$ of $p_n$ with the $n$-th $d$-block $B_n:=\{e_{nd},e_{nd+1},\dots,e_{nd+d-1}\}\subset\ell^2$ of canonical vectors $e_n=(\delta_{k,n})_{k\ge0}$. 

Since $p_n^\dag\,d\mu\,p_n$ is the spectral measure of the subspace spanned by the columns of $p_n$ with respect to $T_\mu$, it is also the spectral measure of $\spn{B_n}$ with respect to $\mc{J}$. Therefore, the Nevanlinna function of $p_n^\dag\,d\mu\,p_n$ is the FR-function of $\spn{B_n}$ with respect to $\mc{J}$. As in the scalar case, the rest of the proof consists in identifying such an FR-function by decomposing it using Theorems~\ref{thm:split}.{\it(i)} and \ref{thm:split2}.{\it(i)}. 

The steps for this identification are similar to those of the scalar case. The only quirk in the matrix valued situation comes from the identification of the FR-function $g_n$ of $\spn B_n$ with respect to the $(n+1)d\times(n+1)d$ principal submatrix $\mc{J}_{n+1}$, which now will not rely on determinant identities. Instead, we will use Schur complements to obtain the m-function 
$$
  m_n(z) = P_n(\mc{J}_{n+1}-zI_{(n+1)d})^{-1}P_n,
 \qquad
 \begin{aligned}
 & P_n = \text{orthogonal projection of $\ell^2$ onto } \spn B_n,
 \\
 & I_k = k \times k \text{ identity matrix},
 \end{aligned}
$$
giving $g_n$ via the renewal equation $g_n(z)=z^{-1}I_d+ m_n(z^{-1})^{-1}$. The application of Lemma~\ref{lem:SC} to the block representation
$$
 \mc{J}_{n+1}-zI_{n+1} = 
 \begin{pmatrix}
 	\mc{J}_n-zI_{nd} & v_n^\dag \\ v_n & b_n-zI_d
 \end{pmatrix},
 \quad v_n = 
 \begin{pmatrix} 
 	O_d & \kern-5pt O_d & \kern-5pt \cdots & \kern-5pt a_{n-1} 
 \end{pmatrix},
 \quad O_k = k \times k \text{ null matrix},
$$ 
leads to the recurrence
$$
  m_n(z)^{-1} = b_n-zI_d - a_{n-1}  m_{n-1}(z) a_{n-1}, 
 \qquad n\ge0 \qquad (a_{-1}=0). 
$$
Comparing this recurrence with \eqref{eq:OP} rewritten as 
$$
 -p_n(z)^{-1}p_{n+1}(z)a_n = b_n-zI_d + p_n(z)^{-1}p_{n-1}(z)a_{n-1},
 \qquad n\ge0 \qquad (a_{-1}=0, \kern5pt p_0(z)=I_d),
$$
yields the identity
$$
  m_n(z) = -a_n^{-1} p_{n+1}(z)^{-1} p_n(z),
 \qquad n\ge0.
$$
Therefore, using \eqref{eq:OP} again we obtain
$$
\begin{aligned}
 g_n(z)-b_n & = z^{-1}I_d-m_n(z^{-1})^{-1}-b_n
 = p_n(z^{-1})^{-1} (p_n(z^{-1})(z^{-1}I_d-b_n)-p_{n+1}(z^{-1})a_n)
 \\ 
 & = p_n(z^{-1})^{-1} p_{n-1}(z^{-1}) a_{n-1}
 = p_n(z^{-1})^{-1} p_{n-1}(z^{-1}) \kappa_n^{-1} \kappa_{n-1}.
\end{aligned}
$$   
\end{proof}

The use of the Khrushchev formula above to develop a Khrushchev theory for OPRL remains as a challenge. 

\medskip

In the next sections we will discuss several applications of FR-functions which are at the heart of their origin: the study of recurrence --i.e. return properties-- in RW and QW. While the context of FR-functions in Hilbert spaces will be enough for the analysis of recurrence in unitary QW, the general discussion of recurrence in RW as well as in open QW will require the setting of FR-functions in Banach spaces.

A common feature of all these applications is the fact that the evolution is governed by a contraction and the related projections have norm one. Hence, the FR-functions involved in these applications are analytical on the open unit disk. Besides, at least in the case of RW and unitary QW, the FR-functions have values in contractions --on a Banach and a Hilbert space respectively-- so they turn out to be Schur functions. As for RW, there is large class --which includes all those which are irreducible and reversible-- whose stochastic matrices are also self-adjoint with respect to suitable inner products \cite[Chapter 6]{Stroock}, thus the corresponding FR-functions are simultaneously Schur and Nevanlinna functions.

As we will see, the Hilbert FR-function approach to recurrence in unitary QW clearly differs from that of RW recurrence. However, the density operator formalism in which open QW are usually described establishes a close parallelism between the Banach FR-function approaches to recurrence in RW and open QW.  

When applied to stochastic matrices, the relation between operator valued FR-functions and Stieltjes functions given in Theorem~\ref{thm:dom2} becomes the generalization of the classical renewal equation for RW, first obtained in \cite{Polya} (see also \cite{Feller,Stroock}), to the recurrence of a subset of states. The unitary situation leads to the version of the renewal equation for QW, already uncovered in \cite{GVWW,BGVW} (see also \cite{CGMV2}). The fact that Theorem~\ref{thm:dom2} holds for arbitrary operators on Banach spaces implies that open QW also have a renewal equation.

In this context, the splitting rules for FR-functions become splitting rules for the study of recurrence properties in a walk: they tell us how to split a walk into overlapping smaller walks so that the return properties of the overlapping piece in the original walk are determined by its return properties in the smaller walks.



\section{Applications to random walks: RW recurrence}
\label{sec:RW}

There is a huge literature on RW. For good references which highlight the return aspects and the connection with self-adjoint operators see \cite{Feller,Stroock}. 

Consider a RW on a countable set $\SS$ of states. By this, we mean a Markov chain on $\SS$ whose evolution is given at any time by a stochastic matrix $\Pi=(\Pi_{i,j})_{i,j\in\SS}$, i.e. a matrix with non-negative entries whose rows sum up to 1,
$$
 \Pi_{i,j}\ge0 \quad \forall i,j\in\SS, 
 \qquad\qquad 
 \sum_{j\in\SS}\Pi_{i,j}=1 \quad \forall i\in\SS.
$$
Since multiplication preserves stochasticity, $\Pi^n$ is again stochastic and its entry $\Pi^n_{i,j}$ gives the $n$-step probability transition $i \to j$. 

In general, the stochastic matrix $\Pi$ defines an operator $v \mapsto v\Pi$ on the Banach space $\ell^1(\SS)$ rather than on a Hilbert space since
$$
 \|v\Pi\| = \sum_{j\in\SS} \left|\sum_{i\in\SS} v_i\Pi_{i,j}\right| 
 \le \sum_{i\in\SS} |v_i| \sum_{j\in\SS} \Pi_{i,j} = \|v\|,
 \qquad v\in\ell^1(\SS),
$$
where the norm is that of $\ell^1(\SS)$ and the commutation of sums is possible even in case of infinitely many terms because they are non-negative. The above identity also shows that $\Pi$ is bounded with $\|\Pi\|\le1$, i.e. $\Pi$ is a contraction on $\ell^1(\SS)$.  

With any --finite or infinite-- subset $\Omega\subset\SS$ we can associate the projection $P$ of $\ell^1(\SS)$ onto $\ell^1(\Omega)$ along $\ell^1(\SS\setminus\Omega)$, represented by the matrix
\begin{equation} \label{eq:P-RW}
 P_{i,j}=
 \begin{cases}
 	1 & i=j\in\Omega,
	\\
	0 & \text{otherwise},
 \end{cases}
 \qquad \|P\|=1,
\end{equation}
as well as the complementary projection $Q=1-P$, which also has norm $\|Q\|=1$.
In short, we will refer to $P$ as the projection of $\SS$ onto $\Omega$.

The FR-function $f$ of $P$ with respect to $\Pi$ is the function with values in operators on $\ell^1(\Omega)$ given by 
$$
 f(z) = P\Pi(1-zQ\Pi)^{-1}P = \sum_{n\ge0} z^nP\Pi(Q\Pi)^nP,
 \qquad z\in\D,
$$
where the validity of the above expressions for $z\in\D$ follows from $\|Q\Pi\|\le1$. 
We will also refer to $f$ as the FR-function of $\Omega$ with respect to $\Pi$. The interest of this FR-function lies in its relation with the return properties of the subset $\Omega$, in particular with the following recurrence concepts. 

\begin{defn} 
Given a subset $\Omega\subset\SS$, we define the following notions for a RW on the set of states $\SS$:

$\pi(i\to\Omega)=$ probability of returning to $\Omega$ when starting at the state $i\in\Omega$.

$\pi(i\xrightarrow{\Omega}j)=$ probability of landing on the state $j\in\Omega$ when returning to $\Omega$ starting at the \hspace*{67pt} state $i\in\Omega$.

$\pi_n(i\to\Omega)=$ probability of returning for the first time in $n$ steps to $\Omega$ when starting at \hspace*{74pt} the state $i\in\Omega$.

$\pi_n(i\xrightarrow{\Omega}j)=$ probability of landing on the state $j\in\Omega$ when returning for the first time \hspace*{74pt} in $n$ steps to $\Omega$ starting at the state $i\in\Omega$.

$\tau(i\to\Omega)=$ expected return time to $\Omega$ when starting at the state $i\in\Omega$. 

\noindent When $\Omega=\{i\}$ we write $\pi(i\to i)$, $\pi_n(i\to i)$ and $\tau(i\to i)$ for the corresponding quantities.
\end{defn}

These probabilistic notions are related with each other, with the stochastic matrix $\Pi$ and with the FR-function $f$ of $\Omega$ with respect to $\Pi$, as follows 
\begin{align}
 & \pi_n(i\xrightarrow{\Omega}j) = 
 \sum_{j_k\in\SS\setminus\Omega} 
 \Pi_{i,j_1} \Pi_{j_1,j_2} \cdots \, \Pi_{j_{n-1},j} =
 (\Pi(Q\Pi)^{n-1})_{i,j},
 \qquad 
 \pi_n(i\to\Omega) = \sum_{j\in\Omega} \pi_n(i\xrightarrow{\Omega}j),
 \notag
 \\
 & \kern40pt 
 f(z)_{i,j} = \sum_{n\ge1} z^{n-1} \pi_n(i\xrightarrow{\Omega}j),
 \qquad
 \sum_{j\in\Omega} f(z)_{i,j} = 
 \sum_{n\ge1} z^{n-1} \pi_n(i\to\Omega), 
 \qquad z\in\D, 
 \label{eq:f-RW}
 \\
 & \pi(i\xrightarrow{\Omega}j) = \sum_{n\ge1} \pi_n(i\xrightarrow{\Omega}j) 
 = f(1)_{i,j},
 \qquad
 \pi(i\to\Omega) = \sum_{n\ge1} \pi_n(i\to\Omega) = 
 \sum_{j\in\Omega} \pi(i\xrightarrow{\Omega}j) =
 \sum_{j\in\Omega} f(1)_{i,j},
 \notag
 \\
 & \tau(i\to\Omega) = 
 \begin{cases}
 	\infty 
	& \text{ if } \pi(i\to\Omega)<1,
	\\
 	\displaystyle \sum_{n\ge1} n\,\pi_n(i\to\Omega) 
 	= \lim_{x\uparrow1} \left(\sum_{j\in\Omega}xf(x)_{i,j}\right)' 
 	= 1 + \sum_{j\in\Omega} f'(1)_{i,j},
	& \text{ if } \pi(i\to\Omega)=1,
 \end{cases}
 \notag
 \\[5pt]
 & \kern70pt f(1)_{i,j} := \lim_{x\uparrow1} f(x)_{i,j},  
 \qquad 
 f'(1)_{i,j}:=\lim_{x\uparrow1} f'(x)_{i,j},
 \qquad x\in[0,1).
 \notag
\end{align}
The exchange of sums is legitimate by their absolute convergence. The commutation of limits and derivatives with eventual infinite sums follows from the fact that $f(x)_{i,j}$ and their derivatives are non-decreasing in $x$ for $x\in[0,1)$.

The relations \eqref{eq:f-RW} identify the FR-function $f$ as a true generating function of first time return probabilities to $\Omega$ (up to multiplication by $z$). The corresponding Stieltjes function 
$$
 s(z) = P(1-z\Pi)^{-1}P = \sum_{n\ge0} z^n P\Pi^nP,
 \qquad z\in\D,
$$ 
is instead the generating function of the return probabilities to $\Omega$ because $\Pi^n_{i,j}$ is the probability of the transition $i \to j$ in $n$ steps. The generalized renewal equation \eqref{eq:sf} becomes now
$$
 s(z)^{-1} = 1_0-zf(z), 
 \qquad 1_0 = \text{ identity of order the size of } \Omega,
 \qquad z\in\D,
$$
which is the extension of the standard renewal equation for the return to a state in a RW \cite{Polya,Feller,Stroock} to the case of the return to a subset of states.

While the expected time $\tau(i\to\Omega)$ is given essentially by the sum of the $i$-th row of the weak derivative $\displaystyle f'(1)$, the probability $\pi(i\to\Omega)$ is the sum of the $i$-th row of $f(1)$. Hence, $f(1)$ should be a substochastic matrix, i.e. a matrix with non-negative entries whose rows sum up to no more than 1. The absence of negative entries is obvious, but a bound for the sum of the rows requires a proof. As a byproduct, we will find that $f$ is a Schur function.

\begin{prop} \label{prop:schur-RW}
The FR-function $f$ of a subset $\Omega\in\SS$ with respect to a stochastic matrix $\Pi$ on $\SS$ satisfies 
$$
 \left|\sum_{j\in\Omega}f(z)_{i,j}\right| 
 \le \sum_{j\in\Omega}f(1)_{i,j} \le 1, 
 \qquad i\in\Omega, \qquad z\in\D.
$$
Therefore, $\|f(z)\|\le1$ for $z\in\D$, so that $f$ is a Schur function with values in operators on the Banach space $\ell^1(\Omega)$. 
\end{prop}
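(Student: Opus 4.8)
The plan is to deduce everything from the probabilistic reading of the matrix entries of $f$ recorded in \eqref{eq:f-RW}. Fix $i\in\Omega$. Since $f(z)_{i,j}=\sum_{n\ge1}z^{n-1}\pi_n(i\xrightarrow{\Omega}j)$ has non-negative coefficients and all the sums involved converge absolutely, I would apply the triangle inequality \emph{inside} the sum over $j\in\Omega$: for $z\in\D$,
\begin{equation*}
 \sum_{j\in\Omega}|f(z)_{i,j}|
 \le \sum_{n\ge1}|z|^{n-1}\sum_{j\in\Omega}\pi_n(i\xrightarrow{\Omega}j)
 = \sum_{n\ge1}|z|^{n-1}\,\pi_n(i\to\Omega)
 \le \sum_{n\ge1}\pi_n(i\to\Omega),
\end{equation*}
where the middle equality is the relation $\pi_n(i\to\Omega)=\sum_{j\in\Omega}\pi_n(i\xrightarrow{\Omega}j)$ from \eqref{eq:f-RW}. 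In particular $\bigl|\sum_{j\in\Omega}f(z)_{i,j}\bigr|$ is bounded by the same quantity; evaluating along $z=x\in[0,1)$ and letting $x\uparrow1$ (the limit commutes with the sum since $f(x)_{i,j}$ is non-decreasing on $[0,1)$, as noted after \eqref{eq:f-RW}) identifies $\sum_{n\ge1}\pi_n(i\to\Omega)=\sum_{j\in\Omega}f(1)_{i,j}$. This yields the two leftmost inequalities of the statement.

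Next I would check $\sum_{n\ge1}\pi_n(i\to\Omega)\le1$. The numbers $\pi_n(i\to\Omega)$ are the probabilities of the pairwise disjoint events ``the walk started at $i\in\Omega$ returns to $\Omega$ for the first time at step $n$'', so their sum equals the probability $\pi(i\to\Omega)$ of ever returning, which is at most $1$. (Purely algebraically one can instead show, by induction on $N$, that $\sum_{n=1}^{N}P\Pi(Q\Pi)^{n-1}P$ has non-negative entries with row sums $\le1$, using that $\Pi$ and $Q$ have non-negative entries and that $\Pi$ fixes the all-ones vector, and then let $N\to\infty$.) Combining this with the previous paragraph closes the chain $\bigl|\sum_{j\in\Omega}f(z)_{i,j}\bigr|\le\sum_{j\in\Omega}f(1)_{i,j}\le1$.

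For the operator-norm claim I would use, exactly as in the computation establishing $\|\Pi\|\le1$ earlier in this section, that an operator on $\ell^1(\Omega)$ acting by $v\mapsto vM$ has norm $\sup_{i\in\Omega}\sum_{j\in\Omega}|M_{i,j}|$. The display above already gives $\sum_{j\in\Omega}|f(z)_{i,j}|\le1$ for every $i\in\Omega$ and every $z\in\D$, hence $\|f(z)\|\le1$ on $\D$; together with the analyticity of $f$ on $\D$ noted above (a consequence of $\|Q\Pi\|\le1$), this is precisely the assertion that $f$ is a Schur function with values in $\frak{B}(\ell^1(\Omega))$.

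I do not expect a genuine obstacle. The only point that wants a little care is that the proposition as stated bounds $\bigl|\sum_{j\in\Omega}f(z)_{i,j}\bigr|$, whereas the operator-norm conclusion needs the slightly stronger row-sum bound $\sum_{j\in\Omega}|f(z)_{i,j}|\le1$; this is why the triangle inequality should be pushed inside the $j$-summation from the start, so that both bounds drop out of the same computation.
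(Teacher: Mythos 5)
Your argument is correct and follows the paper's proof in all essentials: bound the entries of $f(z)$ by those of $f(1)$ using the non-negativity of the Taylor coefficients $\pi_n(i\xrightarrow{\Omega}j)$, establish the row-sum bound $\sum_{n\ge1}\pi_n(i\to\Omega)\le1$, and read off the operator norm on $\ell^1(\Omega)$ as the supremum of the row sums. You are also right to flag that the operator-norm conclusion needs $\sum_{j\in\Omega}|f(z)_{i,j}|\le1$ rather than just the bound on $|\sum_j f(z)_{i,j}|$; the paper gets this the same way, via the entrywise inequality $|f(z)_{i,j}|\le f(1)_{i,j}$.

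The one place you diverge is the key inequality $\sum_{n\ge1}\pi_n(i\to\Omega)\le1$, which is really the whole content of the proposition. Your primary justification is probabilistic (the first-return events are pairwise disjoint, so their probabilities sum to at most the probability of ever returning). The paper deliberately does not take this route --- it remarks just before the proposition that ``a bound for the sum of the rows requires a proof'' --- and instead proves the purely algebraic estimate \eqref{eq:sumpi} by induction on the tail length $r$, using only the non-negativity of the entries and $\sum_{j\in\SS}\Pi_{k,j}=1$. The point is that $\pi_n(i\to\Omega)$ is being \emph{defined} here by the matrix expression $\sum_{j\in\Omega}(\Pi(Q\Pi)^{n-1})_{i,j}$, and identifying that with the probability of a disjoint family of events invokes the Markov property of a chain the paper never formally constructs; in a general countable state space one wants the bound to hold for the formal quantities themselves. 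Your parenthetical alternative --- induction on $N$ showing $\sum_{n=1}^{N}P\Pi(Q\Pi)^{n-1}P$ has row sums at most $1$, which telescopes against $(\Pi Q)^N\mathbf{1}\ge0$ using $\Pi\mathbf{1}=\mathbf{1}$ --- is exactly the right repair and is equivalent to the paper's induction, so the proof is complete provided you promote that parenthesis to the main argument.
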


\begin{proof}
The inequality $|\sum_{j\in\Omega}f(z)_{i,j}| \le \sum_{j\in\Omega}f(1)_{i,j}$ for $z\in\D$ follows from 
$$
 |f(z)_{i,j}| \le \sum_{n\ge1} |z|^{n-1} \pi_n(i\xrightarrow{\Omega}j)
 \le \sum_{n\ge1} \pi_n(i\xrightarrow{\Omega}j) = f(1)_{i,j},
 \qquad |z|<1.
$$ 

As for the remaining inequality, 
$$
 \sum_{j\in\Omega}f(1)_{i,j}\le1
 \quad\Leftrightarrow\quad
 \pi_1(i\to\Omega)+\pi_2(i\to\Omega)+\cdots+\pi_n(i\to\Omega)\le1 
 \quad \forall n\in\N. 
$$
Let us first see by induction on $r=0,1,\dots,n-2$ that 
\begin{equation} \label{eq:sumpi}
 \pi_{n-r}(i\to\Omega)+\cdots+\pi_{n-1}(i\to\Omega)+\pi_n(i\to\Omega)
 \le \sum_{j_k\in\SS\setminus\Omega} 
 \Pi_{i,j_1} \Pi_{j_1,j_2} \cdots \, \Pi_{j_{n-r-2},j_{n-r-1}}. 
\end{equation}
The result follows for $r=0$ from the inequality $\sum_{j\in\Omega} \Pi_{j_{n-1},j} \le \sum_{j\in\SS} \Pi_{j_{n-1},j} = 1$, which gives $\pi_n(i\to\Omega)\le\sum_{j_k\in\SS\setminus\Omega} \Pi_{i,j_1} \Pi_{j_1,j_2} \cdots \, \Pi_{j_{n-2},j_{n-1}}$. Assuming \eqref{eq:sumpi} for some $r<n-2$ we obtain 
$$
\begin{aligned}
 \pi_{n-r-1}(i\to\Omega)+\pi_{n-r}(i\to\Omega)+\cdots+\pi_n(i\to\Omega) &
 \\
 & \kern-100pt \le \sum_{\substack{j_k\in\SS\setminus\Omega \\ j\in\Omega}} 
 \Pi_{i,j_1} \Pi_{j_1,j_2} \cdots \, \Pi_{j_{n-r-2},j}
 + \sum_{j_k\in\SS\setminus\Omega} 
 \Pi_{i,j_1} \Pi_{j_1,j_2} \cdots \, \Pi_{j_{n-r-2},j_{n-r-1}}
 \\
 & \kern-100pt = \sum_{\substack{j_k\in\SS\setminus\Omega \\ j\in\SS}} 
 \Pi_{i,j_1} \Pi_{j_1,j_2} \cdots \, \Pi_{j_{n-r-2},j}
 = \sum_{j_k\in\SS\setminus\Omega} 
 \Pi_{i,j_1} \Pi_{j_1,j_2} \cdots \, \Pi_{j_{n-r-3},j_{n-r-2}},
\end{aligned}
$$
where in the last equality we have used that $\sum_{j\in\SS} \Pi_{j_{n-r-2},j} = 1$. This proves \eqref{eq:sumpi} for $r+1$. 

Therefore, \eqref{eq:sumpi} holds for $r\le n-2$. In particular, this inequality reads for $r=n-2$ as follows
$$
 \pi_2(i\to\Omega)+\cdots+\pi_n(i\to\Omega)
 \le \sum_{j\in\SS\setminus\Omega} \Pi_{i,j}, 
$$
which, combined with $\pi_1(i\to\Omega)=\sum_{j\in\Omega}\Pi_{i,j}$, yields
$$
 \pi_1(i\to\Omega)+\pi_2(i\to\Omega)+\cdots+\pi_n(i\to\Omega)
 \le \sum_{j\in\SS} \Pi_{i,j} = 1.
$$

Since the operator norm of the operator on $\ell^1(\Omega)$ given by $v\mapsto vf(z)$ is $\|f(z)\|=\sup_{i\in\Omega}\sum_{j\in\Omega}|f(z)_{i,j}|$, the previous results prove that $\|f(z)\|\le1$ for $z\in\D$.
\end{proof} 

The previous proposition shows that every FR-function $f$ for a RW is a Schur function with values $f(z)$ in operators on a $\ell^1$ Banach space, i.e. taking for $f(z)$ the operator norm with respect to the $\ell^1$ norm. Nevertheless, in many situations they are also Nevanlinna functions. This is the case for instance when the RW is simultaneously irreducible and reversible because in this situation the stochastic matrix $\Pi$ defines a self-adjoint operator on certain Hilbert space \cite[Chapter 6]{Stroock}. A particular case of this are the birth-death processes, which will appear later on as an example illustrating the use of FR-function splitting techniques for RW.  

The splitting formulas for FR-functions lead to splitting rules for recurrence properties of RW. Such splitting rules follow from the splitting of a RW into overlapping ones, corresponding to one of the following overlapping splittings for the related stochastic matrix $\Pi$,
$$
 \Pi = 
 \left(
 \begin{array}{c|c} 
 	\\[-5pt]
	\kern7pt \Pi_L \kern9pt 
	\\[7pt] \hline
	&
	\\[-13pt] 
	& \kern1pt 0_+ \kern-3pt
 \end{array}
 \right)
 +
 \left(
 \begin{array}{c|c} 
 	0_- \kern-2pt 
 	\\ \hline 
	\\[-5pt]
	& \kern9pt \Pi_R \kern7pt
	\\ [5pt]
 \end{array}
 \right)
 -
 \left(
 \begin{array}{c|c|c} 
 	0_- \kern-2pt & & 
 	\\ \hline
	& &
	\\[-13pt]
	& \kern2pt \Pi_0 &
	\\ \hline
	& &
	\\[-13pt]
	& & \kern1pt 0_+ \kern-3pt 
 \end{array}
 \right),
 \quad
 \Pi = 
 \left(
 \begin{array}{c|c} 
 	\\[-5pt]
	\kern7pt \Pi_L \kern9pt 
	\\[7pt] \hline
	& 
	\\[-13pt] 
	& \kern1pt 1_+ \kern-3pt
 \end{array}
 \right)
 \left(
 \begin{array}{c|c} 
 	1_- \kern-2pt 
 	\\ \hline 
	\\[-5pt]
	& \kern9pt \Pi_R \kern7pt
	\\ [5pt]
 \end{array}
 \right).
$$
Following the terminology in Theorem~\ref{thm:split}, we refer below to these two cases as a decomposition and a factorization respectively.

The second situation, which deals with the factorization of a stochastic matrix $\Pi$ into a product of overlapping ones $\Pi_{L,R}$, needs no clarification since the product of stochastic matrices is again stochastic. In the first case however, the sum of overlapping matrices does not preserve the stochasticity. Actually, a sum of two overlapping stochastic matrices $\Pi_{L,R}$ has rows summing up to 1, except for the overlapping rows which sum up to 2. Hence, subtracting an additional stochastic matrix $\Pi_0$ on the overlapping subset yields a matrix $\Pi$ whose rows sum up to 1 all of them. This matrix $\Pi$ is stochastic as long as the subtraction of $\Pi_0$ does not lead to any negative matrix entry.   

\begin{thm}[\bf splitting rules for RW recurrence] \label{thm:split-RW}
Let $\SS=\Omega_-\cup\Omega\cup\Omega_+$ be a decomposition of a set of states into disjoint subsets, $P$ the projection \eqref{eq:P-RW} of $\SS$ onto $\Omega$ and $\Pi_L$, $\Pi_R$, $\Pi_0$ stochastic matrices on $\SS_L=\Omega_-\cup\Omega$, $\SS_R=\Omega\cup\Omega_+$ and $\Omega$ respectively. Then, using the subscript $L,R$ to distinguish the quantities related to the RW given by $\Pi_{L,R}$ and denoting by $0_\pm$ and $1_\pm$ the null and identity matrices on $\Omega_\pm$, we have the following splitting rules:
\begin{itemize}
\vskip7pt
\item[{\it (i)}] {\bf Decomposition:}
$\Pi = (\Pi_L \oplus 0_+) + (0_- \oplus \Pi_R) - (0_- \oplus \Pi_0 \oplus 0_+)$ is a stochastic matrix on $\SS$ whenever the block $P\Pi P$ has non-negative entries. Then, the corresponding return probabilities and expected return times to $\Omega$ are given by
$$
\begin{aligned}
 & \pi(i\to\Omega) = \pi_L(i\to\Omega) + \pi_R(i\to\Omega) - 1,
 \\
 & \tau(i\to\Omega) = \tau_L(i\to\Omega) + \tau_R(i\to\Omega) - 1,
\end{aligned}
\qquad i\in\Omega.
$$
As a consequence, for each $i\in\Omega$,
$$
 \pi(i\to\Omega)=1 \quad\Leftrightarrow\quad 
 \pi_L(i\to\Omega)=\pi_R(i\to\Omega)=1.
$$
\vskip7pt
\item[{\it (ii)}] {\bf Factorization:}
$\Pi = (\Pi_L \oplus 1_+)(1_- \oplus \Pi_R)$ is a stochastic matrix on $\SS$ and the corresponding return probabilities and expected return times to $\Omega$ are given by
$$
\begin{aligned}
 & \pi(i\to\Omega) = 
 \sum_{k\in\Omega} \pi_L(i\xrightarrow{\Omega}k) \, \pi_R(k\to\Omega),
 \\
 & \tau(i\to\Omega) = \tau_L(i\to\Omega) + 
 \sum_{k\in\Omega} \pi_L(i\xrightarrow{\Omega}k) \, \tau_R(k\to\Omega) - 1,
\end{aligned}
\qquad i\in\Omega, 
$$
where, in case of indetermination $\pi_L(i\xrightarrow{\Omega}k) \, \tau_R(k\to\Omega)=0.\infty$, we set
$$
 \pi_L(i\xrightarrow{\Omega}k) \, \tau_R(k\to\Omega) = 
 \begin{cases}
 	0 & \text{if } \pi_R(k\to\Omega)=1,
	\\
	\infty & \text{if } \pi_R(k\to\Omega)<1.
 \end{cases}
$$
As a consequence, for each $i\in\Omega$, 
$$
 \pi(i\to\Omega)=1 \quad\Leftrightarrow\quad 
 \pi_L(i\to\Omega)=\pi_R(k\to\Omega)=1
 \quad \forall k\in\Omega.
$$
\end{itemize}
\end{thm}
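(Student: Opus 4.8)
The plan is to obtain both splitting rules as consequences of the splitting rules for FR-functions (Theorems~\ref{thm:split} and \ref{thm:split2}), and then to read off the probabilistic statements from the dictionary \eqref{eq:f-RW} between the FR-function of $\Omega$ and the first-return probabilities to $\Omega$.

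I would start by verifying that the displayed $\Pi$ really is a stochastic matrix. In the decomposition case, the rows of $(\Pi_L\oplus0_+)+(0_-\oplus\Pi_R)$ sum to $1$ over $\Omega_-\cup\Omega_+$ and to $2$ over $\Omega$, while the rows of $0_-\oplus\Pi_0\oplus0_+$ sum to $1$ over $\Omega$ and to $0$ elsewhere, so all rows of $\Pi$ sum to $1$; outside the block $P\Pi P$ the entries of $\Pi$ are entries of a single $\Pi_L$ or $\Pi_R$, hence non-negative, so the only extra requirement is exactly the hypothesis $P\Pi P\ge0$. In the factorization case, $\Pi_L\oplus1_+$ and $1_-\oplus\Pi_R$ are stochastic, and a product of stochastic matrices is stochastic.

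Next I would apply the operator splittings. Since $\Pi$ is a contraction on $\ell^1(\SS)$ and the projections have norm $1$, all FR-functions in play are analytic on $\D$ and the domain hypotheses of Theorems~\ref{thm:split} and \ref{thm:split2} hold there (the requirement $\BB_0\subset\DD(T_{L,R})$ being automatic for bounded operators). For (i), write $\Pi=\big[(\Pi_L\oplus0_+)+(0_-\oplus\Pi_R)\big]+\big((-\Pi_0)\oplus0_1\big)$ with $0_1$ the null operator on $\ell^1(\Omega_-\cup\Omega_+)$; Theorem~\ref{thm:split}.{\it(i)} gives that the FR-function of $\Omega$ with respect to $(\Pi_L\oplus0_+)+(0_-\oplus\Pi_R)$ equals $f_L+f_R$, and then Theorem~\ref{thm:split2}.{\it(i)} with $T_0=-\Pi_0$ yields $f(z)=f_L(z)+f_R(z)-\Pi_0$ on $\D$. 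For (ii), Theorem~\ref{thm:split}.{\it(ii)} gives directly $f(z)=f_L(z)f_R(z)$ on $\D$.

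Finally I would translate these identities. Summing the $i$-th row over $\Omega$ and letting $x\uparrow1$ along $[0,1)$ --- where $f(x)_{i,j}$, $f_L(x)_{i,j}$, $f_R(x)_{i,j}$ and their derivatives are non-negative and non-decreasing, so all limits and interchanges with the possibly infinite row sums are justified by monotone convergence --- gives $\pi(i\to\Omega)=\pi_L(i\to\Omega)+\pi_R(i\to\Omega)-1$ in case (i) (using $\sum_{j\in\Omega}(\Pi_0)_{i,j}=1$) and $\pi(i\to\Omega)=\sum_{k\in\Omega}\pi_L(i\xrightarrow{\Omega}k)\,\pi_R(k\to\Omega)$ in case (ii) (expanding the matrix product $f_Lf_R$). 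Differentiating, via $f'=f_L'+f_R'$ resp.\ $f'=f_L'f_R+f_Lf_R'$, and using $\tau(i\to\Omega)=1+\sum_{j\in\Omega}f'(1)_{i,j}$ when $\pi(i\to\Omega)=1$, produces the asserted formulas for $\tau$, and the final equivalences follow at once. The delicate point --- and the main obstacle --- is the bookkeeping at $z=1$ of the terms that become infinite or indeterminate: here the key observations are that $\pi(i\to\Omega)=1$ forces $\pi_L(i\to\Omega)=\pi_R(i\to\Omega)=1$ in case (i) (each summand is $\le1$), and in case (ii) forces $\pi_L(i\to\Omega)=1$ together with $\pi_R(k\to\Omega)=1$ for every $k$ with $\pi_L(i\xrightarrow{\Omega}k)>0$; moreover a $k$ with $\pi_L(i\xrightarrow{\Omega}k)=0$ also has $f_L'(1)_{i,k}=0$, so its (possibly infinite) $\tau_R$-contribution enters the product rule only multiplied by $0$. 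These remarks, combined with the stated conventions for the indeterminate products, settle all the remaining finite-versus-infinite cases and complete the argument.
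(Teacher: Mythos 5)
Your proposal is correct and follows essentially the same route as the paper: stochasticity is checked row by row, the identities $f=f_L+f_R-\Pi_0$ and $f=f_Lf_R$ are obtained from Theorems~\ref{thm:split} and \ref{thm:split2}, and the probabilistic formulas are read off by letting $x\uparrow1$, with monotonicity of the coefficients justifying the interchanges and the same treatment of the $0\cdot\infty$ terms via $f_L(1)_{i,k}=0\Rightarrow f_L(x)_{i,k}\equiv0$. One remark: in the forward direction of the equivalence in \emph{(ii)} you establish $\pi_R(k\to\Omega)=1$ only for those $k$ with $\pi_L(i\xrightarrow{\Omega}k)>0$, which is precisely what $f=f_Lf_R$ yields; the paper's literal claim ``for every $k\in\Omega$'' rests on the inequality $\pi(i\to\Omega)<\pi_L(i\to\Omega)$ whenever some $\pi_R(k\to\Omega)<1$, and that inequality is strict only when the corresponding $\pi_L(i\xrightarrow{\Omega}k)$ is positive, so your more guarded formulation is the one that is actually provable.
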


\begin{proof}
\quad

\noindent{\it (i)}
The rows of the matrix $\Pi$ sum up to 1 because $\Pi_L$, $\Pi_R$, $\Pi_0$ are stochastic and
$$
 \sum_{j\in\SS} \Pi_{i,j} = 
 \begin{cases}
 	\sum_{j\in\SS_L}(\Pi_L)_{i,j} & \; i\in\Omega_-,
	\\
	\sum_{j\in\SS_L}(\Pi_L)_{i,j} + \sum_{j\in\SS_R}(\Pi_R)_{i,j}
	- \sum_{j\in\Omega}(\Pi_0)_{i,j} & \; i\in\Omega, 
	\\
	\sum_{j\in\SS_R}(\Pi_R)_{i,j} & \; i\in\Omega_+.
 \end{cases}
$$
Hence, $\Pi$ is stochastic whenever the entries of the block $P\Pi P$ are non-negative because they are the only entries of $\Pi$ which differ from those of $\Pi_L$ or $\Pi_R$.  

The decomposition rules follow from Theorem~\ref{thm:split}.{\it(i)} applied to $\Pi_L$ and $\Pi_R$ as operators on the Banach spaces $\ell^1(\SS_L)$ and $\ell^1(\SS_R)$ respectively, combined with Theorem~\ref{thm:split2}.{\it(i)} applied to the perturbation $\Pi_0$. Such theorems imply that the FR-function of $P$ with respect to $\Pi$ is given by $f=f_L+f_R-\Pi_0$, where $f_{L,R}$ is the FR-function of the projection of $\SS_{L,R}$ onto $\Omega$ with respect to $\Pi_{L,R}$. Therefore,
$$
 \pi(i\to\Omega) = \sum_{j\in\Omega} f(1)_{i,j} 
 = \sum_{j\in\Omega} f_L(1)_{i,j} + \sum_{j\in\Omega} f_R(1)_{i,j}
 + \sum_{j\in\Omega} (\Pi_0)_{i,j}
 = \pi_L(i\to\Omega) + \pi_R(i\to\Omega) - 1.
$$ 
Hence, $\pi(i\to\Omega)=1$ iff $\pi_L(i\to\Omega) + \pi_R(i\to\Omega) = 2$, i.e. $\pi_L(i\to\Omega) = \pi_R(i\to\Omega) = 1$.

The expression for the expected return time holds trivially when $\pi(i\to\Omega)<1$ because this is equivalent to $\pi_L(i\to\Omega)<1$ or $\pi_R(i\to\Omega)<1$. In such a situation $\tau(i\to\Omega)=\infty=\tau_L(i\to\Omega)+\tau_R(i\to\Omega)-1$. 

Suppose now that $\pi(i\to\Omega)=1$. Then, 
$$
 \tau(i\to\Omega) = 1 + \sum_{j\in\Omega} f'(1)_{i,j} 
 = 1 + \sum_{j\in\Omega} f'_L(1)_{i,j} + \sum_{j\in\Omega} f'_R(1)_{i,j}
 = \tau_L(i\to\Omega) + \tau_R(i\to\Omega) - 1.
$$

\noindent{\it (ii)}
As a product of stochastic matrices, $\Pi$ is also stochastic. 

The factorization rules are a consequence of Theorem~\ref{thm:split}.{\it(ii)} for $\Pi_L$ and $\Pi_R$. This theorem states that the FR-function of $P$ with respect to $\Pi$ is given by $f=f_Lf_R$, in terms of the FR-functions $f_{L,R}$ introduced in {\it(i)}. Thus,
\begin{equation} \label{eq:pi-fact}
 \pi(i\to\Omega) = \sum_{j\in\Omega} f(1)_{i,j} 
 = \sum_{j,k\in\Omega} f_L(1)_{i,k} f_R(1)_{k,j}
 = \sum_{k\in\Omega} \pi_L({i\xrightarrow{\Omega}k}) \, \pi_R(k\to\Omega).
\end{equation} 

This implies that $\pi(i\to\Omega)\le\sum_{k\in\Omega}\pi_L({i\xrightarrow{\Omega}k})=\pi_L(i\to\Omega)$, so that $\pi(i\to\Omega)=1$ leads to $\pi_L(i\to\Omega)=1$. Also, $\pi_R(k\to\Omega)<1$ for some $k\in\Omega$ yields $\pi(i\to\Omega)<\sum_{k\in\Omega}\pi_L({i\xrightarrow{\Omega}k})=\pi_L(i\to\Omega)\le1$. Hence, $\pi(i\to\Omega)=1$ also implies that $\pi_R(k\to\Omega)=1$ for every $k\in\Omega$. Conversely, if $\pi_L(i\to\Omega)=\pi_R(k\to\Omega)=1$ for every $k\in\Omega$, then the relation \eqref{eq:pi-fact} becomes $\pi(i\to\Omega)=\sum_{k\in\Omega}\pi_L({i\xrightarrow{\Omega}k})=\pi_L(i\to\Omega)=1$. These arguments prove the equivalence between $\pi(i\to\Omega)=1$ and $\pi_L(i\to\Omega)=\pi_R(k\to\Omega)=1$ for every $k\in\Omega$.

Concerning the expected return time, $\tau(i\to\Omega)=\infty$ when $\pi(i\to\Omega)<1$. This situation corresponds to $\pi_L(i\to\Omega)<1$ or $\pi_R(k\to\Omega)<1$ for some $k\in\Omega$. In any of these cases $\tau_L(i\to\Omega)+\sum_{k\in\Omega}\pi_L(i\xrightarrow{\Omega}k)\,\tau(k\to\Omega)-1=\infty$ with the convention established in the statement {\it(ii)} of the theorem for the indetermination $\pi_L(i\xrightarrow{\Omega}k)\,\tau(k\to\Omega)=0.\infty$.

Assume now that $\pi(i\to\Omega)=1$. Then, using that $\sum_{k\in\Omega}f_L(1)_{i,k}=\pi_L(i\to\Omega)=1$ and $\sum_{j\in\Omega}f_R(1)_{k,j}=\pi_R(k\to\Omega)=1$ for all $k\in\Omega$, we get
$$
\begin{aligned}
 \tau(i\to\Omega) & = 1 + \sum_{j\in\Omega} f'(1)_{j,k} 
 = 1 + \sum_{j\in\Omega} 
 \left(\sum_{k\in\Omega}f'_L(1)_{i,k}f_R(1)_{k,j} + 
 \sum_{k\in\Omega}f_L(1)_{i,k}f'_R(1)_{k,j}\right)
 \\
 & = 1 + \sum_{k\in\Omega} f'_L(1)_{i,k} + 
 \sum_{j\in\Omega}\sum_{k\in\Omega}f_L(1)_{i,k}f'_R(1)_{k,j}
 \\
 & = \tau_L(i\to\Omega) + 
 \sum_{k\in\Omega}\pi_L(i\xrightarrow{\Omega}k)\,(\tau_R(k\to\Omega)-1)
 \\
 & = \tau_L(i\to\Omega) + 
 \sum_{k\in\Omega}\pi_L(i\xrightarrow{\Omega}k)\,\tau_R(k\to\Omega)-1.
\end{aligned}
$$
The only potential problem with the above manipulations arises in the case of an indetermination $\pi_L(i\xrightarrow{\Omega}k)\,\tau_R(k\to\Omega)=0.\infty$ for some $k\in\Omega$. Nevertheless, in this situation $f_L(1)_{i,k}=0$, which gives $\sum_{j\in\Omega}\sum_{k\in\Omega}\displaystyle f_L(1)_{i,k}f'_R(1)_{k,j}=0$ regardless of the value of $\sum_{j\in\Omega}\displaystyle f'_R(1)_{k,j}$. This is in agreement with the convention established for the indetermination in question.         
\end{proof}

Under the hypothesis of Theorem~\ref{thm:split-RW}, we will refer to the RW driven by $\Pi_{L,R}$ as the left/right subsystems in which the RW governed by $\Pi$ splits. We will also say that these left/right subsystems overlap on the subset $\Omega$.

In the case of the recurrence of a single state, the splitting rules for the factorization of Theorem~\ref{thm:split-RW}.{\it(ii)} are particularly simple. 

\begin{cor} \label{cor:split-RW}
With the same notation as in Theorem~\ref{thm:split-RW}, if $\Omega=\{i\}$ we have the following splitting rules for $\Pi = (\Pi_L \oplus 1_+)(1_- \oplus \Pi_R)$:
$$
\begin{aligned}
 & \pi(i\to i) = \pi_L(i\to i) \, \pi_R(i\to i),
 \\
 & \tau(i\to i) = \tau_L(i\to i) + \tau_R(i\to i) - 1.
\end{aligned} 
$$
As a consequence,  
$$
 \pi(i\to i)=1 \quad\Leftrightarrow\quad 
 \pi_L(i\to i)=\pi_R(i\to i)=1.
$$
\end{cor}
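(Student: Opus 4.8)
The plan is to obtain Corollary~\ref{cor:split-RW} as the specialization of Theorem~\ref{thm:split-RW}.{\it(ii)} to a one-point target set $\Omega=\{i\}$. The key elementary observation is that, when $\Omega$ consists of a single state, ``returning to $\Omega$'' and ``landing on that state upon return to $\Omega$'' describe the same event; hence $\pi_L(i\xrightarrow{\{i\}}i)=\pi_L(i\to i)$, and every sum over $k\in\Omega$ occurring in Theorem~\ref{thm:split-RW}.{\it(ii)} collapses to its single term $k=i$.

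First I would invoke Theorem~\ref{thm:split-RW}.{\it(ii)} directly. Its return-probability formula then reads $\pi(i\to i)=\pi_L(i\xrightarrow{\{i\}}i)\,\pi_R(i\to i)=\pi_L(i\to i)\,\pi_R(i\to i)$, which is the first claimed identity. Its expected-return-time formula becomes $\tau(i\to i)=\tau_L(i\to i)+\pi_L(i\to i)\,\tau_R(i\to i)-1$, so it only remains to argue that the prefactor $\pi_L(i\to i)$ may be deleted. I would distinguish two cases. If $\pi(i\to i)=1$, then from $\pi(i\to i)=\pi_L(i\to i)\,\pi_R(i\to i)$ together with $0\le\pi_L(i\to i),\pi_R(i\to i)\le1$ (equivalently, from the equivalence already stated in Theorem~\ref{thm:split-RW}.{\it(ii)}) we get $\pi_L(i\to i)=\pi_R(i\to i)=1$; in particular $\pi_L(i\to i)=1$, so $\pi_L(i\to i)\,\tau_R(i\to i)=\tau_R(i\to i)$ and the formula reduces to $\tau(i\to i)=\tau_L(i\to i)+\tau_R(i\to i)-1$. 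If instead $\pi(i\to i)<1$, then at least one of $\pi_L(i\to i),\pi_R(i\to i)$ is strictly below $1$, whence at least one of $\tau_L(i\to i),\tau_R(i\to i)$ equals $\infty$; since also $\tau(i\to i)=\infty$ in this case, both sides of the asserted identity are $\infty$, and the indetermination convention $0.\infty$ carried over from Theorem~\ref{thm:split-RW}.{\it(ii)} is exactly what makes the bookkeeping consistent.

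Finally, the stated equivalence $\pi(i\to i)=1\Leftrightarrow\pi_L(i\to i)=\pi_R(i\to i)=1$ follows at once from $\pi(i\to i)=\pi_L(i\to i)\,\pi_R(i\to i)$ and the fact that each factor lies in $[0,1]$; alternatively it is the $\Omega=\{i\}$ instance of the last assertion in Theorem~\ref{thm:split-RW}.{\it(ii)}. I do not expect any genuine obstacle here: the proof is a routine specialization, and the only point deserving attention is the handling of the $0\cdot\infty$ indetermination in the non-recurrent case, which is resolved precisely by the convention inherited from the theorem.
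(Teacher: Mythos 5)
Your proposal is correct and follows exactly the route the paper intends: Corollary~\ref{cor:split-RW} is obtained in the paper as the direct specialization of Theorem~\ref{thm:split-RW}.{\it(ii)} to $\Omega=\{i\}$, where the sum over $k\in\Omega$ collapses and $\pi_L(i\xrightarrow{\{i\}}i)=\pi_L(i\to i)$. Your case analysis for dropping the prefactor $\pi_L(i\to i)$ in the expected-return-time formula (using $\pi_L(i\to i)=1$ in the recurrent case and $\tau=\infty$ on both sides otherwise, with the stated $0\cdot\infty$ convention) is exactly the bookkeeping needed.
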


The previous results have a remarkable consequence: if a RW splits into subsystems which overlap at a subset $\Omega$, the return probabilities to $\Omega$ are independent of the details of those subsystems for which $\Omega$ has certain recurrence properties. The precise statements of these results are given by the corollary below, but for convenience we first introduce the following recurrence concepts concerning states and subsets of states.

\begin{defn}
Given $i\in\SS$ and $\Omega\subset\SS$, we define the following notions for a RW on the set of states $\SS$:

The state $i\in\SS$ is recurrent if $\pi(i\to i)=1$.

The state $i\in\Omega$ is $\Omega$-recurrent if $\pi(i\to\Omega)=1$.

The subset $\Omega\subset\SS$ is recurrent if all its states are $\Omega$-recurrent. 
\end{defn}

Using this terminology, Theorem~\ref{thm:split-RW} and Corollary~\ref{cor:split-RW} have the following immediate consequences.  

\begin{cor} \label{cor:indep-RW}
With the same notation as in Theorem~\ref{thm:split-RW}, we have the following results:
\begin{itemize}
\vskip7pt
\item[{\it (i)}] 
The return probability $\pi(i\to\Omega)$ for a RW whose stochastic matrix decomposes as $\Pi = (\Pi_L \oplus 0_+) + (0_- \oplus \Pi_R) - (0_- \oplus \Pi_0 \oplus 0_+)$ is independent of the left or right subsystem whenever the state $i$ is $\Omega$-recurrent for such a subsystem. More precisely,
$$
\begin{aligned}
 & \pi_L(i\to\Omega)=1 \quad\Rightarrow\quad \pi(i\to\Omega)=\pi_R(i\to\Omega),
 \\
 & \pi_R(i\to\Omega)=1 \quad\Rightarrow\quad \pi(i\to\Omega)=\pi_L(i\to\Omega).
\end{aligned}
$$
\item[{\it (ii)}] 
The return probabilities to $\Omega$ for a RW whose stochastic matrix factorizes as $\Pi = (\Pi_L \oplus 1_+)(1_- \oplus \Pi_R)$ are independent of the right subsystem whenever the subset $\Omega$ is recurrent for such a subsystem. More precisely,
$$
 \pi_R(i\to\Omega)=1 \quad \forall i\in\Omega \quad\Rightarrow\quad 
 \pi(i\to\Omega)=\pi_L(i\to\Omega) \quad \forall i\in\Omega.
$$

If $\Omega=\{i\}$, a similar independence with respect to the left subsystem holds, i.e.
$$
 \pi_L(i\to i)=1 \quad\Rightarrow\quad \pi(i\to i)=\pi_R(i\to i).
$$ 
Hence, the return probability of the overlapping state $i$ is independent of any of the left/right subsystems for which $i$ is recurrent.
\end{itemize}  
\end{cor}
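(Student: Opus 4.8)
The plan is to obtain both parts as direct corollaries of the splitting formulas for RW return probabilities already proved in Theorem~\ref{thm:split-RW} and Corollary~\ref{cor:split-RW}; no fresh analytic or combinatorial argument is required, only substitution into those identities together with some attention to the conventions attached to them.

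First I would prove {\it(i)}. The hypotheses of the corollary guarantee that the block $P\Pi P$ has non-negative entries, so $\Pi=(\Pi_L\oplus0_+)+(0_-\oplus\Pi_R)-(0_-\oplus\Pi_0\oplus0_+)$ is stochastic and Theorem~\ref{thm:split-RW}.{\it(i)} applies, giving $\pi(i\to\Omega)=\pi_L(i\to\Omega)+\pi_R(i\to\Omega)-1$. Substituting $\pi_L(i\to\Omega)=1$ collapses the right-hand side to $\pi_R(i\to\Omega)$, and the symmetric substitution $\pi_R(i\to\Omega)=1$ gives $\pi(i\to\Omega)=\pi_L(i\to\Omega)$, which is exactly the pair of implications claimed.

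For {\it(ii)} I would start from the factorization rule $\pi(i\to\Omega)=\sum_{k\in\Omega}\pi_L(i\xrightarrow{\Omega}k)\,\pi_R(k\to\Omega)$ of Theorem~\ref{thm:split-RW}.{\it(ii)}. If $\pi_R(k\to\Omega)=1$ for every $k\in\Omega$, each factor $\pi_R(k\to\Omega)$ equals $1$, so the sum reduces to $\sum_{k\in\Omega}\pi_L(i\xrightarrow{\Omega}k)$, which by the defining relation $\pi_L(i\to\Omega)=\sum_{k\in\Omega}\pi_L(i\xrightarrow{\Omega}k)$ equals $\pi_L(i\to\Omega)$; this yields the first displayed implication. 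For the single-state case $\Omega=\{i\}$ I would instead use Corollary~\ref{cor:split-RW}, which gives $\pi(i\to i)=\pi_L(i\to i)\,\pi_R(i\to i)$; imposing $\pi_L(i\to i)=1$ then forces $\pi(i\to i)=\pi_R(i\to i)$, and combining this with the previous implication specialized to $\Omega=\{i\}$ gives the claimed independence of the overlapping state with respect to either subsystem in which it is recurrent.

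There is no real obstacle in this proof: the only thing to watch is the bookkeeping associated with the indeterminate product convention $\pi_L(i\xrightarrow{\Omega}k)\,\tau_R(k\to\Omega)=0.\infty$, but since the present statement involves only the return probabilities $\pi$ and never the expected return times $\tau$, that delicacy does not enter here, and the corollary is an immediate consequence of the two results it cites.
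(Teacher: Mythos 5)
Your proposal is correct and follows exactly the route the paper takes: the corollary is stated there as an ``immediate consequence'' of Theorem~\ref{thm:split-RW} and Corollary~\ref{cor:split-RW}, and your substitutions into the identities $\pi(i\to\Omega)=\pi_L(i\to\Omega)+\pi_R(i\to\Omega)-1$, $\pi(i\to\Omega)=\sum_{k\in\Omega}\pi_L(i\xrightarrow{\Omega}k)\,\pi_R(k\to\Omega)$ and $\pi(i\to i)=\pi_L(i\to i)\,\pi_R(i\to i)$ are precisely the intended argument. Your remark that the $0\cdot\infty$ convention is irrelevant here because only probabilities, not expected return times, appear is also accurate.
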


\subsection{Examples of RW recurrence}
\label{ssec:RW-ex}

We will illustrate the splitting rules for RW recurrence with a few examples. As previously, in what follows $O$ and $I$ stand for the infinite null and identity matrices, while their $n \times n$ versions are denoted by $O_n$ and $I_n$ respectively. When the size of these matrices is undetermined we will denote them by $O$ and $I$. 

\begin{ex} \label{ex:RW1}
The first example is a simple RW on a finite set of states where we will illustrate both the decomposition and factorization rules. The RW in question is given by the following diagram of one-step probability transitions,
\begin{center}
\begin{tikzpicture}[->,>=stealth',shorten >=1pt,auto,node distance=2.4cm,
                    semithick]
  \tikzstyle{every state}=[fill=black,draw=none,text=white,
  			inner sep=0pt,minimum size=0.2cm,node distance=0.75cm]
  
  \kern-40pt

  \node[main node] (A)              {$1$};
  \node[main node] (B) at (2,1.2) 	{$3$};
  \node[main node] (C) [right of=B] {$5$};
  \node[main node] (D) [below of=B] {$2$};
  \node[main node] (E) [below of=C] {$4$};
  \node[main node] (F) at (6.4,0)   {$6$};
    
  \path (A) edge   [loop above]   	node {$\frac{1}{2}$} (A)
  		(F) edge   [loop above]    	node {$b$} (F)
        (A) edge   []     		  	node {$\frac{1}{2}$ \kern-7pt} (B)
        (B) edge   []  				node {$\frac{1}{2}$} (C)
        (C) edge   []     			node {\kern-3pt $\frac{1}{2}$} (F)
        (F) edge   []     			node {\kern-3pt $a$} (E)
        (E) edge   []     			node {$\frac{1}{2}$} (D)
        (D) edge   []     			node {$\frac{1}{2}$ \kern-7pt} (A)
        (D) edge   [bend left]  	node {$\frac{1}{2}$} (B)
        (B) edge   [bend left]  	node {$\frac{1}{2}$} (D)
        (E) edge   [bend left]  	node {$\frac{1}{2}$} (C)
        (C) edge   [bend left]  	node {$\frac{1}{2}$} (E);
        
  \kern260pt 
  $\begin{aligned}
  & a,b\ge0, 
  \\ 
  & a+b=1,
  \end{aligned}$
  
\end{tikzpicture}
\end{center}
thus it is governed by the stochastic matrix
$$
 \Pi =
 \begin{pmatrix}
 	\frac{1}{2} & 0 & \frac{1}{2} & 0 & 0 & 0
	\\[3pt]
	\frac{1}{2} & 0 & \frac{1}{2} & 0 & 0 & 0
	\\[3pt]
	0 & \frac{1}{2} & 0 & 0 & \frac{1}{2} & 0
	\\[3pt]
	0 & \frac{1}{2} & 0 & 0 & \frac{1}{2} & 0
	\\[3pt]
	0 & 0 & 0 & \frac{1}{2} & 0 & \frac{1}{2}
	\\[3pt]
	0 & 0 & 0 & a & 0 & b  
 \end{pmatrix}.
$$

This matrix can be decomposed as
$$
\begin{aligned}
 \Pi & = 
 \left(
 \begin{array}{cccc|cc}
 	\frac{1}{2} & 0 & \frac{1}{2} & 0 & 0 & 0
	\\[2pt]
	\frac{1}{2} & 0 & \frac{1}{2} & 0 & 0 & 0
	\\[2pt]
	0 & \frac{1}{2} & 0 & \frac{1}{2} & 0 & 0
	\\[2pt]
	0 & \frac{1}{2} & 0 & \frac{1}{2} & 0 & 0
	\\[2pt] \hline
	& & & & & 
	\\[-12pt] 
	0 & 0 & 0 & 0 & 0 & 0
	\\[2pt]
	0 & 0 & 0 & 0 & 0 & 0  
 \end{array}
 \right) 
 +
 \left(
 \begin{array}{cc|cccc}
 	0 & 0 & 0 & 0 & 0 & 0
	\\[2pt]
	0 & 0 & 0 & 0 & 0 & 0
	\\ \hline
	& & & & & 
	\\[-12pt]
	0 & 0 & \frac{1}{2} & 0 & \frac{1}{2} & 0
	\\[2pt] 
	0 & 0 & \frac{1}{2} & 0 & \frac{1}{2} & 0
	\\[2pt]  
	0 & 0 & 0 & \frac{1}{2} & 0 & \frac{1}{2}
	\\[2pt]
	0 & 0 & 0 & a & 0 & b  
 \end{array} 
 \right)
 -
 \left(
 \begin{array}{cc|cc|cc}
 	0 & 0 & 0 & 0 & 0 & 0
	\\[2pt]
	0 & 0 & 0 & 0 & 0 & 0
	\\ \hline
	& & & & &
	\\[-12pt]  
	0 & 0 & \frac{1}{2} & \frac{1}{2} & 0 & 0
	\\[2pt] 
	0 & 0 & \frac{1}{2} & \frac{1}{2} & 0 & 0
	\\[2pt] \hline 
	& & & & & 
	\\[-12pt] 
	0 & 0 & 0 & 0 & 0 & 0
	\\[2pt]
	0 & 0 & 0 & 0 & 0 & 0  
 \end{array} 
 \right) 
 \\
 & =
 \left(
 \begin{array}{c|c} 
 	\\[-5pt]
	\kern7pt \Pi_L \kern9pt 
	\\[7pt] \hline
	&
	\\[-13pt] 
	& \kern1pt O_2 \kern-2pt
 \end{array}
 \right)
 +
 \left(
 \begin{array}{c|c} 
 	O_2 \kern-1pt
 	\\ \hline 
	\\[-5pt]
	& \kern9pt \Pi_R \kern7pt
	\\ [5pt]
 \end{array}
 \right)
 -
 \left(
 \begin{array}{c|c|c} 
 	O_2 \kern-1pt & & 
 	\\ \hline
	& &
	\\[-13pt]
	& \kern2pt \Pi_0 &
	\\ \hline
	& &
	\\[-13pt]
	& & \kern1pt O_2 \kern-2pt 
 \end{array}
 \right),
\end{aligned}
$$  
corresponding to an overlapping decomposition of the RW into left, right and center ones represented by
\begin{center}
\begin{tikzpicture}[->,>=stealth',shorten >=1pt,auto,node distance=2.4cm,
                    semithick]
  \tikzstyle{every state}=[fill=black,draw=none,text=white,
  			inner sep=0pt,minimum size=0.2cm,node distance=0.75cm]

  \node[main node] (A)              {$1$};
  \node[main node] (B) at (2,1.2) 	{$3$};
  \node[main node] (D) [below of=B] {$2$};
  \node[main node] (E) at (4,0) 	{$4$};
    
  \path (A) edge   [loop above]   	node {$\frac{1}{2}$} (A)
  		(E) edge   [loop above]   	node {$\frac{1}{2}$} (E)
        (A) edge   []     		  	node {$\frac{1}{2}$ \kern-7pt} (B)
        (B) edge   []  				node {\kern-3pt $\frac{1}{2}$} (E)
        (E) edge   []     			node {\kern-3pt $\frac{1}{2}$} (D)
        (D) edge   []     			node {$\frac{1}{2}$ \kern-7pt} (A)
        (D) edge   [bend left]  	node {$\frac{1}{2}$} (B)
        (B) edge   [bend left]  	node {$\frac{1}{2}$} (D);
        
  \node[main node] (C) at (6,0)     {$3$};
  \node[main node] (F) at (8,1.2) 	{$5$};
  \node[main node] (G) [below of=F] {$4$};
  \node[main node] (H) at (10,0) 	{$6$};
    
  \path (C) edge   [loop above]   	node {$\frac{1}{2}$} (C)
  		(H) edge   [loop above]   	node {$b$} (H)
        (C) edge   []     		  	node {$\frac{1}{2}$ \kern-7pt} (F)
        (F) edge   []  				node {\kern-3pt $\frac{1}{2}$} (H)
        (H) edge   []     			node {\kern-3pt $a$} (G)
        (G) edge   []     			node {$\frac{1}{2}$ \kern-7pt} (C)
        (G) edge   [bend left]  	node {$\frac{1}{2}$} (F)
        (F) edge   [bend left]  	node {$\frac{1}{2}$} (G);
        
  \node[main node] (I) at (12,0)    {$3$};
  \node[main node] (J) [right of=I] {$4$};
    
  \path (I) edge   [loop above]   	node {$\frac{1}{2}$} (I)
  		(J) edge   [loop above]   	node {$\frac{1}{2}$} (J)
        (I) edge   [bend left]     	node {$\frac{1}{2}$} (J)
        (J) edge   [bend left]  	node {$\frac{1}{2}$} (I);
        
\end{tikzpicture}
\end{center}
The overlap is on the states 3 and 4, a fact that, according to Theorems~\ref{thm:split}.{\it(i)} and \ref{thm:split2}.{\it(i)}, guarantees the decomposition $f=f_L+f_R-\Pi_0$ of the FR-function of the subset of states $\Omega=\{3,4\}$ for the original RW, 
\begin{equation} \label{eq:RW1-f-34}
 f(z) = P(\Pi(1-zQ\Pi)^{-1})P,
 \qquad
 P = 
 \left(
 \begin{smallmatrix}
 	\\ 0 \\ & 0 \\ & & 1 \\ & & & 1 \\ & & & & 0 \\ & & & & & 0 \\[2pt] 	
 \end{smallmatrix}
 \right),
 \qquad
 Q = I_6-P, 
\end{equation}
into similar FR-functions for the left and right RW, 
\begin{equation} \label{eq:RW1-fLR-34}
\begin{aligned}
 & f_L(z) = P_L(\Pi_L(1-zQ_L\Pi_L)^{-1})P_L,
 \qquad
 P_L = 
 \left(
 \begin{smallmatrix}
 	\\ 0 \\ & 0 \\ & & 1 \\ & & & 1 \\[1pt]  	
 \end{smallmatrix}
 \right),
 \qquad
 Q_L = I_4-P_L, 
 \\
 & f_R(z) = P_R(\Pi_R(1-zQ_R\Pi_R)^{-1})P_R,
 \qquad
 P_R = 
 \left(
 \begin{smallmatrix}
 	\\ 1 \\ & 1 \\ & & 0 \\ & & & 0 \\[1pt]  	
 \end{smallmatrix}
 \right),
 \qquad
 Q_R = I_4-P_R.
\end{aligned}
\end{equation}
The relation $f=f_L+f_R-\Pi_0$ can be checked by a direct computation of these FR-functions, which leads to the expressions
\begin{equation} \label{eq:RW1-fs-34}
 f(z) = 
 \begin{pmatrix}
 	-\frac{z}{2(z-2)} & \frac{z(2az-z+1)}{4(az-z+1)}
	\\[5pt] 
	-\frac{z}{2(z-2)} & \frac{z(2az-z+1)}{4(az-z+1)}
 \end{pmatrix},
 \quad
 f_L(z) =
 \begin{pmatrix}
 	-\frac{z}{2(z-2)} & \frac{1}{2}
	\\[5pt] 
	-\frac{z}{2(z-2)} & \frac{1}{2}
 \end{pmatrix},
 \quad
 f_R(z) =
 \begin{pmatrix}
 	\frac{1}{2} & \frac{z(2az-z+1)}{4(az-z+1)}
	\\[5pt] 
	\frac{1}{2} & \frac{z(2az-z+1)}{4(az-z+1)}
 \end{pmatrix}.
\end{equation}
Their limits at $z=1$, 
$$
\begin{aligned}
 & \lim_{x\uparrow1} f(x) = \lim_{x\uparrow1} f_L(x) = \lim_{x\uparrow1} f_R(x) 
 =
 \begin{pmatrix} 
 	\frac{1}{2} & \frac{1}{2} \\[3pt] \frac{1}{2} & \frac{1}{2} 
 \end{pmatrix}
 & \qquad & \text{if } a\ne0,
 \\
 & \lim_{x\uparrow1} f(x) = \lim_{x\uparrow1} f_R(x) =
 \begin{pmatrix} 
 	\frac{1}{2} & \frac{1}{4} \\[3pt] \frac{1}{2} & \frac{1}{4} 
 \end{pmatrix},
 \qquad
 \lim_{x\uparrow1} f_L(x) =
 \begin{pmatrix} 
 	\frac{1}{2} & \frac{1}{2} \\[3pt] \frac{1}{2} & \frac{1}{2} 
 \end{pmatrix},
 & & \text{if } a=0,
\end{aligned}
$$
give the probabilities of returning to $\Omega=\{3,4\}$ landing on a particular state, so that for $i,j\in\Omega$,
$$
 \pi(i\xrightarrow{\Omega}j) = \pi_L(i\xrightarrow{\Omega}j) = 
 \pi_R(i\xrightarrow{\Omega}j) = \frac{1}{2} 
 \quad \text{except for} \quad
 \pi(i\xrightarrow{\Omega}4) = \pi_R(i\xrightarrow{\Omega}4) = 
 \begin{cases} 
 	\frac{1}{2} & \text{if } a\ne0,
	\\ 
	\frac{1}{4} & \text{if } a=0.
 \end{cases}
$$
The sum of the rows of such limits provide the return probabilities to the set $\Omega=\{3,4\}$, given for any $i\in\Omega$ by
$$
 \pi_L(i\to\Omega) = 1,
 \qquad\quad
 \pi(i\to\Omega) = \pi_R(i\to\Omega) = 
 \begin{cases} 
 	1 & \text{if } a\ne0,
	\\ 
	\frac{3}{4} & \text{if } a=0.
 \end{cases}
$$
Therefore, the subset $\Omega$ is recurrent for the left RW and, if $a\ne0$, also for the original and right RW.    

In the recurrent cases, the expected return times are given by the sums of the rows of the following matrices
$$
 \lim_{x\uparrow1}(xf(x))' = 
 \begin{pmatrix}
 	\frac{3}{2} & 1+\frac{1}{4a}
	\\[3pt] 
	\frac{3}{2} & 1+\frac{1}{4a}
 \end{pmatrix},
 \qquad
 \lim_{x\uparrow1}(xf_L(x))' =
 \begin{pmatrix}
 	\frac{3}{2} & \frac{1}{2}
	\\[3pt] 
	\frac{3}{2} & \frac{1}{2}
 \end{pmatrix},
 \qquad
 \lim_{x\uparrow1}(xf_R(x))' =
 \begin{pmatrix}
 	\frac{1}{2} & 1+\frac{1}{4a}
	\\[3pt] 
	\frac{1}{2} & 1+\frac{1}{4a}
 \end{pmatrix},
$$ 
which yield for $i\in\Omega$
$$
 \tau(i\to\Omega) = \frac{5}{2}+\frac{1}{4a},
 \qquad
 \tau_L(i\to\Omega) = 2,
 \qquad
 \tau_R(i\to\Omega) = \frac{3}{2}+\frac{1}{4a}.
$$

The sum of each row of $f$ yields the FR-function for the returns to $\Omega$, which coincides for the states 3 and 4. The coefficients of its power expansion around the origin provide the probability of returning to $\Omega$ for the first time in $n$ steps, given for $i=3,4$ by   
$$
 \pi_n(i\to\Omega) = 
 \begin{cases}
 	0 & \text{ if } n=1,
	\\
	\frac{1}{2} & \text{ if } n=2,
	\\
	\frac{1}{2^n} + \frac{1}{4}a(1-a)^{n-3}  & \text{ if } n\ge3.
 \end{cases}
$$

These results can be inferred also from path counting, but the splitting techniques become an invaluable tool for the analysis of recurrence in more complex RW, like those described in the next examples.

The decomposition rules do not work for the return properties of the single state 4 because there is no decomposition of $\Pi$ with overlaps only at such state. However, the factorization rules are available for that purpose due to the overlapping factorization
\begin{equation} \label{eq:RW1-fact}
 \Pi =
 \left(
 \begin{array}{cccc|cc}
 	\frac{1}{2} & 0 & \frac{1}{2} & 0 & 0 & 0
	\\[2pt]
	\frac{1}{2} & 0 & \frac{1}{2} & 0 & 0 & 0
	\\[2pt]
	0 & \frac{1}{2} & 0 & \frac{1}{2} & 0 & 0
	\\[2pt]
	0 & \frac{1}{2} & 0 & \frac{1}{2} & 0 & 0
	\\[2pt] \hline
	& & & & & 
	\\[-12pt] 
	0 & 0 & 0 & 0 & 1 & 0
	\\[2pt]
	0 & 0 & 0 & 0 & 0 & 1  
 \end{array}
 \right)
 \left(
 \begin{array}{ccc|ccc}
 	1 & 0 & 0 & 0 & 0 & 0
	\\[2pt] 
	0 & 1 & 0 & 0 & 0 & 0
	\\[2pt] 
	0 & 0 & 1 & 0 & 0 & 0
	\\ \hline 
	& & & & & 
	\\[-12pt] 
	0 & 0 & 0 & 0 & 1 & 0
	\\[2pt] 
	0 & 0 & 0 & \frac{1}{2} & 0 & \frac{1}{2}
	\\[2pt] 
	0 & 0 & 0 & a & 0 & b
 \end{array}
 \right)
 = 
 \left(
 \begin{array}{c|c} 
 	\\[-5pt]
	\kern7pt \Pi_L \kern9pt 
	\\[7pt] \hline
	& 
	\\[-13pt] 
	& \kern1pt I_2 \kern-2pt
 \end{array}
 \right)
 \left(
 \begin{array}{c|c} 
 	I_3 
 	\\ \hline 
	\\[-5pt]
	& \kern9pt \Pi_R \kern7pt
	\\ [5pt]
 \end{array}
 \right)
\end{equation}
into the left and right RW diagrammatically represented by
\begin{center}
\begin{tikzpicture}[->,>=stealth',shorten >=1pt,auto,node distance=2.4cm,
                    semithick]
  \tikzstyle{every state}=[fill=black,draw=none,text=white,
  			inner sep=0pt,minimum size=0.2cm,node distance=0.75cm]

  \node[main node] (A)              {$1$};
  \node[main node] (B) at (2,1.2) 	{$3$};
  \node[main node] (D) [below of=B] {$2$};
  \node[main node] (E) at (4,0) 	{$4$};
    
  \path (A) edge   [loop above]   	node {$\frac{1}{2}$} (A)
  		(E) edge   [loop above]   	node {$\frac{1}{2}$} (E)
        (A) edge   []     		  	node {$\frac{1}{2}$ \kern-7pt} (B)
        (B) edge   []  				node {\kern-3pt $\frac{1}{2}$} (E)
        (E) edge   []     			node {\kern-3pt $\frac{1}{2}$} (D)
        (D) edge   []     			node {$\frac{1}{2}$ \kern-7pt} (A)
        (D) edge   [bend left]  	node {$\frac{1}{2}$} (B)
        (B) edge   [bend left]  	node {$\frac{1}{2}$} (D);
        
  \node[main node] (F) at (8,1.2) 	{$5$};
  \node[main node] (G) [below of=F] {$4$};
  \node[main node] (H) at (10,0) 	{$6$};
    
  \path (H) edge   [loop above]   	node {$b$} (H)
        (F) edge   []  				node {\kern-3pt $\frac{1}{2}$} (H)
        (H) edge   []     			node {\kern-3pt $a$} (G)
        (G) edge   [bend left]  	node {$1$} (F)
        (F) edge   [bend left]  	node {$\frac{1}{2}$} (G);
        
\end{tikzpicture}
\end{center}
When applied to this overlapping factorization, Theorem~\ref{thm:split}.{\it(ii)} provides the factorization $f=f_Lf_R$ for the FR-functions of the state 4, where such FR-functions are as in \eqref{eq:RW1-f-34}, \eqref{eq:RW1-fLR-34}, but substituting $\Pi_R$ by the new right stochastic matrix and the projections $P$, $P_L$, $P_R$ by 
$$
 P = 
 \left(
 \begin{smallmatrix}
 	\\ 0 \\ & 0 \\ & & 0 \\ & & & 1 \\ & & & & 0 \\ & & & & & 0 \\[2pt] 	
 \end{smallmatrix}
 \right),
 \qquad
 P_L = 
 \left(
 \begin{smallmatrix}
 	\\ 0 \\ & 0 \\ & & 0 \\ & & & 1 \\[2pt]	
 \end{smallmatrix}
 \right),
 \qquad
 P_R = 
 \left(
 \begin{smallmatrix}
 	\\[-0.5pt] 1 \\ & 0 \\ & & 0 \\[2pt]	
 \end{smallmatrix}
 \right).
$$
The result 
$$
 f(z) = \frac{z(z-2)(2az-z+1)}{2(az-z+1)(z^2+2z-4)},
 \qquad
 f_L(z) = \frac{z-2}{z^2+2z-4},
 \qquad
 f_R(z) = \frac{z(2az-z+1)}{2(az-z+1)},
$$
makes evident the factorization $f=f_Lf_R$. From the FR-functions we find the return probabilities of the state 4 taking limits at $z=1$,
$$
 \pi_L(4\to4) = 1,
 \qquad\quad
 \pi(4\to4) = \pi_R(4\to4) = 
 \begin{cases}
 	1 & \text{ if } a\ne0,
 	\\
 	\frac{1}{2} & \text{ if } a=0.
 \end{cases}
$$
Hence, the sate 4 is recurrent for the left RW and, if $a\ne0$, also for the original and right RW. 

The limit at $z=1$ of the derivatives of the FR-functions provide the expected return times in the recurrent situations, 
$$
 \tau(4\to4) = 5+\frac{1}{2a},
 \qquad
 \tau_L(4\to4) = 4,
 \qquad
 \tau_R(4\to4) = 2+\frac{1}{2a}.
$$

The overlapping factorization \eqref{eq:RW1-fact} can be also used to factorize the FR-function of the subset of states $\Omega=\{3,4\}$. For this we must simply enlarge the right RW including the site 3, which amounts to reconsider the factorization \eqref{eq:RW1-fact} in the following way 
$$
 \Pi =
 \left(
 \begin{array}{cccc|cc}
 	\frac{1}{2} & 0 & \frac{1}{2} & 0 & 0 & 0
	\\[2pt]
	\frac{1}{2} & 0 & \frac{1}{2} & 0 & 0 & 0
	\\[2pt]
	0 & \frac{1}{2} & 0 & \frac{1}{2} & 0 & 0
	\\[2pt]
	0 & \frac{1}{2} & 0 & \frac{1}{2} & 0 & 0
	\\[2pt] \hline
	& & & & & 
	\\[-12pt] 
	0 & 0 & 0 & 0 & 1 & 0
	\\[2pt]
	0 & 0 & 0 & 0 & 0 & 1  
 \end{array}
 \right)
 \left(
 \begin{array}{cc|cccc}
 	1 & 0 & 0 & 0 & 0 & 0
	\\[2pt] 
	0 & 1 & 0 & 0 & 0 & 0
	\\ \hline
	& & & & & 
	\\[-12pt] 
	0 & 0 & 1 & 0 & 0 & 0
	\\[2pt] 
	0 & 0 & 0 & 0 & 1 & 0
	\\[2pt] 
	0 & 0 & 0 & \frac{1}{2} & 0 & \frac{1}{2}
	\\[2pt] 
	0 & 0 & 0 & a & 0 & b
 \end{array}
 \right)
 = 
 \left(
 \begin{array}{c|c} 
 	\\[-5pt]
	\kern7pt \Pi_L \kern9pt 
	\\[7pt] \hline
	& 
	\\[-13pt] 
	& \kern1pt I_2 \kern-2pt
 \end{array}
 \right)
 \left(
 \begin{array}{c|c} 
 	I_2  
 	\\ \hline 
	\\[-5pt]
	& \kern9pt \Pi_R \kern7pt
	\\ [5pt]
 \end{array}
 \right).
$$
Then, the FR-functions for the subset $\Omega=\{3,4\}$ are the same as in \eqref{eq:RW1-fs-34} except for that of the right RW,  
$$
 f(z) = 
 \begin{pmatrix}
 	-\frac{z}{2(z-2)} & \frac{z(2az-z+1)}{4(az-z+1)}
	\\[5pt] 
	-\frac{z}{2(z-2)} & \frac{z(2az-z+1)}{4(az-z+1)}
 \end{pmatrix},
 \qquad
 f_L(z) =
 \begin{pmatrix}
 	-\frac{z}{2(z-2)} & \frac{1}{2}
	\\[5pt] 
	-\frac{z}{2(z-2)} & \frac{1}{2}
 \end{pmatrix},
 \qquad
 f_R(z) =
 \begin{pmatrix}
 	1 & 0 
	\\ 
	0 & \frac{z(2az-z+1)}{2(az-z+1)}
 \end{pmatrix}.
$$
This makes explicit the factorization $f=f_Lf_R$.  

The return probabilities and expected return times for the subset $\Omega=\{3,4\}$ with respect to the new right RW follow from the above FR-function $f_R$ as in the previous cases, 
$$
\begin{aligned}
 & \pi_R(3\to\Omega) = 1, 
 & \qquad & \pi_R(4\to\Omega) =
 \begin{cases}
 	1 & \text{ if } a\ne0,
 	\\
 	\frac{1}{2} & \text{ if } a=0,
 \end{cases}
 \\
 & \tau_R(3\to\Omega) = 1, 
 & & \tau_R(4\to\Omega) = 2+\frac{1}{2a}.
\end{aligned}
$$

It is instructive to check in this simple example the general splitting rules given in Theorem~\ref{thm:split-RW}, which in more complicated situations become an useful tool to deal with recurrence properties of RW.
\end{ex}

\begin{ex} \label{ex:RW2}
We will use the birth-death processes to illustrate the use of Khrushchev formula for OPRL obtained in the previous section as an application of the splitting rules for FR-functions. This is possible because birth-death processes, represented by tridiagonal matrices, are always symmetrizable and thus lead to OPRL via Jacobi matrices. Actually, the fact that such Jacobi matrices define self-adjoint contractions on a Hilbert space implies that the related FR-functions are not only Schur functions, but also Nevanlinna functions. 

Consider the RW on the non-negative integers $\SS=\{0,1,2,\dots\}$ given by the  tridiagonal stochastic matrix
$$
 \Pi = 
 \begin{pmatrix}
 	b_0 & \kern-2pt q_0
	\\[2pt]
	p & \kern-2pt 0 & q
	\\[2pt]
	& \kern-2pt p & 0 & \kern3pt q
	\\[2pt]
	& & p & \kern3pt 0 & \kern3pt q
	\\
	& & & \kern-5pt \ddots & \ddots & \ddots
 \end{pmatrix},
 \qquad
 \begin{aligned}
 	& p,q,q_0>0, \quad b_0\ge0,
	\\[5pt]
 	& p+q=1, \quad b_0+q_0=1, 
 \end{aligned}
$$
which we can represent diagramatically as
\begin{center}
\begin{tikzpicture}[->,>=stealth',shorten >=1pt,auto,node distance=2.5cm,
                    semithick]
  \tikzstyle{every state}=[fill=black,draw=none,text=white,
  			inner sep=0pt,minimum size=0.15cm,node distance=0.5cm]

  \node[main node] (A)              {$0$};
  \node[main node] (B) [right of=A] {$1$};
  \node[main node] (C) [right of=B] {$2$};
  \node[main node] (D) [right of=C] {$3$};
  
  \node[state] (E) at (8.3,0)  	{};
  \node[state] (F) [right of=E]	{};
  \node[state] (G) [right of=F]	{};
  
  \path (A) edge   [loop above]    node {$b_0$} (A)
        (A) edge   [bend left]     node {$q_0$} (B)
        (B) edge   [bend left]     node {$p$} (A)
        (B) edge   [bend left]     node {$q$} (C)
        (C) edge   [bend left]     node {$p$} (B)
        (C) edge   [bend left]     node {$q$} (D)
        (D) edge   [bend left]     node {$p$} (C);
\end{tikzpicture}
\end{center}  
We can symmetrize $\Pi$ by conjugation with a diagonal positive matrix $\Lambda$ so that $\mc{J}=\Lambda\Pi\Lambda^{-1}$ is a Jacobi matrix,
$$
 \mc{J} = 
 \begin{pmatrix}
 	b_0 & \kern-2pt a_0
	\\[2pt]
	a_0 & \kern-2pt 0 & a
	\\[2pt]
	& \kern-2pt a & 0 & \kern3pt a
	\\[2pt]
	& & a & \kern3pt 0 & \kern3pt a
	\\
	& & & \kern-5pt \ddots & \ddots & \ddots
 \end{pmatrix},
 \qquad
 \begin{aligned}
 	& a_0=\sqrt{pq_0},
	\\[3pt]
 	& a=\sqrt{pq}. 
 \end{aligned}
$$
The fact that $\Lambda$ is diagonal guarantees that $\Pi$ and $\mc{J}$ have the same FR-function $f(n;z)$ for each single state $n$,
$$
 f(n;z) = (\Pi(1-zQ^{(n)}\Pi)^{-1})_{n,n} 
 = (\mc{J}(1-zQ^{(n)}\mc{J})^{-1})_{n,n},
 \qquad
 Q^{(n)}_{i,j}=
 \begin{cases}
 	0 & i=j=n,
	\\
	1 & \text{otherwise}.
 \end{cases}
$$
Consequently, $f(n;z)$ is simultaneously a Schur and a Nevanlinna function. Bearing in mind the comments in Section~\ref{sec:OP}, $f(n;z)$ is the Nevanlinna function of $p_n^2\,d\mu$, where $p_n$ is the OPRL of degree $n$ generated by the three term recurrence relation associated with $\mc{J}$ and $\mu$ is the related orthogonality measure on the real line. The values of the FR-function $f(n;z)$ and its derivative at $z=1$ give the return probability and expected return time for the state $n$. 

The FR-function $f(z):=f(0;z)$ for the first state follows easily from the fact that it is a Nevanlinna function with Schur parameters $b_0,a_0,0,a,0,a,\dots$ with respect to the Schur algorithm \eqref{eq:Nalg} at the origin, so that its first iterate $f_1$ has constant Schur parameters $0,a,0,a,0,a,\dots$. From Example~\ref{ex:SA4} we find that 
\begin{equation} \label{eq:f1-BD}
 f_1(z) = \frac{1-\sqrt{1-4a^2z^2}}{2z},
\end{equation}
where the branch for the square root is that one taking the value 1 at $z=0$. Therefore,
\begin{equation} \label{eq:f-BD}
 f(z) = b_0 + \frac{a_0^2z}{1-zf_1(z)} = b_0 + \frac{a_0^2}{a^2} f_1(z).
\end{equation}
Using that $\sqrt{1-4a^2}=\sqrt{1-4p(1-p)}=|1-2p|=|p-q|$ we get
$$
 f_1(1) = \frac{1-|p-q|}{2} = \min\{p,q\},
 \qquad
 \lim_{x\uparrow1}(xf_1(x))' = \frac{2pq}{|p-q|}. 
$$
Hence, the return probability for the first state is given by 
$$
 \pi(0\to0) = f(1) = b_0 + \frac{q_0}{q}\min\{p,q\} = 
 \begin{cases}
 	1 & \text{ if } p\ge q, 
	\\
	1-q_0(1-\frac{p}{q}) & \text{ if } p<q,
 \end{cases}
$$ 
and, in the recurrent case $p\ge q$, the return to the first state happens in an expected return time
$$
 \tau(0\to0) = \lim_{x\uparrow1}(xf(x))' = 
 b_0 + \frac{2pq_0}{p-q} = 1 + \frac{q_0}{p-q}.
$$

However, the recurrence properties of an arbitrary state $n\ge1$ of such a semi-infinite chain are not so easy to obtain. Here, the splitting rules for FR-functions come to our rescue via Khrushchev formula for OPRL, which states that $f(n;z) = g_n(z) + f_n(z)$, where $f_n$ is the $n$-th iterate of the Nevanlinna function $f$ for the Schur algorithm \eqref{eq:Nalg} at the origin, while $g_n(z)=\kappa_{n-1}p_{n-1}(z^{-1})/\kappa_n p_n(z^{-1})$ and $\kappa_n=1/a_0a^{n-1}$ is the leading coefficient of $p_n$. According to the discussion of Section~\ref{sec:OP}, the splitting $f(n;z)=(g_n(z)+q)+(p+f_n(z))-1$ is associated with the overlapping decomposition $\Pi=(\Pi_L \oplus O) + (O_n \oplus \Pi_R) - (O_n \oplus 1 \oplus O)$, where $\Pi_L$ and $\Pi_R$ are stochastic matrices diagrammatically depicted as
\begin{center}
\begin{tikzpicture}[->,>=stealth',shorten >=1pt,auto,node distance=2cm,
                    semithick]
  \tikzstyle{every state}=[fill=black,draw=none,text=white,
  			inner sep=0pt,minimum size=0.1cm,node distance=0.3cm]

  \node[main node] (BB) 			 	{$n$};
  \node[main node] (C) 	[right of=BB] 	{\footnotesize 
                   						$n \kern-1.5pt + \kern-2pt 1$};
  \node[main node] (D) 	[right of=C] 	{\footnotesize 
                   						$n \kern-1.5pt + \kern-2pt 2$};
  \node[main node] (Z)  at (-2.5,0) 	{$n$};
  \node[main node] (Y)  [left of=Z]      {\footnotesize 
                   						$n \kern-1.5pt - \kern-2pt 1$};
  \node[main node] (X)  [left of=Y] 		{$1$};
  \node[main node] (W)  [left of=X] 	{$0$};
  
  \node[state] (E) at (4.7,0)  	{};
  \node[state] (F) [right of=E]	{};
  \node[state] (G) [right of=F]	{};
  \node[state] (e) at (-5.2,0)  {};
  \node[state] (f) [left of=e]	{};
  \node[state] (g) [left of=f]	{};

  \path (W)  edge   [loop above]    	node {$b_0$} (W)
  		(W)  edge   [bend left=20]    	node {$q_0$} (X)
		(X)  edge   [bend left=20]    	node {$p$} (W)
		(Y)  edge   [bend left=20]    	node {$q$} (Z)
		(Z)  edge   [bend left=20]    	node {$p$} (Y)
		(Z)  edge   [loop above]    	node {$q$} (Z)
        (BB) edge   [loop above]    	node {$p$} (B)
        (BB) edge   [bend left=20]     	node {$q$} (C)
        (C)  edge   [bend left=20]     	node {$p$} (BB)
        (C)  edge   [bend left=20]     	node {$q$} (D)
        (D)  edge   [bend left=20] 		node {$p$} (C);
\end{tikzpicture}
\end{center}

The iterates $f_n$ coincide with $f_1$ because they have the same Schur parameters for \eqref{eq:Nalg}. As for the Nevanlinna function $g_n$, we need the OPRL $p_n$.

The OPRL $p_n$ satisfy for $n\ge1$ the same three term recurrence relation as the rescaled Chebyshev polynomials of the second kind $U_n(x/2a)$, where $U_n$ is given by the recurrence 
$$
 2xU_n(x) = U_{n+1}(x)+U_{n-1}(x), \qquad U_0=1, \qquad U_{-1}=0.
$$
This allows to express
$$
 p_n(x) = 
 \frac{x-b_0}{a_0} \, U_{n-1}({\textstyle\frac{x}{2a}}) 
 - \frac{a_0}{a} \, U_{n-2}({\textstyle\frac{x}{2a}}),
 \qquad n\ge1,
$$
so that 
\begin{equation} \label{eq:gn}
 g_n(z) = 
 \begin{cases}
 	\frac{a_0^2z}{1-b_0z} 
	& \text{ if } n=1,
	\\[3pt] 
	a \frac{a(1-b_0z)\,U_{n-2}(1/2az) - a_0^2z\,U_{n-3}(1/2az)}
	{a(1-b_0z)\,U_{n-1}(1/2az) - a_0^2z\,U_{n-2}(1/2az)} 
	& \text{ if } n\ge2.
 \end{cases}
\end{equation} 
Using the properties of the Chebyshev polynomials $U_n$ and their derivatives we find that
$$
 g_n(1) = p, 
 \qquad 
 \lim_{x\uparrow1}g_n'(x) = \frac{p}{q_0} w^{n-2} 
 \left(w^n + \frac{q_0}{q} \, U_{n-2}({\textstyle\frac{1}{2a}})\right),
 \qquad w = \sqrt{\frac{p}{q}},
 \qquad n\ge1.
$$
Therefore, for any state $n\ge1$, the return probability is given by
$$
 \pi(n\to n) = g_n(1) + f_n(1) = p + \min\{p,q\} =
 \begin{cases}
 	1 & \text{ if } p\ge q, 
	\\
	2p & \text{ if } p<q,
 \end{cases}
$$
and, in the recurrent case $p\ge q$, the corresponding expected return time has the expression
$$
\begin{aligned}
 \tau(n\to n) & = \lim_{x\uparrow1}(xg_n(x))' + \lim_{x\uparrow1}(xf_n(x))' =
 p + \frac{p}{q_0} w^{n-2} 
 \left(w^n + \frac{q_0}{q} \, U_{n-2}({\textstyle\frac{1}{2a}})\right)
 + \frac{2pq}{p-q}
 \\[2pt] 
 & = \frac{p}{q_0} \left[w^{n-2} 
 \left(w^n + \frac{q_0}{q} \, U_{n-2}({\textstyle\frac{1}{2a}})\right)
 + \frac{q_0}{p-q}\right].
\end{aligned}
$$

The splitting rules are also useful to obtain the FR-function of a subset of states in this example. For instance, the subset $\Omega=\{0,1\}$ naturally generates an overlapping splitting of the RW
\begin{center}
\begin{tikzpicture}[->,>=stealth',shorten >=1pt,auto,node distance=2.5cm,
                    semithick]
  \tikzstyle{every state}=[fill=black,draw=none,text=white,
  			inner sep=0pt,minimum size=0.15cm,node distance=0.5cm]

  \node[main node] (A)               	{$0$};
  \node[main node] (B) 	[right of=A] 	{$1$};
  \node[main node] (BB) [right of=B] 	{$1$};
  \node[main node] (C) 	[right of=BB] 	{$2$};
  \node[main node] (D) 	[right of=C] 	{$3$};
  
  \node[state] (E) at (10.8,0)  {};
  \node[state] (F) [right of=E]	{};
  \node[state] (G) [right of=F]	{};
  
  \path (A) edge   [loop above]    node {$b_0$} (A)
        (A) edge   [bend left]     node {$q_0$} (B)
        (B) edge   [bend left]     node {$p$} (A)
        (B) edge   [loop above]    node {$q$} (B)
        (BB) edge  [loop above]    node {$p$} (B)
        (BB) edge  [bend left]     node {$q$} (C)
        (C) edge   [bend left]     node {$p$} (BB)
        (C) edge   [bend left]     node {$q$} (D)
        (D) edge   [bend left]     node {$p$} (C);
\end{tikzpicture}
\end{center}
which originates the following overlapping decomposition of the stochastic matrix $\Pi$, 
$$
\begin{aligned}
 \Pi & =
 \left(
 \begin{array}{cc|ccc}
 	b_0 & \kern-4pt q_0 & & &  
 	\\
	p & \kern-4pt q & & &  
	\\ \hline
	& & & &   
	\\[-13pt]
	& & 0 & &   
	\\
	& & & 0 &    
	\\
	& & & & \kern-3pt \ddots
 \end{array}
 \right)
 +
 \left(
 \begin{array}{c|ccccc} 
 	0 & & & & &
	\\ \hline
	& p & q & & & 
	\\
	& p & 0 & q & &
	\\
	& & p & 0 & q &
	\\
	& & & \kern-5pt \ddots & \kern-3pt \ddots & \kern-2pt \ddots 
 \end{array}
 \right)
 - \left(
 \begin{array}{c|c|ccc} 
 	0 & & & & 
	\\ \hline
	& 1 & & &  
	\\ \hline
	& & & &  
	\\[-13pt]
	& & 0 & & 
	\\
	& & & 0 & 
	\\
	& & & & \kern-3pt \ddots 
 \end{array}
 \right)
 \\
 & =
 \left(
 \begin{array}{c|c} 
 	\\[-9pt]
	\kern1pt \Pi_0 & 
	\\[3pt] \hline
	& 
	\\[-9pt] 
	& \kern5pt O \kern3pt
	\\[3pt]
 \end{array}
 \right)
 +
 \left(
 \begin{array}{c|c} 
 	\text{\footnotesize 0} &  
 	\\ \hline 
	\\[-5pt]
	& \kern10pt \Pi_R \kern6pt
	\\[6pt]
 \end{array}
 \right)
 -
 \left(
 \begin{array}{cc|c} 
 	\text{\footnotesize 0} & \kern-3pt \text{\footnotesize 0} &
	\\[-4pt]
	\text{\footnotesize 0} & \kern-3pt \text{\footnotesize 1} &
	\\[-0.5pt] \hline
	& &
	\\[-9pt]
	& & \kern5pt O \kern3pt
	\\[3pt] 
 \end{array}
 \right).
\end{aligned}
$$  
Hence, using Theorem~\ref{thm:split-RW}.{\it(i)} we find that the FR-function $h$ of the subset $\Omega=\{0,1\}$ with respect to $\Pi$ decomposes as $h=h_R+\Pi_0-\left(\begin{smallmatrix}0&0\\0&1\end{smallmatrix}\right)$, where $h_R$ is the FR-function of the same subset with respect to $0\oplus\Pi_R$. Obviously, $h_R=0\oplus f$, with $f$ the FR-function of the single state 1 with respect to $\Pi_R$, which is given by \eqref{eq:f-BD} with $b_0=p$ and $a_0=a$. Therefore, $f(z)=p+f_1(z)$ with $f_1$ as in \eqref{eq:f1-BD}, so we conclude that  
$$
 h(z) = 
 \begin{pmatrix}
 	b_0 & q_0
	\\
	p & f_1(z)
 \end{pmatrix}.
$$
The values
$$
 h(1) = 
 \begin{pmatrix}
 	b_0 & q_0
	\\[1pt]
	p & \min\{p,q\}
 \end{pmatrix},
 \qquad\quad 
 \lim_{x\uparrow1} (xh(x))' = 
 \begin{pmatrix}
 	b_0 & q_0
	\\[1pt]
	p & \frac{2pq}{|p-q|}
 \end{pmatrix},
$$ 
provide the following return probabilities and expected return times to $\Omega=\{0,1\}$,
$$
\begin{aligned}
 & \pi(0\to\Omega) = 1, 
 & \qquad & \pi(1\to\Omega) = 
 \begin{cases}
 	1 & \text{ if } p\ge q,
	\\
	2p & \text{ if } p<q,
 \end{cases}
 \\[1pt]
 & \tau(0\to\Omega) = 1,
 & & \tau(1\to\Omega) = \frac{p}{p-q} \kern6pt \text{ if } p\ge q.
\end{aligned}
$$

When $\Omega=\{n,n+1\}$ with $n\ge1$, a similar decomposition based on the splitting 
\begin{center}
\begin{tikzpicture}[->,>=stealth',shorten >=1pt,auto,node distance=1.7cm,
                    semithick]
  \tikzstyle{every state}=[fill=black,draw=none,text=white,
  			inner sep=0pt,minimum size=0.1cm,node distance=0.25cm]

  \node[main node] (A)               	{$n$};
  \node[main node] (B) 	[right of=A] 	{\footnotesize 
                   						$n \kern-1.5pt + \kern-2pt 1$};
  \node[main node] (BB) [right of=B] 	{\footnotesize 
                   						$n \kern-1.5pt + \kern-2pt 1$};
  \node[main node] (C) 	[right of=BB] 	{\footnotesize 
                   						$n \kern-1.5pt + \kern-2pt 2$};
  \node[main node] (D) 	[right of=C] 	{\footnotesize 
                   						$n \kern-1.5pt + \kern-2pt 3$};
  \node[main node] (Z)  [left of=A] 	{$n$};
  \node[main node] (Y)  [left of=Z]      {\footnotesize 
                   						$n \kern-1.5pt - \kern-2pt 1$};
  \node[main node] (X)  [left of=Y] 		{$1$};
  \node[main node] (W)  [left of=X] 	{$0$};
  
  \node[state] (E) at (7.4,0)  {};
  \node[state] (F) [right of=E]	{};
  \node[state] (G) [right of=F]	{};
  \node[state] (e) at (-4,0)  	{};
  \node[state] (f) [left of=e]	{};
  \node[state] (g) [left of=f]	{};

  \path (W)  edge   [loop above]    	node {$b_0$} (W)
  		(W)  edge   [bend left=20]    	node {$q_0$} (X)
		(X)  edge   [bend left=20]    	node {$p$} (W)
		(Y)  edge   [bend left=20]    	node {$q$} (Z)
		(Z)  edge   [bend left=20]    	node {$p$} (Y)
		(Z)  edge   [loop above]    	node {$q$} (Z)
        (A)  edge   [loop above]    	node {$p$} (A)
        (A)  edge   [bend left=20]     	node {$q$} (B)
        (B)  edge   [bend left=20]     	node {$p$} (A)
        (B)  edge   [loop above]    	node {$q$} (B)
        (BB) edge   [loop above]    	node {$p$} (B)
        (BB) edge   [bend left=20]     	node {$q$} (C)
        (C)  edge   [bend left=20]     	node {$p$} (BB)
        (C)  edge   [bend left=20]     	node {$q$} (D)
        (D)  edge   [bend left=20] 		node {$p$} (C);
\end{tikzpicture}
\end{center}
identifies the corresponding FR-function with respect to $\Pi$ as
$$
 h(n;z) = 
 \begin{pmatrix}
 g_n(z) & q \\ p & f_n(z)
 \end{pmatrix},
$$
where $f_n=f_1$ is the $n$-th iterate of $f$ for the Schur algorithm \eqref{eq:Nalg} and $g_n$ is given by \eqref{eq:gn}. This yields for $\Omega=\{n,n+1\}$ exactly the same return probabilities and expected return times as in the case of the subset $\{0,1\}$, except for the expected return time of the state $n$ to $\Omega=\{n,n+1\}$, given by
$$
 \tau(n\to\Omega) = \lim_{x\uparrow1}(xg_n(x))' + q = 1 + q +
 \frac{p}{q_0} w^{n-2} 
 \left(w^n + \frac{q_0}{q} \, U_{n-2}({\textstyle\frac{1}{2a}})\right),
 \qquad n\ge1.
$$
\end{ex}

\begin{ex} \label{ex:RW3} 
Suppose that a RW on a set $\SS=\Omega_1\cup\{0\}\cup\Omega_2$ has a single state 0 that separates two subsets of states, $\Omega_1$ and $\Omega_2$, which are not communicated by one-step transitions. In other words, the RW has a one-step diagram of the following type
\begin{center}
\begin{tikzpicture}[->,>=stealth',shorten >=1pt,auto,node distance=1.7cm,
                    semithick]
  \tikzstyle{every state}=[ellipse,minimum height=2cm, 
    		minimum width=3cm,fill=blue!20,draw,inner sep=0pt,
			node distance=5cm]

  \kern-60pt
  
  \node[state] (L)  			{$\Omega_1$};
  \node[state] (R)  at (7,0) 	{$\Omega_2$};

  \node[main node] (0)  at (3.5,0)  	{0};
  
  \path
        (0)  edge   [loop above]				  node {$b_0$} (0)
        (0)  edge	[bend left=20,line width=1mm] node {\kern-7pt $q_1$} (L)
        (0)  edge   [bend left=20,line width=1mm] node {$q_2$ \kern-12pt} (R)
        (L)  edge   [bend left=20,line width=1mm] node {\kern-7pt $p_1$} (0)
        (R)  edge   [bend left=20,line width=1mm] node {$p_2$ \kern-12pt} (0);
        
  \kern280pt $b_0+q_1+q_2=1,$
        
\end{tikzpicture}
\end{center}
where the thick arrows summarize all the transitions between the subsets $\Omega_i$ and the state 0, and we denote by $p_i$, $q_i$ the sums of the probabilities of the corresponding transitions. This is equivalent to state that the RW is driven by a stochastic matrix with the shape
$$
\begin{aligned}
 \Pi & =
 \left(
 \begin{array}{c|c|c}
    & & 
    \\[-7pt] 
	\kern3pt A_1 \kern3pt & \kern-1pt B_1 \kern-2pt &  
	\\[5pt] \hline
	& &  
	\\[-13pt]
	C_1 & b_0 & C_2
	\\ \hline
	& & 
	\\[-9pt]
	& \kern-1pt B_2 \kern-2pt & \kern3pt A_2 \kern3pt
	\\[-9pt]
	& & 
 \end{array}
 \right)
 =
 \left(
 \begin{array}{c|c|c}
    & & 
    \\[-7pt] 
	\kern3pt A_1 \kern3pt & \kern-1pt B_1 \kern-2pt &  
	\\[5pt] \hline
	& &  
	\\[-13pt]
	C_1 & b_1 & 
	\\ \hline
	& & 
	\\[-9pt]
	& & 	
	\\[-9pt]
	& & \kern9pt
 \end{array}
 \right)
 +
 \left(
 \begin{array}{c|c|c}
    \kern9pt & & 
    \\[-7pt] 
	& &  
	\\[5pt] \hline
	& &  
	\\[-13pt]
	& b_2 & C_2
	\\ \hline
	& & 
	\\[-9pt]
	& \kern-1pt B_2 \kern-2pt & \kern3pt A_2 \kern3pt
	\\[-9pt]
	& & 
 \end{array}
 \right)
 -
 \left(
 \begin{array}{c|c|c}
    \kern9pt & & 
    \\[-7pt] 
	& &  
	\\[5pt] \hline
	& &  
	\\[-13pt]
	& 1 &
	\\ \hline
	& & 
	\\[-9pt]
	& &
	\\[-9pt]
	& & \kern9pt
 \end{array}
 \right)
 \\[2pt]
 & = \left(
 \begin{array}{c|c} 
 	\\[-5pt]
	\kern7pt \Pi^{[1]} \kern7pt 
	\\[7pt] \hline
	&
	\\[-13pt] 
	& \kern4pt
 \end{array}
 \right)
 +
 \left(
 \begin{array}{c|c} 
 	\kern4pt
 	\\ \hline 
	\\[-5pt]
	& \kern10pt \Pi^{[2]} \kern4pt
	\\ [5pt]
 \end{array}
 \right)
 -
 \left(
 \begin{array}{c|c|c} 
 	\kern4pt& & 
 	\\ \hline
	& &
	\\[-13pt]
	& 1 &
	\\ \hline
	& &
	\\[-13pt]
	& & \kern4pt 
 \end{array}
 \right),
 \qquad
 b_1=1-q_1, \qquad b_2=1-q_2. 
\end{aligned}
$$
As the previous relation shows, up to a trivial $-1$ term acting on the state 0, the evolution matrix $\Pi$ decomposes into a sum overlapping stochastic matrices $\Pi^{[i]}$ on $\SS_i=\Omega_i\cup\{0\}$, with 0 as overlapping sate. In other words, the RW essentially decomposes into a couple of smaller ones which diagrammatically look like
\begin{equation} \label{tz:Si-0}
\begin{tikzpicture}[->,>=stealth',shorten >=1pt,auto,node distance=1.7cm,
                    semithick,baseline=(current bounding box.center)]
  \tikzstyle{every state}=[ellipse,minimum height=2cm,
  			minimum width=3cm,fill=blue!20,draw,inner sep=0pt,
			node distance=5cm]

  \kern-40pt
  
  \node[state] (L)  					{$\Omega_i$};

  \node[main node] (0)  at (3.5,0)  	{0};
  
  \path
        (0)  edge   [loop above]				  node {$b_i$} (0)
        (0)  edge	[bend left=20,line width=1mm] node {\kern-7pt $q_i$} (L)
        (L)  edge   [bend left=20,line width=1mm] node {\kern-7pt $p_i$} (0);
        
  \kern170pt $b_i+q_i=1.$
        
\end{tikzpicture}
\end{equation}
Therefore, as a consequence of Theorems~\ref{thm:split}.{\it(i)} and \ref{thm:split2}.{\it(i)}, the FR-function $f$ of the state 0 with respect to $\Pi$ decomposes as $f=f^{[1]}+f^{[2]}-1$ in terms of similar FR-functions $f^{[i]}$ for $\Pi^{[i]}$. Hence, Theorem~\ref{thm:split-RW}.{\it(i)} implies that the return probabilities and expected return times to the state 0 are related by $\pi(0\to0)=\pi^{[1]}(0\to0)+\pi^{[2]}(0\to0)-1$ and $\tau(0\to0)=\tau^{[1]}(0\to0)+\tau^{[2]}(0\to0)-1$. 

The iteration of these relations leads to the decomposition $f=f^{[1]}+f^{[2]}+f^{[3]}-2$ for the FR-function of the state 0 when the RW has the structure
\begin{center}
\begin{tikzpicture}[->,>=stealth',shorten >=1pt,auto,node distance=1.7cm,
                    semithick]
  \tikzstyle{every state}=[ellipse,minimum height=2cm,
  			minimum width=3cm,fill=blue!20,draw,inner sep=0pt,
			node distance=5cm]

  \kern-60pt
  
  \node[state] (L)  			{$\Omega_1$};
  \node[state] (R)  at (6.1,2) 	{$\Omega_2$};
  \node[state] (D)  at (6.1,-2) 	{$\Omega_3$};

  \node[main node] (0)  at (3.5,0)  	{0};
  
  \path
        (0)  edge  	[loop above]				  node {$b_0$} (0)
        (0)  edge	[bend left=12,line width=1mm] node {\kern-7pt $q_1$} (L)
        (0)  edge   [bend left=12,line width=1mm] node {$q_2$ \kern-9pt} (R)
        (0)  edge  	[bend left=12,line width=1mm] node {\kern-6pt $q_3$} (D)
        (L)  edge   [bend left=12,line width=1mm] node {\kern-7pt $p_1$} (0)
        (R)  edge   [bend left=12,line width=1mm] node {\kern-6pt $p_2$} (0)
        (D)  edge   [bend left=12,line width=1mm] node {$p_3$ \kern-11pt} (0);
        
  \kern260pt $b_0+q_1+q_2+q_3=1,$
        
\end{tikzpicture}
\end{center}
and $f^{[i]}$ is the FR-function of the state 0 for the RW represented by \eqref{tz:Si-0}.

In general, if $\SS\setminus\{0\}=\Omega_1\cup\cdots\cup\Omega_n$ splits into $n$ subsets of states, $\Omega_1,\dots,\Omega_n$, without one-step transitions among them, the previous relation among FR-functions generalizes to $f=f^{[1]}+\cdots+f^{[n]}-(n-1)$, hence a similar relation holds for the probabilities and expected return times, namely, $\pi(0\to0)=\pi^{[1]}(0\to0)+\cdots+\pi^{[n]}(0\to0)-(n-1)$ and $\tau(0\to0)=\tau^{[1]}(0\to0)+\cdots+\tau^{[n]}(0\to0)-(n-1)$.

We will illustrate this multiple use of the splitting rules for FR-functions in the case of RW on spider graphs. A simple example of such a RW has a one-step diagram with the shape
\begin{center}
\begin{tikzpicture}[->,>=stealth',shorten >=1pt,auto,node distance=1.7cm,
                    semithick]
  \tikzstyle{every state}=[fill=black,draw=none,text=white,
  			inner sep=0pt,minimum size=0.1cm,node distance=0.3cm]
  
  \kern-60pt

  \node[main node] (0)                	{0};
  \node[main node] (A1) [left of=0]  	{};
  \node[main node] (A2) [left of=A1] 	{};
  \node[main node] (B1) at (1.2,1.2) 	{};
  \node[main node] (B2) at (2.4,2.4) 	{};
  \node[main node] (C1) at (1.2,-1.2) 	{};
  \node[main node] (C2) at (2.4,-2.4) 	{};

  \node[state] (a1) at (-4.05,0)  		{};
  \node[state] (a2) [left of=a1]		{};
  \node[state] (a3) [left of=a2]		{};
  \node[state] (b1) at (2.85,2.85)  	{};
  \node[state] (b2) at (3.05,3.05) 		{};
  \node[state] (b3) at (3.25,3.25)		{};
  \node[state] (c1) at (2.85,-2.85)  	{};
  \node[state] (c2) at (3.05,-3.05) 	{};
  \node[state] (c3) at (3.25,-3.25)		{};
  
  \path (0)  edge   [bend left=20]     		node {$q_1$} (A1)
        (A1) edge   [bend left=20]     		node {$p$} (0)
        (A1) edge   [bend left=20]     		node {$q$} (A2)
        (A2) edge   [bend left=20]     		node {$p$} (A1)
        (0)  edge   [bend left=20,left]    	node {$q_2$ \kern-2pt} (B1)
        (B1) edge   [bend left=20,right]    node {\kern2pt $p$} (0)
        (B1) edge   [bend left=20,left]    	node {$q$ \kern-2pt} (B2)
        (B2) edge   [bend left=20,right]    node {\kern2pt $p$} (B1)
        (0)  edge   [bend left=20,right]    node {\kern2pt $q_3$} (C1)
        (C1) edge   [bend left=20,left]    	node {$p$ \kern-2pt} (0)
        (C1) edge   [bend left=20,right]    node {\kern2pt $q$} (C2)
        (C2) edge   [bend left=20,left]    	node {$p$ \kern-2pt} (C1);
        
  \kern160pt
   
  $\begin{aligned}
    & p,q,q_1,q_2,q_3>0, 
	\\[5pt] 
	& p+q=1,
	\\[5pt]  
	& q_1+q_2+q_3=1.
  \end{aligned}$
        
\end{tikzpicture}
\end{center}
As the general case previously discussed shows, the FR-function $f=f^{[1]}+f^{[2]}+f^{[3]}-2$ of the state 0 follows from those $f^{[i]}$ of the smaller RW given by
\begin{equation} \label{tz:spider-leg} 
\begin{tikzpicture}[->,>=stealth',shorten >=1pt,auto,node distance=1.7cm, 
              		semithick,baseline=(current bounding box.center)]
  \tikzstyle{every state}=[fill=black,draw=none,text=white,
  			inner sep=0pt,minimum size=0.1cm,node distance=0.3cm]

  \kern-85pt
   
  \node[main node] (0)                	{0};
  \node[main node] (A1) [right of=0]  	{};
  \node[main node] (A2) [right of=A1] 	{};
  \node[main node] (A3) [right of=A2] 	{};

  \node[state] (a1) at (5.75,0)  		{};
  \node[state] (a2) [right of=a1]		{};
  \node[state] (a3) [right of=a2]		{};
  
  \path (0)  edge	[loop above]			node {$b_i$} (0)
        (0)  edge   [bend left=20]     		node {$q_i$} (A1)
        (A1) edge   [bend left=20]     		node {$p$} (0)
        (A1) edge   [bend left=20]     		node {$q$} (A2)
        (A2) edge   [bend left=20]     		node {$p$} (A1)
        (A2) edge   [bend left=20]     		node {$q$} (A3)
        (A3) edge   [bend left=20]     		node {$p$} (A2);
        
  \kern220pt 
  $\begin{aligned}
    & p,q,q_i>0, \kern20pt b_i\ge0
    \\[5pt]  
	& p+q=1, \kern20pt b_i+q_i=1.
  \end{aligned}$
        
\end{tikzpicture}
\end{equation}

In the case of a RW on a similar spider graph with $n$ legs, we have the relation $f=f^{[1]}+\cdots+f^{[n]}-(n-1)$, where $f^{[i]}$ is the FR-function of the state 0 for the RW given by \eqref{tz:spider-leg}, which is exactly the kind of birth-death processes studied in the previous example. Therefore, we already know that
$$
 f^{[i]}(z) = b_i + \frac{q_i}{q} \frac{1-\sqrt{1-4pqz^2}}{2z}.
$$
We find that the FR-function of the central state for the spider RW is independent of the number of legs, as well as from the probabilities $q_i$ of the transitions starting at such a central state,
$$
 f(z) = \frac{1-\sqrt{1-4pqz^2}}{2qz}.
$$
So this independence also holds for the return probability and the expected return time, 
$$
 \pi(0\to0) = 
 \begin{cases} 
 	1 & \text{ if } p\ge q,
	\\
	p/q & \text{ if } p<q,
 \end{cases}
 \qquad\qquad
 \tau(0\to0) = 1+\frac{1}{p-q} \quad \text{ if } p\ge q.
$$ 
\end{ex}

\begin{ex} \label{ex:RW4}
Consider now a RW on a set of states $\SS=\Omega_1\cup\{0,1,2,3\}\cup\Omega_2$ with the following structure of one-step transitions, 
\begin{center}
\begin{tikzpicture}[->,>=stealth',shorten >=1pt,auto,node distance=1.7cm,
					semithick]
  \tikzstyle{every state}=[ellipse,minimum height=3cm,
  			minimum width=2cm,fill=blue!20,draw,inner sep=0pt,
			node distance=5cm]

  \kern-50pt
  
  \node[state] (L)  			{\kern3pt $\Omega_1$};
  \node[state] (R)  at (9,0) 	{\kern3pt $\Omega_2$};

  \node[main node] (0)  at (2.5,0)  	{0};
  \node[main node] (1)  at (4.5,1.25)  	{1};
  \node[main node] (2)  at (4.5,-1.25)  {2};
  \node[main node] (3)  at (6.5,0)  	{3};
  
  \path
        (0)  edge	[bend left=20,line width=1mm]  	node {\kern-6pt $p_0$} (L)
        (0)  edge   [below]  						node {\kern5pt $q_0$} (1)
        (0)  edge   [above]  						node {\kern5pt $q_0$} (2)
        (1)  edge   [loop above] 				   	node {$b_1$} (1)
        (1)  edge   [below]  						node {$q_1$ \kern1pt} (3)
        (1)  edge   [bend right,above]  			node {\kern-3pt $p_1$} (0)
        (2)  edge   [loop below] 				   	node {$b_2$} (2)
        (2)  edge   [right]  						node {\kern-1pt $p_2$} (1)
        (2)  edge   [above]  						node {$q_1$ \kern1pt} (3)
        (3)  edge   [bend left,below]  				node {\kern7pt $p_3$} (2)
        (3)  edge	[bend left=20,line width=1mm]  	node {$q_3$ \kern-11pt} (R)
        (L)  edge   [bend left=20,line width=1mm]  	node {\kern-6pt $q$} (0)
        (R)  edge   [bend left=20,line width=1mm]   node {$p$ \kern-11pt} (3);
        
  \kern315pt 
  $\begin{aligned}
  & p_0+2q_0=1,
  \\ 
  & b_1+p_1+q_1=1,
  \\
  & b_2+p_2+q_1=1,
  \\
  & p_3+q_3=1,
  \\[12pt]
  & p_2=b_1+b_2,
  \end{aligned}$
        
\end{tikzpicture}
\end{center}
where the thick arrows represent all the transitions between the subsets $\Omega_1,\Omega_2$ and the states $0,3$, while $p,q,p_0,q_3$ are the sums of the corresponding probabilities.

The condition $p_2=b_1+b_2$ guarantees that the related evolution stochastic matrix $\Pi$ factorizes into a couple of smaller stochastic matrices, $\Pi_L$ and $\Pi_R$, on the subsets $\SS_L=\Omega_1\cup\{0,1,2\}$ and $\SS_R=\{1,2,3\}\cup\Omega_2$ respectively,
$$
\begin{aligned}
 \Pi & =
 \left(
 \begin{array}{c|c|cc|c|c}
    & & & & &
    \\[-7pt] 
	\kern3pt A_1 \kern3pt & \kern-1pt B_1 \kern-2pt & & & &  
	\\[5pt] \hline
	& & & & & 
	\\[-13pt]
	C_1 & 0 & q_0 & q_0 & 0 & 
	\\ \hline
	& & & & & 
	\\[-13pt]
	& p_1 & b_1 & 0 & q_1 & 
	\\
	& 0 & p_2 & b_2 & q_2 &
	\\ \hline
	& & & & & 
	\\[-13pt]
	& 0 & 0 & p_3 & 0 & C_2
	\\ \hline
	& & & & & 
	\\[-9pt]
	& & & & \kern-1pt B_2 \kern-2pt & \kern3pt A_2 \kern3pt
	\\[-9pt]
	& & & & &
 \end{array}
 \right)
 =
 \\[2pt]
 &  = 
 \left(
 \begin{array}{c|c|cc|c|ccc}
    & & & & & & & 
    \\[-5pt] 
	\kern3pt A_1 \kern3pt & \kern-1pt B_1 \kern-2pt & & & & & &  
	\\[7pt] \hline
	& & & & & & & 
	\\[-13pt]
	C_1 & 0 & 2q_0 & \kern-5pt 0 & & & & 
	\\ \hline
	& & & & & & & 
	\\[-12pt]
	& p_1 & 0 & \kern-5pt b_1\kern-3pt+\!q_1 \kern-2pt & & & & 
	\\[3pt]
	& 0 & 2b_2 & \kern-5pt b_1\kern-3pt+\!q_1 \kern-2pt & & & & 
	\\[3pt] \hline
	& & & & & & & 
	\\[-13pt]
	& & & & 1 & & & 
	\\ \hline
	& & & & & & & 
	\\[-15pt]
	& & & & & \text{\footnotesize 1} & & 
	\\[-7pt]
	& & & & & & \kern-6pt \text{\footnotesize 1} & 
	\\[-6pt]
	& & & & & & & \kern-7pt \ddots
 \end{array}
 \right)
 \left(
 \begin{array}{ccc|c|cc|c|c}
    \ddots & & & & & & &
    \\[-6pt]
    & \kern-7pt \text{\footnotesize 1} & & & & & & 
    \\[-7pt] 
	& & \kern-7pt \text{\footnotesize 1} & & & & &  
	\\[-1pt] \hline
	& & & & & & & 
	\\[-13pt]
	& & & 1 & & & & 
	\\[-14pt]
    & & & & & & & 
	\\ \hline
	& & & & & & & 
	\\[-12pt]
	& & & & \frac{1}{2} & \frac{1}{2} & 0 & 
	\\[2pt]
	& & & & \frac{b_1}{b_1\kern-1pt+q_1} & 0 
	& \kern-4pt \frac{q_1}{b_1\kern-1pt+q_1} \kern-4pt &
	\\[3pt] \hline
	& & & & & & & 
	\\[-13pt]
	& & & & 0 & p_3 & 0 & C_2
	\\ \hline
	& & & & & & & 
	\\[-8pt]
	& & & & & & \kern-1pt B_2 \kern-2pt & \kern4pt A_2 \kern2pt
	\\[-8pt]
	& & & & & & & 
 \end{array}
 \right)
 \\
 & = \left(
 \begin{array}{c|c} 
 	\\[-5pt]
	\kern7pt \Pi_L \kern9pt 
	\\[7pt] \hline
	& 
	\\[-12pt] 
	& \kern3pt I
 \end{array}
 \right)
 \left(
 \begin{array}{c|c} 
 	\kern1pt I \kern2pt 
 	\\ \hline 
	\\[-5pt]
	& \kern9pt \Pi_R \kern7pt
	\\ [5pt]
 \end{array}
 \right),
\end{aligned}
$$
where $I$ denotes an identity matrix of appropriate size for each factor. The left and right RW given by $\Pi_L$ and $\Pi_R$ are represented diagrammatically by
\begin{center}
\begin{tikzpicture}[->,>=stealth',shorten >=1pt,auto,node distance=1.7cm,
					semithick]
  \tikzstyle{every state}=[ellipse,minimum height=3cm,
  			minimum width=2cm,fill=blue!20,draw,inner sep=0pt,
			node distance=5cm]
  
  \node[state] (L)  				{\kern3pt $\Omega_1$};
  \node[state] (R)  at (13.6,0) 	{\kern3pt $\Omega_2$};

  \node[main node] (10)  at (2.5,0)  	{0};
  \node[main node] (11)  at (4.2,0)  	{1};
  \node[main node] (12)  at (5.9,0)  	{2};
  \node[main node] (21)  at (7.7,0)  	{1};
  \node[main node] (22)  at (9.4,0)  	{2};
  \node[main node] (23)  at (11.1,0)  	{3};
  
  \path
        (10)  edge	[bend left=20,line width=1mm]  	node {\kern-6pt $p_0$} (L)
        (10)  edge  [bend left=20]  				node {$2q_0$} (11)
        (11)  edge  [bend left=20] 				   	node {$p_1$} (10)
        (11)  edge  [bend left=20]  		   node {$b_1\kern-3pt+\!q_1$} (12)
        (12)  edge  [bend left=20]  				node {$2b_2$} (11)
        (12)  edge  [loop above] 			   node {$b_1\kern-3pt+\!q_1$} (12)
        (21)  edge  [loop above] 				   	node {$\frac{1}{2}$} (21)
        (21)  edge  [bend left=20]  				node {$\frac{1}{2}$} (22)
        (22)  edge  [bend left=20]   node {$\frac{b_1}{b_1\kern-1pt+q_1}$} (21)
        (22)  edge  [bend left=20]   node {$\frac{q_1}{b_1\kern-1pt+q_1}$} (23)
        (23)  edge  [bend left=20]  				node {\kern7pt $p_3$} (22)
        (23)  edge	[bend left=20,line width=1mm]  	node {$q_3$ \kern-11pt} (R)
        (L)  edge   [bend left=20,line width=1mm]  	node {\kern-6pt $q$} (10)
        (R)  edge   [bend left=20,line width=1mm]   node {$p$ \kern-11pt} (23);
        
\end{tikzpicture}
\end{center}
Since the left and right RW overlap on the set of states $\Omega=\{1,2\}$, the factorization of $\Pi$ translates into a factorization $f=f_Lf_R$ for the FR-function of $\Omega$ with respect to $\Pi$ in terms of similar FR-functions $f_L,f_R$ for $\Pi_L$, $\Pi_R$. 

Let us apply this result to a non-trivial perturbation of a birth-death process on the integers $\SS=\Z$ given by
\begin{center}
\begin{tikzpicture}[->,>=stealth',shorten >=1pt,auto,node distance=1.7cm,
					semithick]
  \tikzstyle{every state}=[fill=black,draw=none,text=white,
  			inner sep=0pt,minimum size=0.1cm,node distance=0.25cm]

  \kern-35pt
  
  \node[main node] (-2) 	   			{$-2$};
  \node[main node] (-1)	at (1.6,0)  	{$-1$};
  \node[main node] (0)  at (3.2,0)  	{0};
  \node[main node] (1)  at (5.2,1.25)  	{1};
  \node[main node] (2)  at (5.2,-1.25)  {2};
  \node[main node] (3)  at (7.2,0)  	{3};
  \node[main node] (4)  at (8.8,0)  	{4};
  \node[main node] (5)  at (10.4,0)  	{5};
  
  \node[state] (a1) at (-0.6,0)  		{};
  \node[state] (a2) [left of=a1]		{};
  \node[state] (a3) [left of=a2]		{};
  \node[state] (b1) at (11,0)  			{};
  \node[state] (b2) [right of=b1]		{};
  \node[state] (b3) [right of=b2]		{};

  \path
        (-2) edge   [bend left=20]  		node {$q$} (-1)
        (-1) edge   [bend left=20]  		node {$p$} (-2)
        (-1) edge   [bend left=20]  		node {$q$} (0)
        (0)  edge   [bend left=20]  		node {$p$} (-1)
        (0)  edge   [below]  				node {\kern5pt $q/2$} (1)
        (0)  edge   [above]  				node {\kern5pt $q/2$} (2)
        (1)  edge   [loop above] 			node {$q/2$} (1)
        (1)  edge   [below]  				node {$q/2$ \kern1pt} (3)
        (1)  edge   [bend right,above]  	node {$p$} (0)
        (2)  edge   [loop below] 			node {$p/2$} (2)
        (2)  edge   [right]  				node {\kern-2pt $\frac{1}{2}$} (1)
        (2)  edge   [above]  				node {$q/2$ \kern1pt} (3)
        (3)  edge   [bend left,below]  		node {\kern5pt $p$} (2)
        (3)  edge   [bend left=20]  		node {$q$} (4)
        (4)  edge   [bend left=20]  		node {$p$} (3)
        (4)  edge   [bend left=20]  		node {$q$} (5)
        (5)  edge   [bend left=20]  		node {$p$} (4);
        
  \kern355pt 
  $\begin{aligned}
  & p,q>0,
  \\[2pt] 
  & p+q=1.
  \end{aligned}$
        
\end{tikzpicture}
\end{center}
According to the previous general result, this RW factorizes into the following left and right birth-death processes on a semi-infinite chain,
\begin{center} 
\begin{tikzpicture}[->,>=stealth',shorten >=1pt,auto,node distance=1.6cm,
					semithick,baseline=(current bounding box.center)]
  \tikzstyle{every state}=[fill=black,draw=none,text=white,
  			inner sep=0pt,minimum size=0.1cm,node distance=0.25cm]

  \kern-50pt
  
  \node[main node] (A0) at (0,1.1)      {2};
  \node[main node] (A1) [right of=A0]  	{1};
  \node[main node] (A2) [right of=A1] 	{0};
  \node[main node] (A3) [right of=A2] 	{$-1$};
  \node[main node] (A4) [right of=A3] 	{$-2$};
   
  \node[main node] (B0) at (0,-1.1)   	{1};
  \node[main node] (B1) [right of=B0]  	{2};
  \node[main node] (B2) [right of=B1] 	{3};
  \node[main node] (B3) [right of=B2] 	{4};
  \node[main node] (B4) [right of=B3] 	{5};

  \node[state] (a1) at (7,1.1)  		{};
  \node[state] (a2) [right of=a1]		{};
  \node[state] (a3) [right of=a2]		{};
  \node[state] (b1) at (7,-1.1)  		{};
  \node[state] (b2) [right of=b1]		{};
  \node[state] (b3) [right of=b2]		{};
  
  \path (A0) edge	[loop above]			node {$q$} (A0)
        (A0) edge   [bend left=20]     		node {$p$} (A1)
        (A1) edge   [bend left=20]     		node {$q$} (A0)
        (A1) edge   [bend left=20]     		node {$p$} (A2)
        (A2) edge   [bend left=20]     		node {$q$} (A1)
        (A2) edge   [bend left=20]     		node {$p$} (A3)
        (A3) edge   [bend left=20]     		node {$q$} (A2)
        (A3) edge   [bend left=20]     		node {$p$} (A4)
        (A4) edge   [bend left=20]     		node {$q$} (A3)
        (B0) edge	[loop above]			node {$\frac{1}{2}$} (B0)
        (B0) edge   [bend left=20]     		node {$\frac{1}{2}$} (B1)
        (B1) edge   [bend left=20]     		node {$\frac{1}{2}$} (B0)
        (B1) edge   [bend left=20]     		node {$\frac{1}{2}$} (B2)
        (B2) edge   [bend left=20]     		node {$p$} (B1)
        (B2) edge   [bend left=20]     		node {$q$} (B3)
        (B3) edge   [bend left=20]     		node {$p$} (B2)
        (B3) edge   [bend left=20]     		node {$q$} (B4)
        (B4) edge   [bend left=20]     		node {$p$} (B3);
        
  \kern290pt 
  $\begin{aligned}
    & p,q>0,
    \\[3pt]  
	& p+q=1.
  \end{aligned}$
        
\end{tikzpicture}
\end{center}
The FR-function of the subset $\Omega=\{1,2\}$ with respect to these birth-death processes follows from a slight generalization of the final results in Example~\ref{ex:RW2}, which yields
$$
 f_L(z) = 
 \begin{pmatrix}
 	q & p \\ q & g(z)
 \end{pmatrix},
 \qquad
 f_R(z) = \frac{1}{2}
 \begin{pmatrix}
 	1 & 1 \\ 1 & g(z)/q
 \end{pmatrix},
 \qquad
 g(z) = \frac{1-\sqrt{4pqz^2}}{2z}.
$$
The function $f_L$ is naturally given for the ordered set $\{2,1\}$. Changing the order of the states to $\{1,2\}$ we find that
$$
 f(z) = \frac{1}{2}
 \begin{pmatrix}
 	g(z) & q \\ p & q
 \end{pmatrix}
 \begin{pmatrix}
 	1 & 1 \\ 1 & g(z)/q
 \end{pmatrix} 
 = \frac{1}{2}
 \begin{pmatrix}
 	g(z)+q & 2g(z) \\[2pt] 1 & g(z)+p
 \end{pmatrix}.
$$
Summing the rows of 
$$
 f(1) = 
 \begin{pmatrix}
 	\min\{q,\frac{1}{2}\} & \min\{p,q\} 
	\\[3pt] 
	\frac{1}{2} & \min\{p,\frac{1}{2}\}
 \end{pmatrix},
 \qquad 
 \lim_{x\uparrow1}(xf(x))' = \frac{1}{2}
 \begin{pmatrix}
 	\frac{2pq}{|p-q|}+q & \frac{4pq}{|p-q|} 
	\\[5pt] 
	1 & \frac{2pq}{|p-q|}+p
 \end{pmatrix},
$$ 
we obtain the following return probabilities and expected return times to $\Omega=\{1,2\}$,
$$
\begin{gathered}
 \pi(1\to\Omega) =
 \begin{cases} 
 	2q,
	\\
	p+\frac{1}{2},
 \end{cases}
 \qquad\quad
 \pi(2\to\Omega) =
 \begin{cases} 
 	1 & \qquad\quad \text{ if } p\ge q,
	\\
	p+\frac{1}{2} & \qquad\quad \text{ if } p< q.
 \end{cases}
 \\[3pt]
 \tau(2\to\Omega) = \frac{1}{2} \left(1+\frac{p}{p-q}\right)
 \qquad\quad \text{ if } p\ge q.
\end{gathered}
$$
Hence, the states 1 and 2 are $\Omega$-recurrent for $p=q=1/2$ and $p\ge q$ respectively, but the return to $\Omega$ happens in a finite expected time only for the state 2 when $p>q$. 
\end{ex}

\section{Applications to quantum walks: QW recurrence}
\label{sec:QW}

We will move now to the more novel subject of QW. Born in the early ninetees as the quantum analogue of RW \cite{ADZ}, the interest in QW has been continuously increasing due to their role in quantum information processing. In particular, QW are central for the design of quantum algorithms of up to exponential speed ups over classical ones \cite{Ambainis,Childs,Kempe,Kendon}. This has triggered and intense research activity around QW from the experimental and theoretical points of view. 

Concerning return properties of QW, several different notions of recurrence have been recently introduced \cite{SJK,SJK2,SJK3,GVWW,BGVW,CGMV}. A key difference with respect to their classical analogue, the RW, is that the quantum collapse due to the measurement necessary to define first time returns alters the natural evolution of a QW, and therefore different schemes have been introduced to deal with such a situation. We will follow here a monitoring approach, first presented in \cite{GVWW} (see also \cite{BGVW,CGMV}), in which the collapse is taken as a natural ingredient of quantum recurrence, which is in the spirit of Quantum Mechanics and the prominent role of measurements in its postulates. Fortunately, despite the conceptual differences between recurrence for RW and QW, the quantum collapse due to a measurement is mathematically represented by the action of a projection, so that in the end both, classical and quantum recurrence, are controlled by a similar object, an FR-function, but defined in different contexts.

QW are discrete time quantum models. As in any quantum system, the states of a QW are the rays of a complex Hilbert space $\HH$, i.e. its one-dimensional subspaces $\spn\{\psi\}$, which for convenience are identified with any of their normalized generator vectors $\psi\in\HH$, $\|\psi\|=1$. The corresponding discrete time quantum evolution is given by a unitary operator $U\colon\HH\to\HH$, so that $|\<\phi|U^n\psi\>|^2$ is the $n$-th step probability transition $\psi\to\phi$. The complex quantity $\<\phi|U^n\psi\>$ is called the amplitude of such a transition. 

The probability of obtaining a state in a closed subspace $\HH_0\subset\HH$ after a measurement at a state $\psi\in\HH$ is controlled by the orthogonal projection $P$ of $\HH$ onto $\HH_0$. Such a probability is given by $\|P\psi\|^2$, and after a positive outcome of the measurement --i.e. finding the system in the subspace $\HH_0$-- the system collapses to the state $P\psi/\|P\psi\|$.

Suppose that we are interested in the probability of returning to $\HH_0$ when starting at $\psi\in\HH_0$. In the case of first time returns, a measurement must be performed after each step to know if the system has returned to $\HH_0$ or not. Then, the application of the projection $P$ after each step conditions on the event `return to $\HH_0$', while the application of the complementary projection $Q=1-P$ conditions on the event `no return to $\HH_0$'. 

The above remarks are key in the monitoring approach to recurrence because then the total return probability is obtained as a sum of first time return probabilities. Actually, the projections involved in the first time returns are central ingredients in the explicit expressions for each of the following precise notions about QW recurrence.

\begin{defn} 
Given a closed subspace $\HH_0\subset\HH$, we define the following notions for a QW on the Hilbert space $\HH$:

$\pi(\psi\to\HH_0)=$ probability of returning to $\HH_0$ when starting at the state $\psi\in\HH_0$.

$\pi(\psi\xrightarrow{\HH_0}\phi)=$ probability of landing on the state $\phi\in\HH_0$ when returning to $\HH_0$ starting \hspace*{79pt} at the state $\psi\in\HH_0$.

$\pi_n(\psi\to\HH_0)=$ probability of returning for the first time in $n$ steps to $\HH_0$ when starting \hspace*{85pt} at the state $\psi\in\HH_0$.

$\pi_n(\psi\xrightarrow{\HH_0}\phi)=$ probability of landing on the state $\phi\in\HH_0$ when returning for the first \hspace*{87pt} time in $n$ steps to $\HH_0$ starting at the state $\psi\in\HH_0$.

$\tau(\psi\to\HH_0)=$ expected return time to $\HH_0$ when starting at the state $\psi\in\HH_0$.

When $\HH_0=\spn\{\psi\}$ we write $\pi(\psi\to\psi)$, $\pi_n(\psi\to\psi)$ and $\tau(\psi\to\psi)$ for the corresponding quantities. 
\end{defn}

These quantities are closely related to the FR-function $f$ of the projection $P$ with respect to the unitary step $U$, given by 
$$
 f(z) = PU(1-zQU)^{-1}P = \sum_{n\ge1} z^{n-1} a_n,
 \qquad a_n=PU(QU)^{n-1}P,
 \qquad z\in\D,
$$
which is indeed a Schur function with values in operators on the Hilbert space $\HH_0$, as noticed in \cite{GVWW,BGVW}. According to the rules discussed above, we have the following explicit expressions, 
$$
\begin{aligned}
 & \pi_n(\psi\xrightarrow{\HH_0}\phi) = |\<\phi|U(QU)^{n-1}\psi\>|^2 = 
 |\<\phi|a_n\psi\>|^2,
 \\[3pt]
 & \pi_n(\psi\to\HH_0) = \|PU(QU)^{n-1}\psi\|^2 = \|a_n\psi\|^2,
 \\
 & \pi(\psi\xrightarrow{\HH_0}\phi) = 
 \sum_{n\ge1} \pi_n(\psi\xrightarrow{\HH_0}\phi) =
 \sum_{n\ge1}| \<\phi|a_n\psi\>|^2 =
 \int_0^{2\pi} |\<\phi|f(e^{i\theta})\psi\>|^2 \, \frac{d\theta}{2\pi},
 \\
 & \pi(\psi\to\HH_0) = \sum_{n\ge1} \pi_n(\psi\to\HH_0) =
 \sum_{n\ge1} \|a_n\psi\|^2 = 
 \int_0^{2\pi} \|f(e^{i\theta})\psi\|^2 \, \frac{d\theta}{2\pi},
 \\[3pt]
 & \tau(\psi\to\HH_0) = 
 \begin{cases}
 	\infty, 
	& \text{ if } \pi(\psi\to\HH_0)<1,
	\\
 	\displaystyle \sum_{n\ge1} n\,\pi_n(\psi\to\HH_0) 
 	= \sum_{n\ge1} n\|a_n\psi\|^2,
	& \text{ if } \pi(\psi\to\HH_0)=1,
 \end{cases}
 \\[3pt]
 & \kern45pt f(e^{i\theta}) := \lim_{r\uparrow1} f(re^{i\theta}),  
 \qquad r\in[0,1), \qquad \theta\in[0,2\pi).
\end{aligned}
$$
Just as in the case of RW, the inequality $\sum_{n\ge1}\|a_n\psi\|^2\le1$ requires a proof which can be found in \cite{BGVW}. 

The expected return time for the case $\pi(\psi\to\HH_0)=1$ also has the integral representation
$$
 \tau(\psi\to\HH_0) =  \lim_{r\uparrow1} \int_0^{2\pi} 
 \<a(re^{i\theta}) \psi \, | \, \partial_\theta a(re^{i\theta}) \psi\> 
 \, \frac{d\theta}{2\pi i},
 \qquad a(z)=zf(z)=\sum_{n\ge1}z^na_n,
$$
which in terms of the original Schur function $f$ reads as
$$
 \tau(\psi\to\HH_0) = 1 + \lim_{r\uparrow1} \int_0^{2\pi} 
 \<\psi \, | \, f(re^{i\theta})^\dag \partial_\theta f(re^{i\theta}) \psi\> 
 \, \frac{d\theta}{2\pi i}.
$$
Some readers will see here a connection with Berry's phase, see for instance \cite{BGVW}.
 
The radial boundary values $f(e^{i\theta})$ of a Schur function exist a.e. on the unit circle \cite[Chapter 11]{Rudin}, but this is not necessarily true for its derivative $\partial_\theta f(re^{i\theta})$. This is the reason for expressing the integral representation of the expected return time with the limit $r\uparrow1$ outside the integral.

While the FR-function $f$ is the generating function of the first return amplitudes $a_n$, the Stieltjes function 
$$
 s(z) = P(1-zU)^{-1}P = \sum_{n\ge0} z^n PU^nP,
 \qquad z\in\D,
$$ 
is the generating function of the return amplitudes without monitoring. The specialization of the generalized renewal equation \eqref{eq:sf} to unitary operators gives the renewal equation for QW, first obtained in \cite{GVWW,BGVW}.

Regarding the splitting rules for QW recurrence, since products preserve the unitarity, such splitting rules take the form of factorizations of Schur functions due to overlapping factorizations of the corresponding unitary operator $U$,
$$
 U = 
 \left(
 \begin{array}{c|c} 
 	\\[-5pt]
	\kern7pt U_L \kern9pt 
	\\[7pt] \hline 
	& 1_+ \kern-2pt
 \end{array}
 \right)
 \left(
 \begin{array}{c|c} 
 	1_- \kern-2pt 
 	\\ \hline 
	\\[-5pt]
	& \kern9pt U_R \kern7pt
	\\ [5pt]
 \end{array}
 \right).
$$ 
In contrast to the case of RW, the factorization of Schur functions does not give a simple relation among the return probabilities for the whole QW and the left/right ones. Nevertheless, some of the results of Theorem~\ref{thm:split-RW} and Corollary~\ref{cor:split-RW} have a quantum analogue.

\begin{thm}[\bf splitting rules for QW recurrence] \label{thm:split-QW}
Let $\HH=\HH_-\oplus\HH_0\oplus\HH_+$ be an orthogonal decomposition of a Hilbert space and $U_L$, $U_R$ unitary operators on $\HH_L=\HH_-\oplus\HH_0$, $\HH_R=\HH_0\oplus\HH_+$ respectively. Consider the QW given by $U = (U_L \oplus 1_+)(1_- \oplus U_R)$, where $1_\pm$ stands for the identity on $\HH_\pm$. Then, using the subscript $L,R$ to distinguish the quantities related to the QW given by $U_{L,R}$, for any state $\psi\in\HH_0$, 
$$
 \pi_L(\phi\to\HH_0)=\pi_R(\psi\to\HH_0)=1 \quad \forall \phi\in\HH_0
 \quad\Rightarrow\quad
 \pi(\psi\to\HH_0)=1 
 \quad\Rightarrow\quad 
 \pi_R(\psi\to\HH_0)=1.
$$

In particular, if $\HH_0=\spn\{\psi\}$,
$$
 \pi(\psi\to\psi)=1 
 \quad\Leftrightarrow\quad 
 \pi_L(\psi\to\psi)=\pi_R(\psi\to\psi)=1.
$$
\end{thm}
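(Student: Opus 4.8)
The plan is to reduce the statement to the Schur-function factorization $f=f_Lf_R$ provided by Theorem~\ref{thm:split}.{\it(ii)}, combined with the boundary integral representations of the return probabilities recorded just before the statement. First I would apply Theorem~\ref{thm:split}.{\it(ii)} with $T=U$, $T_L=U_L$, $T_R=U_R$: the hypothesis $\HH_0\subset\DD(T_{L,R})$ is automatic since $U_{L,R}$ are bounded and everywhere defined, and $\|Q_LU_LQ_L\|\le1$, $\|Q_RU_RQ_R\|\le1$ give $\D\subset\varpi(Q_LU_LQ_L)\cap\varpi(Q_RU_RQ_R)$, so the theorem yields $f(z)=f_L(z)f_R(z)$ on $\D$, where $f$, $f_L$, $f_R$ are the FR-functions of $\HH_0$ with respect to $U$, $U_L$, $U_R$ (the relevant projections are orthogonal because the decomposition $\HH=\HH_-\oplus\HH_0\oplus\HH_+$ is). All three are Schur functions, hence have radial boundary values a.e.\ on the unit circle which are contractions; letting $r\uparrow1$ in $f(re^{i\theta})=f_L(re^{i\theta})f_R(re^{i\theta})$ gives $f(e^{i\theta})=f_L(e^{i\theta})f_R(e^{i\theta})$ for a.e.\ $\theta$. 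Throughout I would use
\[
 \pi(\psi\to\HH_0)=\int_0^{2\pi}\|f(e^{i\theta})\psi\|^2\,\frac{d\theta}{2\pi}
\]
and its analogues for $\pi_L,\pi_R$ (Parseval on $H^2$); since $\|\psi\|=1$ each probability is $\le1$, with equality precisely when the corresponding boundary norm equals $1$ for a.e.\ $\theta$.

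For the implication $\pi(\psi\to\HH_0)=1\Rightarrow\pi_R(\psi\to\HH_0)=1$, the hypothesis forces $\|f(e^{i\theta})\psi\|=1$ a.e.; writing $f(e^{i\theta})\psi=f_L(e^{i\theta})\bigl(f_R(e^{i\theta})\psi\bigr)$ with $f_R(e^{i\theta})\psi\in\HH_0$, the chain $1=\|f(e^{i\theta})\psi\|\le\|f_R(e^{i\theta})\psi\|\le1$ must consist of equalities, so $\|f_R(e^{i\theta})\psi\|=1$ a.e., i.e.\ $\pi_R(\psi\to\HH_0)=1$. For the converse implication I would use that $\pi_R(\psi\to\HH_0)=1$ again gives $\|f_R(e^{i\theta})\psi\|=1$ a.e., while the hypothesis $\pi_L(\phi\to\HH_0)=1$ for every $\phi\in\HH_0$ means that for each fixed $\phi$ the nonnegative function $\|\phi\|^2-\|f_L(e^{i\theta})\phi\|^2$ has vanishing integral, hence is zero a.e.; taking a countable dense subset of $\HH_0$ and discarding the countably many exceptional null sets, at the surviving $\theta$ one has $\|f_L(e^{i\theta})\phi\|=\|\phi\|$ for all $\phi$ by boundedness and density, i.e.\ $f_L(e^{i\theta})$ is an isometry of $\HH_0$. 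Then $\|f(e^{i\theta})\psi\|=\|f_L(e^{i\theta})f_R(e^{i\theta})\psi\|=\|f_R(e^{i\theta})\psi\|=1$ a.e., so $\pi(\psi\to\HH_0)=1$.

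For the one-dimensional case $\HH_0=\spn\{\psi\}$ the iterates $f_L,f_R$ are scalar, the condition ``$\pi_L(\phi\to\HH_0)=1$ for all $\phi$'' collapses to $\pi_L(\psi\to\psi)=1$, and $\pi(\psi\to\psi)=\int_0^{2\pi}|f(e^{i\theta})|^2\,d\theta/2\pi$. The two implications just proved give $\pi_L(\psi\to\psi)=\pi_R(\psi\to\psi)=1\Rightarrow\pi(\psi\to\psi)=1\Rightarrow\pi_R(\psi\to\psi)=1$; it remains to add $\pi(\psi\to\psi)=1\Rightarrow\pi_L(\psi\to\psi)=1$, which follows because the hypothesis forces $|f(e^{i\theta})|=1$ a.e.\ and $|f(e^{i\theta})|=|f_L(e^{i\theta})|\,|f_R(e^{i\theta})|$ with both factors $\le1$, so $|f_L(e^{i\theta})|=1$ a.e. This closes the equivalence.

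The one genuinely delicate step is the quantifier exchange (``for all $\phi$, a.e.\ $\theta$'' to ``a.e.\ $\theta$, for all $\phi$'') used to upgrade the left factor to an isometry on $\HH_0$; it requires $\HH_0$ to be separable, which one may assume after passing to the separable reducing subspace carrying the orbit of $\psi$. Everything else is a direct reading of Theorem~\ref{thm:split}.{\it(ii)} together with the classical contractivity and boundary-value theory of Schur functions, so the main work is simply organizing these ingredients in the order above.
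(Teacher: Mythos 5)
Your proof is correct and follows essentially the same route as the paper's: the factorization $f=f_Lf_R$ from Theorem~\ref{thm:split}.{\it(ii)}, the boundary integral representations of the return probabilities, the contractivity of $f_L$ for the implication $\pi(\psi\to\HH_0)=1\Rightarrow\pi_R(\psi\to\HH_0)=1$, the a.e.\ isometry of $f_L(e^{i\theta})$ for the converse, and the scalar identity $|f|=|f_L|\,|f_R|$ in the one-dimensional case. Your explicit treatment of the quantifier exchange (countable dense subset plus separability) is a welcome refinement of a step the paper's proof asserts without comment.
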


\begin{proof}
From Theorem~\ref{thm:split}.{\it (ii)} we know that the FR-function of $\HH_0$ with respect to $U$ is given by $f=f_Lf_R$, where $f_{L,R}$ is the FR-function of $\HH_0$ with respect to $U_{L,R}$. Hence, since $f_L$ is a Schur function,
$$
 \pi(\psi\to\HH_0) = 
 \int_0^{2\pi} \|f_L(e^{i\theta})f_R(e^{i\theta})\psi\|^2 
 \, \frac{d\theta}{2\pi}
 \le \int_0^{2\pi} \|f_R(e^{i\theta})\psi\|^2 \, \frac{d\theta}{2\pi}
 = \pi_R(\psi\to\HH_0).
$$
On the other hand, $\pi_L(\phi\to\HH_0)=1$ means that $\|f_L(e^{i\theta})\phi\|=1$ a.e. on the unit circle. Hence, $\HH_0$ is recurrent with respect to the left QW iff the boundary values of $f_L$ satisfy $f_L(e^{i\theta})^\dag f_L(e^{i\theta})=1_0$ a.e. on the unit circle. In this case,
$$
 \pi(\psi\to\HH_0) =
 \int_0^{2\pi} 
 \<f_R(e^{i\theta})\psi \,|\,
 f_L(e^{i\theta})^\dag f_L(e^{i\theta}) f_R(e^{i\theta})\psi\> 
 \, \frac{d\theta}{2\pi} 
 = \pi_R(\psi\to\HH_0),
$$
so that $\pi(\psi\to\HH_0)$ and $\pi_R(\psi\to\HH_0)$ are equal to one simultaneously. 

Besides, if $\HH_0=\spn\{\psi\}$, then
$$
 \pi(\psi\to\psi) = 
 \int_0^{2\pi} |f_L(e^{i\theta})f_R(e^{i\theta})|^2 \, \frac{d\theta}{2\pi}.
$$
Therefore, $\pi(\psi\to\psi)=1$ is equivalent to stating that $|f_L(e^{i\theta})|=|f_R(e^{i\theta})|=1$ a.e. on the unit circle, which means that $\pi_L(\psi\to\psi)=\pi_R(\psi\to\psi)=1$. 
\end{proof}

The following recurrence notions, similar to those already defined for RW, can be introduced for QW.

\begin{defn}
Given a state $\psi\in\HH$ and a closed subspace $\HH_0\subset\HH$, we define the following notions for a QW on $\HH$:

The state $\psi\in\HH$ is recurrent if $\pi(\psi\to\psi)=1$.

The state $\psi\in\HH_0$ is $\HH_0$-recurrent if $\pi(\psi\to\HH_0)=1$.

The subspace $\HH_0\subset\HH$ is recurrent if all its states are $\HH_0$-recurrent. 
\end{defn}

With this terminology, the same arguments as those given in the proof of Theorem~\ref{thm:split-QW} lead directly to the following results, which constitute the quantum version of Corollary~\ref{cor:indep-RW}. These results state that, given an overlapping factorization of a QW, its return probabilities to the overlapping subspace $\HH_0$ do not depend on the details of certain subsystems for which $\HH_0$ is recurrent. 

\begin{cor} \label{cor:indep-QW}
With the same notation as in Theorem~\ref{thm:split-QW}, the return probabilities to $\HH_0$ for the QW given by $U = (U_L \oplus 1_+)(1_- \oplus U_R)$ are independent of the left subsystem whenever the subspace $\HH_0$ is recurrent for such a subsystem. More precisely,
$$
 \pi_L(\psi\to\HH_0)=1 \quad \forall \psi\in\HH_0 
 \quad\Rightarrow\quad 
 \pi(\psi\to\HH_0)=\pi_R(\psi\to\HH_0) \quad \forall \psi\in\HH_0.
$$

If $\HH_0=\spn\{\psi\}$, a similar independence with respect to the right subsystem holds, i.e.
$$
 \pi_R(\psi\to\psi)=1 
 \quad\Rightarrow\quad 
 \pi(\psi\to\psi)=\pi_L(\psi\to\psi).
$$ 
Hence, the return probability of the overlapping state $\psi$ is independent of any of the left/right subsystems for which $\psi$ is recurrent.
\end{cor}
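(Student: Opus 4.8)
The plan is to reuse the machinery already assembled for Theorem~\ref{thm:split-QW}. First I would invoke Theorem~\ref{thm:split}.{\it(ii)} to write the FR-function $f$ of $\HH_0$ with respect to $U=(U_L\oplus1_+)(1_-\oplus U_R)$ as the product $f=f_Lf_R$, where $f_{L,R}$ is the FR-function of $\HH_0$ with respect to $U_{L,R}$. All three are Schur functions with values in operators on $\HH_0$, so their radial boundary values $f(e^{i\theta})$, $f_L(e^{i\theta})$, $f_R(e^{i\theta})$ exist for a.e. $\theta$ and satisfy $f(e^{i\theta})=f_L(e^{i\theta})f_R(e^{i\theta})$ a.e. The return probabilities are then recovered from the integral representation $\pi(\psi\to\HH_0)=\int_0^{2\pi}\|f(e^{i\theta})\psi\|^2\,d\theta/2\pi$ and the analogous formulas for $\pi_{L,R}$ displayed just before Theorem~\ref{thm:split-QW}.

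For the first implication, I would argue that $\pi_L(\psi\to\HH_0)=1$ for every unit vector $\psi\in\HH_0$ forces $f_L(e^{i\theta})$ to be an isometry on $\HH_0$ for a.e. $\theta$. Indeed, $\|f_L(e^{i\theta})\psi\|\le\|\psi\|$ a.e. because $f_L$ is a Schur function, while $\int_0^{2\pi}\|f_L(e^{i\theta})\psi\|^2\,d\theta/2\pi=1=\|\psi\|^2$, so $\|f_L(e^{i\theta})\psi\|=\|\psi\|$ a.e.; picking a countable dense set of unit vectors in $\HH_0$ and using the boundedness (hence continuity in $\psi$) of $f_L(e^{i\theta})$ at each good $\theta$ yields a single null set off which $f_L(e^{i\theta})^\dag f_L(e^{i\theta})=1_0$ by polarization, exactly as already noted inside the proof of Theorem~\ref{thm:split-QW}. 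With this in hand,
\[
 \pi(\psi\to\HH_0)=\int_0^{2\pi}\<f_R(e^{i\theta})\psi\,|\,f_L(e^{i\theta})^\dag f_L(e^{i\theta})f_R(e^{i\theta})\psi\>\,\frac{d\theta}{2\pi}
 =\int_0^{2\pi}\|f_R(e^{i\theta})\psi\|^2\,\frac{d\theta}{2\pi}=\pi_R(\psi\to\HH_0),
\]
which is the asserted independence of the left subsystem.

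For the one-dimensional case $\HH_0=\spn\{\psi\}$, all the FR-functions are scalar Schur functions and $f=f_Lf_R$ is an ordinary product of functions on $\D$. The hypothesis $\pi_R(\psi\to\psi)=1$ reads $\int_0^{2\pi}|f_R(e^{i\theta})|^2\,d\theta/2\pi=1$, which together with $|f_R(e^{i\theta})|\le1$ a.e. gives $|f_R(e^{i\theta})|=1$ a.e.; hence $|f(e^{i\theta})|=|f_L(e^{i\theta})|\,|f_R(e^{i\theta})|=|f_L(e^{i\theta})|$ a.e., so $\pi(\psi\to\psi)=\int_0^{2\pi}|f(e^{i\theta})|^2\,d\theta/2\pi=\int_0^{2\pi}|f_L(e^{i\theta})|^2\,d\theta/2\pi=\pi_L(\psi\to\psi)$. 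The final sentence of the corollary is merely the combination of these two statements.

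I expect the only delicate point to be the exchange of the "for all $\psi$" and "for a.e. $\theta$" quantifiers in passing from $\|f_L(e^{i\theta})\psi\|=\|\psi\|$ (each fixed $\psi$) to "$f_L(e^{i\theta})$ is an isometry for a.e. $\theta$"; this is handled by separability of $\HH_0$ together with a density/continuity argument, and since it is precisely the step already carried out in the proof of Theorem~\ref{thm:split-QW}, here it can simply be cited. Everything else is a direct substitution into the integral representations of the return probabilities.
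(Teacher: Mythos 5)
Your proof is correct and follows essentially the same route as the paper, which obtains the corollary by repeating the arguments in the proof of Theorem~\ref{thm:split-QW}: the factorization $f=f_Lf_R$ from Theorem~\ref{thm:split}.{\it(ii)}, the observation that recurrence of $\HH_0$ for the left subsystem forces $f_L(e^{i\theta})^\dag f_L(e^{i\theta})=1_0$ a.e., and the scalar modulus argument $|f|=|f_L|\,|f_R|$ in the one-dimensional case. The only addition is your explicit treatment of the quantifier exchange (fixed $\psi$ vs.\ a.e.\ $\theta$) via a countable dense set and polarization, a detail the paper leaves implicit.
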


As it is proved in \cite{BGVW}, a finite-dimensional subspace is recurrent for a QW iff it is included in the singular subspace of the related unitary evolution operator. In particular, every subspace is recurrent for a QW on a finite-dimensional Hilbert space because this situation gives only pure point spectrum. Hence, Corollary~\ref{cor:indep-QW} has the following consequence. 

\begin{cor} \label{cor:indep-QW2}
With the same notation as in Theorem~\ref{thm:split-QW}, the return probabilities to $\HH_0$ for the QW given by $U = (U_L \oplus 1_+)(1_- \oplus U_R)$ are independent of the left subsystem whenever $\HH_L$ is finite-dimensional. More precisely,
$$
 \dim\HH_L<\infty 
 \quad\Rightarrow\quad 
 \pi(\psi\to\HH_0)=\pi_R(\psi\to\HH_0) \quad \forall \psi\in\HH_0.
$$

If $\HH_0=\spn\{\psi\}$, a similar independence with respect to the right subsystem holds, i.e.
$$
 \dim\HH_R<\infty 
 \quad\Rightarrow\quad 
 \pi(\psi\to\psi)=\pi_L(\psi\to\psi).
$$ 
Hence, the return probability of the overlapping state $\psi$ is independent of any of the left/right subsystems whose underlying Hilbert space is finite-dimensional.
\end{cor}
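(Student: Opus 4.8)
The plan is to deduce Corollary~\ref{cor:indep-QW2} directly from Corollary~\ref{cor:indep-QW} by supplying, as the only missing ingredient, the fact that a finite-dimensional subspace of a Hilbert space is automatically recurrent for any QW acting on it. First I would invoke the result quoted immediately before the statement (from \cite{BGVW}): a finite-dimensional subspace $\HH_0\subset\HH$ is recurrent for the QW driven by a unitary $U$ if and only if $\HH_0$ is contained in the singular subspace of $U$, and in particular every subspace is recurrent when the ambient Hilbert space is finite-dimensional, since a unitary on a finite-dimensional space has pure point spectrum. Thus the hypothesis $\dim\HH_L<\infty$ forces $U_L$ to have pure point spectrum, so $\HH_0\subset\HH_L$ is recurrent for the left QW, i.e. $\pi_L(\psi\to\HH_0)=1$ for every $\psi\in\HH_0$.

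Next I would simply feed this into Corollary~\ref{cor:indep-QW}: its hypothesis $\pi_L(\psi\to\HH_0)=1$ for all $\psi\in\HH_0$ is now verified, and the conclusion $\pi(\psi\to\HH_0)=\pi_R(\psi\to\HH_0)$ for all $\psi\in\HH_0$ follows at once. This gives the first implication. For the second implication, take $\HH_0=\spn\{\psi\}$; then $\dim\HH_R<\infty$ forces $U_R$ to have pure point spectrum, hence the one-dimensional subspace $\spn\{\psi\}\subset\HH_R$ is recurrent for the right QW, i.e. $\pi_R(\psi\to\psi)=1$. Applying the second part of Corollary~\ref{cor:indep-QW} (the $\HH_0=\spn\{\psi\}$ case) yields $\pi(\psi\to\psi)=\pi_L(\psi\to\psi)$. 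The final sentence of the statement is then a verbal restatement of these two implications, with ``left/right'' used symmetrically, and requires nothing further.

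There is essentially no obstacle here beyond correctly citing the spectral characterization of recurrence: the whole argument is a two-line specialization of the preceding corollary. The only point that deserves a word of care is why $\dim\HH_L<\infty$ implies pure point spectrum for $U_L$ and hence recurrence of \emph{every} subspace of $\HH_L$ (not merely of $\HH_0$): this is exactly the parenthetical remark in \cite{BGVW} that on a finite-dimensional Hilbert space a unitary is diagonalizable, so its continuous and singular-continuous spectral parts vanish and the singular subspace is the whole space. Once that is stated, both implications are immediate, and I would present the proof in just a few sentences without any computation.

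\begin{proof}
By the result of \cite{BGVW} quoted above, a finite-dimensional subspace is recurrent for a QW if and only if it lies in the singular subspace of the underlying unitary; in particular, on a finite-dimensional Hilbert space every unitary has pure point spectrum, so its singular subspace is the whole space and every subspace is recurrent. Hence, if $\dim\HH_L<\infty$, the subspace $\HH_0\subset\HH_L$ is recurrent for the QW driven by $U_L$, i.e. $\pi_L(\psi\to\HH_0)=1$ for every $\psi\in\HH_0$. Corollary~\ref{cor:indep-QW} then gives $\pi(\psi\to\HH_0)=\pi_R(\psi\to\HH_0)$ for every $\psi\in\HH_0$. Similarly, if $\HH_0=\spn\{\psi\}$ and $\dim\HH_R<\infty$, then the one-dimensional subspace $\spn\{\psi\}\subset\HH_R$ is recurrent for the QW driven by $U_R$, so $\pi_R(\psi\to\psi)=1$, and the second part of Corollary~\ref{cor:indep-QW} yields $\pi(\psi\to\psi)=\pi_L(\psi\to\psi)$. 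The final assertion is a restatement of these two implications.
\end{proof}
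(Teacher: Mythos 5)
Your proof is correct and follows exactly the route the paper intends: the corollary is stated as an immediate consequence of Corollary~\ref{cor:indep-QW} together with the quoted fact from \cite{BGVW} that every subspace of a finite-dimensional Hilbert space is recurrent (pure point spectrum, hence the singular subspace is everything). Nothing further is needed.
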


A key result of \cite{CGVWW} states that, if either $\dim\HH_L<\infty$ or $\dim\HH_R<\infty$, the existence of an overlapping factorization $U = (U_L \oplus 1_+)(1_- \oplus U_R)$ is characterized by the simple condition $P_+UP_-=0$, where $P_\pm$ is the orthogonal projection of $\HH$ onto $\HH_\pm$. This means that, if at least one the subspaces $\HH_L$, $\HH_R$ is finite-dimensional, the existence of splitting rules for the recurrence of a QW on $\HH=\HH_-\oplus\HH_0\oplus\HH_+$ is easily determined by checking the absence of one-step transition from $\HH_-$ to $\HH_+$, a situation represented pictorically by the following diagram of one-step transitions,
\begin{center}
\begin{tikzpicture}[->,>=stealth',shorten >=1pt,auto,node distance=1.7cm,
                    semithick]
  \tikzstyle{every state}=[ellipse,minimum height=1.5cm, 
    		minimum width=2cm,fill=blue!20,draw,inner sep=0pt,
			node distance=5cm]
  
  \node[state] (L)  			{$\HH_-$};
  \node[state] (R)  at (4.4,0) 	{$\HH_+$};
  \node[state,minimum height=1.5cm,minimum width=1.5cm] (0)  
  								at (2.2,2.2)  {$\HH_0$};
  
  \path
        (0)  edge   [loop above,line width=1mm]   	node {} (0)
        (L)  edge   [loop left,line width=1mm]   	node {} (L)
        (R)  edge   [loop right,line width=1mm]   	node {} (R)
        (0)  edge	[bend left=20,line width=1mm] 	node {} (L)
        (0)  edge   [bend left=20,line width=1mm] 	node {} (R)
        (L)  edge   [bend left=20,line width=1mm] 	node {} (0)
        (R)  edge   [bend left=20,line width=1mm] 	node {} (0)
        (R)  edge   [line width=1mm] 				node {} (L);
        
\end{tikzpicture}
\end{center}  

Explicit examples of the generating function approach to QW recurrence appear in \cite{GVWW,BGVW,CGMV2}. Additional aspects of the splitting techniques for unitary operators are discussed in \cite{CGVWW}, where one can find concrete examples of these splittings, including the case of CMV matrices, which provide the general form of a one-dimensional coined QW \cite{CGMV,CGMV2}. 

\section{Applications to open quantum walks: OQW recurrence}
\label{sec:OQW}

We have seen that the notion of FR-function has an important role to play in the study of return properties of RW and QW. To emphasize the wide applicability of the machinery developed in this paper, we indicate now very briefly that this notion is also useful in the context of a much newer topic, namely the open QW (OQW) introduced in \cite{APSS} as a kind of iterated quantum channels. A careful discussion of the use of FR-functions for the analysis of recurrence in OQW and general iterated quantum channels will be the purpose of a joint effort with Carlos Lardizabal and Albert Werner. For details on OQW the reader may consult the original reference \cite{APSS} as well as some of the contributions fostered by that one, see for instance \cite{APS,SP,SP2,SP3,SaPa,LS,LS2,CGL,CP,CP2,SKA,SKKA}. In particular, \cite{LS,CGL,SKKA} deal with recurrence problems in OQW and iterated quantum channels. Quick guides to general quantum channels are \cite{Wolf,BR} and \cite[Lectures 5 and 6]{Attal} for the finite- and infinite-dimensional settings respectively. 

OQW model QW in interaction with the enviroment. Every step of an OQW is a particular case of the so called quantum channels which, instead of the pure quantum states of a closed quantum system, represented by unit vectors of a Hilbert space $\HH$, deal with mixed quantum states, i.e. statistical ensembles of pure quantum states. A mixture of pure states $\psi_i\in\HH$ with probability distribution $p_i$ is represented by the so called density operator
$$
 \rho = \sum_i \, p_i \, |\psi_i\>\<\psi_i|,
 \qquad \psi_i\in\HH, \qquad p_i\ge0, \qquad \sum_i p_i=1,
$$
so that pure states $\psi$ correspond to rank one orthogonal projections $\rho=|\psi\>\<\psi|$. Density operators can be characterized as the non-negative definite operators on $\HH$ with trace one. The connection with open quantum systems comes from another characterization: the density operators coincide with those obtained from a pure state $\Psi$ in a dilated Hilbert space $\HH\otimes\HH_e$ by taking the partial trace of $|\Psi\>\<\Psi|$ over $\HH_e$ (see \cite{Wolf,BR} and \cite[Lecture 5]{Attal}). This allows us to treat an open system --with Hilbert space $\HH$-- coupled to an enviroment --with Hilbert space $\HH_e$-- using only objects built out of the intial open system.

The mixed quantum states are the non-negative operators of the unit sphere in the Banach space $\frak{B}_1(\HH)$ of trace class operators on $\HH$. A quantum channel must preserves such states, hence it is given by a trace preserving (TP) linear map $\mbb{T}\colon\frak{B}_1(\HH)\to\frak{B}_1(\HH)$ which is positive, i.e. mapping non-negative operators into non-negative operators. Actually, this should be the case even considering the system as a part of any dilated one $\HH\otimes\HH_e$, so $\mbb{T}\otimes 1_e$ must be positive for the identity on any $\HH_e$, which is abbreviated by saying that $\mbb{T}$ must be completeley positive (CP). CPTP maps can be equivalently described as the result of a unitary evolution on a dilated Hilbert space $\HH\otimes\HH_e$ when reduced to $\HH$ by taking the partial trace over $\HH_e$ (see \cite{Wolf,BR} and \cite[Lecture 6]{Attal}). This corresponds to the evolution of an open quantum system with Hilbert space $\HH$ under the influence of an enviroment with Hilbert space $\HH_e$. 
It can be shown that CP maps have the general form $\mbb{T}\rho=\sum_jK_j\rho K_j^\dag$ with $K_j$ bounded operators on $\HH$, and then the trace preserving condition means that $\sum_j\tr(K_j^\dag K_j)=1$ in the strong sense (see \cite{Wolf,BR} and \cite[Lecture 6]{Attal}). The additional condition $\mbb{T}(1)=\sum_j\tr(K_j K_j^\dag)=1$ defines the so called unital CPTP maps, an example of which are the transformations $\mbb{T}\rho=U\rho\,U^\dag$ induced by unitary operators $U$. 

Regarding the measurement problem in the density operator formalism, the probability of finding a state in a closed subspace $\HH_0\subset\HH$ after a measurement at a mixed quantum state represented by a density operator $\rho$ is given by $\tr(P\rho)$, where $P$ stands for the orthogonal projection of $\HH$ onto $\HH_0$. After a positive outcome of such a measurement the system collapses to the mixed state given by the new density operator $P\rho P/\tr(\rho P)$.

OQW appear by iterating certain kind of quantum channels \cite{APSS}, thus their dynamics is given by the powers of a CPTP map $\mbb{T}$. Actually, although for illustration we will use examples of OQW, the FR-function approach to recurrence in OQW presented below holds in general for iterated quantum channels. Bearing in mind the previous comments, the translation to OQW of the monitoring process to define return properties of a closed subspace $\HH_0$ in a QW amounts to the following substitutions:
$$
\begin{aligned}
 & 
 & \quad & \kern7pt \text{QW}
 & \quad & \text{OQW}
 \\
 & \text{Evolution} 
 & & \kern2pt \psi \to U\psi 
 & & \kern2pt \rho \to \mbb{T}\rho 
 \\
 & \text{Measurement} 
 \left\{ 
 \begin{aligned} 
 	& \text{Return} 
	\\[5pt] 
	& \text{No return} 
 \end{aligned}
 \right. 
 & & 
 \begin{aligned} 
 	& \psi \to P\psi 
	\\[5pt] 
	& \psi \to Q\psi 
 \end{aligned}
 & & 
 \begin{aligned} 
 	& \rho \to \mbb{P}\rho:=P\rho P 
	\\[5pt] 
	& \rho \to \mbb{Q}\rho:=Q\rho Q
 \end{aligned}
 & \quad & 
 \begin{aligned} 
 	& P = \parbox{110pt}{orthogonal projection of $\HH$ onto $\HH_0$} 
	\\[2pt] 
	& Q=1-P
	\\[-12pt]
	&
 \end{aligned}
\end{aligned}
$$
The operators $\mbb{P}T:=PTP$ and $\mbb{Q}T:=QTQ$ define projections of $\frak{B}_1(\HH)$ onto $\frak{B}_1(\HH_0)$ and $\frak{B}_1(\HH_0^\bot)$ respectively, understood as subspaces of $\frak{B}_1(\HH)$. Therefore, $\mbb{P}+\mbb{Q}\ne1$ because $\mbb{R}:=1-\mbb{P}-\mbb{Q}$ is given by
$$
 \mbb{R}T = \begin{pmatrix} 0 & PTQ \\ QTP & 0 \end{pmatrix}
$$
and defines a projection of $\frak{B}_1(\HH)$ onto the subspace of operators mapping $\HH_0$ on $\HH_0^\bot$ and viceversa. Hence, $\mbb{P}$ and $\mbb{Q}$ are not complementary projections, although $\mbb{P}\mbb{Q}=\mbb{Q}\mbb{P}=0$. Besides, for every $T\in\frak{B}_1(\HH)$ we have that $\|\mbb{P}T\|_1\le\|P\|^2\|T\|_1=\|T\|_1$, where $\|\cdot\|$ and $\|\cdot\|_1$ stand for the operator and trace norm respectively. Hence, the operator norm of these projections with respect to the trace norm in $\frak{B}_1(\HH)$ is $\|\mbb{P}\|=\|\mbb{Q}\|=1$. We will refer to $\mbb{P}$ as the projection on $\frak{B}_1(\HH)$ induced by the orthogonal projection $P$ on $\HH$, and analogously for $\mbb{Q}$.

The corresponding recurrence notions for OQW are related to FR-functions too, but for operators on the Banach space $\frak{B}_1(\HH)$ rather than on the Hilbert space $\HH$. The generating function which encodes the return properties of the closed subspace $\HH_0$ is the function with values in operators on $\frak{B}_1(\HH_0)$ given by 
\begin{equation} \label{eq:f-OQW}
 \mbb{F}(z) = \mbb{P}\mbb{T}(1-z\mbb{Q}\mbb{T})^{-1}\mbb{P}
 = \sum_{n\ge1} z^{n-1} \mbb{A}_n,
 \qquad \mbb{A}_n = \mbb{P}\mbb{T}(\mbb{Q}\mbb{T})^{n-1}\mbb{P}, 
 \qquad z\in\D,
\end{equation}
which is analytic and bounded for every $z\in\D$ because $\|\mbb{Q}\mbb{T}\|\le1$ since $\|\mbb{T}\|=1$ for every CPTP map $\mbb{T}$ on $\frak{B}_1(\HH)$ \cite{CP}. The expression \eqref{eq:f-OQW} does not define a strict FR-function because $\mbb{Q}\ne1-\mbb{P}$. Nevertheless, $\mbb{F}$ is the result of projecting a true FR-function, namely the FR-function $f$ of the projection $1-\mbb{Q}$ with respect to the operator $\mbb{T}$, 
\begin{equation} \label{eq:F-f-OQW}
\begin{gathered}
 \mbb{F}(z) = \mbb{P}f(z)\mbb{P}, \qquad z\in\D,
 \\
 f(z) = (1-\mbb{Q})\mbb{T}(1-z\mbb{Q}\mbb{T})^{-1}(1-\mbb{Q})
 = \sum_{n\ge1} z^{n-1} a_n,
 \quad a_n = (1-\mbb{Q})\mbb{T}(\mbb{Q}\mbb{T})^{n-1}(1-\mbb{Q}).
\end{gathered}
\end{equation}
By a slight abuse of language, we will also refer to $f$ as the FR-function of $\HH_0$ with respect to $\mbb{T}$, while $\mbb{F}$ will be called the corresponding reduced FR-function.

We will use an obvious adaptation of the notation and terminology for QW recurrence to the case of OQW. Then, the following relations summarize the connections between the reduced FR-function \eqref{eq:f-OQW} and the return properties of the closed subspace $\HH_0$ in an OQW driven by $\mbb{T}$. In these relations we assume that $\rho$ is a density operator in $\frak{B}_1(\HH_0)$ --understood as a subspace of $\frak{B}_1(\HH)$-- and $\psi$ is a pure state in $\HH_0$.  
$$
\begin{aligned}
 & \begin{aligned}
 	& \pi_n(\rho\xrightarrow{\HH_0}\psi) = 
	\tr(|\psi\>\<\psi|\mbb{T}(\mbb{Q}\mbb{T})^{n-1}\rho) = 
	\<\psi|\mbb{A}_n\rho\psi\>,
	\\[3pt]
	& \pi_n(\rho\to\HH_0) = \tr(\mbb{P}\mbb{T}(\mbb{Q}\mbb{T})^{n-1}\rho) =
	\tr(\mbb{A}_n\rho),
 \end{aligned}
 \qquad \mbb{A}_n=\mbb{P}\mbb{T}(\mbb{Q}\mbb{T})^{n-1}\mbb{P}, 
 \\
 & \pi(\rho\xrightarrow{\HH_0}\psi) = 
 \sum_{n\ge1} \pi_n(\rho\xrightarrow{\HH_0}\psi) =
 \sum_{n\ge1} \<\psi|\mbb{A}_n\rho\psi\> = 
 \lim_{x\uparrow1} \<\psi|\mbb{F}(x)\rho\psi\>,
 \\
 & \pi(\rho\to\HH_0) = \sum_{n\ge1} \pi_n(\rho\to\HH_0) =
 \sum_{n\ge1} \tr(\mbb{A}_n\rho) = \lim_{x\uparrow1} \tr(\mbb{F}(x)\rho),
 \\[5pt]
 & \tau(\rho\to\HH_0) = 
 \begin{cases}
 	\infty, 
	& \text{ if } \pi(\rho\to\HH_0)<1,
	\\ \displaystyle 
	\sum_{n\ge1} n\,\pi_n(\rho\to\HH_0) 
 	= \lim_{x\uparrow1} \frac{d}{dx} \, x\tr(\mbb{F}(x)\rho)
	\\ \displaystyle
	\kern88pt = 1 + \lim_{x\uparrow1} \frac{d}{dx} \tr(\mbb{F}(x)\rho),
	& \text{ if } \pi(\rho\to\HH_0)=1.
 \end{cases}
\end{aligned}
$$
The similarity among the above relations and the analogous ones for RW is remarkable. As it was shown in Sections~\ref{sec:RW} and \ref{sec:QW}, while the recurrence properties of RW are controled by the behaviour of the related Banach FR-functions around the single point 1, QW recurrence depends on the values of the corresponding Hilbert FR-functions on the whole unit circle. Nevertheless, as in the RW case, the recurrence properties of OQW are codified by the values of the related Banach FR-functions around 1. This does not mean that RW and OQW recurrence must be similar. Actually, unitary QW can be formulated as iterated quantum channels, hence the above Banach FR-function approach may be used to describe their recurrence properties, which are surprisingly different from those of RW, as shown in \cite{GVWW,BGVW}. Maybe the only useful conclusion to draw is that the density operator formalism places the generating function approach to quantum recurrence formally closer to that of classical recurrence.     

The FR-function $f$ given in \eqref{eq:F-f-OQW} is not only different from the reduced one $\mbb{F}$, but has values in a larger subspace of $\frak{B}_1(\HH)$, namely the range of the projection 
$$
 (1-\mbb{Q})T = \begin{pmatrix} PTP & PTQ \\ QTP & 0 \end{pmatrix},
$$
which is the subspace of operators mapping $\HH_0^\bot$ on $\HH_0$. Nevertheless, all the previous relations which express return probabilities and expected return times in terms of $\mbb{F}$ and its Taylor coefficients $\mbb{A}_n$ remain valid when substituting them by the FR-function $f$ and its Taylor coefficients $a_n$ respectively. The reason for this is that such relations always involve either inner products $\<\psi|\mbb{A}_n\rho\psi\>$ or traces $\tr(\mbb{A}_n\rho)$, where $\rho\in\frak{B}_1(\HH_0)$ and $\psi\in\HH_0$. Since $1-\mbb{Q}=\mbb{P}+\mbb{R}$ with $\mbb{R}\rho=P\rho Q+Q\rho P$ projecting onto operators mapping $\HH_0$ on $\HH_0^\bot$ and viceversa, we find that $\<\psi|a_n\rho\psi\>=\<\psi|\mbb{A}_n\rho\psi\>$ and $\tr(a_n\rho)=\tr(\mbb{A}_n\rho)$. 

The reduced FR-function $\mbb{F}$ is more convenient for practical calculations, like those shown in the examples below. However, the true FR-function $f$ becomes a key tool for theoretical discussions since it allows one to take advantage of the machinery developed for FR-functions. For instance, the generalized renewal equation \eqref{eq:sf} relates the FR-function $f$, generating function of the first return operators $a_n$, and the generating function of the return operators without monitoring given by the Stieltjes function 
\begin{equation} \label{eq:St-OQW}
 s(z) = (1-\mbb{Q})(1-z\mbb{T})^{-1}(1-\mbb{Q}) 
 = \sum_{n\ge0} z^n (1-\mbb{Q})\mbb{T}^n(1-\mbb{Q}),
 \qquad z\in\D.
\end{equation}
Such a relation can be considered as the renewal equation for OQW. An analogue of this renewal equation for the reduced FR-function $\mbb{F}$ should state that the Stieltjes function 
\begin{equation} \label{eq:SSt-OQW}
 \mbb{S}(z) = \mbb{P}(1-z\mbb{T})^{-1}\mbb{P}
\end{equation} 
and $\1_0-z\mbb{F}(z)$ are inverses of each other for $z\in\D$, where $\1_0$ stands for the identity on $\frak{B}_1(\HH_0)$. However, this relation is not true because $\mbb{F}$ is not an FR-function. Furthermore, the splitting rules of Theorem~\ref{thm:split} apply directly to $f$ since it is an FR-function. We will illustrate all this in the examples below, but a detailed discussion of the corresponding splitting rules for OQW and general iterated quantum channels will be explored in our joint collaboration with C.~Lardizabal and A.~H.~Werner.

Analogously to the case of RW and QW, we should prove that $\sum_{n\ge1}\tr(a_n\rho)\le1$ for every density operator $\rho$ in $\frak{B}_1(\HH_0)$. This is a direct consequence of the following result.

\begin{prop} \label{prop:schur-OQW}
Let $P$ be the orthogonal projection of a Hilbert space $\HH$ onto a closed subspace $\HH_0$, and $\mbb{Q}T=QTQ$ the projection on $\frak{B}_1(\HH)$ induced by $Q=1-P$. Then, the Taylor coefficients $a_n=(1-\mbb{Q})\mbb{T}(\mbb{Q}\mbb{T})^{n-1}(1-\mbb{Q})$ of the FR-function $f$ for the projection $1-\mbb{Q}$ with respect to a CPTP map $\mbb{T}$ on $\frak{B}_1(\HH)$ satisfy 
$$
 \sum_{n\ge1}\tr(a_nT)\le\tr T,
 \qquad T\in\frak{B}_1(\HH_0),
 \qquad T\ge0.
$$
\end{prop}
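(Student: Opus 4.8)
The plan is to reduce the claim to a statement about finite partial sums and use the trace-preserving and complete positivity properties of $\mbb{T}$ together with an explicit telescoping identity, in complete analogy with the proof of Proposition~\ref{prop:schur-RW}. First I would set $T\in\frak{B}_1(\HH_0)$ with $T\ge0$ and note that $\tr(a_nT)\ge0$ for every $n$, since $a_n=(1-\mbb{Q})\mbb{T}(\mbb{Q}\mbb{T})^{n-1}(1-\mbb{Q})$ is a composition of the CP maps $\mbb{T}$, $\mbb{Q}$, $1-\mbb{Q}$ (each of which is CP: $\mbb{Q}\rho=Q\rho Q$, $(1-\mbb{Q})\rho=P\rho P+P\rho Q+Q\rho P$, and $\mbb{T}$ CP by hypothesis), hence $a_n$ is positive, so $a_nT\ge0$ and its trace is non-negative. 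Thus $\sum_{n\ge1}\tr(a_nT)$ is a sum of non-negative terms, and it suffices to bound $\sum_{n=1}^N\tr(a_nT)$ by $\tr T$ uniformly in $N$.

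The key computation is a telescoping identity. Observe that for $\rho\in\frak{B}_1(\HH)$ one has $\tr(\mbb{P}\mbb{T}\rho)+\tr(\mbb{Q}\mbb{T}\rho)=\tr(\mbb{T}\rho)$ whenever $\mbb{R}\mbb{T}\rho$ is traceless, which it always is since $\mbb{R}\sigma=P\sigma Q+Q\sigma P$ has zero trace; and $\tr(\mbb{T}\sigma)=\tr\sigma$ by the trace-preserving property. Writing $a_n T=(1-\mbb{Q})\mbb{T}(\mbb{Q}\mbb{T})^{n-1}(1-\mbb{Q})T$ and noting $(1-\mbb{Q})T=T$ for $T\in\frak{B}_1(\HH_0)$, I would prove by induction on $r=0,1,\dots,N-1$ the inequality
$$
 \sum_{n=N-r}^{N}\tr\bigl((1-\mbb{Q})\mbb{T}(\mbb{Q}\mbb{T})^{n-1}T\bigr)
 \;\le\; \tr\bigl(\mbb{T}(\mbb{Q}\mbb{T})^{N-r-2}T\bigr),
$$
with the right-hand side read as $\tr T$ when the exponent is $-1$, exactly mirroring \eqref{eq:sumpi}. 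The base case $r=0$ follows from $\tr((1-\mbb{Q})\mbb{T}\sigma)=\tr(\mbb{T}\sigma)-\tr(\mbb{Q}\mbb{T}\sigma)\le\tr(\mbb{T}\sigma)=\tr\sigma$ applied with $\sigma=(\mbb{Q}\mbb{T})^{N-1}T\ge0$ (using $\tr(\mbb{Q}\mbb{T}\sigma)\ge0$ by positivity). For the induction step I would split off the $(N-r-1)$-st term, apply $\tr((1-\mbb{Q})\mbb{T}\sigma)+\tr(\mbb{Q}\mbb{T}\sigma)=\tr\sigma$ with $\sigma=(\mbb{Q}\mbb{T})^{N-r-2}T$, and feed the $\mbb{Q}\mbb{T}\sigma$ piece into the inductive hypothesis; the algebra is the operator-trace analogue of the summation-over-paths argument in Proposition~\ref{prop:schur-RW}. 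Taking $r=N-1$ gives $\sum_{n=1}^{N}\tr(a_nT)=\sum_{n=1}^N\tr((1-\mbb{Q})\mbb{T}(\mbb{Q}\mbb{T})^{n-1}T)\le\tr T$, and letting $N\to\infty$ yields the claim.

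The main obstacle I anticipate is the careful bookkeeping of the projections $\mbb{P},\mbb{Q},\mbb{R}$ and the fact that $\mbb{P}+\mbb{Q}\ne1$; one must consistently use $1-\mbb{Q}=\mbb{P}+\mbb{R}$ and the vanishing of traces of $\mbb{R}$-images to keep the telescoping valid, and one must be sure that every intermediate operator $(\mbb{Q}\mbb{T})^{k}T$ is positive (which it is, being a composition of CP maps applied to $T\ge0$) so that all discarded terms are genuinely non-negative. A minor technical point in the infinite-dimensional setting is that $\mbb{T}$ and its building blocks map $\frak{B}_1(\HH)$ to itself and the strong-sense trace identities $\sum_j\tr(K_j^\dag K_j\,\cdot)$ hold on trace-class operators, so all the traces above are finite and the manipulations are justified; I would simply invoke the standard facts about CPTP maps on $\frak{B}_1(\HH)$ recalled earlier and in \cite{Wolf,BR,Attal}. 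No delicate limiting argument beyond monotone convergence of the partial sums is needed, so once the telescoping inequality is set up correctly the proof is short.
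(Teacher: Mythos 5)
Your overall strategy is the paper's: a telescoping identity coming from trace preservation, with the discarded tail term non-negative by positivity of $\mbb{Q}\mbb{T}$, plus term-wise non-negativity of $\tr(a_nT)$ to pass from partial sums to the series. (The induction on $r$ you set up is just the unrolled form of the two-line telescoping $\tr(a_nT)=\tr((\mbb{Q}\mbb{T})^{n-1}T)-\tr((\mbb{Q}\mbb{T})^{n}T)$, so there is no real difference there.)

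There is, however, one concretely false step: your first paragraph asserts that $1-\mbb{Q}$ is completely positive, so that each $a_n$ is a positive map and $a_nT\ge0$. This is wrong. Indeed $(1-\mbb{Q})\rho=\rho-Q\rho Q$ is not even a positive map: take $\HH_0=\spn\{u\}$, $v=u+w$ with $w\in\HH_0^\bot$, $\|u\|=\|w\|=1$, and $\rho=|v\>\<v|$; then $(1-\mbb{Q})\rho=|v\>\<v|-|w\>\<w|$, and testing against $x=w-\tfrac12 u$ gives $|\<v|x\>|^2-|\<w|x\>|^2=\tfrac14-1<0$. Consequently $a_nT=(1-\mbb{Q})\bigl(\mbb{T}(\mbb{Q}\mbb{T})^{n-1}T\bigr)$ need not be a non-negative operator, and the non-negativity of $\tr(a_nT)$ does not follow the way you state it. The repair is exactly the observation you make elsewhere in the proposal but do not deploy here: since $\mbb{R}\sigma=P\sigma Q+Q\sigma P$ is traceless, $\tr((1-\mbb{Q})\sigma)=\tr(\mbb{P}\sigma)$, hence $\tr(a_nT)=\tr(\mbb{A}_nT)$ with $\mbb{A}_n=\mbb{P}\mbb{T}(\mbb{Q}\mbb{T})^{n-1}\mbb{P}$, and $\mbb{A}_n$ \emph{is} a composition of positive maps, so $\tr(a_nT)\ge0$ for $T\ge0$. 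This is precisely how the paper argues; with that substitution your proof is correct and coincides with the one in the text.
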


\begin{proof}
Using the decomposition 
$$
 \mbb{T}(\mbb{Q}\mbb{T})^{n-1} = a_n + (\mbb{Q}\mbb{T})^n,
$$
and the fact that $\mbb{T}$ is trace preserving we get
$$
 \tr(a_nT) = \tr((\mbb{Q}\mbb{T})^{n-1}) - \tr((\mbb{Q}\mbb{T})^n).
$$
Hence,
$$
 \sum_{n=1}^N \tr(a_nT) = \tr(T) - \tr((\mbb{Q}\mbb{T})^NT),
 \qquad T\in\frak{B}_1(\HH).
$$
If $T\ge0$, the above equality proves that $\sum_{n=1}^N \tr(a_nT) \le \tr(T)$ because $\tr((\mbb{Q}\mbb{T})^NT)\ge0$ since $\mbb{Q}$ and $\mbb{T}$ are positive maps. On the other hand, if $\mbb{P}$ is the projection on $\frak{B}_1(\HH)$ induced by $P$, then $\mbb{A}_n=\mbb{P}a_n\mbb{P}=\mbb{P}\mbb{T}(\mbb{Q}\mbb{T})^{n-1}\mbb{P}$ is also a positive map such that $\tr(a_nT)=\tr(\mbb{A}_nT)$ whenever $T\in\frak{B}_1(\HH_0)$, hence $\sum_{n\ge1} \tr(a_nT)$ is a series of non-negative terms for $T\ge0$, which therefore converges to a sum bounded by $\tr T$.
\end{proof}

\subsection{Examples of OQW recurrence}
\label{ssec:OQW-ex}

We illustrate the use of generating functions with a few examples of unital and non-unital OQW, kindly provided to us --together with the corresponding diagrams below-- by C.~Lardizabal. It is a pleasure to thank him for very useful guidance on the topic of OQW.

In all these examples the underlying Hilbert space $\HH=\C^2\otimes\spn\{|1\>,|2\>,\dots\}$ is constituted by several copies of $\C^2$ which describe the internal degrees of freedom at each site $|i\>$ of a network. According to \cite{APSS}, the corresponding OQW evolution of a general density operator $\rho$ on $\HH$ is given by the following CPTP map
$$
 \mbb{T}\rho = \sum_{i,j} M^i_j \rho {M^i_j}^\dag,
 \qquad
 M^i_j = B^i_j \otimes |i\>\<j|,
$$
where the $2\times2$ complex matrices $B^i_j$, which act on the $\C^2$ factor of $\HH$, satisfy the trace preserving condition $\sum_i {B^i_j}^\dag B^i_j=I_2$ for all $i$, with $I_2$ the $2\times2$ identity matrix. The unital examples satisfy the additional condition $\sum_j B^i_j {B^i_j}^\dag =I_2$ for all $j$. In any case, this kind of evolution makes the density operator $2\times2$-block diagonal after the first step because \cite{APSS}
$$
 \mbb{T}\rho = \sum_i (\sum_j B^i_j\rho_j{B^i_j}^\dag) \otimes |i\>\<i|, 
 \qquad \rho_j := \<j|\rho|j\>,
$$ 
so we can suppose without loss of generality that the mixed states have this block diagonal form $\rho=\sum_i\rho_i\otimes|i\>\<i|$. This means that we will consider $\mbb{T}$ as an operator on the trace class subspace of $\bigoplus_i\C^{2,2}\otimes|i\>\<i|$ rather than on the larger space $\frak{B}_1(\HH)$. Concerning the trace class condition, it will play no role in the following examples because they are all finite-dimensional.

\begin{ex} \label{ex:OQW1}

The first example is a unital OQW in a network with two sites. The one-step transitions considered are graphically represented in the following diagram,
\begin{center}
\begin{tikzpicture}[->,>=stealth',shorten >=1pt,auto,node distance=2.8cm,
                    semithick]
  \tikzstyle{every state}=[fill=black,draw=none,text=white]

  \node[main node] (B)  {$1$};
  \node[main node] (E) [right of=B] {$2$};
  
  \path (B) edge   [loop above]     node {$B_1^1$} (B)
        (B) edge   [bend left]     	node {$B_1^2$} (E)
        (E) edge   [bend left]     	node {$B_2^1$} (B)
        (E) edge   [loop above]     node {$B_2^2$} (E);
        
\end{tikzpicture}
\end{center}
where the matrices $B_i^j$ are given as follows,
$$
 B_1^1=B_2^2=\frac{1}{\sqrt{3}}\begin{pmatrix} 1 & 1 \\ 0 & 1 \end{pmatrix},
 \quad
 B_1^2=B_2^1=\frac{1}{\sqrt{3}}\begin{pmatrix} 1 & 0 \\ -1 & 1 \end{pmatrix}.
$$
The reduced FR-function $\mbb{F}$ for the site $|1\>$ follows from \eqref{eq:f-OQW} by using the projection $P=I_2\otimes|1\>\<1|$. This leads to the expression  
$$
 \mbb{F}(z) = \frac{1}{(z-3)^3} 
 \begin{pmatrix}
 \scriptstyle -(z+3)(z^2-3z+3) & \scriptstyle z^3-3z^2+9z-9 
 & \scriptstyle z^3-3z^2+9z-9 & \scriptstyle -(4z^2-9z+9)
 \\
 \scriptstyle 2z(z^2-3z+3) & \scriptstyle -(z+3)(z^2-3z+3) 
 & \scriptstyle -z^2(z-1) & \scriptstyle z^3-3z^2+9z-9
 \\
 \scriptstyle 2z(z^2-3z+3) & \scriptstyle -z^2(z-1) 
 & \scriptstyle -(z+3)(z^2-3z+3) & \scriptstyle z^3-3z^2+9z-9
 \\
 \scriptstyle -z(3z^2-11z+12) & \scriptstyle 2z(z^2-3z+3) 
 & \scriptstyle 2z(z^2-3z+3) & \scriptstyle -(z+3)(z^2-3z+3)
 \end{pmatrix},
$$
which is understood as a function with values in operators $\rho=\rho_1\otimes|1\>\<1| \mapsto \sigma=\sigma_1\otimes|1\>\<1|$ acting as follows,
\begin{equation} \label{eq:Fvec}
 \mbb{F}(z) 
 \begin{pmatrix} 
 	\rho_{11} \\ \rho_{12} \\ \rho_{13} \\ \rho_{14} 
 \end{pmatrix} 
 = \begin{pmatrix} 
 	\sigma_{11}(z) \\ \sigma_{12}(z) \\ \sigma_{13}(z) \\ \sigma_{14}(z) 
 \end{pmatrix},
 \qquad
 \rho_1 = 
 \begin{pmatrix}
 	\rho_{11} & \rho_{12} \\ \rho_{13} & \rho_{14}
 \end{pmatrix},
 \qquad
 \sigma_1 = 
 \begin{pmatrix}
 	\sigma_{11} & \sigma_{12} \\ \sigma_{13} & \sigma_{14}
 \end{pmatrix}.
\end{equation} 
For instance, starting at the pure state $\psi=(1,0)^t\otimes|1\>$, which corresponds to the density operator $\rho=|\psi\>\<\psi|=\left(\begin{smallmatrix}1&0\\0&0\end{smallmatrix}\right)\otimes|1\>\<1|$, we have that
$$
 \rho_1 = 
 \begin{pmatrix} 
 	1 & 0 \\ 0 & 0 
 \end{pmatrix}
 \quad \Rightarrow \quad 
 \sigma_1 = 
 \begin{pmatrix}
 	\sigma_{11} & \sigma_{12} \\ \sigma_{13} & \sigma_{14}
 \end{pmatrix},
 \quad
 \begin{pmatrix} 
 	\sigma_{11}(z) \\ \sigma_{12}(z) \\ \sigma_{13}(z) \\ \sigma_{14}(z) 
 \end{pmatrix} 
 = \mbb{F}(z) 
 \begin{pmatrix} 
 	1 \\ 0 \\ 0 \\ 0 
 \end{pmatrix}
 = 
 \begin{pmatrix} 
 	\mbb{F}_{11}(z) \\ \mbb{F}_{21}(z) \\ \mbb{F}_{31}(z) \\ \mbb{F}_{41}(z) 
 \end{pmatrix}.
$$
Therefore, the probability of returning to site $|1\>$ starting at the state $\psi$ is given by the value at $z=1$ of
$$
 \tr\sigma_1(z) = \mbb{F}_{11}(z)+\mbb{F}_{41}(z) 
 = -\frac{4z^3-11z^2+6z+9}{(z-3)^3}.
$$
Hence $\pi(\rho\to|1\>)=1$, i.e. the state $\psi$ is $|1\>$-recurrent. The corresponding expected return time, obtained by taking the derivative of $z\tr\sigma_1(z)$ at $z=1$, turns out to be $\tau(\rho\to|1\>)=2$. Besides, the $n$-th coefficient of the Taylor expansion of $z\tr\sigma_1(z)$ around the origin is the probability of the first return to site $|1\>$ in $n$ steps. This leads to the following probabilities for the first values of $n$,

\smallskip

\begin{center}
\renewcommand{\arraystretch}{1.25}
\begin{tabular}{|c|c|c|c|c|c|c|c|}
\hline
	$n$   
  & 1 & 2 & 3 & 4 & 5 & 6 & 7
\\ \hline
 	$\pi_n(\rho\to|1\>)$ 
  & $1/3$ & 5/9 & 1/27 & 1/81 & 5/243 & 13/729 & 25/2187 
\\ \hline
\end{tabular}
\end{center} 

\medskip 

We can also ask about the probability of returning to site $|1\>$ landing on the initial state $\psi$. This is given by the value at $z=1$ of
$$
 \sigma_{11}(z)=\mbb{F}_{11}(z)=-\frac{(z+3)(z^2-3z+3)}{(z-3)^3},
$$
which yields $\pi(\rho\xrightarrow{|1\>}\psi)=1/2$, while its power expansion provides the $n$-step first return probabilities

\smallskip

\begin{center}
\renewcommand{\arraystretch}{1.25}
\begin{tabular}{|c|c|c|c|c|c|c|c|}
\hline
	$n$   
  & 1 & 2 & 3 & 4 & 5 & 6 & 7
\\ \hline
 	$\pi_n(\rho\xrightarrow{|1\>}\psi)$ 
  & $1/3$ & 1/9 & 0 & 1/81 & 4/243 & 1/81 & 16/2187 
\\ \hline
\end{tabular}
\end{center} 

\medskip

The above probabilities should be distinguished from those of just returning to the state $\psi$ --without any thought being given to returning to site $|1\>$--, which is calculated from a new reduced FR-function \eqref{eq:f-OQW} built out of the projection $P=|\psi\>\<\psi|=\left(\begin{smallmatrix}1&0\\0&0\end{smallmatrix}\right)\otimes|1\>\<1|$ instead of $P=I_2\otimes|1\>\<1|$. The result is the following generating function of first returns 
\begin{equation} \label{eq:F-ex1}
  \mbb{F}(z) = -\frac{z^4-2z^3+5z^2-9z+9}{z^4+z^3-15z^2+36z-27},
\end{equation}
whose value at $z=1$ gives $\pi(\rho\to\psi)=1$, thus the state $\psi$ is also recurrent. Evaluating the derivative of $z\mbb{F}(z)$ at $z=1$ we find the expected return time $\tau(\rho\to\psi)=4$. Also, the Taylor expansion of $\mbb{F}(z)$ yields

\smallskip

\begin{center}
\renewcommand{\arraystretch}{1.25}
\begin{tabular}{|c|c|c|c|c|c|c|c|}
\hline
	$n$   
  & 1 & 2 & 3 & 4 & 5 & 6 & 7
\\ \hline
 	$\pi_n(\rho\to\psi)$ 
  & $1/3$ & 1/9 & 4/27 & 2/27 & 17/243 & 5/81 & 113/2187 
\\ \hline
\end{tabular}
\end{center}

\medskip

To clarify the difference between the reduced FR-function $\mbb{F}$ and the true FR-function $f$ we will also present the last one for the state $\psi$, which is given by \eqref{eq:F-f-OQW} with $\mbb{Q}$ the projection on $(\C^{2,2}\otimes|1\>\<1|)\oplus(\C^{2,2}\otimes|2\>\<2|)$ defined by $\mbb{Q} \rho = (1-P) \rho (1-P)$, i.e.  
$$
 \mbb{Q} \rho = 
 \left(\left(\begin{smallmatrix}0&0\\0&1\end{smallmatrix}\right) \rho_1
 \left(\begin{smallmatrix}0&0\\0&1\end{smallmatrix}\right)
 \otimes|1\>\<1|\right) 
 \oplus (\rho_2\otimes|2\>\<2|),
 \qquad
 \rho = (\rho_1\otimes|1\>\<1|) \oplus (\rho_2\otimes|2\>\<2|).  
$$
Therefore, $1-\mbb{Q}$ is the rank 3 projection
$$
 (1-\mbb{Q})\rho = 
 \begin{pmatrix} 
 	\rho_{11} & \rho_{12} \\ \rho_{13} & 0
 \end{pmatrix}
 \otimes |1\>\<1|,
 \qquad
 \rho_1 =  
 \begin{pmatrix} 
 	\rho_{11} & \rho_{12} \\ \rho_{13} & \rho_{14}
 \end{pmatrix}, 
$$
and, according to \eqref{eq:F-f-OQW}, leads to the following $3\times3$ matrix FR-function
$$
 f(z) = \textstyle -\frac{1}{z^4+z^3-15z^2+36z-27}
 \begin{pmatrix}
 	\scriptstyle z^4-2z^3+5z^2-9z+9 
	& \scriptstyle -(z-1)(z^3-3z+9) 
	& \scriptstyle -(z-1)(z^3-3z+9)
	\\[3pt]
	\scriptstyle z(z-1)(z^2-6z+6) 
	& -\frac{(z^2-3z+3)(z^3-4z^2-3z+9)}{z-3}
	& -\frac{z^2(z^3-6z^2+13z-9)}{z-3}
	\\[3pt]
	\scriptstyle z(z-1)(z^2-6z+6) 
	& -\frac{z^2(z^3-6z^2+13z-9)}{z-3}
	& -\frac{(z^2-3z+3)(z^3-4z^2-3z+9)}{z-3}	
 \end{pmatrix}. 
$$
The value of $f(z)$ must be understood as the operator $\rho=\rho_1\otimes|1\>\<1| \mapsto \sigma=\sigma_1\otimes|1\>\<1|$ on $\RR(1-\mbb{Q})$ given by 
$$
 f(z)  
 \begin{pmatrix} 
 	\rho_{11} \\ \rho_{12} \\ \rho_{13} 
 \end{pmatrix} 
 = \begin{pmatrix} 
 	\sigma_{11}(z) \\ \sigma_{12}(z) \\ \sigma_{13}(z) 
 \end{pmatrix},
 \qquad
 \rho_1 = 
 \begin{pmatrix}
 	\rho_{11} & \rho_{12} \\ \rho_{13} & 0
 \end{pmatrix},
 \qquad
 \sigma_1 = 
 \begin{pmatrix}
 	\sigma_{11} & \sigma_{12} \\ \sigma_{13} & 0
 \end{pmatrix},
$$
so that $\mbb{F}=f_{1,1}$.
A direct calculation shows that the corresponding Stieltjes function \eqref{eq:St-OQW} has the expression 
$$
 s(z) = \textstyle \frac{1}{3(z^2-3)(2z^2-3z+3)}
 \begin{pmatrix}
 	\frac{2z^5-8z^4+3z^3+24z^2-45z+27}{z-1} 
	& \kern-9pt \scriptstyle -z(z^3-3z+9) 
	& \kern-9pt \scriptstyle -z(z^3-3z+9) 
	\\[3pt] 
	\scriptstyle z^2(z^2-6z+6) 
	& \kern-9pt \frac{2z^5-12z^4+3z^3+54z^2-108z+81}{2z-3} 
	& \kern-9pt -\frac{z^3(4z^2-15z+15)}{2z-3}
	\\[3pt] 
	\scriptstyle z^2(z^2-6z+6) 
	& \kern-9pt -\frac{z^3(4z^2-15z+15)}{2z-3} 
	& \kern-9pt \frac{2z^5-12z^4+3z^3+54z^2-108z+81}{2z-3} 
 \end{pmatrix},
$$
and is related to the FR-function $f$ by the renewal equation $s(z)=(I_3-zf(z))^{-1}$. However, the Stieltjes function \eqref{eq:SSt-OQW}, which reads in this case as
$$
 \mbb{S}(z) = s(z)_{1,1} 
 = \frac{2z^5-8z^4+3z^3+24z^2-45z+27}{3(z-1)(z^2-3)(2z^2-3z+3)},
$$
has no analogous relation with the reduced FR-function $\mbb{F}$ in \eqref{eq:F-ex1} since we have 
$$
 (1-z\mbb{F}(z))^{-1} = \frac{z^4+z^3-15z^2+36z-27}{(z-1)(z^4+6z^2-18z+27)}.
$$

\end{ex}

\begin{ex} \label{ex:OQW2}

Unital OQW are the closest ones to unitary QW. For instance, the previous example illustrates a general property of finite-dimensional unital OQW, actually proved in \cite{SKKA} for the general setting of iterated quantum channels and first uncovered for the unitary case in \cite{GVWW}: every state has an integer expected return time which coincides with the dimension of the subspace of the Hilbert space explored by the state, thus it can not be greater than the dimension of the whole Hilbert space. As the next example shows, this does not hold in the non-unital case. 

The non-unital example is again an OQW with two sites, but the one-step transition matrices are now as follows,
$$
 B_{11}=\frac{1}{\sqrt{2}}\begin{pmatrix} 1 & 1 \\ 0 & 0 \end{pmatrix},
 \quad 
 B_{12}=\frac{1}{\sqrt{2}}\begin{pmatrix} 0 & 0 \\ 1 & -1\end{pmatrix},
 \quad
 B_{21}=\frac{1}{\sqrt{3}}\begin{pmatrix} 1 & 1 \\ 0 & 1 \end{pmatrix},
 \quad
 B_{22}=\frac{1}{\sqrt{3}}\begin{pmatrix} 1 & 0 \\ -1 & 1 \end{pmatrix}.
$$
The corresponding reduced FR-function for the pure state $\psi=(1,0)^t\otimes|1\>$ is given by
$$
 \mbb{F}(z) = -\frac{3}{z^2+2z-6},
$$
and yields return probability 1 but with non-integer expected return time 7/3. The related $n$-step first return probabilities appear in the following table. 

\smallskip

\begin{center}
\renewcommand{\arraystretch}{1.25}
\begin{tabular}{|c|c|c|c|c|c|c|c|}
\hline
	$n$   
  & 1 & 2 & 3 & 4 & 5 & 6 & 7
\\ \hline
 	$\pi_n(\rho\to\psi)$ 
  & $1/2$ & 1/6 & 5/36 & 2/27 & 31/648 & 55/1944 & 203/11664 
\\ \hline
\end{tabular}
\end{center}

\end{ex}

\begin{ex} \label{ex:OQW3}

The return properties of OQW with two sites can be tackled with more direct methods since the path counting is not too difficult for them. However, this situation changes dramatically for OQW with three or more sites. As another example we will present an OQW with three sites which will show more clearly the advantages of the generating function approach. This example appeared originally in \cite{CP2} and its relevance was suggested to us by C.~Lardizabal. The corresponding one-step transitions are represented in the following diagram, 
\begin{center}
\begin{tikzpicture}
[->,>=stealth',shorten >=1pt,auto,node distance=1.75cm,
                    semithick]
    \node[main node] (1) {$1$};
    \node[main node] (2) [below left = 1.75cm and 1.75cm of 1]  {$2$};
    \node[main node] (3) [below right = 1.75cm and 1.75cm of 1] {$3$};

    \path[draw,thick]
    (1) edge   [loop above]     node {$B_1^1$} (1)
    (1) edge      				node {\kern-6pt $B_1^2$} (2)
    (2) edge   [bend left]     	node {$B_2^1$ \kern-6pt} (1)
    (2) edge   [loop left]     	node {$B_2^2$} (2)
    (2) edge        			node {$B_2^3$} (3)
    (3) edge   [bend left]     	node {$B_3^2$} (2)
    (3) edge   [loop right]     node {$B_3^3$} (3)
    (3) edge        			node {$B_3^1$ \kern-7pt} (1)
    (1) edge   [ bend left]     node {\kern-5pt $B_1^3$} (3);
    
\end{tikzpicture}
\end{center}
where the matrices $B_i^j$ are chosen as
$$
 B_1^2=B_2^3=B_3^1=\frac{1}{2}\begin{pmatrix} 1 & 0 \\ -1 & 1\end{pmatrix},
 \quad 
 B_1^3=B_3^2=B_2^1=\frac{1}{2}\begin{pmatrix} 1 & 1 \\ 0 & 1\end{pmatrix},
 \quad
 B_1^1=B_2^2=B_3^3=\frac{1}{2}\begin{pmatrix} 1 & 0 \\ 0 & 1\end{pmatrix}.
$$
For this example we have computed again the reduced FR-function for the pure state $\begin{pmatrix}1,0\end{pmatrix}^t\otimes|1\>$, which is explicitly given by
$$
 \mbb{F}(z) = 
 -\frac{2z^7+7z^6-37z^5-10z^4+112z^3-512z^2+768z-512}
 {z^7-13z^6+10z^5+184z^4-1024z^3+2560z^2-3584z+2048}.
$$
This leads to a return probability equal to 1 and an integer expected return time equal to 6, in agreement with the general result in \cite{SKKA}. The corresponding $n$-step first return probabilities look like

\smallskip

\begin{center}
\renewcommand{\arraystretch}{1.25}
\begin{tabular}{|c|c|c|c|c|c|c|c|}
\hline
	$n$   
  & 1 & 2 & 3 & 4 & 5 & 6 & 7
\\ \hline
 	$\pi_n(\rho\to\psi)$ 
  & $1/4$ & 1/16 & 3/64 & 19/256 & 87/1024 & 371/4096 & 1361/16384 
\\ \hline
\end{tabular}
\end{center}

\end{ex}

\begin{ex} \label{ex:OQW4}

We now tackle the problem of OQW splitting and its impact on the reduced FR-function. We will illustrate this splitting by decomposing a three site OQW into a ``sum'' of a couple of two site OQW which overlap on a single site. Although CP maps are preserved by sums, this is not the case for trace preserving ones. Hence, the compatibility of trace preservation for the original OQW and the ``sumands'' will require an extra term for the overlapping site, very much in line with the case of the splitting rules for RW recurrence. It is worth noticing that the older result is a very good guide of how to proceed in this new situation.

The alluded three site OQW is an example with nearest neighbour interactions depicted in the following diagram
\begin{center}
\begin{tikzpicture}[->,>=stealth',shorten >=1pt,auto,node distance=2.8cm,
                    semithick]
  \tikzstyle{every state}=[fill=black,draw=none,text=white]

  \node[main node] (B)  {$1$};
  \node[main node] (D) [right of=B] {$2$};
  \node[main node] (E) [right of=D] {$3$};
  
  \path (B) edge   [loop above]     node {$A$} (B)
        (B) edge   [bend left]     	node {$B$} (D)
        (D) edge   [bend left]     	node {$C$} (B)
        (D) edge   [bend left]     	node {$D$} (E)
        (D) edge   [loop above]     node {$E$} (D)
        (E) edge   [bend left]     	node {$A$} (D)
        (E) edge   [loop above]     node {$B$} (E);
\end{tikzpicture}
\end{center}
where 
$$
 A = \frac{1}{\sqrt{2}} \begin{pmatrix} 1 & 1 \\ 0 & 0 \end{pmatrix},
 \quad 
 B = \frac{1}{\sqrt{2}} \begin{pmatrix} 0 & 0 \\1 & -1 \end{pmatrix},
 \quad
 C=\sqrt{1-\epsilon}B,
 \quad 
 D=\sqrt{1-\epsilon}A,
 \quad
 E=\sqrt{\epsilon} I_2,
$$
for some $\epsilon\in[0,1]$. The corresponding CP map 
$$
 \mbb{T}\rho = 
 (A\rho_1A^\dag+C\rho_2C^\dag)\otimes|1\>\<1| +
 (B\rho_1B^\dag+E\rho_2E^\dag+A\rho_3A^\dag)\otimes|2\>\<2| +
 (D\rho_2D^\dag+B\rho_3B^\dag)\otimes|3\>\<3|
$$
is trace preserving because $A^\dag A + B^\dag B = C^\dag C + D^\dag D +E^\dag E = I_2$, although it is unital only for $\epsilon=0$. In the example given here we will take $\epsilon=1/2$.

We proceed to construct two OQW with an overlap on the middle site, so that they respect most of the existing transitions in the larger OQW, in fact all of them except the self-transitions at the overlapping site. These QW are given by the diagrams
\begin{center}
\begin{tikzpicture}[->,>=stealth',shorten >=1pt,auto,node distance=2.8cm,
                    semithick]
  \tikzstyle{every state}=[fill=black,draw=none,text=white]

  \node[main node] (B)  {$1$};
  \node[main node] (D) [right of=B] {$2$};
  
  \path (B) edge   [loop above]     node {$A$} (B)
        (B) edge   [bend left]     	node {$B$} (D)
        (D) edge   [bend left]    	node {$C$} (B)
        (D) edge   [loop above]     node {$X$} (D);

  \node[main node] (C) at (6,0) {$2$};
  \node[main node] (E) [right of=C] {$3$};
 
  \path (C) edge   [bend left]     	node {$D$} (E)
        (C) edge   [loop above]     node {$Y$} (C)
        (E) edge   [bend left]     	node {$A$} (C)
        (E) edge   [loop above]     node {$B$} (E);
        
\end{tikzpicture}
\end{center}
and, in agreement with the previous terminology, we will refer to them as the left and right OQW respectively. The new transition matrices $X$ and $Y$ must be chosen such that the associated CP maps 
$$
\begin{aligned}
 & \mbb{T}_L\rho = (A\rho_1A^\dag+C\rho_2C^\dag)\otimes|1\>\<1| +
 (B\rho_1B^\dag+X\rho_2X^\dag)\otimes|2\>\<2|,
 \\ 
 & \mbb{T}_R\rho = (Y\rho_2Y^\dag+A\rho_3A^\dag)\otimes|2\>\<2| +
 (D\rho_2D^\dag+B\rho_3B^\dag)\otimes|3\>\<3|,
\end{aligned}
$$
are also trace preserving, i.e. $C^\dag C+X^\dag X = Y^\dag Y + D^\dag D = I_2$. Both $X$ and $Y$ are not unique and a possible choice is given by
$$
 X = -\frac{1}{2} 
 \begin{pmatrix} 
	\sqrt{2} & \sqrt{2}
	\\ 
	1 & -1 
 \end{pmatrix},
 \qquad
 Y = -\frac{1}{6}
 \begin{pmatrix}
 	\sqrt{2} & 3\sqrt{2}
 	\\
	5 & -3
 \end{pmatrix}.
$$

Obviously, the left and right OQW yield the following decomposition of the original one,
$$
 \mbb{T}\rho = (\mbb{T}_L\oplus0_3)\rho 
 + (0_1\oplus\mbb{T}_R)\rho 
 + (E\rho_2E^\dag-X\rho_2X^\dag-Y\rho_2Y^\dag)\otimes|2\>\<2|,
$$
where $0_i=O_2\otimes|i\>\<i|$ is the null operator on the site $|i\>$. Hence, Theorems~\ref{thm:split}.{\it(i)} and \ref{thm:split2}.{\it(i)} imply that the FR-function $f$ of the overlapping site $|2\>$ for the original OQW decomposes analogously in terms of the FR-functions $f_{L,R}$ of the same site with respect to the left/right OQW, i.e. 
\begin{equation} \label{eq:F-dec}
 f(z)\rho = f_L(z)\rho + f_R(z)\rho - 
 (X\rho_2X^\dag+Y\rho_2Y^\dag-E\rho_2E^\dag)\otimes|2\>\<2|.
\end{equation}

Bearing in mind \eqref{eq:F-f-OQW}, this decomposition can be equivalently rewritten using the corresponding reduced FR-functions $\mbb{F}$ and $\mbb{F}_{L,R}$. Following a convention similar to \eqref{eq:Fvec} for the representation of $\mbb{F}$ and $\mbb{F}_{L,R}$, one can check that these generating functions have the explicit form
$$
\begin{gathered}
 \mbb{F}(z) = \frac{1}{4}
 \begin{pmatrix}
 	\frac{z-4}{z-2} & -\frac{z}{z-2} & -\frac{z}{z-2} & -\frac{z}{z-2}
	\\[3pt]
	0 & 2 & 0 & 0
	\\
	0 & 0 & 2 & 0
	\\
	-\frac{z}{z-2} & \frac{z}{z-2} & \frac{z}{z-2} & \frac{z-4}{z-2} 
 \end{pmatrix},
 \\[3pt]
 \mbb{F}_L(z) = \frac{1}{2}
 \begin{pmatrix}
 	1 & 1 & 1 & 1
	\\
	\frac{1}{\sqrt{2}} & -\frac{1}{\sqrt{2}} & 
	\frac{1}{\sqrt{2}} & -\frac{1}{\sqrt{2}}
	\\[5pt]
	\frac{1}{\sqrt{2}} & \frac{1}{\sqrt{2}} & 
	-\frac{1}{\sqrt{2}} & -\frac{1}{\sqrt{2}}
	\\[5pt]
	-\frac{1}{z-2} & \frac{1}{z-2} & \frac{1}{z-2} & -\frac{1}{z-2} 
 \end{pmatrix},
 \kern20pt
 \mbb{F}_R(z) = \frac{1}{2}
 \begin{pmatrix}
 	-\frac{7z+4}{18(z-2)} & -\frac{z+4}{6(z-2)} & 
	-\frac{z+4}{6(z-2)} & \frac{z-4}{2(z-2)} 
	\\[5pt]
	\frac{5}{9\sqrt{2}} & -\frac{1}{3\sqrt{2}} & 
	\frac{5}{3\sqrt{2}} & -\frac{1}{\sqrt{2}}
	\\[7pt]
	\frac{5}{9\sqrt{2}} & \frac{5}{3\sqrt{2}} & 
	-\frac{1}{3\sqrt{2}} & -\frac{1}{\sqrt{2}}
	\\[7pt]
	\frac{25}{18} & -\frac{5}{6} & -\frac{5}{6} & \frac{1}{2} 
 \end{pmatrix}.
\end{gathered}
$$
Under this convention, the splitting \eqref{eq:F-dec} becomes
$$
 \mbb{F}(z) = \mbb{F}_L(z) + \mbb{F}_R(z) - 
 (X\otimes X+Y\otimes Y-E\otimes E),
$$
a relation that can be directly verified.

Consider the probability of returning to site $|2\>$ when starting at a mixed state $\rho=\rho_2\otimes|2\>\<2|$. The previous decomposition implies that such a probability $\pi(\rho\to|2\>)$ for the original OQW is determined by the corresponding ones $\pi_{L,R}(\rho\to|2\>)$ for the left/right OQW. The precise relation is given by 
$$
\begin{aligned}
 \pi(\rho\to|2\>) & = \lim_{x\uparrow1}\tr(f(x)\rho) 
 = \lim_{x\uparrow1}\tr(f_L(x)\rho) + \lim_{x\uparrow1}\tr(f_R(x)\rho) 
 - \tr((X^\dag X \kern-2pt + \kern-1pt Y^\dag Y \kern-2pt - \kern-1pt E^\dag E)\rho_2) 
 \\
 & = \pi_L(\rho\to|2\>) + \pi_R(\rho\to|2\>) - 1,
\end{aligned}
$$
where we have used that $X^\dag X+Y^\dag Y-E^\dag E=X^\dag X+Y^\dag Y+C^\dag C+D^\dag D-I_2=I_2$ and $\tr\rho_2=1$. The above relation is formally equal to the first splitting rule for RW recurrence given in Theorem~\ref{thm:split-RW}.{\it(i)}.

\end{ex}

\medskip

\noindent{\large \bf Acknowledgements}

\smallskip

The work of L. Vel\'azquez has been partially supported by the Spanish Government together with the European Regional Development Fund (ERDF) under grant MTM2014-53963-P from Ministerio de Econom\'{\i}a y Competitividad of Spain, and by Project E-64 of Diputaci\'on General de Arag\'on (Spain). 

We are thankful to C.~Lardizabal who made useful remarks on an earlier version of the paper, and to H.~Dym for help with some references.

\appendix

\section{Derivatives of Nevanlinna FR-functions}
\label{app:der}

The power expansion of FR-functions given in Proposition~\ref{prop:B-falt}.{\it (vii)} is valid in a neighbourhood of the origin for every bounded operator. The following result is a weak version of such an expansion for self-adjoint operators on Hilbert spaces which is valid even in the unbounded case.

\begin{lem} \label{lem:der}
If $f$ is the FR-function of a closed subspace $\HH_0$ with respect to a self-adjoint operator $T$, the following limits exist as operators in $\frak{B}(\HH_0)$, 
$$
\begin{aligned} 
 & f^{(0)}(0) = f(0) := \lim_{y\to0} f(iy) = PTP,
 & & \text{ if } \HH_0\subset\DD(T),
 \\ 
 & f^{(n)}(0) := \lim_{y\to0} \frac{f^{(n-1)}(iy)-f^{(n-1)}(0)}{iy} 
 = n!PTQ(QTQ)^{n-1}QTP,
 & & \text{ if } \HH_0\subset\DD(T^{n+1}), \kern12pt n\ge1,
\end{aligned}
$$
where $y\in\R$, $P$ is the orthogonal projection onto $\HH_0$ and $Q=1-P$. These normal limits must be understood as weak limits, thus they become limits in norm when $\dim\HH_0<\infty$. 
\end{lem}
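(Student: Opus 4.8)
The plan is to reduce everything to the alternative expression of the FR-function in Proposition~\ref{prop:falt}.{\it(iii)}, namely $f(z) = PTP + zPTQ(1-zQTQ)^{-1}QTP$, valid whenever $\BB_0\subset\DD(T)$. Under the stated hypotheses $\HH_0\subset\DD(T^{n+1})$ one has in particular $\HH_0\subset\DD(T)$, so this representation is available; moreover $PTP$ and $PTQ$ are bounded everywhere-defined operators (they are closed with full domain, as in the proof of Proposition~\ref{prop:QTQ}), and since $T$ is self-adjoint, $QTQ$ is self-adjoint on $\HH_0^\bot$ (Theorem~\ref{thm:FR=N}.{\it(ii)}), hence $(1-zQTQ)^{-1}\in\frak{B}(\HH_0^\bot)$ for every non-real $z^{-1}$, in particular for $z=iy$ with $y\in\R\setminus\{0\}$. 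Thus $f(iy)$ is a well-defined bounded operator on $\HH_0$ and the whole analysis is an analysis of the resolvent $(1-iyQTQ)^{-1}$ as $y\to0$.

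First I would treat $n=0$: $f(iy)-PTP = iy\,PTQ(1-iyQTQ)^{-1}QTP$. Write $R(y):=(1-iyQTQ)^{-1}$; by the functional calculus for the self-adjoint operator $A:=QTQ$, $R(y)=\int (1-iyt)^{-1}dE_A(t)$, so $\|R(y)\|\le1$ and, for each fixed $w\in\HH_0^\bot$, $R(y)w\to w$ strongly as $y\to0$ (dominated convergence against the spectral measure $\|E_A(\cdot)w\|^2$). Hence $\|f(iy)-PTP\| \le |y|\,\|PTQ\|\,\|QTP\|\to0$, giving the norm limit even when $\dim\HH_0=\infty$; the claim $f^{(0)}(0)=PTP$ follows. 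For $n\ge1$ I would proceed by induction, showing at each stage the stronger statement that $f^{(k)}(iy)$ extends continuously to $y=0$ with value $k!\,PTQA^{k-1}QTP$, provided $\HH_0\subset\DD(T^{k+1})$. The key algebraic identity is the power-series-type expansion $R(y) = \sum_{j=0}^{N} (iy)^j A^j + (iy)^{N+1}A^{N+1}R(y)$, obtained by iterating $R(y)=1+iyAR(y)$; applied with $N=n$ inside $f(iy)=PTP+iy\,PTQ\,R(y)\,QTP$ it yields
$$
 f(iy) = PTP + \sum_{j=1}^{n} (iy)^j\, PTQA^{j-1}QTP
 + (iy)^{n+1}\, PTQ A^{n}R(y)\,QTP.
$$
Here the crucial point is that $\HH_0\subset\DD(T^{n+1})$ makes the operators $PTQA^{j-1}QTP = PT(QT)^{j}P$ (for $j\le n$) and $PTQA^{n} = PT(QT)^{n+1}P\cdot(\text{bounded})$ genuinely bounded and everywhere defined on $\HH_0$: each factor $QT$ maps $\HH_0\subset\DD(T^{n+1})$ into $\DD(T^{n})$, and one checks inductively that $QTQ$ restricted appropriately agrees with $QT$ on these nested domains, so no domain obstruction arises. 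Taking the $n$-th normal derivative of the displayed identity term by term kills the terms of order $<n$, extracts $n!\,PTQA^{n-1}QTP$ from the order-$n$ term, and leaves a remainder proportional to $iy$ times $PTQA^{n}R(y)QTP$, which tends to $0$ in norm by the same $\|R(y)\|\le1$ estimate used in the base case.

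For the statement that the limits must be read as \emph{weak} limits in general, I would note that the termwise differentiation of $f(iy)$ above is justified coefficient by coefficient (the finitely many terms of order $\le n$ are polynomials in $iy$ with bounded-operator coefficients, and the remainder is manifestly $O(y)$ in norm), so in fact one obtains norm limits for $f^{(n)}(iy)$ as $y\to0$ \emph{whenever the hypotheses hold}; the weak-versus-norm subtlety the lemma warns about concerns the definition of $f^{(n)}(x)$ as a difference quotient for a general Nevanlinna function (Proposition~\ref{prop:der-p}), where strong limits need not be norm limits — here, because we have the explicit resolvent formula, the convergence is always in norm. I would phrase the conclusion to match the lemma's wording: the limits exist (a priori weakly, hence in norm here since the bound is uniform), and equal the stated operators. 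The main obstacle is the bookkeeping of domains: verifying that $PT(QT)^jP$ is bounded and everywhere defined on $\HH_0$ for $j\le n+1$ under $\HH_0\subset\DD(T^{n+1})$, and that $(QTQ)^{j}$ acting after these projections coincides with $(QT)^{j}$ on the relevant nested domains, so that the clean power-series identity for $R(y)$ can legitimately be inserted into the resolvent formula for $f$ — everything else is the uniform-boundedness estimate $\|(1-iyQTQ)^{-1}\|\le1$ from the self-adjoint functional calculus.
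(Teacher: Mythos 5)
Your overall strategy coincides with the paper's: both start from the representation $f(z)=PTP+zPTQ(1-zQTQ)^{-1}QTP$ of Proposition~\ref{prop:falt}, use the self-adjointness of $D=QTQ$ to get the uniform bound $\|(1-iyD)^{-1}\|\le1$, and induct on $n$ while tracking the domain conditions $\HH_0\subset\DD(T^{k})$ to make sense of $D^{j}C$ with $C=QTP$. The base case is fine (and you are right that there the convergence is actually in norm, since $\|f(iy)-PTP\|\le|y|\,\|PTQ\|\,\|QTP\|$ with $PTQ$ bounded as the restriction of $(TP)^\dag$). However, there are two genuine problems for $n\ge1$ when $T$ is unbounded.

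First, the claim that ``the convergence is always in norm'' is false, and this is precisely why the lemma insists on weak limits. Already for $n=1$ the difference quotient minus its limit is $iy\,B(1-iyD)^{-1}DC$ with $B=PTQ$, $C=QTP$; pairing against $u\in\HH_0$ gives the bound $|y|\,\|Cu\|\,\|DCv\|$, which tends to $0$ for each fixed $v$ but is not $\le\mathrm{const}\cdot|y|\,\|v\|$, because $D$ is unbounded on $\RR(C)$. Equivalently, your remainder $(iy)^{n}B(R(y)-1)A^{n-1}C$ satisfies $\|(R(y)-1)A^{n-1}Cv\|\to0$ only pointwise in $v$ (one would need $\|R(y)-1\|\to0$, which fails for unbounded $A$), so the remainder is $o(y^{n})$ in the strong topology but \emph{not} in operator norm. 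Second, ``taking the $n$-th normal derivative term by term'' of that expansion is not justified: the normal derivatives are defined iteratively through the honest analytic derivatives $f^{(k)}(iy)$ at points off the real axis, and a remainder that is $o(y^{n})$ strongly says nothing about the behaviour of its derivatives (differentiating $R(y)$ brings down further unbounded factors of $A$, which would push the domain hypothesis beyond $\HH_0\subset\DD(T^{n+1})$). The paper avoids both issues by computing the exact derivative $f^{(n)}(z)=n!\,B(1-zD)^{-(n+1)}D^{n-1}C$, forming a single difference quotient, and estimating it weakly after moving $B$ to the other side of the inner product via $C=B^\dag$, so that all unbounded powers of $D$ land on the fixed vector $Cv$. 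Your argument can be repaired along exactly these lines, but as written the norm-convergence and termwise-differentiation steps do not hold.
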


\begin{proof}
Suppose $\HH_0\subset\DD(T)$. Then, from the proof of Theorem~\ref{thm:FR=N}.{\it (ii)} we know that $D$ is self-adjoint, so that $\C\setminus\R\subset\varrho(QTQ)$. Hence, using the notation \eqref{eq:T-block}, Proposition~\ref{prop:falt}.{\it (iii)} yields for $z\in\C\setminus\R$,
\begin{equation} \label{eq:f-PTP}
 f(z) = A + zB(1-zD)^{-1}C,
 \qquad A=PTP, \quad B=PTQ, \quad C=QTP, \quad D=QTQ, 
\end{equation}
where the relation $C \subset B^\dag$ becomes an equality because $\DD(C)=\HH_0$ since $\HH_0\subset\DD(T)$. 
Taking $z=iy$, $y\in\R$, in the above expression we get
$$
 |\<u|(f(iy)-A)v\>| = |y\<Cu|(i+yD)^{-1}Cv\>|, 
 \qquad \forall u,v\in\HH_0.
$$
The inequality $\|(z-T)^{-1}\| \le 1/|\im z|$, valid for every self-adjoint operator $T$, leads to $\|(i+yD)^{-1}\|\le1$. Thus, from the previous identity we find that
$$
 |\<u|(f(iy)-A)v\>| \le |y| \|Cu\|\|Cv\| \xrightarrow{y \to 0} 0,
 \qquad \forall u,v\in\HH_0.
$$
This proves that $f(0)=A$ as a weak limit.  

As for the rest of the limits, we will see first that the condition $\HH_0\subset\DD(T^{n+2})$ ensures that $\DD(D^nC)=\HH_0$ and $D^nC\HH_0\subset\sum_{k=0}^{n+1}T^k\HH_0\subset\DD(T)$, thus $\RR(D^nC)\subset\HH_0^\bot\cap\DD(T)=\DD(B)$ so that $BD^nC$ is everywhere defined on $\HH_0$. 

Let us argue by induction on $n$. 

For $n=0$, this follows by rewriting $\HH_0\subset\DD(T^2)$ as $\HH_0\subset\DD(T)$, $T\HH_0\subset\DD(T)$, which give $\DD(C)=\HH_0$ and
\begin{equation} \label{eq:rankC}
 \RR(C) = QT\HH_0 = (T-PT)\HH_0 \subset T\HH_0+\HH_0 \subset \DD(T),
 \qquad \text{ if } \HH_0\subset\DD(T^2).
\end{equation}

Assume now that $\DD(D^{n-1}C)=\HH_0$ and $D^{n-1}C\HH_0\subset\sum_{k=0}^nT^k\HH_0$ whenever $\HH_0\subset\DD(T^{n+1})$. Then, since the condition $\HH_0\subset\DD(T^{n+2})$ implies $\HH_0\subset\DD(T^{n+1})$ and $T^k\HH_0\subset\DD(T)$ for $0\le k\le n+1$, such a condition guarantees that $\RR(D^{n-1}C)\subset\HH_0^\bot\cap\DD(T)=\DD(D)$, hence $\DD(D^nC)=\HH_0$ and
\begin{equation} \label{eq:rankD^nC}
\begin{aligned}
 \RR(D^nC) = QTD^{n-1}C\HH_0 
 & \subset (T-PT) \sum_{k=0}^nT^k\HH_0
 \\
 & \subset \sum_{k=0}^{n+1}T^k\HH_0 \subset \DD(T),
 \quad \text{ if } \HH_0\subset\DD(T^{n+2}).
\end{aligned}
\end{equation}

Let us prove now the existence and give the explicit expression of $f^{(n)}(0)$ for $n\ge1$ by induction on $n$. If $\HH_0\subset\DD(T^2)$, then $\HH_0\subset\DD(T)$ so that $f(0)=A$ according to the previous result. Using \eqref{eq:f-PTP} we get for $z\in\C\setminus\R$,
$$
 \frac{f(z)-f(0)}{z} - BC = B((1-zD)^{-1}-1)C 
 = zB(1-zD)^{-1}DC,
$$
which makes sense on the whole subspace $\HH_0$ because, from \eqref{eq:rankC}, $\RR(C) \subset \HH_0^\bot\cap\DD(T)=\DD(D)$. Setting $z=iy$, $y\in\R$, we find that
$$
 |\<u|\textstyle(\frac{f(iy)-f(0)}{iy}-BC)v\>| 
 = |y\<Cu|(i+yD)^{-1}DCv\>|
 \le |y|\|Cu\|\|DCv\| \xrightarrow{y \to 0} 0,
 \qquad \forall u,v\in\HH_0, 
$$
concluding that $\displaystyle f'(0)=BC$ as a weak limit.

Suppose now that, for some $n\ge1$, $f^{(n)}(0)=n!BD^{n-1}C$ whenever $\HH_0\subset\DD(T^{n+1})$, and let us obtain $f^{(n+1)}(0)$ under the stronger condition $\HH_0\subset\DD(T^{n+2})$. From \eqref{eq:f-PTP} we get the following identity for $z\in\C\setminus\R$,
$$
\begin{aligned}
 \frac{f^{(n)}(z)-f^{(n)}(0)}{z} 
 & = n! z^{-1} B ((1-zD)^{-(n+1)}-1) D^{n-1}C
 \\
 & = n! z^{-1} B \sum_{k=0}^n(1-zD)^{-k} ((1-zD)^{-1}-1) D^{n-1}C
 \\
 & = n! B \sum_{k=1}^{n+1}(1-zD)^{-k} D^nC,
\end{aligned} 
$$
a relation valid on the whole subspace $\HH_0$ because we have proved that $D^nC$ is everywhere defined on $\HH_0$ for $\HH_0\subset\DD(T^{n+2})$. This allows us to rewrite
$$
\begin{aligned}
 \frac{f^{(n)}(z)-f^{(n)}(0)}{z} - (n+1)! B D^n C
 = n! B \sum_{k=1}^{n+1}((1-zD)^{-k}-1) D^nC.
\end{aligned} 
$$
Taking $z=iy$, $y\in\R$, and using the inequality $\|(i-yD)^{-k}\|\le\|(i-yD)^{-1}\|^k\le1$, we find that  
$$
\begin{aligned}
 |\<u|\textstyle(\frac{f^{(n)}(iy)-f^{(n)}(0)}{iy}-(n+1)!BD^nC)v\>| 
 & \le n! \sum_{k=1}^{n+1}|\<((1+iyD)^{-k}-1)Cu|D^nCv\>|
 \\
 & \kern-50pt  
 = n! \sum_{k=1}^{n+1}|\<(i-yD)^{-k}(1-(1+iyD)^k)Cu|D^nCv\>|
 \\
 & \kern-50pt 
 \le n! \sum_{k=1}^{n+1}\|(1-(1+iyD)^k)Cu\|\|D^nCv\| \xrightarrow{y \to 0} 0,
 \qquad \forall u,v\in\HH_0,
\end{aligned} 
$$
where we have used that, in view of \eqref{eq:rankD^nC}, the condition $\HH_0\subset\DD(T^{n+2})$ implies that $D^kC$ is everywhere defined on $\HH_0$ for $0\le k\le n+1$. Therefore, $f^{(n+1)}(0)=(n+1)!BD^nC$ as a weak limit.

Finally, we will see that $f^{(n)}(0)\in\frak{B}(\HH_0)$ if $\HH_0\subset\DD(T^{n+1})$. Note that $A,C\in\frak{B}(\HH_0)$ whenever $\HH_0\subset\DD(T)$ because then $TP$ is closed and everywhere defined on $\HH_0$. Since $f(0)=A$, this proves the result for $n=0$. Besides, for any operator $S\in\frak{B}(\HH_0)$ such that $\DD(BS)=\HH_0$, we have that $BS=PTQS\in\frak{B}(\HH_0)$ because $TQS$ is closed and everywhere defined on $\HH_0$. A similar argument shows that $DS\in\frak{B}(\HH_0)$ whenever $S\in\frak{B}(\HH_0)$ and $\DD(DS)=\HH_0$. This gives by induction on $n\ge1$ that $f^{(n)}(0)=n!BD^{n-1}C\in\frak{B}(\HH_0)$ if $\HH_0\subset\DD(T^{n+1})$ because we have proved that this condition ensures that $\DD(BD^{n-1}C)=\HH_0$.   
\end{proof}

\section{Characterizations of degenerate Nevanlinna functions}
\label{app:deg}

This appendix is devoted to the study of properties of Nevanlinna functions with emphasis on the distinction between degenerated and non-degenerated ones. Or aim is to prove Proposition~\ref{prop:deg-NFR}. For this purpose we will need an additional characterization of scalar degenerated Nevanlinna functions.

\begin{lem} \label{lem:deg}
A scalar Nevanlinna function $f$ is degenerate iff 
$$
 \lim_{y\to0} \frac{\im f(iy)}{y} = 0.
$$
\end{lem}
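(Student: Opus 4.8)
\textbf{Proof proposal for Lemma~\ref{lem:deg}.}

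The plan is to exploit the standard integral representation \eqref{eq:N-mu} of the scalar Nevanlinna function $f$, namely $f(z)=a+bz+\int\frac{1+zt}{t-z}\,d\nu(t)$ with $a\in\R$, $b\ge0$ and $\nu$ a finite positive measure on $\R$. Taking imaginary parts at $z=iy$ with $y>0$ and computing $\im\frac{1+iyt}{t-iy}=\im\frac{(1+iyt)(t+iy)}{t^2+y^2}=\frac{y(1+t^2)}{t^2+y^2}$, one obtains
\begin{equation} \label{eq:imf-rep}
 \frac{\im f(iy)}{y} = b + \int \frac{1+t^2}{t^2+y^2} \, d\nu(t),
 \qquad y>0.
\end{equation}
The first step is thus to record \eqref{eq:imf-rep}. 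The integrand $\frac{1+t^2}{t^2+y^2}$ is non-negative, and as $y\downarrow0$ it increases monotonically to $\frac{1+t^2}{t^2}$ for $t\ne0$ and to $+\infty$ for $t=0$. By the monotone convergence theorem, the limit $\lim_{y\to0}\im f(iy)/y$ exists in $[0,+\infty]$ and equals $b+\nu(\{0\})\cdot(+\infty)+\int_{\R\setminus\{0\}}\frac{1+t^2}{t^2}\,d\nu(t)$; in particular it is $0$ if and only if $b=0$ and $\nu=0$, i.e. if and only if $f$ is the constant $a\in\R$. Since a scalar Nevanlinna function is degenerate precisely when it is a real constant (as recalled in the dichotomy just before Proposition~\ref{prop:der-p}), this proves the lemma.

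The key steps, in order, are: (i) write down \eqref{eq:N-mu} for the scalar case; (ii) compute $\im\frac{1+zt}{t-z}$ at $z=iy$ to get the closed form \eqref{eq:imf-rep}; (iii) apply monotone convergence as $y\downarrow0$ to conclude that the limit vanishes iff $b=0$ and $\nu=0$; (iv) invoke the characterization of scalar degenerate Nevanlinna functions as the real constants. The only mildly delicate point is step (iii): one must handle the possibility $\nu(\{0\})>0$ (which forces the limit to be $+\infty$, hence nonzero, consistent with $f$ being non-constant) and ensure that the decomposition $b+\int\frac{1+t^2}{t^2+y^2}d\nu$ is legitimate — but since all terms are non-negative and $\nu$ is finite, Tonelli/monotone convergence applies without fuss. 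This is not really an obstacle so much as a case-check; the argument is essentially a one-line consequence of \eqref{eq:N-mu} once \eqref{eq:imf-rep} is in hand.

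Alternatively, if one prefers to avoid \eqref{eq:N-mu}, one could argue directly: the function $g(y):=\im f(iy)$ satisfies $g(y)\ge0$ for $y>0$ and, by the Nevanlinna property, $y\mapsto g(y)/y$ is monotone; a Schwarz-type lemma argument (or the Herglotz representation applied locally) shows $g(y)/y\to0$ forces $\im f\equiv0$ on $\C_+$, hence $f$ is a real constant. However, the representation-theoretic route above is cleanest and uses only \eqref{eq:N-mu}, which is quoted in the excerpt, so that is the approach I would carry out in full.
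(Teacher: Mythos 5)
Your proposal is correct and follows essentially the same route as the paper's own proof: both use the scalar Herglotz--Nevanlinna representation \eqref{eq:N-mu}, derive the identity $\im f(iy)/y = b+\int(1+t^2)/(t^2+y^2)\,d\nu(t)$, and conclude from the monotonicity of the integrand as $y\to0$ that the limit vanishes iff $b=0$ and $\nu=0$, i.e.\ iff $f$ is a real constant. Your explicit appeal to monotone convergence and the case-check for $\nu(\{0\})>0$ only make precise what the paper states more briefly.
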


\begin{proof}
Degenerate scalar Nevanlinna functions are the real constant functions, so their imaginary part is null and trivally satisfy the asymptotic condition. 

To prove the converse we resort to the standard integral representation \eqref{eq:N-mu} of Nevanlinna functions, specialized to the scalar case, which states that $f$ is given by
$$
 f(z) = a + bz + \int \frac{1+zt}{t-z} \, d\nu(t),
 \qquad a\in\R, \qquad b\ge0,
$$
with $\nu$ a finite measure on the real line. Then,
$$
 \frac{\im f(z)}{\im z} = b + \int \frac{1+t^2}{|t-z|^2} \, d\nu(t).
$$
Setting $z=iy$ we obtain
$$
 \frac{\im f(iy)}{y} = b + \int \frac{1+t^2}{t^2+y^2} \, d\nu(t).
$$
The above expression increases when $|y|$ decreases, thus the only way to have a zero limit for $y\to 0$ is if both, $b$ and the measure $\nu$, vanish. Then, $f(z)=a$ is a real constant. 
\end{proof}

Now we can prove Proposition~\ref{prop:deg-NFR} for operator valued Nevanlinna functions. 

\begin{proof} [{\it Proof of Proposition~\ref{prop:deg-NFR}}]

It is enough to prove the first three equalities of this proposition: if $f\colon\C\setminus\R\to\frak{B}(\HH_0)$ is a Nevanlinna function and $x\in\R$, then 
$$
 \ker\im f(z) = \{v\in\HH_0:fv\text{ constant on }\C\setminus\R\} = 
 \ker(f(z)-f(x)) = \ker f'(x),
 \quad z\in\C\setminus\R, 
$$
where $f(x):=\lim_{y\to0}f(x+iy)$ and ${\displaystyle f'(x)}:=\lim_{y\to0}(f(x+iy)-f(0)))/iy$ are assumed to exist as weak limits everywhere defined on $\HH_0$. The characterizations {\it (i)} to {\it (vi)} of Proposition~\ref{prop:deg-NFR} are direct consequences of the above three equalitites.

By using the shifted Nevanlinna function $f(z-x)$ we can reduce the proof to the case $x=0$, bearing in mind that a real shift preserves Nevanlinna functions as well as their degenerate character. Hence, we will suppose without loss that $x=0$. 

Let $\ker\im f:=\ker \im f(z)$. This is a closed subspace of $\HH_0$ which we know is independent of $z\in\C\setminus\R$. The orthogonal projections $p$, $q$ of $\HH_0$ onto $\ker\im f$ and $\ker\im f^\bot$ generate the block representation
$$
 f(z) 
 = \begin{pmatrix}
 a(z) & b(z) \\ c(z) & d(z)
 \end{pmatrix} 
 = \begin{pmatrix}
 pf(z)p & pf(z)q \\ qf(z)p & qf(z)q
 \end{pmatrix},
 \qquad
 z\in\C\setminus\R.
$$ 
Since $(\im f(z))p=0$, we find that $a(z)^\dag=a(z)$ and $b(z)^\dag=c(z)$. Hence, $a(z)$, $c(z)$ and their adjoints are analytic on $\C\setminus\R$. This implies that $a(z)=a_\pm$ and $c(z)=c_\pm$ are constant functions for $z\in\C_\pm$. Also, $f(\overline{z})=f(z)^\dag$ gives $a(\overline{z})=a(z)$ and $c(\overline{z})=c(z)$, thus $a_+=a_-$ and $c_+=c_-$. That is, $a(z)=a$ and $c(z)=c$ are constant operator valued functions for $z\in\C\setminus\R$, so that   
$$
 f(z) 
 = \begin{pmatrix}
 a & c^\dag \\ c & d(z)
 \end{pmatrix},
 \qquad z\in\C\setminus\R.  
$$
Therefore, $f(z)v=(a+c)v$ is independent of $z\in\C\setminus\R$ for each $v\in\ker\im f$.

Conversely, if $fv$ is constant on $\C\setminus\R$ then $f(z)v=f(\overline{z})v=f(z)^\dag v$ for $z\in\C\setminus\R$, which implies that $v\in\ker\im f$. This finishes the proof of the first equality in the proposition, $\ker\im f=\{v\in\HH_0:fv\text{ is constant on }\C\setminus\R\}$.

Suppose now that $f(0)$ exists as an operator everywhere defined on $\HH_0$. Let $v\in\HH_0$ such that $fv$ is constant on $\C\setminus\R$. Then, given $z\in\C\setminus\R$, we have that $\<u|(f(z)-f(0))v\>=\lim_{y\to0}\<u|(f(z)-f(iy))v\>=0$ for every $u\in\HH_0$, thus $(f(z)-f(0))v=0$. Bearing in mind the previous result, this means that $\ker\im f\subset\ker(f(z)-f(0))$.  

To prove the opposite inclusion remember that $f(0)$ is self-adjoint by Proposition~\ref{prop:der-p}. Therefore, $v\in\ker(f(z)-f(0))$ for some $z\in\C\setminus\R$ not only yields $\<v|(f(z)-f(0))v\>=0$, but also $\<v|(f(z)^\dag-f(0))v\>=\<(f(z)-f(0))v|v\>=0$. Combining these two equalities we get $\<v|\im f(z)v\>=0$, which proves that $v\in\ker\im f(z)$ because $\im f(z)/\im z\ge0$.

Finally, assume the existence of $\displaystyle f'(0)$ everywhere defined on $\HH_0$. If $v\in\HH_0$ makes $fv$ constant on $\C\setminus\R$, then $(f(iy)-f(0))v=0$ for every $y\in\R$, so $\displaystyle f'(0)v=0$. In other words, $\ker\im f\subset\ker\displaystyle f'(0)$.

Conversely, suppose that $v\in\ker\displaystyle f'(0)$ and consider the scalar Nevanlinna function $g(z)=\<v|f(z)v\>$. Then, using that $f(iy)^\dag=f(-iy)$ for $y\in\R$, we get
$$
 \lim_{y\to0} \frac{\im g(iy)}{y} 
 = \frac{1}{2} \lim_{y\to0} 
 \left( \<v|\textstyle\frac{f(iy)-f(0)}{iy}v\> 
 + \<v|\textstyle\frac{f(-iy)-f(0)}{-iy}v\> \right)
 = \<v|f'(0)v\>=0. 
$$
By Lemma~\ref{lem:deg}, $g$ is a real constant function, so $\<v|\im f(z)v\>=\im g(z)=0$ for $z\in\C\setminus\R$, which implies that $v\in\ker\im f$.   
\end{proof}





\begin{thebibliography}{99}

\bibitem{ADZ}
Y. Aharonov, L. Davidovich, N. Zagury, 
Quantum random walks, 
Phys. Rev. A {\bf 48} (1993) 1687--1690.

\bibitem{ASW}
A. Ahlbrecht, V.B. Scholz, A.H. Werner, 
Disordered quantum walks in one lattice dimension, 
J. Math. Phys. {\bf 52} (2011) 102201.

\bibitem{Akhiezer}
N.I. Akhiezer, 
The classical moment problem and some related questions in analysis, 
Hafner publishing Co., New-York, 1965.

\bibitem{ADL}
D. Alpay, A. Dijksma, H. Langer,
$J_\ell$ unitary factorization and the Schur algorithm for Nevanlinna functions in an indefinite setting,
Linear Alg. Appl. 419 (2006) 675--709.

\bibitem{ADL2}
D. Alpay, A. Dijksma, H. Langer, 
The Schur transformation for Nevanlinna functions: operator representations, resolvent matrices, and orthogonal polynomials, 
Modern Analysis and Applications, The Mark Kre\u{\i}n Centenary Conference, vol. 1: Operator theory and related topics,  
Oper. Theory Adv. Appl. {\bf 190}, Birkh\"auser Verlag, Basel, 2009, pp. 27--63.

\bibitem{ADLV}
D. Alpay, A. Dijksma, H. Langer, D. Volok,
A Schur transformation for functions in a general class of domains,
Indag. Math. {\bf 23} (2012) 921--969.

\bibitem{Ambainis}
A. Ambainis, 
Quantum walks and their algorithmic application, 
Int. J. Quantum Inf. {\bf 1} (2003) 507--518.

\bibitem{Attal} 
S. Attal,
Lectures in Quantum Noise Theory, \newline 
\texttt{http://math.univ-lyon1.fr/homes-www/attal/chapters.html}

\bibitem{APSS}
S. Attal, F. Petruccione, C. Sabot, I. Sinayskiy, 
Open Quantum Random Walks,
J. Stat. Phys. {\bf 147} (2012) 832--852. 

\bibitem{APS}
S. Attal, F. Petruccione, I. Sinayskiy,
Open quantum walks on graphs,
Phys. Let. A {\bf 376} (2012) 1545--1548.

\bibitem{BS}
J. A. Ball, O. J. Staffans, 
Conservative state–space realizations of dissipative system behaviors, 
Integr. Equat. Oper. Theory {\bf 54} (2006), 151--213. 

\bibitem{BGK}
H. Bart, I. Gohberg, A.M. Kaashoek, 
Minimal factorization of matrix and operator functions, 
Oper. Theory Adv. Appl. {\bf 1}, Birkh\"auser Verlag, Basel-Boston, Mass., 1979.

\bibitem{BHST}
S. Belyi, S. Hassi, H. de Snoo, E. Tsekanovski\u{\i},
A general realization theorem for matrix-valued Herglotz–Nevanlinna functions,
Linear Alg. Appl. {\bf 419} (2006) 331--358.


\bibitem{BR}
C. Beny, F. Richter,
Algebraic approach to quantum theory: a finite-dimensional guide, 
arXiv: 1505.03106 [quant-ph].

\bibitem{BGVW}
J. Bourgain, F.A. Gr\"unbaum, L. Vel\'azquez, J. Wilkening,
Quantum Recurrence of a Subspace and Operator-Valued Schur Functions,
Commun. Math. Phys. {\bf 329} (2014) 1031--1067.

\bibitem{Brodskii}
M.S. Brodski\u{\i}, 
Triangular and Jordan Representations of Linear Operators, 
Transl. Math. Monographs, vol. 32, Amer. Math. Soc., Providence, RI, 1971.

\bibitem{Brodskii2}
M.S. Brodski\u{\i}, 
Unitary operator colligations and their characteristic functions, 
Russian Math. Surveys {\bf 33}, no. 4 (1978) 159--191; 
Uspekhi Mat. Nauk. {\bf 33}, no. 4 (202) (1978) 141--168.

\bibitem{CGMV}
M.J. Cantero, F.A. Gr\"unbaum, L. Moral, L. Vel\'azquez, 
Matrix-valued Szeg\H{o} polynomials and quantum random walks, 
Commun. Pure Appl. Math. {\bf 63} (2010) 464--507.

\bibitem{CGMV1}
M.J. Cantero, F.A. Gr\"unbaum, L. Moral, L. Vel\'azquez, 
One-dimensional quantum walks with one defect, 
Rev. Math. Phys. {\bf 24} (2012) 1250002.

\bibitem{CGMV2}
M.J. Cantero, F.A. Gr\"unbaum, L. Moral, L. Vel\'azquez, 
The CGMV method for quantum walks, 
Quantum Inf. Process. {\bf 11} (2012) 1149--1192.

\bibitem{CMV}
M.J. Cantero, L. Moral, L. Vel\'azquez, 
Five-diagonal matrices and zeros of orthogonal polynomials on the unit circle, Linear Algebra Appl. {\bf 362} (2003) 29--56.

\bibitem{CP} 
R. Carbone, Y. Pautrat, 
Homogeneous open quantum random walks on a lattice, 
J. Stat. Phys. {\bf 160} (2015) 1125--1153.

\bibitem{CP2}
R. Carbone, Y. Pautrat, 
Open quantum random walks: reducibility, period, ergodic properties,
Ann. Henri Poincar\'e {\bf 17} (2016) 99--135. 

\bibitem{CGL} 
S.L. Carvalho, L.F. Guidi, C.F. Lardizabal, 
Site recurrence of open and unitary quantum walks on the line, 
Quantum Inf. Process. {\bf 16} (2017) 17. 

\bibitem{CGVWW}
C. Cedzich, F.A. Gr\"unbaum, L. Vel\'azquez, A.H. Werner, R.F. Werner, 
A Quantum Dynamical Approach to Matrix Khrushchev's Formulas,
Commun. Pure Appl. Math. {\bf 69} (2016) 909--957.

\bibitem{Childs}
A. Childs, 
Universal Computation by Quantum Walk, 
Phys. Rev. Lett. {\bf 102} (18) (2009) 180501.

\bibitem{DPS}
D. Damanik, A. Pushnitski, B. Simon, 
The analytic theory of matrix orthogonal polynomials, 
Surv. Approx. Theory {\bf 4} (2008) 1--85.

\bibitem{DGK-S}
Ph. Delsarte, Y. Genin, Y. Kamp, 
I.~Schur parametrization of positive definite block-Toeplitz systems, 
SIAM J. Appl. Math. {\bf 36} (1979) 34--46.

\bibitem{DGK-N}
Ph. Delsarte, Y. Genin, Y. Kamp, 
The Nevanlinna-Pick Problem for Matrix-Valued Functions,
SIAM J. Appl. Math. {\bf 36} (1979) 47--61.

\bibitem{De}
M. Derevyagin,
The Jacobi matrices approach to Nevanlinna–Pick problems,
J. Approx. Theory {\bf 163} (2011) 117--142.

\bibitem{De2}
M. Derevyagin,
A note on Wall's modification of the Schur algorithm and linear pencils of Jacobi matrices, 
arXiv: 1609.06733 [math.CA].

\bibitem{DS}
E.A. van Doorn, P. Schrijner, 
Geometric ergodicity and quasi-stationarity in discrete-time birth-death processes,
J. Austral. Math. Soc. Ser. B {\bf 37} (1995) 121--144. 


\bibitem{Feller}
W. Feller, 
An introduction to probability theory and its applications, vol. I, 
third edition, John Wiley \& Sons, Inc., New York-London-Sydney, 1968.

\bibitem{Geronimus}
Ya.L. Geronimus, 
On polynomials orthogonal on the circle, on trigonometric moment problem, and on allied Carath\'eodory and Schur functions,
Mat. Sb. {\bf 15} (1944) 99--130.

\bibitem{GKMT}
F. Gesztesy, N.J. Kalton, K.A. Makarov, E. Tsekanovski\u{\i}, 
Some applications of operator-valued Herglotz functions, 
Operator theory, system theory and related topics (Beer-Sheva/Rehovot, 1997), Oper. Theory Adv. Appl. {\bf 123}, Birkh\"auser, Basel, 2001, pp. 271--321.

\bibitem{GT}
F. Gesztesy, E.R. Tsekanovski\u{\i}, 
On matrix-valued Herglotz functions, 
Math. Nachr. {\bf 218} (2000) 61--138.

\bibitem{GVWW}
F.A. Gr\"unbaum, L. Vel\'azquez, A.H. Werner, R.F. Werner,
Recurrence for Discrete Time Unitary Evolutions,
Commun. Math. Phys. {\bf 320} (2013) 543--569.

\bibitem{Herglotz}
G. Herglotz, 
\"Uber Potenzreihen mit positivem, reellen Teil im Einheitskreise,
Ber. Ver. Ges. wiss. Leipzig {\bf 63} (1911) 501--511.

\bibitem{Kaashoek}
M.A. Kaashoek, 
Minimal factorization, linear systems and integral operators,
Operators and function theory (Lancaster, 1984), 
NATO Adv. Sci. Inst. Ser. C Math. Phys. Sci. {\bf 153}, Reidel, Dordrecht, 1985, pp. 41--86. 

\bibitem{KMcG}
S. Karlin, J. McGregor, 
Random walks, 
Illinois J. Math. {\bf 3} (1959) 66--81.

\bibitem{Kato}
T. Kato, 
Perturbation theory for linear operators, 
reprint of the 1980 edition, Classics in Mathematics, Springer-Verlag, Berlin, 1995.

\bibitem{Kempe}
J. Kempe, 
Quantum random walks - an introductory overview, 
Contemp. Phys. {\bf 44} (2003) 307--327.

\bibitem{Kendon}
V. Kendon, 
A random walk approach to quantum algorithms, 
Philos. Trans. R. Soc. A {\bf 364} (2006) 3407--3422.

\bibitem{Khrushchev}
S. Khrushchev, 
Schur’s algorithm, orthogonal polynomials, and convergence of Wall’s
continued fractions in $L^2(\T)$, 
J. Approx. Theory {\bf 108} (2001) 161--248.

\bibitem{Khrushchev2} 
S. Khrushchev, 
Classification theorems for general orthogonal polynomials on the unit circle,
J. Approx. Theory {\bf 116} (2002) 268--342.

\bibitem{KL}
M.G. Kre\u{\i}n, H. Langer, 
\"Uber die Q-Funktion eines $\Pi$-hermiteschen Operators im Raume $\Pi_\kappa$,
Acta Sci. Math. (Szeged) {\bf 34} (1973) 191--230. 

\bibitem{LS} 
C.F. Lardizabal, R. R.Souza, 
On a class of quantum channels, open random walks and recurrence, 
J. Stat. Phys. {\bf 159} (2015) 772--796.

\bibitem{LS2} 
C.F. Lardizabal, R.R. Souza, 
Open quantum random walks: ergodicity, hitting times, gambler's ruin and potential theory,  
J. Stat. Phys. {\bf 164} (2016) 1122--1156. 

\bibitem{Last}
Y. Last, 
Quantum dynamics and decomposition of singular continuous spectra, 
J. Funct. Anal. {\bf 142} (1996) 406--445.

\bibitem{MW}
B. Mohar, W. Woess, 
A survey on spectra of infinite graphs, 
Bull. London Math. Soc. {\bf 21} (1989) 209--234.

\bibitem{NF}
B.Sz.-Nagy, C. Foias, H. Bercovici, L. K\'erchy,
Harmonic analysis of operators on Hilbert space, 
second edition, revised and enlarged edition, Springer, New York, 2010.

\bibitem{NK}
B.Sz.-Nagy, A. Kor\'anyi, 
Operatortheoretische Behandlung und Verallgemeinerung eines
Problemkreises in der komplexen Funktionentheorie, 
Acta Math. {\bf 100}, no. 3-4 (1958) 171--202.

\bibitem{Naimark}
M.A. Naimark, 
Spectral functions of a symmetric operator, 
Izv. Akad. Nauk. SSSR Ser. Mat. {\bf 4} (1940) 277--318.

\bibitem{Nevanlinna}
R. Nevanlinna, 
\"Uber beschr\"ankte analytische Funktionen, 
Ann. Acad. Sci. Fenn. A {\bf 32} (1929) 1--75. 

\bibitem{Pick}
G. Pick, 
\"Uber die Beschr\"ankungen analytischer Funktionen, welche durch vorgegebene Funktionswerte bewirkt werden, 
Math. Ann. {\bf 77} (1916) 7--23.

\bibitem{Polya}
G. P\'olya, 
\"Uber eine Aufgabe der Wahrscheinlichkeitsrechnung betreffend die Irrfahrt im Stra{\ss}ennetz, 
Math. Ann. {\bf 84} (1921) 149--160. 


\bibitem{Rudin}
W. Rudin, 
Real and complex analysis, 
third edition, McGraw-Hill Book Co., New York, 1987.

\bibitem{Schmudgen}
K. Schm\"udgen, 
Unbounded self-adjoint operators on Hilbert space, 
Graduate Texts in Mathematics {\bf 265}, Springer, Dordrecht, 2012.

\bibitem{RS}
M. Reed, B. Simon, 
Methods of Modern Mathematical Physics III: Scattering Theory, 
London-New York, Academic Press, 1979.

\bibitem{Riesz}
F. Riesz, 
Sur certains syst\`emes singuliers d’\'equations integrales, 
Ann. \'Ecole Norm. Sup. {\bf 28} (1911) 33--62.

\bibitem{Schur}
I. Schur, 
\"Uber Potenzreihen, die im Innern des Einheitskreises beschr\"ankt
sind, I and II, 
J. Reine Angew. Math. {\bf 147} (1917) 205--232 (Part I), and {\bf 148} (1918) 122--145 (Part II); 
English translation in I.~Schur methods in operator theory and signal processing, 
Oper. Theory Adv. Appl. {\bf 18}, Birkh\"auser Verlag, Basel and Boston, 1986.

\bibitem{Shmulyan}
Yu.L. Shmul’yan,
On operator R-functions, 
Siberian Math. J. {\bf 12} (1971) 315--322.

\bibitem{ST}
J.A. Shohat, J.D. Tamarkin, 
The problem of moments, 
Amer. Math. Soc. Math. Surveys, vol. I, Amer. Math. Soc., New York, 1943.

\bibitem{SaPa}
P. Sadowski, \L. Pawela, 
Central limit theorem for reducible and irreducible open quantum walks,
Quantum Inf. Process. {\bf 15} (2016) 2725.

\bibitem{Simon-OPUC}
B. Simon, 
Orthogonal polynomials on the unit circle, Part 1 and 2, 
Amer. Math. Soc. Colloq. Publ. {\bf 54}, Amer. Math. Soc., Providence, RI, 2005.

\bibitem{SP}
I. Sinayskiy, F. Petruccione,  
Efficiency of open quantum walk implementation of dissipative quantum computing algorithms,
Quantum Inf. Process. {\bf 11} (2012) 1301.

\bibitem{SP2}
I. Sinayskiy, F. Petruccione,
Quantum optical implementation of open quantum walks 
Int. J. Quantum Inform. {\bf 12} (2014) 1461010.

\bibitem{SP3}
I. Sinayskiy, F. Petruccione,
Microscopic derivation of open quantum walks,
Phys. Rev. A {\bf 92} (2015) 032105. 

\bibitem{SKA}  
P. Sinkovicz, T. Kiss, J. Asboth, 
Generalized Kac lemma for recurrence time in iterated open quantum systems, Phys. Rev. A {\bf 93} (2016) 050101.

\bibitem{SKKA} 
P. Sinkovicz, Z. Kurucz, T. Kiss, J. K. Asb\'oth, 
Quantized recurrence in unital iterated quantum dynamics,
Phys. Rev. A {\bf 91} (2015) 042108.

\bibitem{SJK}
M. {\v S}tefa\v{n}\'ak, I. Jex, T. Kiss,
Recurrence and P\'olya number of quantum walks,
Phys. Rev. Lett. {\bf 100} (2008) 020501.

\bibitem{SJK2}
M. \v{S}tefa\v{n}\'{a}k, T. Kiss, I. Jex,
Recurrence properties of unbiased coined quantum walks on infinite d-dimensional lattices,
Phys. Rev. A {\bf 78} (2008) 032306.

\bibitem{SJK3}
M. \v{S}tefa\v{n}\'{a}k, T. Kiss, I. Jex,
Recurrence of biased quantum walks on a line,
New. J. Phys. {\bf 11} (2009) 043027.

\bibitem{Stroock}
D. Stroock, 
An introduction to Markov processes, 
second edition, Graduate Texts in Mathematics 230, Springer, Heidelberg, 2014.

\bibitem{Watkins}
D.S. Watkins, 
Some perspectives on the eigenvalue problem, 
SIAM Rev. {\bf 35} (1993) 430--471.

\bibitem{Wolf}
M.M. Wolf,
Quantum Channels \& Operations - Guided Tour, \newline
\texttt{https://www-m5.ma.tum.de/foswiki/pub/M5/Allgemeines/MichaelWolf/QChannelLecture.pdf}

\end{thebibliography}
\end{document}